\newtheorem{knowntheorem}{Theorem}
\newtheorem{knownproposition}[knowntheorem]{Proposition}
\newtheorem{knownlemma}[knowntheorem]{Lemma}
\def\today{\ifcase\month\or
January\or February\or March\or April\or May\or June\or July \or
August\or September\or October\or November\or December\fi
\space\NNumber\day, \NNumber\year}
\numberwithin{equation}{section}
\def\eu{ {\, \textrm{\rm e} }}
\def\sbwd{\sigma^{\textrm{bwd}}}
\def\sfwd{\sigma^{\textrm{fwd}}}
\def\sbwd{\sigma^{\textrm{bwd}}}
\def\sTfwd{\Sigma^{\textrm{fwd}}}
\def\sTbwd{\Sigma^{\textrm{bwd}}}
\def\dist{{\mathscr{D}}}
\def\al{\alpha}
\def\ga{\vec{\gamma}}
\def\de{\delta}
\def\ep{\varepsilon}
\def\la{\lambda}
\def\si{\sigma}
\def\Om{\Omega}
\def\na{\vec{\nabla}}
\def\N{{\mathbb N}}
\def\Z{{\mathbb Z}}
\def\R{{\mathbb R}}
\def\ee{{\mathrm e}}
\def\EE{{\mathcal E}}
\def\MM{{\mathcal M}}
\def\SS{{\mathcal S}}
\def\TT{{\mathcal T}}
\def\DDD{\mathscr{D}}
\def\TTT{\mathscr{T}}
\def\PPP{\mathscr{P}}
\def\dd{\delta}
\def\aup{a^{\uparrow}}
\def\adown{a^{\downarrow}}
\def\lzero{\Lambda^0}
\def\l1{\Lambda^{1}}
\def\Q{\vec{Q}}
\def\P{\vec{P}}
\def\xxi{\vec{\xi}}
\def\x{\vec{x}}
\def\f{\vec{f}}
\def\g{\vec{g}}
\newcommand{\lit}{{\lim_{t \to +\infty}}}
\newcommand{\lito}{{\lim_{t \to -\infty}}}
\def\Ae(t){\boldsymbol{A^\ep}}
\def\({\left(}
\def\){\right)}
\DeclareMathOperator{\tr}{tr}
\newcommand\bs[1]{{\boldsymbol{#1}}}
\def\und{\underline}
\def\ov{\overline}
\DeclareMathOperator{\sign}{sign}
\def\ogap{\Theta}   
\def\aaa{\theta}
\newcommand\assump[1]{{\rm\textbf{#1}}}
\def\inn{\textrm{in}}
\newtheorem{theorem}{Theorem}[section]
\newtheorem{proposition}[theorem]{Proposition}
\newtheorem{lemma}[theorem]{Lemma}
\newtheorem{cor}[theorem]{Corollary}
\theoremstyle{definition}
\newtheorem{example}[theorem]{Example}
\theoremstyle{remark}
\newtheorem{remark}[theorem]{Remark}
\begin{document}
\title{A new construction for Melnikov chaos in piecewise-smooth planar systems}
\pagestyle{myheadings}
\date{}
\markboth{}{Melnikov chaos for planar systems}

\author{	
A. Calamai\thanks{Dipartimento di Ingegneria Civile, Edile e Architettura,
Universit\`a Politecnica delle Marche, Via Brecce Bianche 1, 60131 Ancona -
Italy. Partially supported by G.N.A.M.P.A. - INdAM (Italy) and
PRIN 2022 - Progetti di Ricerca di rilevante Interesse Nazionale, \emph{Nonlinear differential
problems with applications to real phenomena} (Grant Number: 2022ZXZTN2).
},
M. Franca\thanks{Dipartimento di Scienze Matematiche, Universit\`a di Bologna, 40126 Bologna, Italy.
 Partially supported by G.N.A.M.P.A. - INdAM (Italy) and PRIN 2022 - Progetti di Ricerca di rilevante Interesse Nazionale, \emph{Nonlinear
 differential
problems with applications to real phenomena} (Grant Number: 2022ZXZTN2). },
M. Posp\' i\v sil\thanks{Department of Mathematical Analysis and Numerical Mathematics, Faculty of Mathematics, Physics and Informatics,
Comenius University in Bratislava,
Mlynsk\'a dolina, 842 48 Bratislava, Slovakia; Mathematical Institute, Slovak Academy of Sciences, \v Ste\-f\'a\-ni\-ko\-va 49, 814 73
Bratislava, Slovakia.
Partially supported by the Slovak Research and Development Agency under the Contract no. APVV-23-0039, and by the Grants VEGA 1/0084/23 and
VEGA-SAV 2/0062/24.}
}

\maketitle

\begin{abstract}
In this paper we consider a piecewise smooth $2$-dimensional system
\begin{equation*}\label{e.ab}
\dot{\x}=\f (\x)+\ep\g(t,\x,\ep)
\end{equation*}
where $\ep>0$ is a small parameter and $\f$ is discontinuous along a curve $\Om^0$.
We assume that $\vec{0}$ is a critical point for any $\ep \ge 0$,  and that  for $\ep=0$ the system  admits a trajectory
$\ga(t)$ homoclinic to $\vec{0}$ and crossing transversely $\Om^0$ in $\ga(0)$.

 In a previous paper we have shown that, also in an $n$-dimensional setting,   the classical Melnikov condition is enough to guarantee the persistence of the homoclinic
to perturbations, but  more recently  we have found an open condition, a geometric obstruction which is not possible in the smooth case,
which prevents chaos  for $2$-dimensional systems  when $\g$ is periodic in $t$.

In this paper we show that when this obstruction is removed we have chaos as in the smooth case. The proofs involve
a new construction of the set $\Sigma$ from which the chaotic pattern originates.

The results are illustrated by examples.
\end{abstract}

{\bf Keywords: Melnikov theory, piecewise-smooth systems, chaos, \\ homoclinic trajectories, non-autonomous dynamical systems,
non-autonomous perturbation.}

{\bf 2020 AMS Subject Classification: Primary 34A36, 37G20, 37G30; Secondary 34C37, 34E10, 34C25, 37C60.}

\vskip 24bp


 \section{Introduction}
In this paper we deal with planar, piecewise-smooth systems and we prove, by Melnikov theory, the
occurrence of a chaotic behavior when the fixed point of the unperturbed system lies on the discontinuity level.

More in detail, we consider the piecewise-smooth system
\begin{equation}\label{eq-disc}\tag{PS}
	\dot{\x}=\f^\pm(\x)+\ep\g(t,\x,\ep),\quad \x\in\Om^\pm ,
\end{equation}
where $\Om^{\pm} = \{ \x\in\Om \mid \pm
G(\x)>0\}$, $\Om^{0}= \{ \x\in\Om \mid G(\x) = 0 \}$, $\Omega \subset \R^2$ is an open set, $G$ is a $C^{r}$-function on $\Om$ with $r> 1$
such that $0$ is a regular value of
$G$. Next, $\ep\in\R$ is a small parameter, and $\f^\pm \in C^{r}_b(\Om^\pm \cup \Omega^0, \R^2)$, $\g\in C_b^r(\R\times\Om\times\R,\R^2)$ and
$G\in C_b^r(\Om,\R)$, i.e., the
derivatives of $\f^\pm$, $\g$ and $G$ are uniformly continuous and bounded up to the $r$-th order, respectively,
 if $r\in \N$ and up to $r_0$ if $r=r_0+r_1$, with $r_0 \in \N$ and $0< r_1 <1$ and the $r_0$-th derivatives are $r_1$
H\"older continuous.

  Here and in the sequel, we
use the shorthand notation $\pm$
to
represent both the $+$ and $-$ equations and functions.

We assume that both the systems
$\dot{\vec{x}}=\f^{\pm}(\x)$
admit the origin $\vec{0} \in \R^2$ as a fixed point,
and that $\vec{0}$ lies
on the discontinuity level $\Omega^0$.
Moreover, the $2\times2$ matrices $\bs{f_x^\pm}(\vec{0})$ have real eigenvalues of opposite sign, and the corresponding eigenvectors are
transversal to $\Om^0$ at $\vec{0}$,
see Section \ref{S.setting} for more details.
Further, we assume that for $\ep=0$ system \eqref{eq-disc} admits a piecewise-smooth
homoclinic trajectory $\ga(t)$, crossing transversely the discontinuity level $\Omega^0$ at some point $\ga(0)\neq \vec{0}$.
That is, the homoclinic trajectory ${\ga}(t)$ satisfies:
 $$\ga(t)\in\begin{cases}\Om^-,& t<0,\\ \Om^0,&t=0,\\ \Om^+,&t>0.\end{cases}$$
So $\bs{\Gamma}= (\{\ga(t) \mid t \in \R \} \cup \{(0,0) \})$ crosses $\Om^0$ transversely twice.

Our  aim is to obtain a sufficient condition for the occurrence of a chaotic pattern, when we turn on a small perturbation $\ep \ne 0$ in
\eqref{eq-disc}.

Discontinuous problems are motivated by several physical applications, for instance mechanical systems with impacts, see e.g.~\cite{B99},
power electronics when we
have state dependent switches \cite{BV01}, walking machines \cite{GCRC98}, relay feedback systems \cite{BGV02}, biological systems \cite{PK};
see also \cite{DE13, LSZH16} and
the references therein.
Further they are also a  good source of examples since it is somehow easy to produce piecewise linear systems exhibiting an explicitly known
homoclinic
trajectory, so giving rise to chaos if subject to perturbations, whereas this is not an easy task in general for the smooth case (especially
if
the system
is not Hamiltonian).

In the smooth case, the problem of detecting the occurrence of chaotic solutions for non-autonomous dynamical systems is now classical.
In particular, it is
known that  perturbing periodically a (smooth) dynamical system
which admits a transversal homoclinic point of the period map, an invariant Smale horseshoe set originating a chaotic pattern arises; we
refer the interested reader, e.g.,
to \cite{Pa84, GH, WigBook, Wig99, SSTC}.
 More in detail,  let us consider the smooth planar system
\begin{equation}\label{eq-smooth}\tag{S}
	\dot{\x}=\f (\x)+\ep\g(t,\x,\ep), \quad \quad \x\in\Om,
\end{equation}
 having a homoclinic trajectory,  where both $\f$ and $\g$ are of class  $C^2$  in the whole $\Om$.
Melnikov theory gives an  integral condition   sufficient to ensure the persistence
of the homoclinic trajectory for $\ep \ne 0$, and, if the system is periodic in $t$, the insurgence of a chaotic pattern:
the classical requirement is that a certain computable function $\tau\mapsto\MM(\tau)$, which for system \eqref{eq-disc} is given by formula
\eqref{melni-disc} below, has a
non-degenerate zero.
In this framework, after the pioneering work of Melnikov \cite{Me}, an important step forward
was performed by Chow et al.\ in \cite{CHM}.
Another milestone in this context is \cite{Pa84} where Palmer  addressed the
$n$-dimensional case where $n \ge 2$.
When $n>2$ an additional non-degeneracy condition on the family of unperturbed homoclinic is needed,
a condition which is automatically satisfied in the $n=2$ case. Palmer's approach is analytic and combines
Fredholm alternative with the use of exponential dichotomy and shadowing lemmas.
These results have been improved from many points of view, in particular they have been extended to the almost periodic case,
see e.g.\ \cite{Sch, PS},  and to the case where the zeros of $\MM(\tau)$ are degenerate, see e.g.\ \cite{BF02C, BF02M}.
Afterwards
the so-called perturbation approach has been widely developed by many authors.
Such a theory is by now well-established for smooth systems, and there are many works devoted to it.
For example we refer to \cite{BL,CHM,G1,GH, Pa84, Pa00, PS,Sch, St,MS,WigBook, Wig99}.

In the last 20 years many authors addressed the problem of generalizing Melnikov theory
to a discontinuous piecewise-smooth setting: among the other papers, let us mention e.g.,
 \cite{K2,KRG,LPT,BF08, BF11, CaFr,CDFP,LiDu}.
In fact most of the cited results are set in the planar case $n=2$.
However we wish to quote the nice  work performed by Battelli and Fe\v ckan, who managed  to extend the theory to the $n \ge 2$ case,
but assuming that the critical point of the unperturbed system \emph{does not} belong to the discontinuity hypersurface
 (in the case $n=2$, the curve $\Om^0$),
see e.g.\ \cite{BF11}. For a generalization of Melnikov theory in other directions, see e.g.\ the recent \cite{BF22, HL24, GZ23, MV21,
ZZL21}.

 Let us assume for illustrative purposes that $\g$ is $1$-periodic in the $t$-variable
 (in fact the results of Battelli and Fe\v ckan require very weak recurrence properties, see e.g.\ \cite{BF11}).
 Roughly speaking, in order to prove the existence of the chaotic pattern
 in a discontinuous setting, Battelli and Fe\v ckan  glued together infinitely many
solutions of the perturbed problem,
  which mimic
different time-translation of the unperturbed homoclinic.
For this purpose, they had to solve infinitely many equations via implicit function theorem, using
a Lijapunov-Schimdt reduction (needed in the $n>2$ case) together with the Melnikov condition.
 A key point of their contribution, and one of the main technical and theoretical  issues,
was to understand how the discontinuity of the flow affects the jumps in the projections
of the exponential dichotomy, and how the non-degeneracy condition required in the $n>2$ case
is translated in this discontinuous setting.
This way they managed to prove the persistence of the homoclinic \cite{BF08}, the insurgence of chaos
when the unperturbed homoclinic exhibits no sliding phenomena \cite{BF11} and when it does \cite{BF10}.

If, instead, we assume that the critical point \emph{lies on the discontinuity hypersurface $\Omega^0$},
the problem of detecting a chaotic behavior becomes even more challenging.  Why shall one insist on studying this case?
 In many real applications this is what really happens, e.g.\ in systems with dry friction.
In this setting,
as a preliminary step, in \cite{CaFr} it was shown that the Melnikov condition found by Battelli and Fe\v ckan together with a further
(generic) transversality requirement
(always satisfied in two dimensions)  are enough to prove the persistence of the homoclinic trajectory.

Subsequently, in \cite{CDFP} we considered the case of a ``one-sided'' homoclinic.
In other words, in the discontinuous setting like \eqref{eq-disc} we assumed that
${\ga}(t)\in \Omega^+$ for any $t \in \R$.
In this case, with suitable assumptions we proved that the chaotic trajectories are still located in  $\Omega^+$.
 We point out that, even if the main motivation and applications are set in the discontinuous case,
a crucial idea in both the papers \cite{CaFr} and \cite{CDFP}  was that the problem addressed   could be
equivalently stated for continuous systems and reduced, in some sense, to a problem of location of trajectories.
Moreover, we stress that in \cite{CaFr} and \cite{CDFP}, like in the paper by Battelli and Fe\v ckan \cite{BF11}, we studied systems in
$\R^n$.

Hence, in the case in which the   origin lies on the discontinuity hypersurface $\Om^0$ and $\bs{\Gamma}$ crosses
$\Om^0$ transversely
 twice, so that it
 splits in two branches,  the problem of the occurrence of chaotic patterns was still open.
 The purpose of the paper is to fill this gap, at least in the two-dimensional case.

We think it is worth mentioning the recent papers \cite{HL24, HuLi}, still concerning the $n$-dimensional case,
$n \ge 2$, and in which
the origin lies on the discontinuity hypersurface $\Om^0$, proving persistence of the homoclinic trajectory
in the $n>2$ degenerate case, and persistence of the periodic solutions.

As pointed out, here we focus on the two-dimensional case. In this setting, in the recent paper \cite{FrPo},
quite surprisingly,
it was shown
that the Melnikov condition,
which ensures the persistence of the homoclinic \cite{CaFr} and the transversality of
the crossing between stable and unstable leaves, \emph{does not guarantee} the insurgence of chaos
differently from the smooth setting, and also from the piecewise-smooth setting considered, e.g., in \cite{K2,KRG,LPT,BF08,BF10, BF11}.
In fact, anytime we have sliding phenomena
 close to the origin, a geometrical obstruction prevents the formation of chaotic patterns, and new bifurcation
 phenomena take place, scenarios which may exist just in a discontinuous context.

 In this paper we complete the picture
 of the two-dimensional, discontinuous case showing
 that, when close to the origin no sliding phenomena occur,
  system \eqref{eq-disc} exhibits a chaotic behavior
as in the smooth setting; while according to \cite{CaFr}, if there is sliding close to the origin, even if
 it does not involve the homoclinic trajectory, we have persistence of the homoclinic, but no chaos when the perturbation is turned on.

\begin{figure}[t!]
\centerline{\epsfig{file=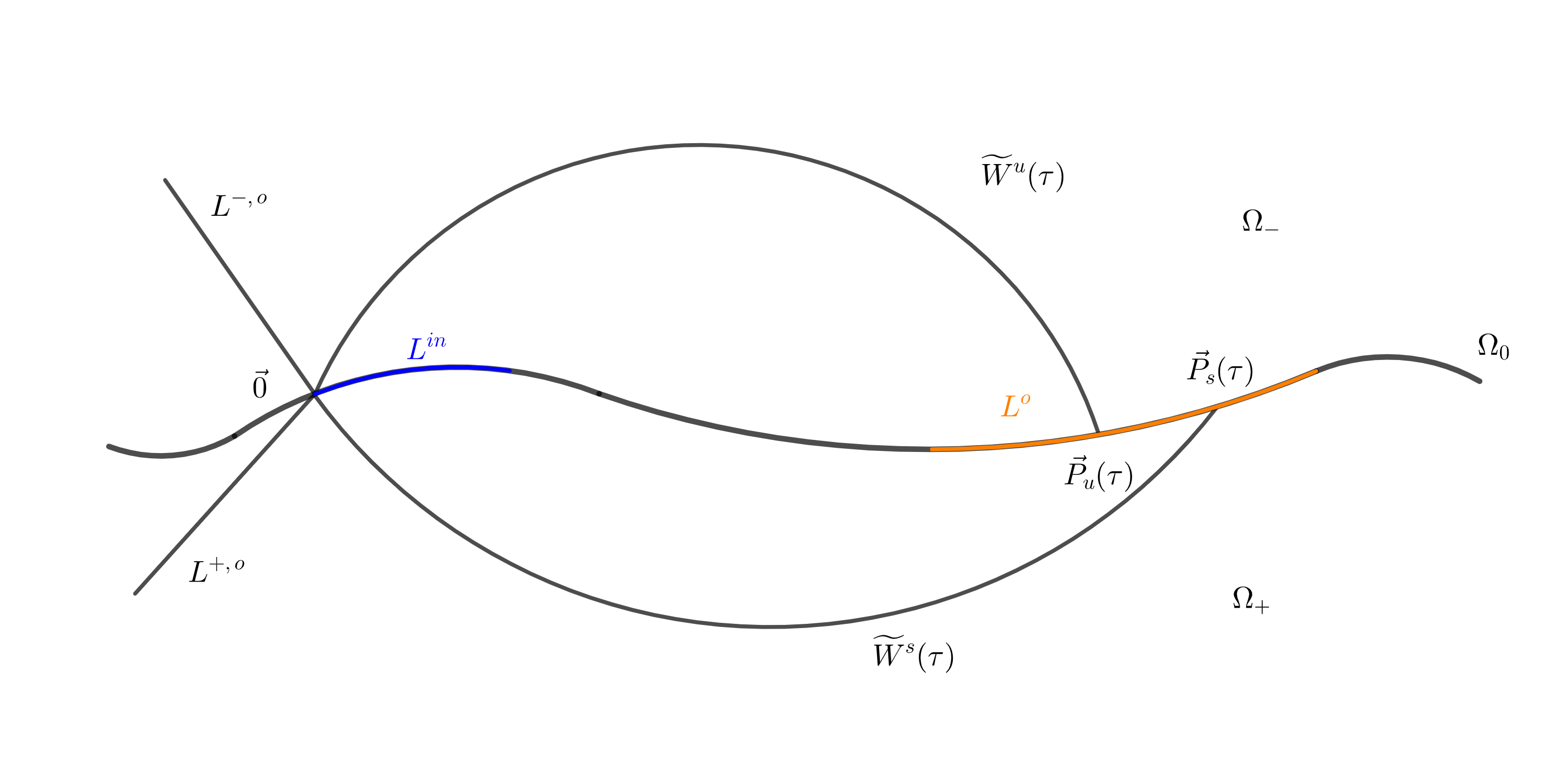, width = 10 cm} }
\caption{Stable and unstable leaves.}
\label{LinL0}
\end{figure}

  For the proofs we profit of the results obtained in our previous papers \cite{CFPRimut, CFPrestate}.
  Let us now summarize the main ideas.
   In \cite{CFPrestate} we have proved the existence of  trajectories which are chaotic only in the future or chaotic only in the past.
  Assume again for illustrative purposes that,
 $\g$ is $1$-periodic in $t$   and  that the Melnikov function $\MM(\tau)$ has a non-degenerate zero, i.e., $\MM(\tau_0)=0 \ne \MM'(\tau_0)$
 for some $\tau_0 \in [0,  1]$.
 Let $\tilde{W}^s(\tau)$ and $\tilde{W}^u(\tau)$ denote the branch of  the stable/unstable manifolds between the origin and $\Om^0$
 and denote by $\vec{P}_s(\tau)$ the (transversal) intersection between $\tilde{W}^s(\tau)$ and $\Om^0$ and by
 $\vec{P}_u(\tau)$ the (transversal) intersection between $\tilde{W}^u(\tau)$ and $\Om^0$.
  Let $\x(t, \tau; \xxi)$ denote the trajectory of \eqref{eq-disc}  leaving from $\xxi$ at $t=\tau$.
  Let $\Z^+=\{j \in \Z \mid j \ge 1\}$,
 $\Z^-=\{j \in \Z \mid j \le -1\}$, $\EE^+=\{0; 1 \} ^{\Z^+}$, $\EE^- = \{0; 1 \} ^{\Z^-}$, $\EE =\{0; 1 \} ^{\Z}$.

 Firstly in \cite{CFPRimut} we have
developed a careful analysis of the fly time needed for a trajectory starting in $\Om^0$ close to $\vec{P}_s(\tau)$ at $t=\tau$
to perform a loop close to $\tilde{W}(t)= \tilde{W}^s(t)\cup \tilde{W}^u(t)$, till it intersects again $\Om^0$ close to  $\vec{P}_u(t)$, together with a careful
evaluation of the space displacement with respect to $\tilde{W}(t)$.

 Then in  \cite{CFPrestate}, using the analysis of \cite{CFPRimut} we have shown that there is $\ep_0$ such that for any $0< \ep \le \ep_0$  there are compact connected sets
$X^+(\tau,\TT^+, \ee^+)$ and $X^-(\tau,\TT^-, \ee^-)$ such that
if $\xxi^+ \in X^+$ then $\x(t,\tau; \xxi^+)$ is ``chaotic   in the future", i.e.,
it mimics a translate of $\ga(t)$  if $\ee_j=1$ and it is close to the origin if $\ee_j=0$ in $t \in [T_{2j-1}, T_{2j+1}]$ and $j \in \Z^+$;
while  if $\xxi^- \in X^-$ then $\x(t,\tau; \xxi^-)$ is ``chaotic in the past", i.e., it has a similar property when $j \in \Z^-$.

In this article we show some sort of continuity of the sets $X^+(\tau,\TT^+, \ee^+)$ and $X^-(\tau,\TT^-, \ee^-)$ with respect to $\tau$
so that we can
apply the intermediate value theorem to find a  $\tau_{\star}\in [0,1]$ such that
the intersection
$X^+(\tau_{\star},\TT^+, \ee^+)\cap X^-(\tau_{\star},\TT^-, \ee^-)$ is non-empty.

Then we construct
the set $\Sigma(\tau_{\star}, \TT)$ made up by all the initial conditions giving rise to a chaotic pattern.
 We stress that an advantage of the application of the intermediate value theorem,  instead of the implicit function theorem, is that we
can allow $\MM(\tau_{\star})$ to be a degenerate zero
(as far as we know, this fact is new in the piecewise-smooth but discontinuous context,
while  analogous results in the smooth case have been obtained  via degree theory
in \cite{BF02C, BF02M}). However, a drawback  is that we just obtain a semi-conjugation with the Bernoulli shift.

Further, our approach, inspired by the work by Battelli and Fe\v ckan, see \cite{BF08, BF10, BF11, BF12}, allows  to weaken the recurrence properties of the Melnikov
function $\MM(\tau)$.

The paper is divided as follows: in \S \ref{S.setting} we list the main hypotheses and we state our main results, i.e.\ Theorems \ref{main.periodic1}, \ref{main.weak1}
and \ref{veryweak}; in \S  \ref{S.prel} we introduce some preliminary constructions and some key results concerning time needed to perform a loop and space displacement with respect to $\tilde{W}(t)$, mainly borrowed from \cite{CFPRimut}; we close the section by illustrating how the results are applied to similar scenarios;
in \S \ref{S.connection}, roughly speaking, we show  that the compact sets $X^+(\tau, \TT^+, \ee^+)$ and $X^-(\tau, \TT^-, \ee^-)$ of initial
conditions giving rise to trajectories chaotic in the future and in the past respectively, depend continuously on $\tau$, see Theorems
\ref{T.connectionLa} and \ref{T.connection}:
the argument relies heavily on \cite{CFPrestate} and it is detailed in \S \ref{S.theorem2.1} in the weaker setting of Theorem
\ref{veryweak}, and then quickly adapted to the setting of Theorems \ref{main.weak1} and \ref{main.periodic1} in  \S
\ref{proof.connection2};
in \S \ref{S.mainproof} we use the intermediate value theorem   to prove
that the Melnikov condition guarantees the existence of $\tau_{\star}$ such that there is a non-empty intersection between the sets $X^+(\tau_{\star}, \TT^+, \ee^+)$
and $X^-(\tau_{\star}, \TT^-, \ee^-)$, thus proving Theorems~\ref{theoremkey} and \ref{theoremkeybis}, from which
Theorems~\ref{main.periodic1}, \ref{main.weak1}
and \ref{veryweak} easily follow;  in \S \ref{proof.Bernoulli} we show that in the setting of Theorems
\ref{main.periodic1} and \ref{main.weak1} the action of the flow of \eqref{eq-disc} on the set $\Sigma$ giving rise to chaos
is semi-conjugated to the Bernoulli shift in $\{0 ; 1 \}^{\Z}$;  finally in \S \ref{s.ex} we illustrate our hypotheses and our results by some examples.


\section{Statement of the main results}\label{S.setting}

\subsection{Basic assumptions, constants and notation}
Following \cite{CFPRimut, CFPrestate} firstly  we give a notion of solution and we collect the basic hypotheses which we assume through the whole paper together with the main
constants. We emphasize that we keep the notation as in \cite{CFPRimut} and \cite{CFPrestate}.
By a \emph{solution} of
\eqref{eq-disc} we mean a continuous, piecewise $C^r$ function
$\x(t)$ that satisfies
\begin{align}
\dot{\x}(t)=\f^+(\x(t)) + \ep \g(t,\x(t),\ep), \quad \textrm{whenever } \x(t) \in \Om^+,  \label{eq-disc+} \tag{PS$+$}\\
\dot{\x}(t)=\f^-(\x(t)) + \ep \g(t,\x(t),\ep), \quad \textrm{whenever } \x(t) \in \Om^-. \label{eq-disc-}\tag{PS$-$}
\end{align}
 Moreover, if  $\x(t_{0})$ belongs to
$\Om^{0}$ for some $t_0$,   then  we assume
either $\x(t)\in\Om^{-}$ or $\x(t)\in\Om^{+}$  for $t$ in some left neighborhood of
$t_{0}$, say $]t_{0}-\tau,t_{0}[$ with $\tau>0$.
In the first case, the
left derivative of $\x(t)$ at $t=t_0$ has to satisfy $\dot
\x(t_{0}^{-}) = \f^{-}(\x(t_0))+ \ep \g(t_0,\x(t_0),\ep)$; while in the
second case, $\dot \x(t_{0}^{-}) = \f^{+}(\x(t_0))+ \ep
\g(t_0,\x(t_0),\ep)$. A similar condition is required for the right
derivative $\dot \x(t_{0}^{+})$.
We stress that, in this paper, we do not consider solutions of equation
\eqref{eq-disc} that belong to $\Om^{0}$ for $t$ in some
nontrivial interval, i.e., sliding solutions.

\subsubsection*{Notation}
Throughout the paper we will use the following notation. We denote scalars by small letters,
e.g.\ $a$, vectors in $\R^2$ with an arrow, e.g.\ $\vec{a}$, and $n \times n$ matrices by bold letters, e.g.\ $\bs{A}$.
By $\vec{a}^*$ and $\bs{A}^*$ we mean the transpose of the vector $\vec{a}$ and of the matrix $\bs{A}$, resp.,
so that $\vec{a}^*\vec{b}$ denotes the scalar product of the vectors $\vec{a}$, $\vec{b}$.
We denote by $\|\cdot\|$ the Euclidean norm in $\R^2$, while for matrices
we use the functional norm $\|\bs{A}\|= \sup_{\|\vec{w}\| \le 1} \|\bs{A} \vec{w}\|$.
We will use the shorthand notation $\bs{f_x}=\bs{\frac{\partial f}{\partial x}}$ unless this may cause confusion.

Further, given $\delta>0$ we denote by $B(\vec{P},\delta):=\{\Q \mid \|\Q-\vec{P}\| < \delta \}$ and,
given a set $D$ and $\delta>0$, we let
\begin{equation*}
  \begin{split}
     B(D, \delta):= &  \{ \vec{Q} \in \R^2 \mid   \exists \vec{P} \in D : \, \|\vec{Q}-\vec{P} \| <  \delta \}= \cup \{ B(\vec{P}, \delta)
 \mid \vec{P} \in D \}.
  \end{split}
\end{equation*}

\begin{figure}[t!]
\begin{center}
	\epsfig{file=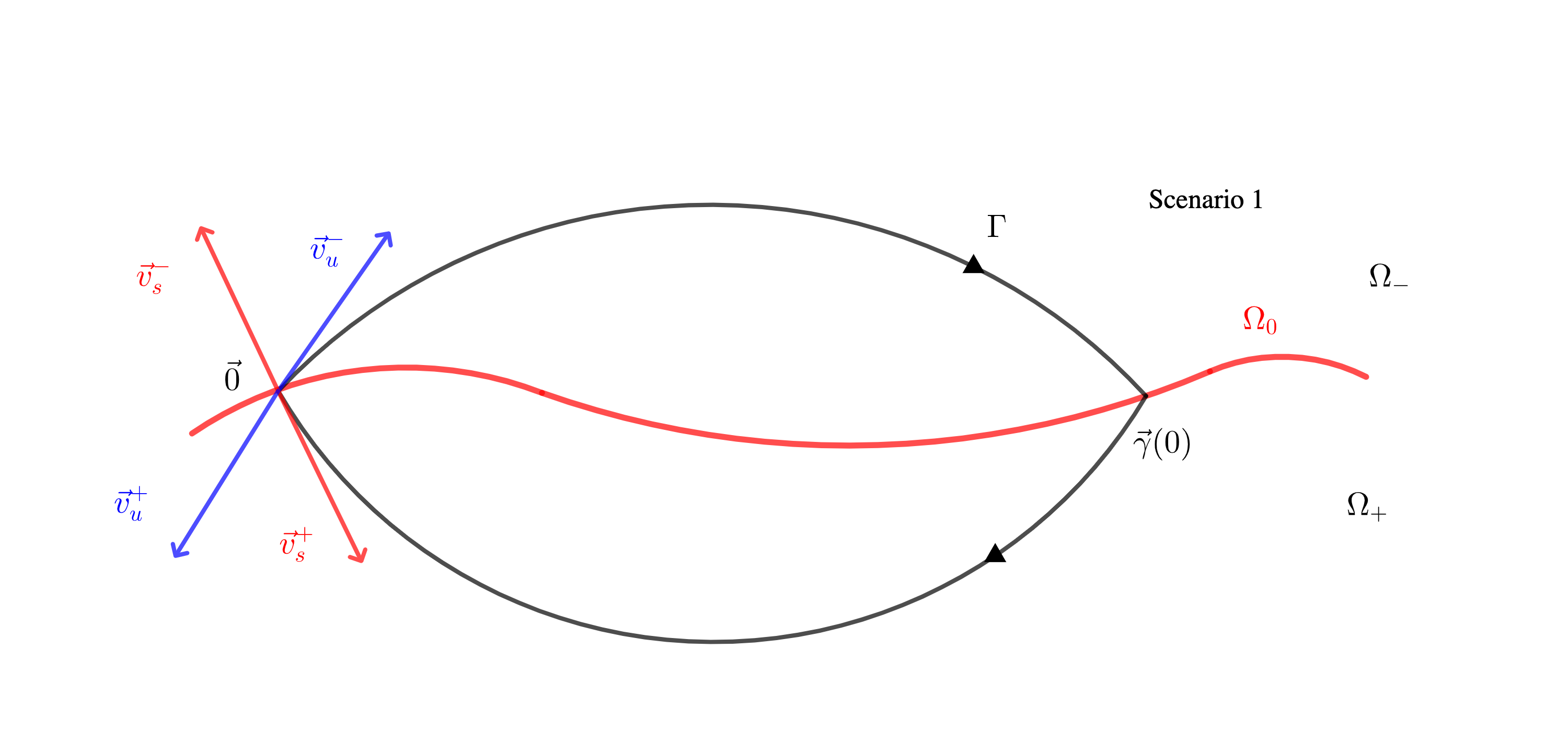, width = 10 cm}\\
	\epsfig{file=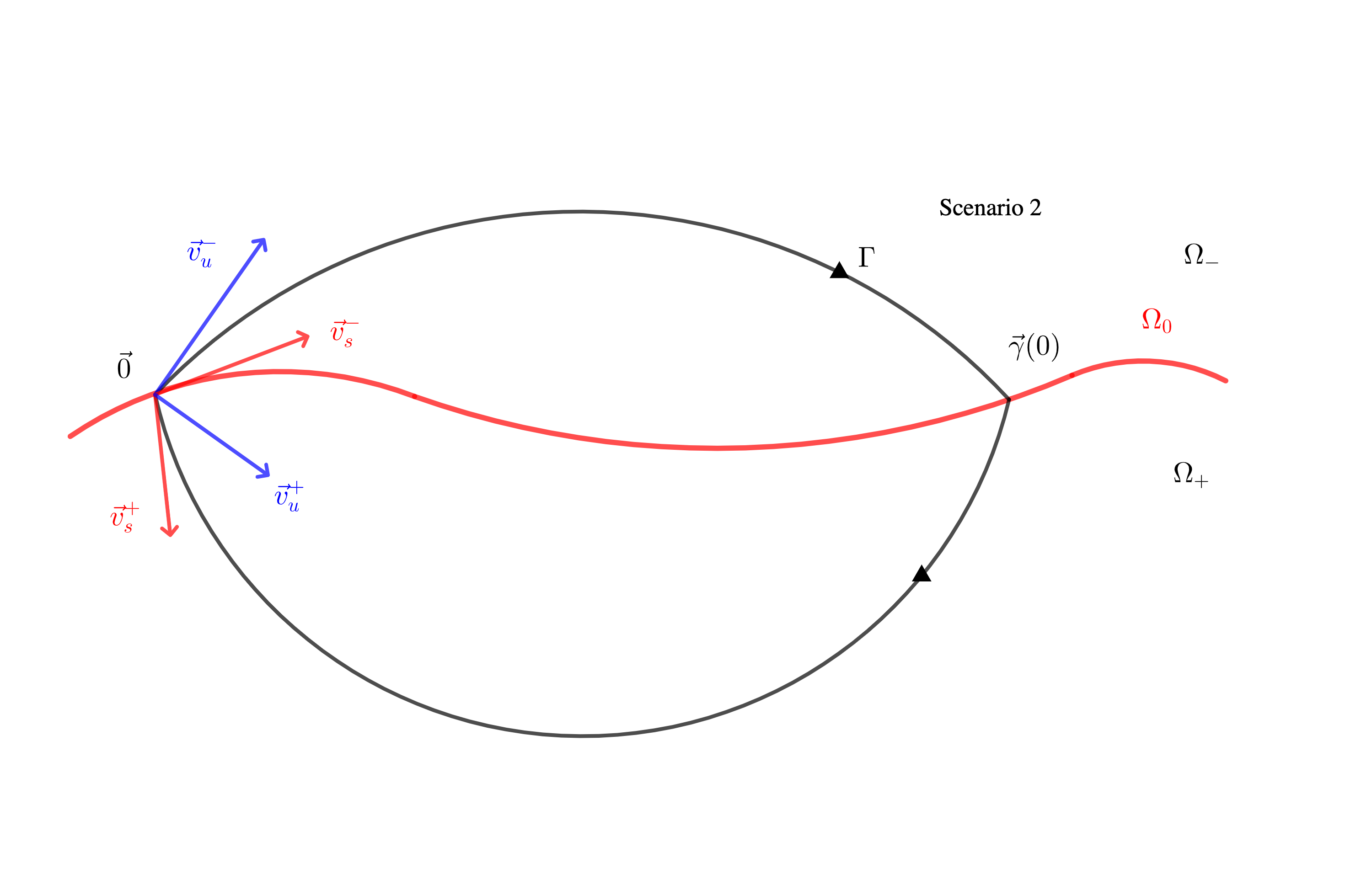, width = 10 cm}
\end{center}
\caption{Scenarios 1 and 2. In both the cases \assump{F2} holds and
we can apply our results, Theorems \ref{main.periodic1},
 \ref{main.weak1} and \ref{veryweak}, and we find chaotic patterns. See also \S \ref{S.scenarios}.
}
\label{scenario123}
\end{figure}

We list here some hypotheses which we assume through the whole paper.

\begin{description}
	\item[\assump{F0}] We have $\vec{0}\in\Omega^0$,  $\f^\pm(\vec{0})=\vec{0}$, and the eigenvalues $\la_s^\pm$, $\la_u^\pm$ of
$\bs{f_x^\pm}(\vec{0})$ are such that
$\la_s^\pm<0<\la_u^\pm$.
\end{description}

Denote by $\vec{v}_s^{\pm}$, $\vec{v}_u^{\pm}$ the normalized eigenvectors of $\bs{f_x^\pm}(\vec{0})$ corresponding to $\la_s^\pm$,
$\la_u^\pm$.
 We assume that the eigenvectors $\vec{v}_s^{\pm}$, $\vec{v}_u^{\pm}$ are not orthogonal to $\na G(\vec{0})$. To fix the ideas we require
\begin{description}
	\item[\assump{F1}]   $[\na G(\vec{0})]^*\vec{v}^{-}_u    <0<[\na G(\vec{0})]^*\vec{v}^{+}_u$,
$\qquad \qquad [\na G(\vec{0})]^*\vec{v}^{-}_s<0<[\na G(\vec{0})]^*\vec{v}^{+}_s$.
\end{description}

\begin{figure}[t!]
\begin{center}
	\epsfig{file=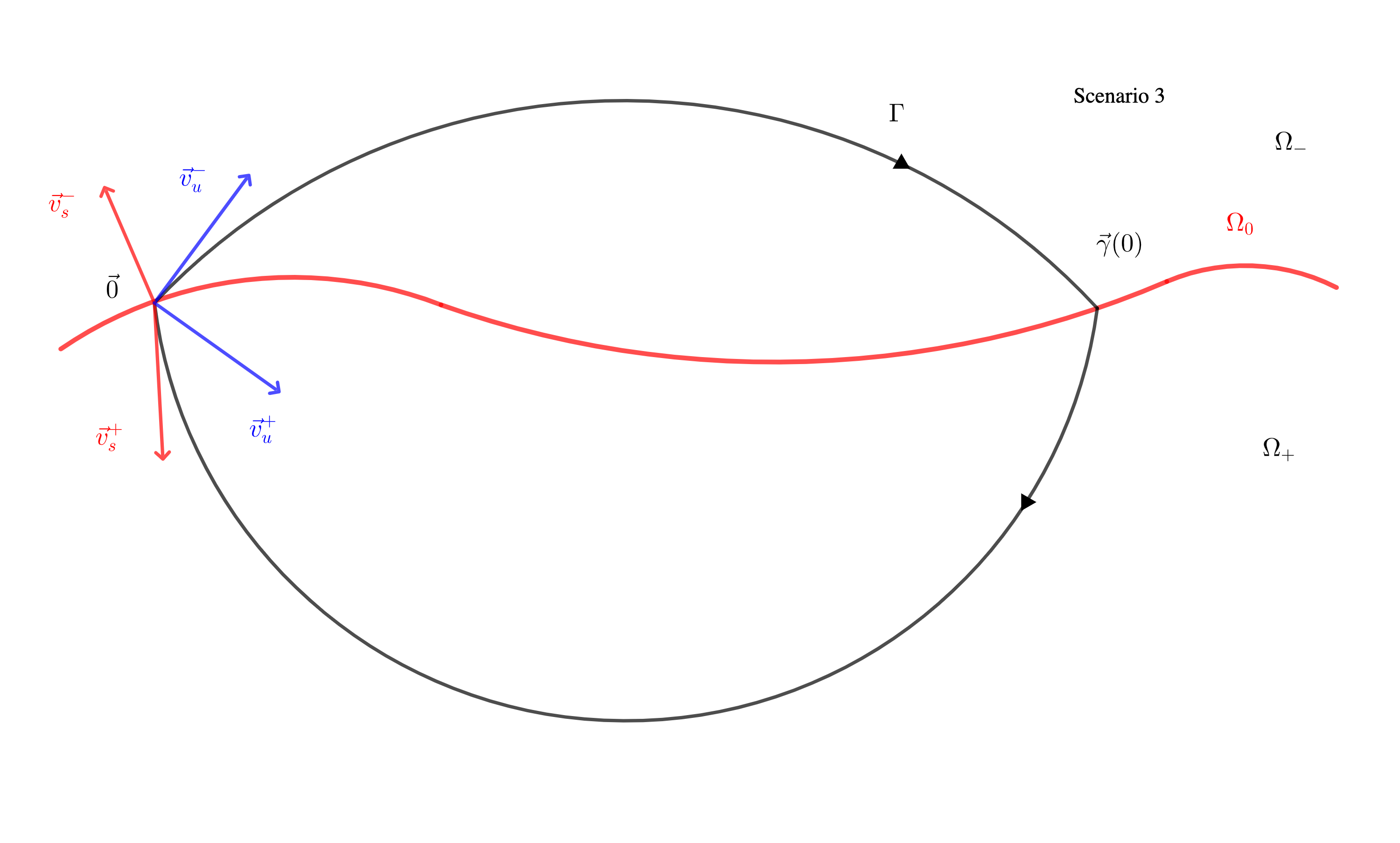, width = 9 cm} \\
	\epsfig{file=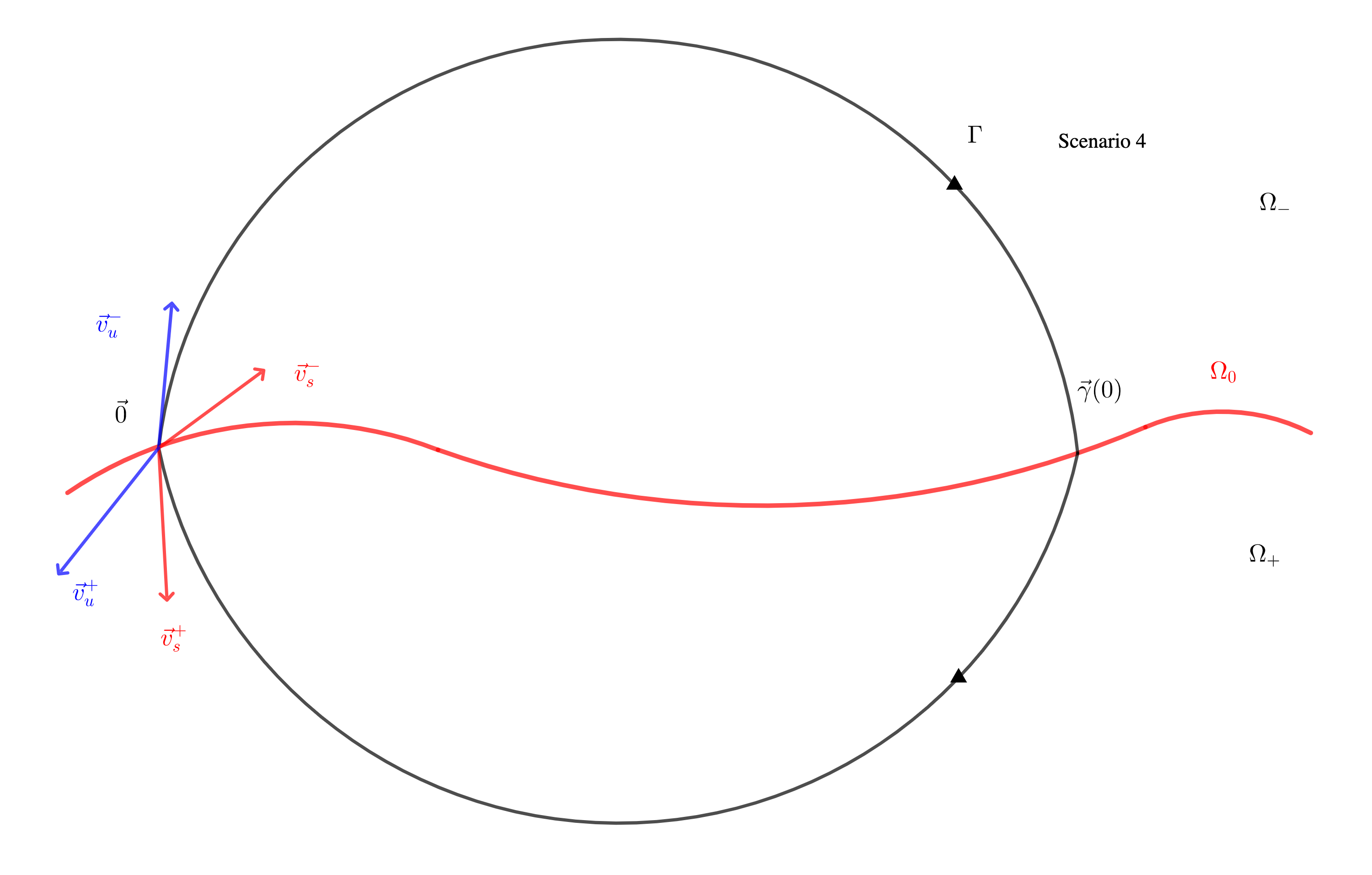, width = 9 cm}
\end{center}
\caption{In these settings \assump{F2} does not hold and sliding may occur close to the origin, so our analysis does not apply directly. Further Melnikov theory guarantees persistence of the homoclinic trajectories
  \cite{CaFr}, but chaos is forbidden  generically,  \cite{FrPo}. See also \S \ref{S.scenarios}.}
\label{scenario45}
\end{figure}

Following \cite{CFPRimut, CFPrestate} we require a further condition on the mutual positions
 of the
directions spanned by
$\vec{v}_s^{\pm}$, $\vec{v}_u^{\pm}$.

More precisely, set $\mathcal{T}_u^{\pm}:=\{ c\vec{v}_u^{\pm} \mid c \ge 0 \}$, and
denote by $\Pi_u^1$ and $\Pi_u^2$ the disjoint open sets
in which $\R^2$ is divided by the polyline $\mathcal{T}^u:=\mathcal{T}_u^+ \cup \mathcal{T}_u^-$.
We require that $\vec{v}_s^+$ and $\vec{v}_s^-$ lie on ``opposite sides'' with respect to
	$\mathcal{T}^u$.
Hence, to fix the ideas, we assume: \begin{description}
	\item[\assump{F2}]   $\vec{v}_s^+ \in \Pi_u^1$ and $\vec{v}_s^- \in \Pi_u^2$.
\end{description}
In fact  it is the \emph{mutual position of the eigenvectors} that plays a role in the argument and all our results hold
if  $\vec{v}_s^- \in \Pi_u^1$ and $\vec{v}_s^+ \in \Pi_u^2$, too: we assume \assump{F2} for definiteness, see Figure \ref{scenario123}, compare also with \cite[Remark 2.1]{CFPRimut}.

We emphasize that if \assump{F2} holds, there is no sliding on $\Om^0$ close to $\vec{0}$.
On the other hand, sliding may occur when both  $\vec{v}_s^{\pm}$ lie in
$\Pi_u^1$, or they both lie in  $ \Pi_u^2$, see Figure \ref{scenario45}, compare also with \cite[\S 3]{FrPo}.
As pointed out in  \cite{CFPRimut, CFPrestate}, the case $\vec{v}_s^- \in \Pi_u^1$ and $\vec{v}_s^+ \in \Pi_u^2$
can be handled similarly.

We assume further:
\begin{description}
	\item[\assump{K}] For $\ep=0$ there is  a unique solution $\ga(t)$ of \eqref{eq-disc} homoclinic to the origin such that
	$$\ga(t)\in\begin{cases}\Om^-,& t<0,\\\Om^0,&t=0,\\\Om^+,&t>0.\end{cases}$$
	Furthermore, $(\na G(\ga(0)))^*\f^\pm(\ga(0))>0$.
\end{description}
Recalling  the orientation of $\vec{v}_s^{\pm}$, $\vec{v}_u^{\pm}$ chosen in \assump{F1}, we assume w.l.o.g.\  that
\begin{equation}\label{ass.scenario}
\lito\frac{\dot{\ga}(t)}{\|\dot{\ga}(t)\|}=\vec{v}_u^-
\quad \mbox{ and } \quad
\lit\frac{\dot{\ga}(t)}{\|\dot{\ga}(t)\|}=-\vec{v}_s^+.
\end{equation}

Concerning the perturbation term $\g$, we assume the following:
\begin{description}
	\item[\assump{G}]  $\g(t,\x, \ep)$ is bounded together with its derivatives for any $t \in \R$, $\x \in B(\bs{\Gamma}, 1)\cap\Omega$, $\ep \ge 0$ and
$\g(t,\vec{0},\ep)=\vec{0}$ for any $t\in\R$ $\ep \ge 0$.
\end{description}
Hence, the origin is a critical point for the perturbed problem too.

Let us define the Melnikov function $\mathcal{M}:\R\to\R$  which, for planar
 piecewise-smooth systems as \eqref{eq-disc}, takes the following form.
For the computation of the Melnikov function we refer to the Appendix of \cite{CFPrestate} in which
we corrected a small error appeared in \cite{CaFr}.

\begin{equation}\label{melni-disc}
\begin{split}
   \mathcal{M}(\al) &= c^-_{\perp} \int_{-\infty}^{0} \eu^{-\int_0^t
			\tr\boldsymbol{\f_x^-}(\ga(s))ds} \f^-(\ga(t))
		\wedge \g(t+\al,\ga(t),0) dt\\
		&\quad {}+ c^+_{\perp} \int_{0}^{+\infty} \eu^{-\int_0^t
			\tr\boldsymbol{\f_x^+}(\ga(s))ds} \f^+(\ga(t))
		\wedge \g(t+\al,\ga(t),0) dt,
\end{split}
	\end{equation}
where
\[
c_{\perp}^{\pm} =
         \frac{\|\na G(\ga(0))\|}{ (\na G(\ga(0)))^* \, \f^\pm(\ga(0))} >0,
\]
and by ``$\wedge$" we denote the wedge product in $\R^2$ defined by $\vec{a} \wedge \vec{b}= a_1b_2-a_2b_1$
for any vectors $\vec{a}=(a_1,a_2)$, $\vec{b}=(b_1,b_2)$.
In fact, also in the piecewise-smooth case the function $\mathcal{M}$ is $C^r$ and, following \cite{CaFr}, we find
\begin{equation}\label{melni-disc.diff}
\begin{split}
   \mathcal{M}'(\al) &= c^-_{\perp} \int_{-\infty}^{0} \eu^{-\int_0^t
			\tr\boldsymbol{\f_x^-}(\ga(s))ds} \f^-(\ga(t))
		\wedge \frac{\partial\g}{\partial t}(t+\al,\ga(t),0) dt\\
		&\quad {}+ c^+_{\perp} \int_{0}^{+\infty} \eu^{-\int_0^t
			\tr\boldsymbol{\f_x^+}(\ga(s))ds} \f^+(\ga(t))
		\wedge  \frac{\partial\g}{\partial t}(t+\al,\ga(t),0) dt.
\end{split}
	\end{equation}

Under our assumptions, in \cite[Theorem 2.9]{CaFr}    the persistence of the homoclinic trajectory was proved
(in fact in the general $\Om \subset \R^n$, $n \ge 2$ case).

 \begin{knowntheorem}[\hspace{1sp}\cite{CaFr}]\label{TFSF2_2}%
	Assume that \assump{F0}, \assump{F1}, \assump{K} and \assump{G} are satisfied, and that there is $\al_0\in\R$ such that $\MM(\al_0)=0$ and
$\MM'(\al_0)\neq 0$.
Then  there is $\ep_{0}>0$ such that for any $0<|\ep|<\ep_0$
system \eqref{eq-disc} admits a  unique piecewise  $C^{r-1}$
solution $\x_b(t;\ep)$ bounded on $\R$, and there is a unique $C^{r-1}$ function
$\al(\ep)$, satisfying $\al(0) = \al_{0}$, such that
     \begin{equation*}
     \sup_{t\in\R}||\x_b(t+\al(\ep); \ep) - \ga(t)|| \to 0  \quad
     \hbox{as $\ep\to 0$}.
     \end{equation*}\end{knowntheorem}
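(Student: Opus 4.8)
The plan is to prove this as an application of Melnikov's method combined with exponential dichotomies and a shadowing/implicit-function argument, following the approach of Battelli--Fe\v ckan for discontinuous systems but adapted to the case where the critical point sits on $\Om^0$. The central idea is that the perturbed bounded solution $\x_b(t;\ep)$ will be built by gluing together two ``half-orbit'' solutions: one on $]-\infty,t_-(\ep)]$ shadowing the branch $\ga|_{]-\infty,0]}$ in $\Om^-$, and one on $[t_+(\ep),+\infty[$ shadowing $\ga|_{[0,+\infty[}$ in $\Om^+$, with a crossing of $\Om^0$ near $\ga(0)$ in between. Concretely, I would first linearize \eqref{eq-disc+} and \eqref{eq-disc-} along the two smooth branches of $\ga$ and record the variational equations $\dot{\y} = \bs{f_x^\pm}(\ga(t))\y$; by \assump{F0} the origin is a hyperbolic saddle for each vector field $\f^\pm$, so each linear equation admits an exponential dichotomy on the relevant half-line. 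The bounded solution of the homogeneous adjoint equation that is used to produce the Melnikov integral is, up to scaling, $\dot\ga(t)^\perp$ weighted by the Liouville factor $\exp(-\int_0^t \tr\bs{f_x^\pm}(\ga(s))\,ds)$; this is exactly why \eqref{melni-disc} has that form.

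Next I would set up the two perturbed pieces via a Lyapunov--Schmidt reduction. For the perturbation parameter $\ep$ near $0$ and a time-shift parameter $\al$ near $\al_0$, one seeks a solution $\x^-(t) = \ga(t-\al) + \z^-(t)$ of \eqref{eq-disc-} on a left half-line that stays on the stable manifold of $\vec 0$ for \eqref{eq-disc-} (using \assump{G}, which keeps $\vec 0$ a critical point of the perturbed system) and a solution $\x^+(t) = \ga(t-\al) + \z^+(t)$ of \eqref{eq-disc+} on a right half-line on the corresponding manifold; the correction terms $\z^\pm$ are obtained by inverting the linear part on the range complementary to the one-dimensional kernel, a standard fixed-point argument using the dichotomy estimates and the $C^r_b$ hypotheses on $\f^\pm, \g$. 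This produces $\z^\pm$ of size $O(\ep)$, depending $C^{r-1}$-smoothly on $(\ep,\al)$. The two pieces must then be matched at a crossing time of $\Om^0$: by \assump{F1}, \assump{K} and the transversality $(\na G(\ga(0)))^*\f^\pm(\ga(0))>0$, the implicit function theorem gives a crossing time $t_0(\ep,\al)$ near $0$ and a matching condition $\Phi(\ep,\al)=0$ (the mismatch in $\Om^0$ of the two half-orbits, measured along a direction transverse to $\Om^0$). The Melnikov computation, already done in \cite{CaFr} and corrected in \cite{CFPrestate} to yield \eqref{melni-disc}, identifies $\Phi(0,\al) = 0$ and $\partial_\al\Phi(0,\al_0)$ proportional to $\MM'(\al_0)$ via $\frac{\partial\Phi}{\partial\ep}(0,\al_0)$ being proportional to $\MM(\al_0)$; more precisely $\Phi(\ep,\al) = \ep(\MM(\al) + O(\ep))$ after dividing out, so that $\MM(\al_0)=0$, $\MM'(\al_0)\neq 0$ lets the implicit function theorem solve $\MM(\al)+O(\ep)=0$ for a unique $C^{r-1}$ function $\al=\al(\ep)$ with $\al(0)=\al_0$. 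Gluing the two half-orbits at $t_0(\ep,\al(\ep))$ gives the continuous, piecewise-$C^{r-1}$ bounded solution $\x_b(t;\ep)$, and the dichotomy estimates give $\sup_t\|\x_b(t+\al(\ep);\ep) - \ga(t)\| = O(\ep)\to 0$.

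The hard part will be handling the discontinuity at $\Om^0$ cleanly. Unlike the smooth case, the exponential dichotomy projectors on the two sides of the crossing need not agree, so one cannot simply shadow a single homoclinic on all of $\R$; one must carefully track how the jump in the flow (and in the variational flow) affects the Lyapunov--Schmidt bookkeeping and the matching condition, and one must verify that the glued solution genuinely satisfies the one-sided derivative conditions in the notion of solution (no spurious sliding), which is where \assump{F2}---ensuring no sliding near $\vec 0$---and the transversality in \assump{K}---ensuring the crossing near $\ga(0)$ is a true transversal crossing for all small $\ep$---enter. A secondary technical point is the uniqueness claim: one shows that any bounded solution of \eqref{eq-disc} for small $\ep$ must remain in a tube around $\bs\Gamma$, hence crosses $\Om^0$ exactly twice near $\ga(0)$ and near $\vec 0$ (the latter only in the limit), so it is captured by the same reduction, forcing $\al$ and the solution to be the ones constructed. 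Since this theorem is quoted from \cite{CaFr}, I would in practice cite that paper for the details and only sketch the Melnikov reduction here; but the skeleton above is the proof I would reconstruct.
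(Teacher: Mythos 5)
The paper does not prove Theorem~\ref{TFSF2_2}; it quotes it from \cite{CaFr} and only describes the method in passing (exponential dichotomies, shadowing, Lyapunov--Schmidt plus the implicit function theorem applied to the Melnikov condition). Your reconstruction of the proof follows exactly this skeleton: dichotomies on the two half-lines for the variational equations along the two smooth branches of $\ga$, a fixed-point/Lyapunov--Schmidt step giving $O(\ep)$ corrections on each side, a matching condition across $\Om^0$ near $\ga(0)$ that, after factoring out $\ep$, has $\MM(\al)$ as leading term, and IFT using $\MM(\al_0)=0\neq\MM'(\al_0)$. That is the right route and consistent with the citation, including the remark that in $n=2$ the extra nondegeneracy condition on the homoclinic family is automatic.

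Two points need correction. First, you invoke \assump{F2} to rule out sliding near $\vec 0$, but \assump{F2} is \emph{not} a hypothesis of the theorem, and the persistence result does not require it: the paper explicitly emphasizes that in Scenarios~3 and~4, where \assump{F2} fails and sliding near $\vec 0$ is possible, the homoclinic still persists (it is only chaos that is obstructed, cf.~\cite{FrPo}). Persistence needs only the transversal crossing in \assump{K} at $\ga(0)$; the perturbed bounded solution approaches $\vec 0$ asymptotically (since $\g(t,\vec 0,\ep)=\vec 0$ by \assump{G}) and never crosses $\Om^0$ near the origin, so sliding there is not an issue for this theorem. Second, your uniqueness remark is too strong as written: the origin itself is a bounded solution on $\R$ for every $\ep$, so it is false that ``any bounded solution must remain in a tube around $\bs\Gamma$.'' Uniqueness in Theorem~\ref{TFSF2_2} is uniqueness of the bounded solution in a neighborhood of $\bs\Gamma$ (equivalently, among solutions satisfying the $\sup$-closeness to $\ga(\cdot-\al)$ stated at the end), which is exactly what the IFT reduction delivers.
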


Before stating the main results of the article we collect here for convenience of the reader and future reference,  the main constants which
will play a role in our argument:
\begin{equation}\label{defsigma}
\displaystyle
\begin{array}{ccc}
\sfwd_+= \frac{|\la_s^+|}{\la_u^++|\la_s^+|}, & \sfwd_-= \frac{\la_u^-+|\la_s^-|}{\la_u^-}, &
\sfwd=  \sfwd_+ \sfwd_-,  \\
  \sbwd_+=   \frac{1}{\sfwd_+}, & \sbwd_- =\frac{1}{\sfwd_-}, &  \sbwd=   \sbwd_+ \sbwd_-, \\
 \underline{\sigma} = \min \{ \sfwd_+ , \sbwd_-  \} , &  & \overline{\sigma}= \max \{ \sfwd_+ , \sbwd_-  \},
  \end{array}
\end{equation}
 \begin{equation*}
\displaystyle
\begin{array}{ccc}
  \sTfwd_+= \frac{1}{\la_u^++|\la_s^+|}, & & \sTbwd_- = \frac{1}{\la_u^- +|\la_s^-|},
  \\
\sTfwd=\frac{\la_u^-+|\la_s^+|}{\la_u^-(\la_u^++|\la_s^+|)} ,&  & \sTbwd= \frac{\la_u^{-}+|\la_s^+|}{|\la_s^+|(\la_u^-+|\la_s^-|)},\\
\underline{\Sigma}= \min \{ \sTfwd , \sTbwd  \} , &  & \overline{\Sigma}= \max \{ \sTfwd , \sTbwd  \},\\
\und{\la}= \min \{ \la_u^-; \la_u^+ ;|\la_s^-|;|\la_s^+| \} , & & \qquad \ov{\la}= \max \{ \la_u^-; \la_u^+ ;|\la_s^-|;|\la_s^+| \}.
\end{array}
\end{equation*}
 Notice that, in
particular,
$\underline{\sigma}\leq\overline{\sigma}<1$.

\begin{remark}
We will see  in Theorem \ref{key} below that trajectories starting close to $\ga(0)$ may perform a loop close to $\bs{\Gamma}$  and arrive back
close to $\ga(0)$.
  The constants $\sigma$ are used to measure the space displacement while the constants $\Sigma$ are used to measure the
  time needed to perform a loop or a part of it. We use the ``$\pm$" pedices for ``staying in $\Om^{\pm}$" and the ``fwd/bwd" apices
  for ``forward and backward time".
\end{remark}

\begin{remark}\label{Ronconstants}
  Notice that if $\la_u^-=\la_u^+=\la_u$ and $\la_s^-=\la_s^+=\la_s$ as in the smooth case, the constants
  in \eqref{defsigma} simplify as follows:
  $$\begin{array}{cc}
      \sfwd= \frac{|\la_s|}{\la_u},   & \sbwd = \frac{\la_u}{|\la_s|}=(\sfwd)^{-1}, \\
      \sTfwd = \frac{1}{\la_u} , &  \sTbwd=\frac{1}{|\la_s|}. \\
    \end{array}   $$
    In the Hamiltonian case we find $\sfwd=\sbwd=1$ and $\sfwd_+=\sbwd_-=1/2$.
\end{remark}

We also need to fix the following absolute constants $K_0$ and $\nu_0$, which depend only on the eigenvalues in \assump{F0}
  \begin{equation}\label{defK0-new}
  K_0:=    \frac{3\overline{\Sigma} }{2  \underline{\sigma}} , \qquad  \nu_0:= \max \left\{  3\overline{\sigma}-1 ; 1   \right\}.
\end{equation}

Roughly speaking, $\nu_0$ is the lower bound for $\nu$, which is used to tune the size of the sets
$\Sigma$   giving rise
to chaotic phenomena and accordingly the time needed by the trajectories to perform a loop: the larger $\nu \ge \nu_0$, the smaller the
diameter
of
$\Sigma$, the longer the time needed for a loop; this aspect will be the subject of a forthcoming paper.
 In fact in this paper the reader can always keep  $\nu=\nu_0$.
On the other hand, the constant $K_0$ is needed  to measure  the minimal time needed to perform a loop, which  goes, roughly speaking,
as
$2K_0(1+\nu) |\ln(\ep)|$, see   \eqref{TandKnu}, \eqref{TandKnunew}  below.
\begin{remark}\label{data.smooth}
 Let us observe that we can choose $\beta>0$ small enough  so that within $B( \bs{\Gamma}, \beta)$ no sliding phenomena are possible (for any
$0<\ep \le \ep_0$, e.g.\ we can choose $\beta= |\ln(\ep_0)|^{-1}$). Then it follows that local uniqueness of the solutions is ensured within the set
$B( \bs{\Gamma}, \beta)$ where all our analysis will take place, see Remarks 3.7 and 3.8 in \cite{CFPRimut}. Further continuous
dependence on initial data and parameters     of the solutions of \eqref{eq-disc} within this set is ensured.
\end{remark}

 \subsection{Recurrence assumptions and statement of the main results} \label{S.results}

 Now we detail the assumptions on the Melnikov function $\MM(\tau)$, defined in \eqref{melni-disc},
hypotheses which replace the standard
assumption that $\g$ is periodic in $t$ (and hence $\MM(\cdot)$ is periodic):  roughly speaking we ask for infinitely many sign changes of the Melnikov function $\MM(\tau)$.
 We stress once again that the same setting has been considered in
\cite{CFPrestate}, where we discussed the existence of trajectories chaotic in the future or chaotic in the past.
 \begin{description}
   \item[P1] There is a constant $\bar{c}>0$ and an increasing sequence $(b_i)$, $i \in \Z$, such that
   $b_{i+1}-b_i \ge 1/10$   and
   $$\MM(b_{2i})<-\bar{c}<0<\bar{c}<\MM(b_{2i+1}),$$
   for any $i \in \Z$.
 \end{description}
\begin{remark}
Notice that $b_i \to \pm \infty$ as $i \to \pm \infty$. Further the assumption $b_{i+1}-b_i \ge  1/10$ could be replaced by
$b_{i+1}-b_i \ge c$ where $c>0$ is a constant independent of $i$.
\end{remark}
If \assump{P1} holds, the intermediate value theorem implies that  $\MM(\tau)$ has at least one zero in $]b_k,b_{k+1}[$ for any $k
\in \Z$.

 Before stating our results for \eqref{eq-disc}, we illustrate what happens  replacing \assump{P1} by more restrictive
recurrence properties,
for clarity and in order to illustrate the novelties introduced by our approach with more usual periodicity
assumptions.
 Namely we consider the following two alternative conditions.

 \begin{description}
   \item[P2] There are  $b_0,b_1$ such that $\MM(b_0)<0<\MM(b_1)$
    and $\g(t,\x,\ep)$ is almost periodic in $t$, i.e., for any  $\varsigma>0$ there is $\bar{N}=\bar{N}(\varsigma)>0$
   such that
   $$ \|\g(t,\x,\ep)- \g(t+\bar{N},\x,\ep)\| \le \varsigma \,, \quad \textrm{for any } (t,\x,\ep) \in\R\times\Om\times\R.$$
     \item[P3]  There are $b_0,b_1$ such that $\MM(b_0)<0<\MM(b_1)$ and $\g(t,\x,\ep)$ is  $1-$periodic in $t$, i.e
   $$ \g(t,\x,\ep)= \g(t+1,\x,\ep)  \, ,\quad \textrm{for any } (t,\x,\ep) \in\R\times\Om\times\R.$$
 \end{description}

It is classically known that the periodicity and the almost periodicity of $\g$ imply the periodicity and the almost periodicity
of $\MM$, so   \assump{P3} implies \assump{P2}, which implies \assump{P1}.

We want to consider an increasing sequence of times $\mathcal{T}=(T_j)$, $j \in \Z$ such that   $\MM(T_{2j})=0$
and  $T_{j+1}-T_j$ becomes larger and larger as $\ep \to 0$, see \eqref{TandKnu} and \eqref{TandKnunew} below.
Correspondingly, we find a subsequence $\beta_j:=b_{n_j}$ such that
\begin{equation}\label{defkj}
  \beta_j:=b_{n_j} <T_j < \beta'_j:= b_{n_j +1} \, , \qquad  B_{j}=\beta'_j- \beta_j= b_{n_j+1}-b_{n_j} >0,
\end{equation}
$j \in \Z$.
 Concerning the sequence $\mathcal{T}$, when condition \assump{P1} holds
we follow two different settings of assumptions; the first is slightly more
restrictive but  includes the cases of \assump{P2}, \assump{P3} and the case where $B_j$ is bounded:  it allows to obtain more precise
(and clearer) results,
i.e., Theorems \ref{main.periodic1} and \ref{main.weak1}; in the second approach we
ask for weaker assumptions but we obtain weaker results, i.e., Theorem \ref{veryweak} (where we have a very weak control of the size of
$\alpha^{\ee}_j(\ep)$ introduced below,  and we cannot guarantee the semi-conjugation with the Bernoulli shift, unless $B_j$ is bounded).

We consider now the first setting of assumptions.

We assume first that   there may be some $j \in \Z$ such that $T_{2j}$ is an accumulation point of the zeros of $\MM(\tau)$,
so there are  $0<\dd<1 $, $1/10>\l1 >2\lzero \ge 0$   and increasing sequences $(\aup_j)$, $(\adown_j)$,
$j\in \Z$   such that
$$\beta_{2j}<\aup_j<T_{2j}-\lzero<T_{2j}<T_{2j}+\lzero<\adown_{j}<\beta'_{2j}$$  and
\begin{equation}\label{minimal}
\begin{split}
 & |\MM(\aup_j)|=|\MM(\adown_j)|= \dd \bar{c}, \qquad \MM(\aup_j) \MM(\adown_j) <0, \\ & \MM(\tau) \ne 0 \quad \forall \tau \in [\aup_j,
  \adown_j] \setminus [T_{2j}-\lzero, T_{2j}+\lzero],\\
  & \textrm{$0<\adown_j-\aup_j \le \l1$,
for any $j \in \Z$.}
  \end{split}
\end{equation}
We emphasize that  both $\l1$ and $\lzero$ are independent of $j \in \Z$.

In fact \eqref{minimal} is enough to construct a chaotic pattern, but
we obtain sharper results  if we assume that there is a strictly monotone increasing and continuous function
$\omega_M(h):[0,\l1] \to [0,+\infty[$ (independent of $j$) such that
$\omega_M(0)=0$ and
\begin{equation}\label{isogen}
\begin{split}
      |\MM(T_{2j}- \lzero - h)| & \ge \omega_M(h)  \quad  \textrm{for any } \;  (T_{2j}- \lzero - h)\in  [\aup_j, T_{2j}- \lzero],  \\
  |\MM(T_{2j}+ \lzero + h)| & \ge \omega_M(h)  \quad  \textrm{for any } \;  (T_{2j}+ \lzero + h)\in  [ T_{2j}+ \lzero, \adown_j],
\end{split}
\end{equation}
and any $j \in \Z$.

Notice that \eqref{minimal} and \eqref{isogen} are always satisfied if \assump{P2} or \assump{P3} holds.
In fact \eqref{minimal} and \eqref{isogen} can be satisfied even if the sequence $(B_j)$ becomes unbounded, but they require  a control
from below on the ``slope" of the Melnikov function ``close to its zeros".

  We stress that in the easier and most significant case where $T_{2j}$ is an isolated zero for $\MM(\cdot)$ for any $j$, we
  can assume  $\lzero=0$ so that \eqref{minimal} simplifies as follows
  \begin{equation}\label{defbetaj}
\begin{split}
    &  |\MM(\aup_j)|=|\MM(\adown_j)|= \dd \bar{c}, \qquad \MM(\aup_j) \MM(\adown_j) <0, \\
     & \MM(\tau) \ne 0 \qquad \forall \tau \in [\aup_j,
  \adown_j] \setminus \{T_{2j} \}.
\end{split}
\end{equation}
  Further in this case   \eqref{isogen} reduces to
\begin{equation}\label{isolated}
  |\MM(T_{2j}+ h)| \ge \omega_M(|h|)  \quad  \textrm{for any }  (T_{2j}+ h)\in [\aup_j, \adown_j]\, , \quad h \ne 0,
\end{equation}
and any $j \in \Z$.

When we assume the classical hypothesis that
there is   $C >0$ such that
\begin{equation}\label{nondeg}
  \MM(T_{2j})=0   \quad   \text{and} \quad |\MM'(T_{2j})|>C  \;  \text{for any $j \in \Z$,}
\end{equation}
then \eqref{minimal} and \eqref{isolated} hold and we may assume  simply $ \omega_M(h) = \tfrac{C}{2} h$
 and $\l1 = 2\frac{\bar{c}}{C}  \delta<\tfrac{1}{10}$.

When  \assump{P1} and \eqref{minimal} hold we need the following condition concerning the sequences $(T_j)$ and the time gap
$T_{j+1}-T_j$:
\begin{equation}\label{TandKnu}
\begin{split}
  &  \MM(T_{2j})=0, \quad T_0 \in [b_0,b_1],\\
& T_{j+1}-T_j > \l1 + K_0 (1+\nu) |\ln(\ep)| ,   \quad \textrm{ for any $j \in \Z$}
  \end{split}
\end{equation}
and for any $\nu \ge \nu_0$, where the absolute constants $K_0$ and $\nu_0$ are as in \eqref{defK0-new}.

\begin{remark}
 If \assump{P3} holds and there is $t_0$ such that $\MM(t_0)=0 \ne \MM'(t_0)$,
  then we can choose  $\TT=(T_j)$, $T_j=t_0+j \ogap$, where $\ogap=     \left[  K_0(1+\nu) |\ln(\ep)|    +2 \right]$, where $[\cdot]$
  denotes the integer part,
  so that it satisfies \eqref{TandKnu} and \eqref{nondeg}.
\end{remark}

In the second approach we drop \eqref{minimal}, but
we need to ask for a slightly more restrictive condition on  the time gap
$T_{j+1}-T_j$, i.e., we assume that for a given  $\nu \ge \nu_0$ we have:
\begin{equation}\label{TandKnunew}
\begin{split}
  &  \MM(T_{2j})=0, \quad T_0 \in [b_0,b_1],\\
& T_{j+1}-T_j >    \max\{ B_{j+1}; B_{j}\}+ K_0 (1+\nu) |\ln(\ep)| ,   \textrm{ for any $j \in \Z$.}
  \end{split}
\end{equation}

 Now we state our first result concerning the existence of a chaotic pattern which takes place in the whole of $\R$,
obtained assuming  that the non-degeneracy assumption \eqref{nondeg} holds.  The  proofs of the results of this section are postponed to Section \ref{S.mainproof}.

\begin{theorem}\label{main.periodic1}
	Assume    $\f^{\pm}$ and $\g$ are $C^r$, $r \ge 2$ and that \assump{F0}, \assump{F1}, \assump{F2}, \assump{K} and \assump{G} hold true; assume further
\assump{P1}.
Then we can choose $\ep_0$ small enough so that for any $0< \ep \le \ep_0$
and any increasing sequence $\mathcal{T}=(T_j)$, $j \in \Z$  satisfying  \eqref{TandKnu} and \eqref{nondeg},
 there is $\tilde{c}^*>0$ (independent of $\ep>0$)
for which the following holds.
\\
For any  $\ee=(\ee_j) \in \EE$,
 there is a compact set $X(\ee, \mathcal{T})$
   such that if $\xxi\in X(\ee, \mathcal{T})$ the trajectory $\x(t,T_{0}; \xxi)$    satisfies \textbf{C}, i.e.
  \begin{description}
  \item[C]
	if $\ee_j=1$ then
	\begin{equation}\label{ej=1}
	\begin{split}
	&   \|\x(t,T_{0}; \xxi)-\ga(t-T_{2j}) \| \le \tilde{c}^*\ep \quad \textrm{when $t \in [T_{2j-1}, T_{2j+1} ]$,}
	\end{split}
	\end{equation}
	while  if $\ee_j=0$ we have
	\begin{equation}\label{ej=0}
	\|\x(t,T_{0}; \xxi)  \| \le \tilde{c}^*\ep  \quad \textrm{when $t \in [T_{2j-1}, T_{2j+1} ] $}
	\end{equation}
	for any $j \in \Z$.
\end{description}
Further, the set
$\Sigma(\TT) = \cup_{\ee \in \EE} X(\ee, \mathcal{T}) \subset [B(\ga(0), \tilde{c}^* \ep) \cup
B(\vec{0}, \tilde{c}^* \ep)]$.
\end{theorem}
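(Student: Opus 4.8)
The plan is to derive Theorem \ref{main.periodic1} from the "future/past chaos" building blocks of \cite{CFPrestate}, together with the continuity-in-$\tau$ statements (Theorems \ref{T.connectionLa}, \ref{T.connection}) and the intersection result (Theorems \ref{theoremkey}, \ref{theoremkeybis}) which are proved later in the paper but may be invoked here. First I would split the bi-infinite symbol sequence $\ee = (\ee_j)_{j\in\Z}$ into its forward part $\ee^+ = (\ee_j)_{j\ge 1} \in \EE^+$ and backward part $\ee^- = (\ee_j)_{j\le -1}\in\EE^-$, and recall from \cite{CFPrestate} that for each $\tau$ in a neighborhood of $T_0$ one has the compact connected sets $X^+(\tau,\TT^+,\ee^+)$ and $X^-(\tau,\TT^-,\ee^-)$ of initial data at $t=\tau$ giving trajectories that realize pattern \textbf{C} on $t\ge T_1$ (resp.\ $t\le T_{-1}$), with the quantitative bounds \eqref{ej=1}, \eqref{ej=0} holding with a constant $\tilde c^*$ that depends only on the data and not on $\ep$ (this is where \assump{F0}--\assump{G}, the loop-time/space-displacement estimates of Section \ref{S.prel}, and the gap condition \eqref{TandKnu} enter). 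Because we are assuming the non-degeneracy \eqref{nondeg}, the simplifications $\lzero = 0$, $\omega_M(h) = \tfrac{C}{2}h$, $\l1 = 2\bar c\delta/C$ apply and \eqref{minimal}, \eqref{isolated} are automatic, so the hypotheses of those earlier theorems are met.

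Next I would invoke the continuity Theorems \ref{T.connectionLa} and \ref{T.connection}: as $\tau$ ranges over the interval where $\MM$ has the appropriate sign behaviour near $T_0$, the sets $X^\pm(\tau,\TT^\pm,\ee^\pm)$ vary continuously (in Hausdorff distance, say), and one can extract from them one-dimensional "slices" — roughly, the graph of a continuous function $\tau\mapsto \xxi^\pm(\tau)$ — so that a scalar quantity measuring the signed position of $X^+$ relative to $X^-$ along $\Om^0$ changes sign between $b_0$ and $b_1$ (using $\MM(b_0)<0<\MM(b_1)$ from \assump{P1}). Then the intermediate value theorem yields $\tau_\star \in [b_0,b_1]\subset\R$ with $X^+(\tau_\star,\TT^+,\ee^+)\cap X^-(\tau_\star,\TT^-,\ee^-)\ne\emptyset$; this is precisely the content of Theorems \ref{theoremkey}, \ref{theoremkeybis}. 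Any point $\xxi$ in that intersection launches a trajectory of \eqref{eq-disc} that is simultaneously chaotic in the future and in the past, hence satisfies \textbf{C} for all $j\in\Z$. I would then push $\tau_\star$ forward to $T_0$ — or simply redefine $T_0 := \tau_\star$, which is still consistent with \eqref{TandKnu} since $\tau_\star\in[b_0,b_1]$ — and set $X(\ee,\TT)$ to be the image of this intersection under the time-$\tau_\star$ flow map (or just the intersection itself, with the trajectory indexed from $t=T_0=\tau_\star$); compactness is inherited from compactness of $X^\pm$ and continuity of the flow (Remark \ref{data.smooth}).

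Finally, the inclusion $\Sigma(\TT) = \bigcup_{\ee\in\EE} X(\ee,\TT) \subset B(\ga(0),\tilde c^*\ep)\cup B(\vec 0,\tilde c^*\ep)$ follows by evaluating the bounds \eqref{ej=1}, \eqref{ej=0} at $t=T_0$: since $T_0\in[T_{-1},T_1]$, a point $\xxi\in X(\ee,\TT)$ satisfies $\|\xxi - \ga(T_0-T_0)\| = \|\xxi-\ga(0)\|\le\tilde c^*\ep$ if $\ee_0=1$, and $\|\xxi\|\le\tilde c^*\ep$ if $\ee_0=0$; taking the union over $\ee$ gives exactly the two balls. The main obstacle I expect is the continuity/intersection step: one must show that the sets $X^\pm(\tau,\cdot,\cdot)$, which are themselves constructed through a delicate fixed-point/shadowing argument with $\ep$-dependent long return times $\sim 2K_0(1+\nu)|\ln\ep|$, genuinely depend continuously on $\tau$ and are "thick" enough transversally to $\Om^0$ that a sign-change argument applies — and that the continuity is uniform over the infinitely many gluing conditions indexed by $\Z$. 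This is exactly why the bulk of the work is deferred to Sections \ref{S.connection}--\ref{S.mainproof}; here I would only assemble the pieces.
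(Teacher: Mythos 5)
Your overall architecture (split $\ee$ into $\ee^\pm$, invoke the forward/backward chaotic sets $X^\pm(\tau,\TT^\pm,\ee^\pm)$, apply the continuity-in-$\tau$ theorems and the intermediate value theorem to find $\tau_\star$ where they intersect) does match the paper's plan for Theorem \ref{theoremkeybis} and Theorem \ref{main.weak1}. However, there is a genuine gap between that machinery and the stated conclusion \eqref{ej=1}--\eqref{ej=0}, and your proposal elides it.

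The sets $X^\pm(\tau,\TT^\pm,\ee^\pm)$ coming out of \cite{CFPrestate} and Theorems \ref{T.connectionLa}, \ref{T.connection} realize the \emph{shifted} property $\mathbf{C_{\ee^\pm}^\pm}$, i.e.\ $\|\x(t,\tau;\xxi)-\ga(t-T_{2j}-\alpha_j^{\ee}(\ep))\|\le c^*\ep$: there is a residual, $j$-dependent, phase $\alpha_j^{\ee}(\ep)$ at each gluing. You assert that these building blocks already satisfy the unshifted bounds \eqref{ej=1}, \eqref{ej=0}; that is not true, and it is precisely what the extra hypothesis \eqref{nondeg} and $r\ge 2$ are for. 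The paper first proves Theorem \ref{main.weak1} (which produces the shifted pattern $\mathbf{C_e}$), then derives Theorem \ref{main.periodic1} by absorbing the shifts: under \eqref{nondeg} Corollary \ref{c.alpha0} (and its translation to each shifted sequence $\tilde\TT^k$ in the proof of Theorem \ref{main.weak1}) gives $|\alpha_j^{\ee}(\ep)|\le c_\alpha\ep$ uniformly in $j$ and $\ee$, and the Lipschitz bound $\|\ga(\cdot+\alpha)-\ga(\cdot)\|\le\tilde c|\alpha|$ with $\tilde c=2\sup\|\dot\ga\|$ then yields $\sup_t\|\ga(t-T_{2j}-\alpha_j)-\ga(t-T_{2j})\|\le\tilde c c_\alpha\ep$, so that $\tilde c^*:=c^*+\tilde c c_\alpha$ works. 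You mention that \eqref{nondeg} "gives simplifications" but never carry out this absorption, and the uniformity of $c_\alpha$ over all $j\in\Z$ (not just $j=0$) is a point that must be argued, not assumed.

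Your alternative suggestion to "redefine $T_0:=\tau_\star$" is also incorrect. The sequence $\TT$ in \eqref{TandKnu} is required to satisfy $\MM(T_{2j})=0$, but $\tau_\star$ is only the crossing time obtained from the intermediate value theorem; by the estimate \eqref{estDDDbis}, $|\MM(\tau_\star)|$ is merely $O(\ep^{\bar\nu})$, not zero. And even if $\MM(\tau_\star)=0$, this would only remove the $j=0$ shift and leave all the other $\alpha_j$'s, $j\ne 0$, untouched. The correct move is the one the paper takes: keep $\TT$ as given, flow the intersection point from $t=\tau_\star=T_0+\alpha_0^\ee$ back to $t=T_0$ to define $X(\ee,\TT)$ as in \eqref{defX}, and then absorb \emph{all} the shifts into the constant via the $O(\ep)$ bound.
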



We emphasize that the flow of \eqref{eq-disc} on $\Sigma$   is semi-conjugated to the Bernoulli shift on $\EE$, see Theorem \ref{thm.bernoulli}  for more details.

\begin{remark}\label{explainP}
In Theorem   \ref{main.periodic1} we require a lower bound
for $ T_{j+1}-T_j$, but not an upper bound: this allows a big flexibility in the choice of $\TT$.
 Hence,
even if we assume periodicity, i.e.\ \assump{P3}, we can consider gaps $ T_{j+1}-T_j$ which are not constant, e.g.\ periodic but even aperiodic or unbounded; see Section~\ref{proof.Bernoulli} for more details.
In fact this possibility can be obtained also through the approach
introduced by the works by Battelli and Fe\v ckan in a similar context, see e.g.\ \cite{BF10,BF11,BF12}.
However this point is not discussed in those papers.
\end{remark}

Now we state some weaker results concerning the case where $\f^{\pm}$ and $\g$ are just $C^r$, $r>1$  (i.e., the derivatives are H\"older
continuous), and we do not
assume the non-degeneracy of the zeroes of $\MM$. In the next Theorem \ref{main.weak1} we drop \eqref{nondeg} and we just require \eqref{minimal}.

\begin{theorem}\label{main.weak1}
	Assume that   $\f^{\pm}$ and $\g$ are $C^r$ with  $r > 1$,   and that \assump{F0}, \assump{F1}, \assump{F2}, \assump{K} and \assump{G} hold true;
assume further    \assump{P1}.
Then we can choose $\ep_0$ small enough so that for any $0< \ep \le \ep_0$
and any increasing sequence $\mathcal{T}=(T_j)$, $j \in \Z$,  satisfying  \eqref{TandKnu} and \eqref{minimal},
 there are $c^*>0$ (independent of $\ep>0$) and  a continuous increasing function $\omega$ satisfying
  $\omega(0)=0$
so that we have the following.
For any $\ee=(\ee_j) \in \EE$,
there is a compact set $\bar{X}(\ee, \mathcal{T})$ and a sequence $(\al_j^{\ee}(\ep))$, $j \in \Z$,
such that $|\al_j^{\ee}(\ep)| \le \omega(\ep)$ and for any $\xxi(\ee) \in \bar{X}(\ee, \mathcal{T})$
  the trajectory $\x(t,T_0 + \al_0^{\ee}(\ep); \xxi(\ee))$ has the property $\mathbf{C_e}$, i.e.
  \begin{description}
  \item[$\mathbf{C_e}$]
	if $\ee_j=1$, then
	\begin{equation}\label{ej=1w}
	\begin{split}
	&   \|\x(t,T_0+ \al_0^{\ee}(\ep); \xxi(\ee) )-\ga(t-T_{2j}-\al_j^{\ee}(\ep)) \| \le c^*\ep  \\
 & \textrm{when $t \in [T_{2j-1}, T_{2j+1} ]$ }
	\end{split}
	\end{equation}
	while if $\ee_j=0$, we have
	 \begin{equation}\label{ej=0w}
\begin{split}
	&	\|\x(t,T_0+ \al_0^{\ee}(\ep); \xxi(\ee))  \| \le c^*\ep \\
 & \textrm{when $t \in [T_{2j-1}, T_{2j+1} ]$ }
	\end{split}
	\end{equation}
	for any $j \in \Z$.
\end{description}
Moreover, we have the following estimates for $\alpha_j^{\ee}(\ep)$:
\begin{itemize}
  \item  Assume \eqref{minimal}, then $|\alpha_j^{\ee}(\ep)| \le  \l1$ for any $j \in \Z$.
  \item Assume \eqref{isolated}, then there is   a continuous increasing function $\omega_{\alpha}(\ep)$ (independent of $\ee$) satisfying
  $\omega_{\alpha}(0)=0$, such that $|\alpha_j^{\ee}(\ep)| \le \omega_{\alpha}(\ep)$ for any $j \in \Z$.
\end{itemize}
\end{theorem}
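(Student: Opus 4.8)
The plan is to deduce Theorem~\ref{main.weak1} from the ``past/future'' chaos results of \cite{CFPrestate} together with the continuity-in-$\tau$ statements (Theorems~\ref{T.connectionLa} and \ref{T.connection}) and the intermediate value theorem, exactly in the way sketched in the Introduction; indeed, a careful reading of the excerpt shows that the section promises to reduce Theorems~\ref{main.periodic1}, \ref{main.weak1} and \ref{veryweak} to two key intermediate statements (Theorems~\ref{theoremkey} and \ref{theoremkeybis}) proved in \S\ref{S.mainproof}. So first I would fix $\ee=(\ee_j)\in\EE$, split it as $\ee^+=(\ee_j)_{j\in\Z^+}$ and $\ee^-=(\ee_j)_{j\in\Z^-}$, and invoke \cite{CFPrestate}: for every $\tau$ in a suitable window and every admissible sequence $\TT^\pm$ one gets compact connected sets $X^+(\tau,\TT^+,\ee^+)$ and $X^-(\tau,\TT^-,\ee^-)$ of initial data at time $\tau$ whose forward (resp.\ backward) orbits realize the prescribed itinerary with the estimates \eqref{ej=1w}--\eqref{ej=0w} on the respective half-lines. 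The orbit of a point in the intersection $X^+\cap X^-$ then has property $\mathbf{C_e}$ on all of $\R$.

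Next I would establish that these sets move continuously with $\tau$ in an appropriate Hausdorff sense — this is precisely Theorems~\ref{T.connectionLa} and \ref{T.connection}, which I am allowed to assume — and that they lie in small neighbourhoods of $\ga(0)$ or $\vec 0$ of radius $O(\ep)$. The crucial geometric point, where assumption \assump{F2} enters, is that the ``chaos in the future'' sets and the ``chaos in the past'' sets cross $\Om^0$ from opposite sides with nonzero speed (by \assump{K}, $(\na G(\ga(0)))^*\f^\pm(\ga(0))>0$), so that a one-dimensional transversality/degree argument applies: parametrize $X^\pm(\tau,\cdot)$ by a scalar coordinate along $\Om^0$ near $\ga(0)$, form the scalar ``gap function'' measuring the signed $\Om^0$-distance between (a selection of) $X^+(\tau)$ and $X^-(\tau)$, show via \assump{P1} — the infinitely many sign changes of $\MM$ encoded in the $b_i$ with $\MM(b_{2i})<-\bar c<0<\bar c<\MM(b_{2i+1})$ — that this gap function changes sign as $\tau$ runs over $[\beta_0,\beta_0']\ni T_0$, and conclude by the intermediate value theorem that there is $\tau_\star$ with $X^+(\tau_\star,\TT^+,\ee^+)\cap X^-(\tau_\star,\TT^-,\ee^-)\neq\emptyset$. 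Setting $\al_0^{\ee}(\ep)=\tau_\star-T_0$ and, more generally, reading off the phase shifts $\al_j^{\ee}(\ep)$ from the local times at which the constructed orbit crosses $\Om^0$ near $\ga(0)$ on the $j$-th loop gives the sequence in the statement; the set $\bar X(\ee,\TT)$ is then the (nonempty, compact) intersection, translated back to time $T_0$.

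Finally I would read off the quantitative control of $\al_j^{\ee}(\ep)$. Under the mere hypothesis \eqref{minimal} the zero $T_{2j}$ of $\MM$ may be non-isolated, but the bracketing points $\aup_j,\adown_j$ with $|\MM(\aup_j)|=|\MM(\adown_j)|=\dd\bar c$ and $\adown_j-\aup_j\le\l1$ confine the admissible phase to the interval $[\aup_j,\adown_j]\subset(T_{2j}-\lzero-\l1,\,T_{2j}+\lzero+\l1)$, which together with $\lzero<\l1/2$ yields $|\al_j^{\ee}(\ep)|\le\l1$; this is where one uses that the continuity/selection construction cannot push the crossing time outside the sign-change window furnished by \assump{P1} and \eqref{minimal}. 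If in addition \eqref{isolated} holds, the lower bound $|\MM(T_{2j}+h)|\ge\omega_M(|h|)$ forces the crossing time to be within $\omega_M^{-1}$ of the displacement of $\MM$ caused by the $O(\ep)$ perturbation, so $|\al_j^{\ee}(\ep)|\le\omega_\alpha(\ep):=\omega_M^{-1}(c_1\ep)$ for a suitable constant $c_1$ and a continuous increasing $\omega_\alpha$ with $\omega_\alpha(0)=0$; and the overall bound $|\al_j^{\ee}(\ep)|\le\omega(\ep)$ in the statement follows by taking $\omega$ to dominate both cases. The main obstacle I expect is not the IVT step itself but verifying the continuity of $\tau\mapsto X^\pm(\tau,\TT^\pm,\ee^\pm)$ uniformly in the infinitely many coordinates $\ee_j$ and in the (possibly unbounded) gaps $T_{j+1}-T_j$ — i.e.\ making sure the estimates from \cite{CFPRimut, CFPrestate} on loop times ($\sim 2K_0(1+\nu)|\ln\ep|$) and on space displacement (governed by the $\sigma$-constants with $\ov\sigma<1$) are uniform enough that the selection used to build the scalar gap function is continuous; the low-regularity case $r>1$ only costs us a modulus of continuity $\omega$ instead of a Lipschitz bound, which is exactly why the conclusion is phrased with $\omega,\omega_\alpha$ rather than linear estimates.
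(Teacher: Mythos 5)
Your overall strategy matches the paper's: split $\ee$ into forward and backward parts, take the compact connected sets from \cite{CFPrestate}, use the continuity-in-$\tau$ results (Theorems~\ref{T.connectionLa}, \ref{T.connection}) and the sign changes of $\MM$ guaranteed by \assump{P1} to force an intersection, and read off the phases $\al_j^{\ee}(\ep)$ from the crossing times. The estimates on $\al_j$ under \eqref{minimal} and \eqref{isolated} are also essentially the ones the paper establishes (via Corollary~\ref{c.alpha0} and the corresponding lemma in \cite{CFPrestate}).

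There is, however, a genuine gap: your gluing step only works as stated for $\ee$ with $\ee_0=1$. The sets $X^\pm(\tau,\TT^\pm,\ee^\pm)$ from \cite{CFPrestate} consist of points on $L^0(\sqrt\ep)$ near $\ga(0)$, and the definitions of $\bs{C^+_{\ee^+}}$ and $\bs{C^-_{\ee^-}}$ include the conditions \eqref{ej+=tau} and \eqref{ej-=tau}, which force the glued orbit to track $\ga(t-\tau)$ on $[T_{-1},T_1]$. That is, the point $\xxi$ in $X^+\cap X^-$ lies near $\ga(0)$, so the orbit necessarily performs the $j=0$ loop close to $\bs{\Gamma}$; it cannot have $\ee_0=0$. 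The paper handles general $\ee$ by first proving the intersection result (Theorem~\ref{theoremkeybis}) only for $\ee_0=1$, and then, for $\ee$ with $\ee_0=0$ but some $\ee_k=1$, shifting the time sequence to $\tilde T^k_j=T_{2k+j}$ and the symbol sequence to $\tilde\ee^k=(\ee_{k+j})_{j\in\Z}$ so that the shifted sequence has a $1$ in position $0$, applying the $\ee_0=1$ result there, and translating the resulting set back to time $T_0$ via the flow; the all-zero sequence is assigned $\xxi=\vec 0$ directly. Without this shift the theorem as stated (for arbitrary $\ee\in\EE$) is not proved. A smaller point: the paper's intersection step is not a scalar IVT on a selected gap function — since $X^\pm$ need not be singletons, it uses a two-dimensional connectedness argument in the strip $\hat A$ (Lemma~\ref{top} from \cite{PZ}, showing $\tilde S^+$ and $\hat S^-$ are connected closed sets spanning $\tau\in[b_0,b_1]$ that must cross). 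You gesture at this with ``degree argument'' and you do flag the non-uniqueness issue elsewhere, but as written your ``parametrize by a scalar and apply IVT'' version would require a continuous selection which may not exist.
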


\begin{cor}\label{c.sigma}
  Let the assumption of Theorem \ref{main.weak1} be satisfied and denote by
  \begin{equation}\label{defX}
  \begin{split}
       X(\ee,\TT):= & \{ \xxi_0(\ee)=\x(T_0, T_0+ \alpha_{0}^{\ee}(\ep); \xi(\ee)) \mid \xi(\ee) \in \bar{X}(\ee, \TT) \}, \\
        \Sigma(\TT):= & \cup_{\ee \in \EE} X(\ee, \TT).
  \end{split}
  \end{equation}
  Then $X(\ee,\TT)$ is compact for any $\ee \in \EE$.
Further,\\
if we assume \eqref{isolated} then
$\Sigma(\TT) \subset [B(\ga(0), \omega_{\al}(\ep)) \cup B(\vec{0}, c^* \ep)]$;
\\ if we assume \eqref{minimal} then
$\Sigma(\TT) \subset [B(\bs{\Gamma^{\l1}}, c^* \ep) \cup B(\vec{0}, c^* \ep)]$
where
\begin{equation}\label{ga1}
	\bs{\Gamma^{\l1}}:= \{ \ga(t) \mid |t| \le \l1     \}.
\end{equation}
\end{cor}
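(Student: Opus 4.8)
The plan is to read both assertions straight off Theorem~\ref{main.weak1}, since $X(\ee,\TT)$ is by construction the image of the compact set $\bar X(\ee,\TT)$ under the solution operator $\Phi_{\ee}\colon \xi\mapsto\x(T_0,T_0+\alpha_{0}^{\ee}(\ep);\xi)$, and the inclusions for $\Sigma(\TT)$ amount to evaluating property $\mathbf{C_e}$ at the single instant $t=T_0$. First I would establish compactness: $\bar X(\ee,\TT)$ is compact by Theorem~\ref{main.weak1}, the shift $\alpha_{0}^{\ee}(\ep)$ is a fixed number once $\ee$ and $\ep$ are chosen, so it suffices to show $\Phi_{\ee}$ is continuous on $\bar X(\ee,\TT)$; then $X(\ee,\TT)=\Phi_{\ee}(\bar X(\ee,\TT))$ is compact as the continuous image of a compact set.

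To obtain continuity of $\Phi_{\ee}$ I would invoke Remark~\ref{data.smooth}: continuous dependence on initial data holds for solutions of \eqref{eq-disc} as long as they remain in $B(\bs{\Gamma},\beta)$, where moreover local uniqueness is guaranteed and no sliding occurs. So I must check that for every $\xi\in\bar X(\ee,\TT)$ the trajectory $\x(\cdot,T_0+\alpha_{0}^{\ee}(\ep);\xi)$, restricted to the short time interval between $T_0+\alpha_{0}^{\ee}(\ep)$ and $T_0$, stays in $B(\bs{\Gamma},\beta)$. This follows since $|\alpha_{0}^{\ee}(\ep)|\le\l1<\tfrac{1}{10}$ while, by \eqref{TandKnu}, both $T_1-T_0$ and $T_0-T_{-1}$ are large, so $T_0+\alpha_{0}^{\ee}(\ep)\in[T_{-1},T_1]$ and property $\mathbf{C_e}$ with $j=0$ applies on the whole segment: there the trajectory is $c^*\ep$-close either to $\ga(\cdot-T_0-\alpha_{0}^{\ee}(\ep))$ (if $\ee_0=1$) or to $\vec{0}$ (if $\ee_0=0$), hence contained in $B(\bs{\Gamma},\beta)$ provided $\ep_0$ is small.

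For the inclusions I would take $\xxi_0(\ee)=\x(T_0,T_0+\alpha_{0}^{\ee}(\ep);\xi(\ee))\in X(\ee,\TT)$ and evaluate $\mathbf{C_e}$ at $t=T_0$, $j=0$. If $\ee_0=0$, \eqref{ej=0w} gives $\|\xxi_0(\ee)\|\le c^*\ep$, i.e.\ $\xxi_0(\ee)\in B(\vec{0},c^*\ep)$. If $\ee_0=1$, \eqref{ej=1w} gives $\|\xxi_0(\ee)-\ga(-\alpha_{0}^{\ee}(\ep))\|\le c^*\ep$; under \eqref{minimal} one has $|\alpha_{0}^{\ee}(\ep)|\le\l1$, hence $\ga(-\alpha_{0}^{\ee}(\ep))\in\bs{\Gamma^{\l1}}$ by \eqref{ga1}, so $\xxi_0(\ee)\in B(\bs{\Gamma^{\l1}},c^*\ep)$, and taking the union over $\ee\in\EE$ yields $\Sigma(\TT)\subset B(\bs{\Gamma^{\l1}},c^*\ep)\cup B(\vec{0},c^*\ep)$. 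Under the stronger assumption \eqref{isolated} one instead has $|\alpha_{0}^{\ee}(\ep)|\le\omega_{\al}(\ep)$, and since $\ga$ is Lipschitz near $t=0$ (its one-sided derivatives at $0$ being $\f^\pm(\ga(0))$, which are bounded), $\|\ga(-\alpha_{0}^{\ee}(\ep))-\ga(0)\|$ is bounded by a constant times $\omega_{\al}(\ep)$; thus $\xxi_0(\ee)\in B(\ga(0),\tilde\omega_{\al}(\ep))$ for the continuous increasing function $\tilde\omega_{\al}(\ep):=c^*\ep+L\,\omega_{\al}(\ep)$ with $\tilde\omega_{\al}(0)=0$, and renaming $\tilde\omega_{\al}$ again $\omega_{\al}$ and taking the union over $\ee$ gives $\Sigma(\TT)\subset B(\ga(0),\omega_{\al}(\ep))\cup B(\vec{0},c^*\ep)$.

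The only genuinely non-routine step is the continuous-dependence argument of the second paragraph: one must verify that the short trajectory segment connecting $T_0+\alpha_{0}^{\ee}(\ep)$ to $T_0$ never leaves the region $B(\bs{\Gamma},\beta)$, where local uniqueness and continuous dependence for the discontinuous system~\eqref{eq-disc} are available (no sliding, thanks to \assump{F2} and Remark~\ref{data.smooth}). Once this is in place, compactness of $X(\ee,\TT)$ is automatic as a continuous image, and the two location statements reduce to a one-line evaluation of $\mathbf{C_e}$ at $t=T_0$ together with the a priori bounds on $\alpha_{0}^{\ee}(\ep)$ already supplied by Theorem~\ref{main.weak1}.
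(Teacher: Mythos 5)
Your proof is correct and follows the argument the paper has in mind (the paper does not give an explicit proof of this corollary, but the implicit argument is exactly yours): compactness is the continuous image of the compact set $\bar X(\ee,\TT)$ under the flow map, whose continuity on the relevant short time interval is guaranteed by Remark~\ref{data.smooth} once one checks the trajectory stays in $B(\bs{\Gamma},\beta)$ via property~$\mathbf{C_e}$ with $j=0$, and the two inclusions follow by evaluating $\mathbf{C_e}$ at $t=T_0$, using the respective bounds on $\al_0^{\ee}(\ep)$ from Theorem~\ref{main.weak1}. You also correctly flag the minor notational point that in the case~\eqref{isolated} the radius $\omega_{\al}(\ep)$ appearing in the corollary must be understood as a redefined continuous increasing function absorbing both $c^*\ep$ and the Lipschitz displacement $L\,\omega_\al(\ep)$, which is consistent with the paper's own flexible reuse of such $\omega$-notations (compare the redefinition $\tilde c\,\bar\omega_\al=:\omega_\al$ in the proof of Theorem~\ref{main.periodic1}).
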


 If the assumptions of either Theorem \ref{main.periodic1} or Theorem \ref{main.weak1} are satisfied, the flow of \eqref{eq-disc} on $\Sigma$   is semi-conjugated to the Bernoulli shift on $\EE$, see Theorem~\ref{thm.bernoulli}   for more details.

\begin{remark}\label{r.nonunique}
 We stress that in the smooth case (but also in the piecewise-smooth case if $\vec{0} \not\in \Omega^0$, see \cite{BF11}), the
classical results found in literature, see e.g. \cite{G1,Pa84,Sch,WigBook,St},
 assume  that  the Melnikov function  has non-degenerate simple zeros and  state that
$X(\ee, \TT)$ is a singleton.
However
 in Theorem  \ref{main.periodic1}  we may have two (or more) different points $\{\xxi_1,  \xxi_2\}\subset X(\ee,\mathcal{T})$.
  We believe this lack of uniqueness is a drawback of the use of
the intermediate value theorem
in our construction, so we think it is just a technical issue.
 Possibly, this problem may be overcome by the use  of the implicit function theorem
as in the approach by Battelli-Fe\v ckan e.g.\ in \cite{BF10,BF11,BF12}: the proof of the uniqueness of $\xxi \in  X(\ee, \TT)$ will be
the object
of future investigations.

In fact, this lack of uniqueness is the reason for which in Theorem \ref{thm.bernoulli} we just get
semi-conjugation
 instead of conjugation as one may wish.

 We recall, however, that our approach is most probably optimal for the general case where we do not require $\mathcal{T}$ to satisfy
 \eqref{nondeg}:
 if the function $\MM(\tau)$ may be null in some intervals, uniqueness should not be expected. An alternative approach for this degenerate
 case, based on degree theory,
 has been developed in \cite{BF02C}, already in dimension $n \ge 2$ but just in the smooth case.
\end{remark}

\begin{remark}\label{regularity2}
Let us stress, once again, that in Theorem \ref{main.periodic1}  we require the  regularity assumptions which are
standard in this
context, i.e.\ $\f^{\pm}$ and $\g$ need to be $C^r$ with $r \ge 2$, however, in Theorem \ref{main.weak1}  we can ask
simply for  $r>1$.
\end{remark}

Let us set
\begin{equation}\label{L0bis}
 L^0 =  L^0(\delta)  := \{ \vec{Q} \in \Omega^0 \mid \|\vec{Q}- \ga(0) \| <\delta \}.
\end{equation}
The trajectory
$\x(t, T_0; \xxi )$ crosses $\Om^0$ close to the origin exactly once between   two consecutive 1s of the sequence $\ee$. Namely we have the
following  two results borrowed from \cite{CFPrestate}.
\begin{remark}\label{consecutive}
 Let  the assumptions of Theorem \ref{main.weak1}  be satisfied.
Fix $\ee \in \EE$ and let $j'<j''$ correspond to two consecutive 1s of $\ee$, i.e., $\ee_{j'}=1=\ee_{j''}$ and $\ee_k=0$ for any $j'<k<j''$
(if any).
Fix $\xxi \in \bar{X}(\ee, \mathcal{T})$; then there is  $\hat{T}(\xxi)$ such that
  $\hat{T}(\xxi) \in ]T_{2j'},T_{2j''}[$ and
the trajectory $\x(t, T_0+\alpha_0^{\ee}(\ep); \xxi)$
crosses transversely $\Om^0$ close to $\ga(0)$ (in $L^0(c^* \ep)$)
at $t= T_{2j'}+\alpha_{j'}^{\ee}(\ep), T_{2j''}+\alpha_{j''}^{\ee}(\ep)$, and close to the origin
(in $B(\vec{0},c^*\ep)$) at $t=\hat{T}(\xxi)$; further $\x(t, T_0+\alpha_0^{\ee}(\ep); \xxi ) \in \Om^+$ if $t \in
]T_{2j'}+\alpha_{j'}^{\ee}(\ep), \hat{T}(\xxi)[$ and
$\x(t, T_0+\alpha_{0}^{\ee}(\ep); \xxi ) \in \Om^-$ if $t \in ]\hat{T}(\xxi),T_{2j''}+\alpha_{j''}^{\ee}(\ep)[$.
Finally,
$$ \hat{T}(\xxi)-T_{2j'}>  K_0 (1+\nu) |\ln(\ep)| \quad \text{and} \quad T_{2j''}-\hat{T}(\xxi) >  K_0 (1+\nu)
|\ln(\ep)| .$$
An analogous remark holds for Theorem   \ref{main.periodic1}.
\end{remark}

\begin{cor}\label{r.all}
In Theorem \ref{main.weak1}, if \eqref{nondeg} holds, we can suppress the dependence on $\al_j^{\ee}(\ep)$  of the
relations in \eqref{ej=1w},
but paying the price of a loss of precision of the estimate. That is,  we can replace
\eqref{ej=1w}  by
	\begin{equation}\label{ej=1w.all}
	\begin{split}
	&   \|\x(t,T_0; \xxi_0)-\ga(t-T_{2j}) \| \le \omega(\ep) \quad \textrm{when $t \in [T_{2j-1}, T_{2j+1} ]$.}
	\end{split}
	\end{equation}
\end{cor}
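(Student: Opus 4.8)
The plan is to read off Corollary~\ref{r.all} directly from Theorem~\ref{main.weak1}, absorbing the time shift $\al_j^{\ee}(\ep)$ occurring in $\ga(t-T_{2j}-\al_j^{\ee}(\ep))$ into the error term, at the price of enlarging $O(\ep)$ to a coarser modulus. Fix $\ee\in\EE$ and $\xxi_0\in X(\ee,\TT)$ as in Corollary~\ref{c.sigma}, i.e.\ $\xxi_0=\x(T_0,T_0+\al_0^{\ee}(\ep);\xxi(\ee))$ for some $\xxi(\ee)\in\bar X(\ee,\TT)$. Since the trajectory stays in $B(\bs{\Gamma},c^*\ep)\cup B(\vec 0,c^*\ep)\subset B(\bs{\Gamma},\beta)$, where solutions are locally unique (Remark~\ref{data.smooth}), the curves $t\mapsto\x(t,T_0;\xxi_0)$ and $t\mapsto\x(t,T_0+\al_0^{\ee}(\ep);\xxi(\ee))$ coincide; hence property $\mathbf{C_e}$ gives, for each $j$ with $\ee_j=1$,
\[
\|\x(t,T_0;\xxi_0)-\ga(t-T_{2j}-\al_j^{\ee}(\ep))\|\le c^*\ep,\qquad t\in[T_{2j-1},T_{2j+1}].
\]

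Next I would observe that $\ga$ is globally Lipschitz: $\bs{\Gamma}$ is compact, $\f^{\pm}$ are continuous on $\Om^{\pm}\cup\Om^0$, so $\dot\ga(t)=\f^{\pm}(\ga(t))$ is bounded by some constant $M$, and then $\ga(b)-\ga(a)=\int_a^b\dot\ga(s)\,ds$ (legitimate since $\ga$ is continuous and piecewise $C^1$, the jump of $\dot\ga$ at $t=0$ causing no harm) yields $\|\ga(\tau_1)-\ga(\tau_2)\|\le M|\tau_1-\tau_2|$. Hence
\[
\|\ga(t-T_{2j}-\al_j^{\ee}(\ep))-\ga(t-T_{2j})\|\le M\,|\al_j^{\ee}(\ep)|.
\]

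Then I would use the sharper control of $\al_j^{\ee}(\ep)$ available under \eqref{nondeg}: by the remarks following \eqref{nondeg}, condition \eqref{isolated} holds (with $\omega_M(h)=\tfrac{C}{2}h$), so the second bullet of the ``Moreover'' part of Theorem~\ref{main.weak1} furnishes a continuous increasing $\omega_\alpha$ with $\omega_\alpha(0)=0$ and $|\al_j^{\ee}(\ep)|\le\omega_\alpha(\ep)$ for all $j$. A triangle inequality then gives, for each $j$ with $\ee_j=1$,
\[
\|\x(t,T_0;\xxi_0)-\ga(t-T_{2j})\|\le c^*\ep+M\,\omega_\alpha(\ep),\qquad t\in[T_{2j-1},T_{2j+1}],
\]
and it suffices to let $\omega(\ep)$ be (a continuous increasing majorant, vanishing at $0$, of) $c^*\ep+M\,\omega_\alpha(\ep)$; e.g.\ one simply redefines the $\omega$ of Theorem~\ref{main.weak1} as the maximum of the old one and this one. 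This is exactly \eqref{ej=1w.all}; for $j$ with $\ee_j=0$ nothing is required, since \eqref{ej=0w} already reads $\|\x(t,T_0;\xxi_0)\|\le c^*\ep\le\omega(\ep)$.

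There is essentially no obstacle here: the whole content is a triangle inequality together with the Lipschitz estimate for $\ga$. The two points deserving explicit mention are the identification $\x(t,T_0;\xxi_0)\equiv\x(t,T_0+\al_0^{\ee}(\ep);\xxi(\ee))$, which rests on local uniqueness inside $B(\bs{\Gamma},\beta)$, and the boundedness of $\dot\ga$ across the crossing time $t=0$. The ``loss of precision'' advertised in the statement is precisely the replacement of the sharp $O(\ep)$ bound by $O(\omega_\alpha(\ep))$; it is intrinsic to this method, since $\al_j^{\ee}(\ep)$ cannot be controlled more tightly without quantitative information on $\MM$ near its zeros beyond what \eqref{nondeg} provides.
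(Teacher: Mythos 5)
Your proof is correct and coincides with the paper's: the paper proves Theorem~\ref{main.periodic1} from Theorem~\ref{main.weak1} via exactly this Lipschitz bound on $\ga$ (the paper's constant $\tilde{c}$ in \eqref{ctilde} plays the role of your $M$) combined with the estimate on $\al_j^{\ee}(\ep)$ and a triangle inequality, and then states that Corollary~\ref{r.all} ``is proved with the same idea.'' Your additional remarks on the identification $\x(t,T_0;\xxi_0)\equiv\x(t,T_0+\al_0^{\ee}(\ep);\xxi(\ee))$ via local uniqueness and on why the $\omega_\alpha(\ep)$ bound (rather than $c_\alpha\ep$) is the right one under the regularity $r>1$ of Theorem~\ref{main.weak1} are both accurate.
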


 We conclude the section with a result in which we drop \eqref{minimal} paying the price
of a loss in the precision of the estimates of $\alpha_j^{\ee}(\ep)$, which may even be unbounded.
In Theorem \ref{veryweak} below, condition \eqref{TandKnu} is replaced by the slightly stronger
 \eqref{TandKnunew}.

\begin{theorem}\label{veryweak}
	Assume that   $\f^{\pm}$ and $\g$ are $C^r$ with  $r > 1$,   and that \assump{F0}, \assump{F1}, \assump{F2}, \assump{K} and \assump{G} hold true;
assume further    \assump{P1}.
Then we can choose $\ep_0$ small enough so that for any $0< \ep \le \ep_0$
and any increasing sequence $\mathcal{T}=(T_j)$, $j \in \Z$,  satisfying  \eqref{TandKnunew},
 there is $c^*>0$ (independent of $\ep>0$ and  of the sequence $(T_j)$)
so that we have the following.
For any $\ee=(\ee_j) \in \EE$,
there is a compact set $\bar{X}(\ee, \mathcal{T})$ and a sequence $(\al_j^{\ee}(\ep))$, $j \in \Z$,
such that $|\al_j^{\ee}(\ep)| \le B_{2j}$ and for any $\xxi(\ee) \in \bar{X}(\ee, \mathcal{T})$
  the trajectory $\x(t,T_0+ \al_0^{\ee}(\ep); \xxi(\ee))$ has the property
  $\mathbf{C_e}$.
\end{theorem}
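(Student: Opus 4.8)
The plan is to produce the trajectories realizing a prescribed bi-infinite itinerary $\ee\in\EE$ by gluing a trajectory that is ``chaotic in the future'' to one that is ``chaotic in the past'' along a transversal to the flow. Fix $\ee=(\ee_j)\in\EE$ and split it into its forward part $\ee^+=(\ee_j)_{j\ge1}\in\EE^+$, its backward part $\ee^-=(\ee_j)_{j\le-1}\in\EE^-$ and the central symbol $\ee_0$; according to whether $\ee_0=1$ or $\ee_0=0$ the gluing is carried out along a transversal $L$ near $\ga(0)$ (the arc $L^0$ of \eqref{L0bis}) or near $\vec0$, the two cases being analogous, and we describe the former. First I would recall from \cite{CFPrestate} the compact connected sets $X^+(\tau,\TT^+,\ee^+)$ and $X^-(\tau,\TT^-,\ee^-)$: they are contained in $L$ and consist of the points $\xxi$ such that the trajectory $\x(t,\tau;\xxi)$, which meets $L$ at time $\tau$, realizes the forward (resp.\ backward) itinerary, with loop-phases $\al^{\ee}_j(\ep)$, $j\ge1$ (resp.\ $j\le-1$), already proved there to satisfy $|\al^{\ee}_j(\ep)|\le B_{2j}$. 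Parametrising the $C^r$ arc $L$ by a scalar coordinate with $\ga(0)$ at the origin, each $X^\pm(\tau,\cdot)$, being compact and connected, is a closed interval $[p^\pm(\tau),q^\pm(\tau)]$.

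The heart of the argument is a one-dimensional intermediate value argument on the window $[\beta_0,\beta'_0]=[b_{n_0},b_{n_0+1}]$ of \eqref{defkj}, which contains $T_0$. I would first invoke the continuity statements of \S\ref{S.connection} --- in this weaker setting Theorems \ref{T.connectionLa} and \ref{T.connection}, established in \S\ref{S.theorem2.1} --- to get that the endpoints $p^\pm(\tau),q^\pm(\tau)$ depend continuously on $\tau$ over $[\beta_0,\beta'_0]$. Next, using the fly-time and space-displacement estimates of \S\ref{S.prel} (Theorem \ref{key}, borrowed from \cite{CFPRimut}) together with the leading-order description of the stable/unstable leaf crossing in terms of the Melnikov function \eqref{melni-disc}, I would show that the sign change $\MM(\beta_0)\MM(\beta'_0)<0$ (guaranteed by \assump{P1} via \eqref{defkj}) forces the intervals $X^+(\tau,\cdot)$ and $X^-(\tau,\cdot)$ to be disjoint and in \emph{opposite} mutual positions at the two endpoints $\tau=\beta_0$ and $\tau=\beta'_0$. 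Applying the intermediate value theorem to, e.g., $\tau\mapsto p^-(\tau)-q^+(\tau)$, which changes sign on $[\beta_0,\beta'_0]$, yields $\tau_\star\in[\beta_0,\beta'_0]$ with $X^+(\tau_\star,\TT^+,\ee^+)\cap X^-(\tau_\star,\TT^-,\ee^-)\neq\emptyset$; this is exactly what Theorems \ref{theoremkey} and \ref{theoremkeybis} of \S\ref{S.mainproof} are meant to package.

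To conclude I would set $\al^{\ee}_0(\ep):=\tau_\star-T_0$; since $\tau_\star,T_0\in[\beta_0,\beta'_0]$ and $B_0=\beta'_0-\beta_0$, this gives $|\al^{\ee}_0(\ep)|\le B_0$, i.e.\ the bound $|\al^{\ee}_j(\ep)|\le B_{2j}$ for $j=0$. Then $\bar X(\ee,\TT):=X^+(\tau_\star,\TT^+,\ee^+)\cap X^-(\tau_\star,\TT^-,\ee^-)$ is compact as an intersection of compacts and nonempty by the previous step, and for every $\xxi(\ee)\in\bar X(\ee,\TT)$ the trajectory $\x(t,T_0+\al^{\ee}_0(\ep);\xxi(\ee))=\x(t,\tau_\star;\xxi(\ee))$ belongs simultaneously to the forward and the backward family, hence it realizes the full bi-infinite itinerary $\ee$ with phase sequence $(\al^{\ee}_j(\ep))_{j\in\Z}$ inherited from $X^+$ for $j\ge1$, from $X^-$ for $j\le-1$, and from the gluing for $j=0$; all of them satisfy $|\al^{\ee}_j(\ep)|\le B_{2j}$, which is precisely property $\mathbf{C_e}$. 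The role of the strengthened gap condition \eqref{TandKnunew} (with $\max\{B_{j+1};B_j\}$ in place of the $\l1$ used in \eqref{TandKnu}) is to keep the successive windows $[\beta_{2j},\beta'_{2j}]$ disjoint and to leave each loop enough dwell time near $\vec0$ for the estimates of \S\ref{S.prel} to apply uniformly in $\ee$ and in the (possibly unbounded) gaps $B_j$.

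The step I expect to be the main obstacle is establishing the continuity of $X^\pm(\tau,\cdot)$ in $\tau$ in a form strong enough to run this interval-crossing argument: these sets are built in \cite{CFPrestate} as limits of nested constructions over finite truncations of the itinerary, involving infinitely many gluings, so one must propagate continuity --- and the bounds uniform in $\ee$ --- through that limit; at the same time one must translate the sign change of $\MM$ at $\beta_0,\beta'_0$ into the required opposite position of the two intervals, and it is there that the quantitative estimates of \S\ref{S.prel} and the Melnikov computation do the real work.
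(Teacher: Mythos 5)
Your overall strategy — split $\ee$ into forward and backward parts, take the sets of initial data chaotic in the future/past, and locate their intersection by exploiting the sign change of $\MM$ on the window $[\beta_0,\beta'_0]$ — is exactly the architecture of the paper. But the way you propose to extract $\tau_\star$ has a genuine gap, and it is precisely the gap the paper's proof is designed to avoid.

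You parametrise $X^\pm(\tau,\cdot)$ as intervals $[p^\pm(\tau),q^\pm(\tau)]$ and want to apply the scalar intermediate value theorem to $\tau\mapsto p^-(\tau)-q^+(\tau)$. This requires the endpoint functions $p^\pm,q^\pm$ to be continuous in $\tau$. You attribute that continuity to Theorems \ref{T.connectionLa} and \ref{T.connection}, but those theorems do not assert anything of the sort. Because $X^\pm(\tau,\cdot)$ are built as a countable intersection of nested intervals coming from an infinite selection scheme, their endpoints can in principle jump as $\tau$ varies; the paper does not prove (and does not need) upper/lower semicontinuity of $p^\pm,q^\pm$, and proving it would be a separate, nontrivial task. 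What Theorems \ref{T.connectionLa}/\ref{T.connection} actually give is a strictly weaker statement: in the two–dimensional parameter strip $A_0=J_0\times[b_0,b_1]$, the closed set $S^+$ (respectively $S^{\Lambda,+}$) contains a closed \emph{connected} subset $\tilde S^+$ touching both sides $\tau=b_0$ and $\tau=b_1$. This is obtained not from any continuity of endpoints but from the Papini–Zanolin crossing lemma (Lemma~\ref{top}): one shows (Propositions~\ref{Sclosed} and \ref{Sintersection}) that $S^+$ is closed and that every path in $A_0$ joining $d=0$ to $d=\ep^{(1+\nu)/\underline{\sigma}}$ meets $S^+$, and the lemma produces a spanning continuum. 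The gluing with $\tilde S^-$ (Theorem~\ref{theoremkey}) is then a planar connectedness argument: the shifted set $\hat S^-$ separates the strip $\hat A$ into two components, and the Melnikov sign change at $b_0,b_1$ (via Remark~\ref{rMelni}) places the two ends of the spanning continuum $\tilde S^+$ in opposite components, forcing a crossing. That is the "intermediate value theorem" of the title, but applied to connected sets, not to scalar endpoint functions. So your proposal, as written, relies on an unproven (and possibly false) continuity statement, whereas the paper substitutes a weaker, provable topological ingredient.

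A secondary deviation: for $\ee_0=0$ you propose to run the same gluing along a transversal near $\vec 0$. The paper instead always glues along $L^0$ near $\ga(0)$, under the normalisation $\ee_0=1$ (Theorem~\ref{theoremkey}), and then reduces the general $\ee$ to this case by shifting the index sequence $\TT$ to the first $k$ with $\ee_k=1$ (see the proof of Theorem~\ref{main.weak1}); the all-zero sequence is assigned to $\xxi=\vec0$ directly. Gluing near $\vec 0$ is not obviously equivalent — the analysis of \S\ref{S.prel} (Theorems~\ref{key} and \ref{keymissed}, Lemma~\ref{L.loop}) is formulated for crossings of $L^0$, not for a transversal near the origin — so this part of your plan would also need additional justification.
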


Note that under the assumptions of Theorem \ref{veryweak} the flow of \eqref{eq-disc} on $\Sigma$ need not be properly semi-conjugated to the Bernoulli shift on $\EE$, see
Section \ref{proof.Bernoulli} for more details.

\begin{remark}\label{remP1}
We stress that \assump{P1} does not require an upper bound on $B_j$ when $|j| \to +\infty$.
Further, the self-similarity required on $\MM$ is very weak and we do not need any non-degeneracy of the zeros of the Melnikov function
$\MM(\tau)$, which may be null in some intervals. Hence with our results we can deal with, e.g.:
\begin{equation}\label{e.remP1}
\begin{split}
&     \MM_1(\tau)= 3\sin ( \tau)+ k(\tau), \\
    & \MM_2(\tau)= 3\sin (\sqrt[3]{1+\tau^2})+ k(\tau)
\end{split}
\end{equation}
where $|k(\tau)| \le 2$ is arbitrary, and can be regarded as a noise on which we have little information
and does not really need to be small.
\\
Notice that $\MM_1(\tau)$  satisfies \eqref{minimal} and \eqref{isogen},
  while
$\MM_2(\tau)$ does not even satisfy
\eqref{minimal}. So in the former case
we can apply Theorem \ref{main.weak1} and in the latter just Theorem \ref{veryweak}.

Further notice that if we assume $k(\tau) \equiv 0$, in the former case we can apply Theorem \ref{main.periodic1}, while in the latter we can apply again just Theorem \ref{veryweak}, since we do not have a
uniform lower bound on     $|\MM'(T_j)|$.
\end{remark}
\begin{remark}\label{toaddinPrestate}
  As pointed out in \cite{CFPrestate} classically the recurrence conditions such as periodicity or almost periodicity, are required directly on $\g$ and
  then inherited by $\MM$. Here we prefer to ask for conditions on $\MM$ (which in general are more difficult to be verified on $\g$)
  in order to include the ``large perturbations" as $k(\tau)$ in \eqref{e.remP1}. We think this might be useful for application since
  it is possible that the perturbation $\g$ is made up by a periodic part of which we have a control and a ``noise", possibly not small, of which
  we just know the size: sometimes this allows to write  $\MM$ as a periodic part and a ``noise" as in \eqref{e.remP1} which does not need to be small, see   Examples \ref{exgen0} and \ref{exgen2}.
\end{remark}

\subsection{Some remarks concerning similar scenarios} \label{S.scenarios}

Now, following again \cite{CFPRimut}, we need to distinguish between several possible scenarios to which our results apply: see Figures \ref{scenario123} and \ref{scenario45}. Let us denote by $E^{\textrm{in}}$ the bounded open set enclosed by $\bs{\Gamma}$ and by
  $E^{\textrm{out}}$ the unbounded open set outside~$\bs{\Gamma}\cup E^{\textrm{in}}$.
 \begin{description}
\item[Scenario 1] Assume \assump{K} and that there is $\rho>0$ such that $d\vec{v}_u^+ \in E^{\textrm{out}}$, $d\vec{v}_s^- \in
    E^{\textrm{out}}$ for any $0<d<\rho$.
\item[Scenario 2] Assume \assump{K} and that there is $\rho>0$ such that $d\vec{v}_u^+ \in E^{\textrm{in}}$, $d\vec{v}_s^- \in
    E^{\textrm{in}}$ for any $0<d<\rho$.
\end{description}
Notice that  \assump{F2} holds in both Scenarios 1 and 2, see \cite{CFPRimut} for details.

Theorems \ref{main.periodic1},  \ref{main.weak1}  and \ref{veryweak} apply in both the scenarios, with
no changes.
  To fix the ideas in the whole paper we
focus on Scenario 1, but everything can be adapted without effort to Scenario 2 as well: the main difference is given
in Remark \ref{Rscenarios12} below.

We emphasize that, if we remove \assump{F2}, i.e.\ if we allow sliding trajectories along $\Om^0$ close to the origin,
we have persistence of the homoclinic trajectory to perturbation but chaos is forbidden. In fact, as was shown in \cite{FrPo}, in this case we have different phenomena which cannot be present in a smooth setting.
\begin{description}
\item[Scenario 3]
Assume \assump{K} and that there is $\rho>0$ such that $d\vec{v}_u^+ \in E^{\textrm{in}}$, $d\vec{v}_s^- \in E^{\textrm{out}}$ for any
$0<d<\rho$, so
    \assump{F2} does not hold.
\item[Scenario 4]
Assume \assump{K} and that there is $\rho>0$ such that $d\vec{v}_u^+ \in E^{\textrm{out}}$, $d\vec{v}_s^- \in E^{\textrm{in}}$ for any
$0<d<\rho$, so
    \assump{F2} does not hold.
\end{description}

Finally we emphasize that Scenarios 1, 2    may take place in the smooth case too: in this case we can consider $\Om^0$ as a generic curve
passing
through the origin. However, Scenarios 3, 4   require \eqref{eq-disc} to be   piecewise-smooth and do not have a smooth counterpart.

 \emph{In this paper we will just consider Scenario 1 for definiteness,
 although Scenario~2 can be handled in a similar way}.

 \section{Preliminary construction and lemmas}\label{S.prel}

Now we define  the stable and the unstable leaves $W^s(\tau)$ and $W^u(\tau)$ of \eqref{eq-disc}.
 In this section we borrow some facts from \cite{CFPRimut} and we mainly follow its notation.
 We refer the reader to \cite{CFPRimut} for more details.

Assume \assump{F0}, \assump{F1}, \assump{K} and $\tau\in\R$;
following \cite[\S 2]{CFPrestate}, see also\cite[\S 2]{CFPRimut} which is based on \cite[Theorem 2.16]{Jsell},  we can define the global stable and unstable
 leaves as follows:
\begin{equation}\label{mm}
\begin{array}{c}
W^u(\tau) := \{ \P \in \R^2 \mid    \lito \x(t,\tau;\P)=\vec{0} \}, \\
W^s(\tau) := \{ \P \in \R^2 \mid   \lit  \x(t,\tau;\P)=\vec{0} \}.
\end{array}
\end{equation}
In fact  $W^u(\tau)$ and $W^s(\tau)$ are $C^r$ immersed $1$-dimensional manifolds if \eqref{eq-disc} is smooth.

Follow $W^u(\tau)$ (respectively $W^s(\tau)$) from the origin towards
$L^0(\sqrt{\ep})$: it intersects $L^0(\sqrt{\ep})$ transversely in a point denoted by $\P_u(\tau)$
(respectively by $\P_s(\tau)$), see Figure \ref{LinL0}.
We denote by $\tilde{W}^u(\tau)$ the branch of $W^u(\tau)$ between the origin and $\P_u(\tau)$
(a path), and by $\tilde{W}^s(\tau)$ the branch of $W^s(\tau)$ between the origin and $\P_s(\tau)$, in both the cases including the endpoints.
Since $\tilde{W}^u(\tau)$ and $\tilde{W}^s(\tau)$ coincide with $\bs{\Gamma} \cap (\Om^- \cup \Om^0)$
and $\bs{\Gamma} \cap (\Om^+ \cup \Om^0)$ if $\ep=0$, respectively, and vary in a $C^r$ way,
 we find $\tilde{W}^u(\tau)\subset  (\Om^- \cup \Om^0)$ and
$\tilde{W}^s(\tau)\subset  (\Om^+ \cup \Om^0)$ for any $\tau \in \R$ and any $0\le \ep \le \ep_0$ (see  \cite{CFPRimut}, which is based on \cite[Theorem 2.16]{Jsell}
or  \cite[Appendix]{mFaS}). Hence $\tilde{W}^u(\tau)$ and $\tilde{W}^s(\tau)$ are $1$-dimensional $C^r$ manifolds and  $\P_u(\tau)$ and $\P_s(\tau)$
are $C^r$ in $\ep$ and $\tau$ even when \eqref{eq-disc} is just piecewise $C^r$.
Further
\begin{equation}\label{proprieta}
\begin{gathered}
 \Q_u \in \tilde{W}^u(\tau)   \Rightarrow  \x(t,\tau; \Q_u) \in \tilde{W}^u(t) \subset (\Om^- \cup \Om^0) \quad \textrm{for any $t\le
 \tau$},\\
\Q_s \in \tilde{W}^s(\tau) \Rightarrow  \x(t,\tau; \Q_s) \in \tilde{W}^s(t) \subset (\Om^+ \cup \Om^0) \quad \textrm{for any $t\ge \tau$}.
\end{gathered}
\end{equation}

We set
\begin{equation}\label{tildeW}
\tilde{W}(\tau):= \tilde{W}^u(\tau) \cup \tilde{W}^s(\tau),
\end{equation}
and we denote by $\tilde{V}(\tau)$ the compact set enclosed by $\tilde{W}(\tau)$ and the
branch of $\Om^0$ connecting $\P_u(\tau)$ and $\P_s(\tau)$.

 \begin{remark}\label{Rscenarios12}
  Assume  the setting of either one of Theorems \ref{main.periodic1},  \ref{main.weak1} and \ref{veryweak} and let $\xxi \in \Sigma$ (cf.~Corollary \ref{c.sigma}).
  Then if we are in Scenario 1 the trajectory $\x(t,T_0; \xxi) \in \tilde{V}(t)$ for any $t \in \R$, while if we are in Scenario 2,
  then
  $\x(t,T_0; \xxi) \in \tilde{\wedge}(t)$ for any $t \in \R$, where $\tilde{\wedge}(t)$ is the unbounded closed set defined by $\tilde{\wedge}(t)=\overline{\R^2\setminus \tilde{V}(t)}$.
\end{remark}

Recall that we are assuming Scenario 1,
and we introduce the following notation.

We denote by $A^{\textrm{fwd}}(\tau)$ and   $B^{\textrm{fwd} }(\tau)$
the two branches in which $\vec{P}_s(\tau)$ splits $L^0(\sqrt{\ep})$ and by
$A^{\textrm{bwd}}(\tau)$ and  $B^{\textrm{bwd} }(\tau)$
 the two branches in which $\vec{P}_u(\tau)$ splits $L^0(\sqrt{\ep})$; we assume that  the whole of  $A^{\textrm{fwd}}(\tau)$
 and $A^{\textrm{bwd}}(\tau)$ are contained in $\tilde{V}(\tau)$ while at least parts of  $B^{\textrm{fwd}}(\tau)$
 and $B^{\textrm{bwd}}(\tau)$ lie outside $\tilde{V}(\tau)$.

    \begin{knownlemma}\label{L.loop}\cite[Lemma 3.6]{CFPRimut}
  Assume \assump{F0}, \assump{F1}, \assump{F2}, \assump{K}, \assump{G}.
    Let $\Q \in A^{\textrm{fwd}}(\tau)$. Then there are $\mathscr{T}^{\textrm{fwd}}(\Q,\tau)>\tau_1(\Q,\tau)>\tau$ such that
    the trajectory $\x(t,\tau; \Q) $
      crosses transversely $\Om^0$ at $t\in\{\tau, \tau_1(\Q,\tau), \mathscr{T}^{\textrm{fwd}}(\Q,\tau)\}$.
     Hence,
     $$\mathscr{P}^{\textrm{fwd}}_+(\Q,\tau):=\x(\tau_1(\Q,\tau),\tau; \Q) \in L^{\inn},$$
     $$\mathscr{P}^{\textrm{fwd}}(\Q,\tau):=\x(\mathscr{T}^{\textrm{fwd}}(\Q,\tau),\tau; \Q) \in
   L^0,$$
     where $L^{\inn}=L^{\inn}(\de):=\{\Q\in(\Om^0\cap\tilde{V}(0))\mid \|\Q\|<\de\}$, and $\de$ is a small parameter.

    Analogously, let $\Q \in A^{\textrm{bwd}}(\tau)$. Then there are $\mathscr{T}^{\textrm{bwd}}(\Q,\tau)<\tau_{-1}(\Q,\tau)<\tau$ such that
    $\x(t,\tau; \Q) $,
    crosses transversely $\Om^0$ at $t\in\{\tau, \tau_{-1}(\Q,\tau), \mathscr{T}^{\textrm{bwd}}(\Q,\tau)\}$. Hence
     $$ \mathscr{P}^{\textrm{bwd}}_-(\Q,\tau):=\x(\tau_{-1}(\Q,\tau),\tau; \Q) \in L^{\inn},$$
     $$ \mathscr{P}^{\textrm{bwd}}(\Q,\tau):=\x(\mathscr{T}^{\textrm{bwd}}(\Q,\tau),\tau; \Q) \in
    L^0.$$

     Further the functions $\mathscr{P}^{\textrm{fwd}}_+(\Q,\tau)$, $\mathscr{P}^{\textrm{fwd}}(\Q,\tau)$, $ \mathscr{P}^{\textrm{bwd}}_-(\Q,\tau)$
     and $ \mathscr{P}^{\textrm{bwd}}(\Q,\tau)$ are $C^r$ in both the variables.
  \end{knownlemma}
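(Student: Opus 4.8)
The plan is to prove the forward statement; the backward one is completely symmetric, after exchanging the roles of $\tilde{W}^u$ and $\tilde{W}^s$, of the half‑saddles $\bs{f_x^+}(\vec 0)$ and $\bs{f_x^-}(\vec 0)$, and of $+\infty$ and $-\infty$ in \eqref{ass.scenario}. I would split a loop into a \emph{near‑origin passage} and a \emph{global excursion along $\bs{\Gamma}$}, glue the two pieces using transversality of the crossings with $\Om^0$ together with continuous dependence on data (Remark~\ref{data.smooth}), and finally deduce $C^r$ regularity of the returned quantities from the implicit function theorem. \emph{Step 1 (crossing at $t=\tau$ and entry near $\vec 0$).} Since $A^{\textrm{fwd}}(\tau)\subset L^0(\sqrt\ep)\subset\Om^0$ is $\sqrt\ep$‑close to $\P_s(\tau)\approx\ga(0)$, the velocity of $\x(\cdot,\tau;\Q)$ at $t=\tau$ is close to $\f^+(\ga(0))$, so by the last part of \assump{K} the crossing at $t=\tau$ is transversal and $\x(t,\tau;\Q)\in\Om^+$ for $t$ slightly larger than $\tau$. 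As $\Q$ lies on the branch $A^{\textrm{fwd}}(\tau)$ enclosed by $\tilde V(\tau)$, it is close to the forward‑invariant leaf $\tilde W^s(\tau)$, which stays in $\Om^+\cup\Om^0$ and tends to $\vec 0$ by \eqref{proprieta}; continuous dependence on the compact piece of $\tilde W^s$ outside a fixed small ball $B(\vec 0,\delta)$ then shows that $\x(t,\tau;\Q)$ enters $B(\vec 0,\delta)\cap\Om^+$ without meeting $\Om^0$ in between, with position and velocity roughly along the stable ray $\{c\vec v_s^+\mid c\ge 0\}$.

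\emph{Step 2 (near‑origin passage and the crossing $\tau_1$).} This is the technical core. Inside $B(\vec 0,\delta)$ the orbit obeys $\dot{\x}=\f^+(\x)+\ep\g$, a small perturbation of the linear saddle $\bs{f_x^+}(\vec 0)$; by \assump{F0} and a $C^r$ normal form (or direct hyperbolic estimates) it comes in essentially along $\vec v_s^+$ and, not lying on the local stable manifold, is pushed toward $\vec v_u^+$. The sign of $(\na G)^*\dot{\x}$ is controlled by \assump{F1}, and the combination of the facts that $\Q$ sits on the $\tilde V(\tau)$‑side of $\P_s(\tau)$ and that \assump{F2} places $\vec v_s^+\in\Pi_u^1$, $\vec v_s^-\in\Pi_u^2$ on opposite sides of the polyline $\mathcal{T}^u=\mathcal{T}_u^+\cup\mathcal{T}_u^-$ (ruling out sliding on $\Om^0$ near $\vec 0$) forces the orbit to meet $\Om^0$ at a first time $\tau_1(\Q,\tau)>\tau$ at a point $\mathscr{P}^{\textrm{fwd}}_+(\Q,\tau)\in L^{\inn}$, after which it lies in $\Om^-\cap B(\vec 0,\delta)$ moving roughly along $\vec v_u^-$; transversality of this crossing again follows from \assump{F1}. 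The delicate items here — that there is \emph{exactly one} such crossing near $\vec 0$, that no sliding can occur (Remark~\ref{data.smooth}), and that the orbit actually leaves $B(\vec 0,\delta)$ afterwards rather than returning — are the geometric bookkeeping of \cite{CFPRimut}, based on the sector analysis of the two half‑saddles.

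\emph{Step 3 (global excursion and the crossing $\mathscr{T}^{\textrm{fwd}}$).} Having passed the saddle, the orbit is close to the unstable leaf $W^u(\cdot)$, in fact to its branch $\tilde W^u(\cdot)$ emanating from $\vec 0$ along $\vec v_u^-$, which for $\ep=0$ equals $\{\ga(s)\mid s\le 0\}$ and, flowed forward, leaves $\tilde W^u$ transversely at $\P_u(\cdot)\approx\ga(0)$ on $L^0(\sqrt\ep)$. Continuous dependence on data on this compact piece of $\bs{\Gamma}$ (Remark~\ref{data.smooth}) keeps $\x(t,\tau;\Q)$ within $O(\ep)$ of it until it crosses $\Om^0$ at a time $\mathscr{T}^{\textrm{fwd}}(\Q,\tau)>\tau_1(\Q,\tau)$ at a point $\mathscr{P}^{\textrm{fwd}}(\Q,\tau)\in L^0$; transversality follows from $(\na G(\ga(0)))^*\f^-(\ga(0))>0$, since the velocity there is close to $\f^-(\ga(0))$. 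The quantitative control of $\tau_1-\tau$, of $\mathscr{T}^{\textrm{fwd}}-\tau_1$ and of the space displacement — governed by the constants $\sigma,\Sigma$ of \eqref{defsigma} — is the content of the finer statements (Theorem~\ref{key}) and is not needed here. \emph{Step 4 ($C^r$ dependence).} By \assump{F2} no sliding occurs in $B(\bs{\Gamma},\beta)$, so along this orbit the flow $(t,\tau,\Q)\mapsto\x(t,\tau;\Q)$ of \eqref{eq-disc} is jointly $C^r$, being a finite composition of the $C^r$ flows of $\dot{\x}=\f^\pm(\x)+\ep\g$ with the $C^r$ transition maps across the transversal crossings of $\Om^0$. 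Applying the implicit function theorem to $G(\x(s,\tau;\Q))=0$, whose $s$‑derivative $(\na G)^*\dot{\x}$ is nonzero at $s\in\{\tau,\tau_1,\mathscr{T}^{\textrm{fwd}}\}$, yields $\tau_1(\Q,\tau)$ and $\mathscr{T}^{\textrm{fwd}}(\Q,\tau)$ as $C^r$ functions, whence $\mathscr{P}^{\textrm{fwd}}_+$, $\mathscr{P}^{\textrm{fwd}}$ and their backward analogues are $C^r$ in both variables.

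The hard part is Step~2: controlling an orbit passing near the piecewise‑smooth saddle at $\vec 0$ and proving that it meets $\Om^0$ exactly once near $\vec 0$ and then escapes. This is precisely where \assump{F2} — equivalently, the absence of sliding near $\vec 0$ — is indispensable, and where the Scenario~1 geometry enters; ruling out extra crossings, sliding and immediate return requires the careful normal‑form / cone estimates at the two half‑saddles developed in \cite{CFPRimut}, whose quantitative by‑products (the fly‑time constants $\Sigma$ and the displacement constants $\sigma$) feed the rest of the paper.
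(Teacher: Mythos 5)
The paper itself does not reprove Lemma~\ref{L.loop}; it imports it verbatim from \cite[Lemma~3.6]{CFPRimut}, so there is no internal proof to compare against. Your sketch is a faithful reconstruction of the expected argument: transversal entry at $t=\tau$ by \assump{K}; tracking $\tilde W^s(t)$ into a fixed small ball by continuous dependence; a saddle passage producing a single transversal crossing of $\Om^0$ in $L^{\inn}$ (the core step, enabled by \assump{F1}, \assump{F2} and the position of $\Q$ inside $\tilde V(\tau)$, ruling out sliding); tracking $\tilde W^u(t)$ back to $L^0$; and $C^r$ regularity of $\tau_1$, $\mathscr{T}^{\textrm{fwd}}$, $\mathscr{P}^{\textrm{fwd}}_+$, $\mathscr{P}^{\textrm{fwd}}$ via the implicit function theorem applied to $G(\x(s,\tau;\Q))=0$ at each transversal crossing, composing $C^r$ pieces of the flow in $\Om^{\pm}$. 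You correctly identify Step~2 --- the cone and sector estimates at the two half-saddles --- as the hard technical part and defer to \cite{CFPRimut}, which is consistent with the paper's treatment. One small imprecision: after the saddle passage, the escape inside $\Om^+$ is along the $-\vec v_u^+$ ray (pointing into $\Om^-$, consistent with \assump{F1} and Scenario~1), not along $\vec v_u^+$ itself; but your subsequent description of the orbit entering $\Om^-$ and moving along $\vec v_u^-$ shows you have the correct geometric picture, so this is only a slip in wording.
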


Let us denote by $\ga^-(s)=\ga(s)$ and  by $\ga^+(t)=\ga(t)$ when $s \le 0 \le t$ so that
$\ga^{\pm}(\tau) \in (\Omega^\pm \cup \Omega^0)$ for all $\tau\in\R$.
We define a directed distance between the points on $\Om^0$ by arc length, in order to be able to determine their mutual positions:
this is possible since $\Om^0$ is a regular curve.
We choose as the positive orientation on $\Om^0$ the one that goes from the origin to $\ga(0)$.
So, for any $\Q\in L^0$ we define
$\ell(\Q)=\int_{\Om^0(\vec{0},\Q)}ds>0$ where $\Om^0(\vec{0},\Q)$ is the (oriented) path of $\Om^0$ connecting $\vec{0}$ with $\Q$,
and we set
\begin{equation}\label{dist}
    \dist(\Q,\P):= \ell(\P)-\ell(\Q)
\end{equation}
for $\Q,\P\in L^0$.
Notice that $\dist(\Q,\P)>0$
 means that $\Q$ lies on $\Om^0$ between $\vec{0}$ and $\P$.
Now, we introduce some further crucial notation.

\textbf{Notation.}
We denote by $\Q_s(d,\tau)$ the point in $L^0(\sqrt{\ep})$ such that
\[
\dist(\Q_s(d,\tau), \P_s(\tau))=d>0,
\]
and by $\Q_u(d,\tau)$ the point in $L^0(\sqrt{\ep})$ such that
\[
\dist(\Q_u(d,\tau), \P_u(\tau))=d>0.
\]

Let us set
\begin{equation}
\begin{split}
     J_0  & :=  \left\{ d \in \R \mid 0< d \le \ep^{\frac{1+\nu}{\und{\si}}} \right\}, \qquad \nu \ge \nu_0.
  \label{J0}
\end{split}
\end{equation}

As in \cite{CFPRimut},
 for any $d \in J_0$,
we  define the $C^r$ maps
\begin{equation}\label{Q1T1}
\begin{gathered}
\mathscr{T}_1(d,\tau):=   \mathscr{T}^{\textrm{fwd}}(\Q_s(d,\tau),\tau) \, , \qquad \mathscr{P}_1(d,\tau):=
\mathscr{P}^{\textrm{fwd}}(\Q_s(d,\tau),\tau),\\
\mathscr{T}_{-1}(d,\tau):=   \mathscr{T}^{\textrm{bwd}}(\Q_u(d,\tau),\tau) \, , \qquad \mathscr{P}_{-1}(d,\tau):=
\mathscr{P}^{\textrm{bwd}}(\Q_u(d,\tau),\tau).
\end{gathered}
\end{equation}
Sometimes we will also make use of the maps
\begin{equation}\label{Q1T1half}
\begin{gathered}
\mathscr{T}_{\frac{1}{2}}(d,\tau):=   \tau_1(\Q_s(d,\tau),\tau) \, , \qquad \mathscr{P}_{\frac{1}{2}}(d,\tau):=
\mathscr{P}^{\textrm{fwd}}_+(\Q_s(d,\tau),\tau),\\
\mathscr{T}_{-\frac{1}{2}}(d,\tau):=   \tau_{-1}(\Q_u(d,\tau),\tau) \, , \qquad \mathscr{P}_{-\frac{1}{2}}(d,\tau):=
\mathscr{P}^{\textrm{bwd}}_-(\Q_u(d,\tau),\tau).
\end{gathered}
\end{equation}

Let us    define
\begin{equation}\label{mu0}
\mu_0= \frac{1}{4} \min \left\{ \sTfwd_+ , \sTbwd_- ,  \und{\sigma}^2    \right\}.
\end{equation}
We introduce a new parameter $\mu \in ]0, \mu_0]$, which gives an upper bound for the estimate of the errors
in the evaluations of the maps defined in \eqref{Q1T1} and \eqref{Q1T1half}.

Now we collect some results borrowed from \cite{CFPRimut} and \cite{CFPrestate}, which will be used in
\S \ref{S.connection}.

 \begin{knowntheorem}\label{key}\cite[Theorem 4.2]{CFPRimut}
Assume \assump{F0}, \assump{F1}, \assump{F2}, \assump{K}, \assump{G}  and    let $\f^\pm$ and $\g$ be $C^r$ with $r>1$.
We can find $\ep_0>0$, $\delta>0$,  such that for any $0<\ep \le \ep_0$,
the functions $\TTT_{\pm 1}(d,\tau)$, $\PPP_{\pm 1}(d,\tau)$ are $C^r$ when
	$0<d \le \delta$ and $\tau \in \R$.
 Further, for any $0< \mu <\mu_0$ we get
	\begin{equation}\label{D1T1}
	\begin{split}
	 	   d^{\sfwd +\mu} & \le \dist (\PPP_1(d,\tau),\P_u(\TTT_1(d,\tau)))
 \le d^{\sfwd-\mu}, \\
	 d^{\sbwd+\mu} & \le \dist (\PPP_{-1}(d,\tau), \P_s (\TTT_{-1}(d,\tau)))
 \le d^{\sbwd-\mu},\\
 \|\PPP_{\frac{1}{2}}(d,\tau)\| & \le d^{\sfwd_+ -\mu}     , \qquad    \|\PPP_{-\frac{1}{2}}(d,\tau)\| \le d^{\sbwd_- -\mu},
	\end{split}
	\end{equation}
	\begin{equation}\label{T1T-1}
	\begin{split}
	 	 \left[\sTfwd -\mu \right] |\ln(d)|& \le  (\TTT_1(d,\tau)-\tau)    \le   \left[\sTfwd+\mu \right] |\ln(d)|, \\
	 \left[\sTbwd -\mu \right] |\ln(d)|
& \le  \tau-\TTT_{-1}(d,\tau)    \le    \left[\sTbwd+\mu \right] |\ln(d)|, \\
 \left[\sTfwd_+ -\mu \right] |\ln(d)|& \le  (\TTT_\frac{1}{2}(d,\tau)-\tau)   \le   \left[\sTfwd_++\mu \right] |\ln(d)|, \\
	 \left[\sTbwd_- - \mu \right] |\ln(d)|
& \le  \tau -\TTT_{-\frac{1}{2}}(d,\tau)     \le    \left[\sTbwd_-+\mu \right] |\ln(d)|,
	\end{split}
	\end{equation}
	and all the expressions in \eqref{D1T1} are uniform with respect to any   $\tau \in \R$ and $0< \ep \le \ep_0$.
\end{knowntheorem}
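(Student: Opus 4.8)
\emph{Proof plan.}
This is the quantitative counterpart of Lemma \ref{L.loop}: for $0<d\le\delta$ one must control the fly time and the space displacement of the loop that the flow of \eqref{eq-disc} makes around the hyperbolic fixed point $\vec0$ when it starts from $\Q_s(d,\tau)$ (resp.\ arrives backward at $\Q_u(d,\tau)$), the extra difficulty being that $\vec0$ lies \emph{on} the switching curve $\Om^0$. The plan is to split each (half)loop into a \emph{regular transition} along a compact arc of $\bs{\Gamma}$ bounded away from $\vec0$, and one or two \emph{saddle passages} near $\vec0$ --- one governed by $\f^+$ in $\Om^+$, one by $\f^-$ in $\Om^-$ --- separated by the crossing of $\Om^0$ close to $\vec0$.

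First I would set up local $C^r$ coordinates near $\vec0$ adapted to the invariant leaves of $\dot\x=\f^\pm(\x)+\ep\g(t,\x,\ep)$. By \assump{G} the origin stays a fixed point, and by \assump{F0} the time-dependent linearisations $\bs{f^\pm_x}(\vec0)+\ep\bs{g_x}(t,\vec0,\ep)$ are uniformly hyperbolic for $\ep$ small; by exponential-dichotomy / invariant-manifold arguments (as behind Theorem \ref{TFSF2_2}) one obtains $C^r$ local stable and unstable leaves $W^{s,\pm}(\tau)$, $W^{u,\pm}(\tau)$, which are $O(\ep)$-close to the unperturbed ones, together with the flow estimate that in these coordinates the stable component contracts like $\ee^{\la_s^\pm t}$ and the unstable one expands like $\ee^{\la_u^\pm t}$, up to factors $\ee^{\pm\mu'|t|}$ absorbing the nonlinear and $\ep$-dependent corrections; this is where the tolerance $\mu$ of \eqref{D1T1}--\eqref{T1T-1}, tuned through \eqref{mu0}, enters. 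Along the compact pieces of $\bs{\Gamma}$ away from $\vec0$ the flow is a $C^r$ diffeomorphism with derivatives and inverse bounded uniformly in $\tau,\ep$ (Remark \ref{data.smooth}), so the transition from $\Q_s(d,\tau)\in L^0(\sqrt\ep)$ to a fixed small cross-section transverse to $W^{s,+}$ near $\vec0$ multiplies the transverse coordinate $d$ by a bounded factor bounded away from $0$ and costs only bounded time; the same holds for the exit transitions back to $L^0$ and for the backward analogues.

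Next comes the \emph{half-passage in} $\Om^+$. A trajectory entering the box around $\vec0$ with transverse coordinate $\asymp d$ (unstable coordinate $\asymp d$, stable coordinate $\asymp 1$) meets $\Om^0$, which by \assump{F1} is transverse to both $\vec v_s^+$ and $\vec v_u^+$ at $\vec0$; solving for the first crossing yields crossing time $\tfrac{1}{\la_u^++|\la_s^+|}\bigl(|\ln d|+O(1)\bigr)$, i.e.\ the $\TTT_{\frac{1}{2}}$ bound with $\sTfwd_+=\tfrac{1}{\la_u^++|\la_s^+|}$, and a crossing point both of whose eigencoordinates are $\asymp d^{\sfwd_+}$, $\sfwd_+=\tfrac{|\la_s^+|}{\la_u^++|\la_s^+|}$, hence $\|\PPP_{\frac{1}{2}}(d,\tau)\|\le d^{\sfwd_+-\mu}$; the symmetric computation run backward in $\Om^-$ gives the $\PPP_{-\frac{1}{2}}$ and $\TTT_{-\frac{1}{2}}$ estimates with $\sbwd_-=\la_u^-/(\la_u^-+|\la_s^-|)$ and $\sTbwd_-$. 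For the \emph{full} forward loop one continues from $\PPP_{\frac{1}{2}}(d,\tau)\in L^{\inn}$: it lies at distance $\asymp d^{\sfwd_+}$ from $\vec0$ and, since $\Om^0$ is transverse to $\vec v_s^-$, has $\f^-$-stable coordinate $\asymp d^{\sfwd_+}$ as well, so the $\f^-$-flow pushes it out of the box across a section transverse to $W^{u,-}$ in extra time $\tfrac{\sfwd_+}{\la_u^-}|\ln d|$ and with transverse coordinate $\asymp\bigl(d^{\sfwd_+}\bigr)^{1+|\la_s^-|/\la_u^-}=d^{\sfwd_+\sfwd_-}=d^{\sfwd}$; composing with the regular exit transition gives $\dist(\PPP_1(d,\tau),\P_u(\TTT_1(d,\tau)))\asymp d^{\sfwd}$, which yields \emph{both} inequalities of \eqref{D1T1} --- in particular the lower one, since $\asymp$ is preserved throughout --- and $\TTT_1(d,\tau)-\tau=\bigl(\sTfwd_++\tfrac{\sfwd_+}{\la_u^-}\bigr)|\ln d|+O(1)$, which a one-line computation identifies with $\sTfwd|\ln d|+O(1)$. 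The backward loop is treated identically after exchanging $\f^+\leftrightarrow\f^-$ and stable $\leftrightarrow$ unstable, producing $d^{\sbwd}$ and $\sTbwd$. Every map involved is a composition of the $C^r$ invariant-manifold maps, the $C^r$ regular transition maps, and the crossing maps (the latter $C^r$ by the implicit function theorem, the crossings being transversal by \assump{F1} and \assump{K}), so $\TTT_{\pm1},\PPP_{\pm1}$ are $C^r$ in $(d,\tau)$ and in $\ep$; all bounds are uniform in $\tau\in\R$ and $0<\ep\le\ep_0$ because hyperbolicity, transversality and the $C^r_b$ bounds on $\f^\pm,\g$ are, and one finishes by fixing $\ep_0,\delta$ small and $0<d\le\delta$ small so that every $O(1)$ error fits inside $\mu$.

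\emph{Main obstacle.}
The delicate point is the saddle passages near a hyperbolic point lying \emph{on} $\Om^0$: the classical Dulac / transition-map estimates must be redone because a trajectory leaves the box not across a section transverse to $W^u$ but across $\Om^0$, which cuts transversally through the saddle and through both eigendirections. One must check that this exit crossing is well defined, transversal and $C^r$, that the half-passage exponents $\sfwd_+,\sbwd_-$ and times $\sTfwd_+,\sTbwd_-$ come out exactly as above, and --- this is what forces the \emph{lower} bounds in \eqref{D1T1} --- that $\PPP_{\pm\frac{1}{2}}$ keeps its ``opposite side'' eigencoordinate of the full order $d^{\sigma}$, not smaller, when the dynamics switches from $\f^+$ to $\f^-$; here \assump{F2} (no sliding near $\vec0$ in Scenario 1, cf.\ the remark following \assump{F2}) guarantees that the trajectory genuinely enters $\Om^-$ along close to $\tilde{W}^u$. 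The book-keeping heart of the argument is propagating the $\ee^{\pm\mu'|t|}$ corrections through flow times $|t|\asymp|\ln d|$, where they turn into factors $d^{\mp\mu''}$, uniformly in $\tau$ and $\ep$; the non-integer case $r=r_0+r_1$ requires only the standard H\"older version of the invariant-manifold step.
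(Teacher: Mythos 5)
Theorem \ref{key} is quoted here as a \emph{knowntheorem}, cited from \cite[Theorem~4.2]{CFPRimut}: the present paper contains no proof of it, so there is no in-paper argument against which to compare your proposal. Judged on its own, your plan is the natural route and, as far as I can reproduce the exponent bookkeeping, a correct one: split the loop from $\Q_s(d,\tau)$ into regular transitions along the compact arcs of $\bs{\Gamma}$, a saddle half-passage in $\Om^+$ ending at the transversal crossing of $\Om^0$ near $\vec0$, and a saddle half-passage in $\Om^-$ starting from that crossing, and read off time and displacement from the linearised dynamics with a $\pm\mu$ tolerance absorbing nonlinear and $O(\ep)$ corrections. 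All your constants check: the first half-passage takes time $\sTfwd_+|\ln d|$ and lands with both eigencoordinates $\asymp d^{\sfwd_+}$, the second takes time $(\sfwd_+/\la_u^-)|\ln d|$ and exits with transverse coordinate $\asymp d^{\sfwd_+\sfwd_-}=d^{\sfwd}$, and indeed $\sTfwd_+ + \sfwd_+/\la_u^- = \sTfwd$; the backward loop is symmetric. You also correctly identify the genuine novelty, namely that the exit section of the first half-passage is $\Om^0$ itself, transverse to \emph{both} eigendirections at $\vec0$, which makes the two eigencoordinates of $\PPP_{\frac12}$ of the same order $d^{\sfwd_+}$ rather than one being negligible; this is exactly what delivers the two-sided bounds in \eqref{D1T1}, and \assump{F2} is what ensures the trajectory actually crosses into $\Om^-$ there instead of sliding.

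The caveat is the one you flag yourself: this is a proof \emph{plan}, not a proof. The rigorous nonsmooth Dulac-type transition estimate near a saddle lying on the switching curve, the propagation of the H\"older error through flow times $\asymp|\ln d|$ so that it only costs a factor $d^{\mp\mu}$, and the uniformity of everything in $\tau\in\R$ and $0<\ep\le\ep_0$ are all asserted rather than established, and those are exactly the content of \cite[\S\,4]{CFPRimut}. Within the scope of an outline I see no gap, but the hard analysis is not done here.
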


 Let us set $\tilde{V}^{\pm}(\tau)=\tilde{V}(\tau) \cap \Om^{\pm}$.

\begin{knowntheorem}\label{keymissed}\cite[Theorem 4.3]{CFPRimut}
Assume \assump{F0}, \assump{F1}, \assump{F2}, \assump{K}, \assump{G}  and    let $\f^\pm$ and $\g$ be $C^r$ with $r>1$.
We can find $\ep_0>0$, $\delta>0$,  such that for any $0<\ep \le \ep_0$,   $0<d \le \delta$,   $0< \mu <\mu_0$   and any $\tau \in \R$
   we find
  \begin{equation}\label{keymissed.es-}
   \|\x(t,\tau; \Q_s(d,\tau))-\x(t,\tau; \P_s(\tau))\| \le  d^{\sfwd_+ - \mu }
  \end{equation}
  for any $\tau \le t \le \TTT_{\frac{1}{2}}(d,\tau)$,   and
  \begin{equation}\label{keymissed.es+}
  \|\x(t,\tau; \Q_s(d,\tau))-\x(t,\TTT_1(d,\tau); \P_u(\TTT_1(d,\tau)))\| \le  d^{\sfwd_+ - \mu }
  \end{equation}
    for any $\TTT_{\frac{1}{2}}(d,\tau) \le t \le \TTT_1(d,\tau)$.
    Further   $\x(t,\tau; \Q_s(d,\tau))$ is in  $\tilde{V}^+(t)$  for any $\tau<t<
\mathscr{T}_{\frac{1}{2}}(d,\tau)$ and it is
in
$\tilde{V}^-(t)$ for any $\mathscr{T}_{\frac{1}{2}}(d,\tau)<t< \mathscr{T}_{1}(d,\tau)$.

Similarly, for any $0<\ep \le \ep_0$,   $0<d \le \delta$,   $0< \mu <\mu_0$   and any $\tau \in \R$  we find
\begin{equation}\label{keymissed.eu+}
	\|\x(t,\tau; \Q_u(d,\tau))-\x(t,\tau; \P_u(\tau))\| \le  d^{\sbwd_- - \mu}
\end{equation}
  for any $   \TTT_{-\frac{1}{2}}(d,\tau)  \le t \le \tau$,    and
\begin{equation}\label{keymissed.eu-}
	\|\x(t,\tau; \Q_u(d,\tau))-\x(t,\TTT_{-1}(d,\tau); \P_s(\TTT_{-1}(d,\tau)))\| \le  d^{\sbwd_- - \mu}
\end{equation}
    for any $\TTT_{-1}(d,\tau) \le t \le \TTT_{-\frac{1}{2}}(d,\tau)$.
    Further $\x(t,\tau; \Q_u(d,\tau))$ is in  $\tilde{V}^-(t)$  for any $
\mathscr{T}_{-\frac{1}{2}}(d,\tau)<t<\tau$
and it is in  $\tilde{V}^+(t)$  for any $\mathscr{T}_{-1}(d,\tau)<t< \mathscr{T}_{-\frac{1}{2}}(d,\tau)$.
\end{knowntheorem}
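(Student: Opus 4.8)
The plan is to reduce each of the four inequalities to a variational (Gronwall) estimate on a single \emph{smooth} sub-arc of the loop, feeding in the time bounds \eqref{T1T-1} and the return bounds \eqref{D1T1} of Theorem~\ref{key} to turn exponential-in-time growth into polynomial-in-$d$ growth. I first settle the location statement at the end of the claim, since it is what guarantees that on each relevant window the two orbits being compared obey one $C^r$ equation. For it: $\P_s(\tau)\in\tilde W^s(\tau)$, so by \eqref{proprieta} the orbit $\x(t,\tau;\P_s(\tau))$ stays in $\tilde W^s(t)\subset\Om^+\cup\Om^0$ for all $t\ge\tau$; and $\Q_s(d,\tau)\in A^{\textrm{fwd}}(\tau)\subset\tilde V(\tau)$, so by Lemma~\ref{L.loop} the orbit $\x(t,\tau;\Q_s(d,\tau))$ first returns to $\Om^0$ exactly at $t=\TTT_{1/2}(d,\tau)$ and hence lies in $\Om^+$ on $]\tau,\TTT_{1/2}(d,\tau)[$; it cannot leave $\tilde V(t)$ across the boundary arc $\tilde W^s(t)$, since by local uniqueness in $B(\bs{\Gamma},\beta)$ (Remark~\ref{data.smooth}) and the forward invariance in \eqref{proprieta} this would force convergence to $\vec{0}$, contradicting the crossing at $\TTT_{1/2}(d,\tau)$. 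Thus $\x(t,\tau;\Q_s(d,\tau))\in\tilde V^+(t)$ there, and by the same Lemma it sits in $\tilde V^-(t)$ on $]\TTT_{1/2}(d,\tau),\TTT_1(d,\tau)[$; the $\Q_u$-statements are symmetric with time reversed.

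For \eqref{keymissed.es-} set $\z(t)=\x(t,\tau;\Q_s(d,\tau))-\x(t,\tau;\P_s(\tau))$, so that $\dot{\z}=\bs B(t)\z$ with $\bs B(t)=\int_0^1(\bs{f_x^+}+\ep\bs{g_x})(s\,\x(t,\tau;\Q_s(d,\tau))+(1-s)\x(t,\tau;\P_s(\tau)),\dots)\,ds$; this is legitimate on $[\tau,\TTT_{1/2}(d,\tau)]$ because both orbits stay in $\Om^+\cup\Om^0$. Using the $C^r$, $\tau$-independent chart near $\vec{0}$ that straightens $\tilde W^s$ onto the stable eigendirection of $\bs{f_x^+}(\vec{0})$, which is the device already underlying Theorem~\ref{key}, the reference orbit has vanishing unstable component; $\|\z(\tau)\|\le C_1 d$ since $\Om^0$ is a regular curve; the unstable component of $\z$ grows at rate $\la_u^++O(\delta)$ while the stable difference decays; and a Gronwall estimate on the chart, glued to a crude bound on the bounded-time stretch away from $\vec{0}$, gives $\|\z(t)\|\le C\,d\,\eu^{(\la_u^++\mu')(t-\tau)}$ uniformly in $\tau\in\R$, where $\mu'>0$ is as small as we like. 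Inserting $t-\tau\le[\sTfwd_++\mu']|\ln d|$ from \eqref{T1T-1} and the identity $\sfwd_+=1-\la_u^+\sTfwd_+$ gives $\|\z(t)\|\le C\,d^{\,\sfwd_+-c\mu'}$ with $c>0$ depending only on the eigenvalues, hence $\|\z(t)\|\le d^{\,\sfwd_+-\mu}$ once $\mu'<\mu/c$ and $\ep_0,\delta$ are small (the constant $C$ being absorbed); for $t$ near $\tau$ one uses $d\le d^{\,\sfwd_+-\mu}$ directly, as $\sfwd_+-\mu<1$.

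For \eqref{keymissed.es+} the same computation is run \emph{backward} from $t=\TTT_1(d,\tau)$ with the $\Om^-$-dynamics: $\x(\cdot,\TTT_1(d,\tau);\P_u(\TTT_1(d,\tau)))$ lies on $\tilde W^u(\cdot)$, so going backward its component off the leaf grows at rate $|\la_s^-|$; its distance at $t=\TTT_1(d,\tau)$ from $\PPP_1(d,\tau)$ is $\le d^{\,\sfwd-\mu}$ along $\Om^0$ by \eqref{D1T1}; and the window has length $\le\TTT_1(d,\tau)-\TTT_{1/2}(d,\tau)\le[\sTfwd-\sTfwd_++2\mu']|\ln d|$ by \eqref{T1T-1}. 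Since a short computation with the constants in \eqref{defsigma} gives $\sfwd-|\la_s^-|(\sTfwd-\sTfwd_+)=\sfwd_+$, the separation comes out $\le d^{\,\sfwd_+-\mu}$ by the same bookkeeping (for $t$ near $\TTT_1(d,\tau)$ use $d^{\,\sfwd}\le d^{\,\sfwd_+}$, as $\sfwd_->1$). The two unstable-side bounds mirror this: for \eqref{keymissed.eu+} compare on $[\TTT_{-1/2}(d,\tau),\tau]$ inside $\Om^-$, going backward, with $\P_u(\tau)$ on $\tilde W^u$, rate $|\la_s^-|$, window $\le[\sTbwd_-+\mu']|\ln d|$, and identity $\sbwd_-=1-|\la_s^-|\sTbwd_-$; for \eqref{keymissed.eu-} compare on $[\TTT_{-1}(d,\tau),\TTT_{-1/2}(d,\tau)]$ inside $\Om^+$, going forward from $\TTT_{-1}(d,\tau)$, with $\P_s(\TTT_{-1}(d,\tau))$ on $\tilde W^s$, rate $\la_u^+$, initial separation $\le d^{\,\sbwd-\mu}$ from $\PPP_{-1}(d,\tau)$ by \eqref{D1T1}, window $\le[\sTbwd-\sTbwd_-+2\mu']|\ln d|$, and identity $\sbwd-\la_u^+(\sTbwd-\sTbwd_-)=\sbwd_-$ (the near-endpoint cases being handled as before, using $\sbwd>\sbwd_-$).

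The step I expect to be the real obstacle is making these hyperbolic Gronwall estimates genuinely uniform in $\tau\in\R$ in the piecewise-smooth framework: one cannot invoke a smooth linearization or a Hartman--Grobman theorem at $\vec{0}$, so one must work on each half-plane with the constant linear part $\bs{f_x^\pm}(\vec{0})$, control the nonlinear and $\ep\g$ contributions as small perturbations on a fixed neighborhood of $\vec{0}$, and then match the smooth pieces precisely at the crossing times $\TTT_{\pm 1/2}(d,\tau),\TTT_{\pm 1}(d,\tau)$, all the while keeping track of which eigenvalue governs which sub-arc so that the exponents come out exactly as $\sfwd_\pm-\mu$, not something weaker. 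This is the same analytic package that delivers Theorem~\ref{key}; the only new content of Theorem~\ref{keymissed} is that the orbit is followed throughout the loop rather than only at the return points $\PPP_{\pm1}$, $\P_u$, $\P_s$, so in practice one proves the two together, as in \cite{CFPRimut}.
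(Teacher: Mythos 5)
The paper does not actually prove Theorem~\ref{keymissed}: it is stated as a \emph{known theorem} and cited verbatim from \cite[Theorem~4.3]{CFPRimut}, so there is no in-paper argument to compare against. With that said, your sketch is sound and is almost certainly the strategy of the cited source (you say so yourself at the end): a variational/Gronwall bound on each smooth sub-arc, with hyperbolic rate $\la_u^+$ on the approach in $\Om^+$ (resp.\ $|\la_s^-|$ on the backward passage in $\Om^-$), turned into a polynomial-in-$d$ bound by inserting the fly-time estimates \eqref{T1T-1} and the return-displacement estimates \eqref{D1T1} of Theorem~\ref{key}, with the bounded-time stretch far from $\vec{0}$ contributing only an absorbable $O(1)$ factor. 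I checked your exponent arithmetic and it is all correct: $\sfwd_+=1-\la_u^+\sTfwd_+$, $\sfwd-|\la_s^-|(\sTfwd-\sTfwd_+)=\sfwd_+$, $\sbwd_-=1-|\la_s^-|\sTbwd_-$, and $\sbwd-\la_u^+(\sTbwd-\sTbwd_-)=\sbwd_-$; and the near-endpoint comparisons $d\le d^{\sfwd_+-\mu}$, $d^{\sfwd}\le d^{\sfwd_+}$, $d^{\sbwd}\le d^{\sbwd_-}$ are right since $\sfwd_+,\sbwd_-<1<\sfwd_-,\sbwd_+$. Your location argument via the invariance in \eqref{proprieta}, Lemma~\ref{L.loop}, and local uniqueness in $B(\bs{\Gamma},\beta)$ (Remark~\ref{data.smooth}) is also the natural one. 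The step you single out as the real obstacle --- uniformity in $\tau$ of the piecewise hyperbolic estimates and exact matching of the $\Om^+$ and $\Om^-$ pieces at $\TTT_{\pm 1/2}$, $\TTT_{\pm 1}$ --- is indeed where all the work lives, and it is precisely the analytic package delivering Theorem~\ref{key}, which this paper likewise quotes from \cite{CFPRimut} without reproving; so deferring to it is both reasonable and consistent with how the paper itself treats this material.
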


Recalling that there is $\bar{c}^*>0$ such that
   \begin{equation}\label{trajWuWs}
   \begin{split}
      \| \x(t,\tau ; \P_u(\tau)) -\ga^-(t-\tau) \| \le \bar{c}^* \ep & \qquad \qquad \textrm{for any $t \le \tau$}, \\
        \| \x(t,\tau ; \P_s(\tau)) -\ga^+(t-\tau) \| \le \bar{c}^* \ep & \qquad \qquad \textrm{for any $t \ge \tau$}.
   \end{split}
 \end{equation}
 we find the following results.

 \begin{knownproposition}\label{forPropC}\cite[Proposition 4.7]{CFPrestate} 
  Assume \assump{F0}, \assump{F1}, \assump{F2}, \assump{K}, \assump{G}, then we can find $\ep_0$ such that for
  any $0< \ep \le \ep_0$ we have the following.

  Fix $\tau \in \R$, $d \in J_0$ and $\nu \ge \nu_0$, then there is $c^*>0$
  (independent of $d$, $\nu$ and $\ep$) such that
  $$
  \| \x(t,\tau; \Q_s(d,\tau))-\ga^+(t-\tau)\| \le \frac{c^*}{2} \ep $$
  for any $\tau \le t \le \TTT_{\frac{1}{2}}(d,\tau)$, and
   $$
  \| \x(t,\tau; \Q_s(d,\tau))-\ga^-(t-\TTT_1(d,\tau))\| \le \frac{c^*}{2} \ep $$
  for any $\TTT_{\frac{1}{2}}(d,\tau) \le t \le \TTT_1(d,\tau)$.

  Further,
  $$
  \| \x(t,\tau; \Q_u(d,\tau))-\ga^-(t-\tau)\| \le \frac{c^*}{2} \ep $$
  for any $  \TTT_{-\frac{1}{2}}(d,\tau)\le t \le \tau$, and
   $$
  \| \x(t,\tau; \Q_u(d,\tau))-\ga^+(t-\TTT_{-1}(d,\tau))\| \le \frac{c^*}{2} \ep $$
  for any $\TTT_{-1}(d,\tau) \le t \le \TTT_{-\frac{1}{2}}(d,\tau)$.
\end{knownproposition}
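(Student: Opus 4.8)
The plan is to bootstrap from the "geometric" estimates of Theorems \ref{key} and \ref{keymissed} (which compare the relevant trajectories to $\x(\cdot,\tau;\P_s(\tau))$, $\x(\cdot,\TTT_1(d,\tau);\P_u(\TTT_1(d,\tau)))$, etc.) to the "homoclinic" estimates requested here (which compare them to translates of $\ga^{\pm}$). The bridge is \eqref{trajWuWs}, which already says that the stable/unstable fibers shadow the corresponding branch of $\ga$ up to $\bar c^*\ep$. First I would treat the forward piece for $\Q_s(d,\tau)$ on $[\tau,\TTT_{\frac12}(d,\tau)]$: by the triangle inequality,
\begin{equation*}
\|\x(t,\tau;\Q_s(d,\tau))-\ga^+(t-\tau)\| \le \|\x(t,\tau;\Q_s(d,\tau))-\x(t,\tau;\P_s(\tau))\| + \|\x(t,\tau;\P_s(\tau))-\ga^+(t-\tau)\|,
\end{equation*}
where the first term is $\le d^{\sfwd_+-\mu}$ by \eqref{keymissed.es-} and the second is $\le \bar c^*\ep$ by \eqref{trajWuWs}. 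The key point is then to absorb $d^{\sfwd_+-\mu}$ into a multiple of $\ep$: since $d \in J_0$ means $d \le \ep^{(1+\nu)/\und\si}$ and $\sfwd_+ \ge \und\si$ while $\mu < \mu_0 \le \tfrac14\und\si^2$, one checks that $d^{\sfwd_+-\mu} \le \ep^{(1+\nu)(\sfwd_+-\mu)/\und\si} \le \ep$ for $\ep_0$ small (the exponent exceeds $1$, using $\nu \ge \nu_0 \ge 1$), so the sum is $\le (\bar c^*+1)\ep =: \tfrac{c^*}{2}\ep$.

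Next I would handle the second forward leg, $t \in [\TTT_{\frac12}(d,\tau),\TTT_1(d,\tau)]$, in the same way but using \eqref{keymissed.es+} together with the estimate for $\x(\cdot,\TTT_1(d,\tau);\P_u(\TTT_1(d,\tau)))$ from \eqref{trajWuWs}: there, $\x(t,\TTT_1(d,\tau);\P_u(\TTT_1(d,\tau)))$ is within $\bar c^*\ep$ of $\ga^-(t-\TTT_1(d,\tau))$ for $t \le \TTT_1(d,\tau)$, which is exactly the range in question. Combining as above and absorbing $d^{\sfwd_+-\mu}$ into $\ep$ gives the claimed bound with respect to $\ga^-(t-\TTT_1(d,\tau))$. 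The two statements for $\Q_u(d,\tau)$ — shadowing $\ga^-(t-\tau)$ on $[\TTT_{-\frac12}(d,\tau),\tau]$ and $\ga^+(t-\TTT_{-1}(d,\tau))$ on $[\TTT_{-1}(d,\tau),\TTT_{-\frac12}(d,\tau)]$ — are obtained by the identical argument, replacing \eqref{keymissed.es-}, \eqref{keymissed.es+} by \eqref{keymissed.eu+}, \eqref{keymissed.eu-} and using $\sbwd_- \ge \und\si$ to absorb $d^{\sbwd_--\mu}$ into $\ep$; the $C^r$ dependence of the break times $\TTT_{\pm1},\TTT_{\pm\frac12}$ on $(d,\tau)$ from Theorem \ref{key} guarantees all the comparisons are made on genuine (overlapping, exhausting) intervals.

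The only mildly delicate point — the "main obstacle" — is the bookkeeping that makes $d^{\sfwd_+-\mu}$ (and $d^{\sbwd_--\mu}$) genuinely $O(\ep)$ \emph{uniformly} in $\tau$, $d \in J_0$ and $\nu \ge \nu_0$: one must verify that the exponent $(1+\nu)(\und\si-\mu)/\und\si$ stays bounded below by $1$ (in fact this is where the choice $\nu_0 = \max\{3\ov\si-1,1\}$ and $\mu_0 \le \tfrac14\und\si^2$ pay off), so that the constant $c^* = 2(\bar c^*+1)$ can be taken independent of those parameters. Everything else is a routine triangle-inequality splicing of already-established estimates, and the final constant $c^*$ is chosen once and for all at the end.
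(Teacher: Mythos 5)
Your proposal is correct and takes the natural route — triangle inequality splicing Theorem \ref{keymissed} with \eqref{trajWuWs}, then absorbing $d^{\sfwd_+-\mu}$ and $d^{\sbwd_--\mu}$ into $O(\ep)$ via the exponent bookkeeping $d \le \ep^{(1+\nu)/\und{\si}}$, $\mu < \tfrac14\und{\si}^2$, $\nu \ge 1$, which gives an exponent $> 3/2 > 1$. The paper cites this proposition from \cite{CFPrestate} without reproducing the proof, but your reconstruction uses exactly the ingredients it is presented alongside, and the constant $c^* = 2(\bar c^*+1)$ is uniform in $\tau$, $d$, $\nu$, $\ep$ as required.
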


\begin{knownlemma}\label{forgottentimes}\cite[Lemma 4.9]{CFPrestate} 
  Let  $d \in J_0$    and $c^*>0$ be as in Proposition \ref{forPropC} and denote
  \begin{equation}\label{TaTb}
  	T_a:=\frac{1}{\la_u^-}|\ln \ep|,\qquad T_b:=\frac{1}{|\la_s^+|}|\ln\ep|.
  \end{equation}
  Then
  \begin{equation}\label{T.half}
  \begin{split}
       & \tau +T_b  \le \TTT_{\frac{1}{2}}(d,\tau) \le \TTT_1(d,\tau)-T_a  \qquad  \textrm{and}   \\
       &  \|x(t,\tau ; \Q_s(d,\tau))\| \le  \frac{3}{4}  c^* \ep  \qquad  \textrm{for any $t \in [\tau +T_b ,\TTT_1(d,\tau)-T_a]$.}
  \end{split}
  \end{equation}
  Further,
   \begin{equation}\label{T.halfbis}
  \begin{split}
       & \TTT_{-1}(d,\tau) +T_b  \le \TTT_{-\frac{1}{2}}(d,\tau) \le \tau-T_a  \qquad  \textrm{and}   \\
       &  \|x(t,\tau ; \Q_u(d,\tau))\| \le   \frac{3}{4}  c^* \ep  \qquad  \textrm{for any $t \in [\TTT_{-1}(d,\tau) +T_b ,\tau-T_a]$.}
  \end{split}
  \end{equation}
\end{knownlemma}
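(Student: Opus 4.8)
The plan is to obtain the four estimates purely from results quoted above: the loop--time bounds \eqref{T1T-1} of Theorem \ref{key}, the shadowing estimate \eqref{keymissed.es+} of Theorem \ref{keymissed}, the closeness bounds of Proposition \ref{forPropC}, and \eqref{trajWuWs}. I would prove the forward statement \eqref{T.half} about $\Q_s(d,\tau)$ in detail, and then observe that \eqref{T.halfbis} (about $\Q_u(d,\tau)$) follows verbatim by feeding the ``second half'' of each of those results into the same argument --- i.e.\ the time--reversed half--loop --- using the lower bounds for $\tau-\TTT_{-1}(d,\tau)$, $\tau-\TTT_{-\frac12}(d,\tau)$ in \eqref{T1T-1}, the estimate \eqref{keymissed.eu-}, and the last two bounds of Proposition \ref{forPropC}. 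Throughout I would fix the constant $c^*$ of Proposition \ref{forPropC} as large as needed (enlarging $c^*$ only weakens that statement), and keep the internal parameter $\mu\in]0,\mu_0[$ at my disposal.

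\textbf{Step 1 (the time inequalities $\tau+T_b\le\TTT_{\frac12}(d,\tau)\le\TTT_1(d,\tau)-T_a$).} Since $d\in J_0$, \eqref{J0} gives $|\ln d|\ge\frac{1+\nu}{\underline{\sigma}}|\ln\ep|$ with $\nu\ge\nu_0\ge1$; combined with $\underline{\sigma}=\min\{\sfwd_+,\sbwd_-\}\le\sfwd_+$ this yields $(1+\nu)\sfwd_+\ge2\underline{\sigma}$, essentially the only arithmetic fact I will need. Plugging the lower bound on $|\ln d|$ into the third line of \eqref{T1T-1} and using $\sTfwd_+\,|\la_s^+|=\sfwd_+$, I would get $\TTT_{\frac12}(d,\tau)-\tau\ge\frac{(1+\nu)(\sfwd_+-\mu|\la_s^+|)}{\underline{\sigma}}\,T_b$, whose prefactor exceeds $1$ once $\mu$ is chosen small enough (depending only on the eigenvalues), hence $\TTT_{\frac12}(d,\tau)-\tau\ge T_b$. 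For the remaining inequality I would combine the lower bound for $\TTT_1(d,\tau)-\tau$ (first line of \eqref{T1T-1}) with the upper bound for $\TTT_{\frac12}(d,\tau)-\tau$ (third line), getting $\TTT_1(d,\tau)-\TTT_{\frac12}(d,\tau)\ge[\sTfwd-\sTfwd_+-2\mu]|\ln d|$, and then use the identity $\sTfwd-\sTfwd_+=\frac{|\la_s^+|}{\la_u^-(\la_u^++|\la_s^+|)}=\frac{\sfwd_+}{\la_u^-}$; the same arithmetic (again $(1+\nu)\sfwd_+\ge2\underline{\sigma}$, $\mu$ small) gives $\TTT_1(d,\tau)-\TTT_{\frac12}(d,\tau)\ge\frac1{\la_u^-}|\ln\ep|=T_a$.

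\textbf{Step 2 (the norm bound on $[\tau+T_b,\TTT_1(d,\tau)-T_a]$).} By Step 1 this interval is the union of $[\tau+T_b,\TTT_{\frac12}(d,\tau)]$ and $[\TTT_{\frac12}(d,\tau),\TTT_1(d,\tau)-T_a]$. On the first I would use Proposition \ref{forPropC}, $\|\x(t,\tau;\Q_s(d,\tau))-\ga^+(t-\tau)\|\le\frac{c^*}2\ep$; since by \assump{F0} and \eqref{ass.scenario} $\ga^+(s)$ tends to $\vec0$ tangentially to $-\vec v_s^+$ at rate $|\la_s^+|$, one has $\|\ga^+(s)\|\le C\ee^{-|\la_s^+|s}$ for $s\ge0$, so $\|\ga^+(t-\tau)\|\le C\ep$ when $t-\tau\ge T_b$, whence $\|\x(t,\tau;\Q_s(d,\tau))\|\le(C+\tfrac{c^*}2)\ep\le\frac34c^*\ep$ for $c^*$ large. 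On the second interval I would combine \eqref{keymissed.es+} with \eqref{trajWuWs} to get $\|\x(t,\tau;\Q_s(d,\tau))-\ga^-(t-\TTT_1(d,\tau))\|\le d^{\sfwd_+-\mu}+\bar c^*\ep$; here $\|\ga^-(s)\|\le C'\ee^{\la_u^-s}$ for $s\le0$ gives $\|\ga^-(t-\TTT_1(d,\tau))\|\le C'\ep$ for $t\le\TTT_1(d,\tau)-T_a$, while $d\in J_0$ forces $d^{\sfwd_+-\mu}\le\ep$ because $\frac{(1+\nu)(\sfwd_+-\mu)}{\underline{\sigma}}\ge1$; thus $\|\x(t,\tau;\Q_s(d,\tau))\|\le(C'+\bar c^*+1)\ep\le\frac34c^*\ep$ for $c^*$ large, which proves \eqref{T.half}.

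\textbf{Main obstacle.} I expect Step 1 to be the crux: it is exactly there that the absolute constants $\nu_0$, $\mu_0$ and the definition of $J_0$ (the restriction $d\le\ep^{(1+\nu)/\underline{\sigma}}$) must be calibrated so that the loop--time windows of Theorem \ref{key} strictly contain the boundary--layer times $T_a$, $T_b$, and checking this is a sharp arithmetic in the four eigenvalues $\la^\pm_{s,u}$. A secondary, standard ingredient is the decay of $\ga^\pm(s)$ towards $\vec0$ at the eigenvalue rate, which is legitimate here because each of $W^s(\tau)$, $W^u(\tau)$ is one--dimensional, so no resonance within the stable (resp.\ unstable) direction can slow the approach below $\ee^{-|\la_s^+|s}$ (resp.\ $\ee^{\la_u^-s}$).
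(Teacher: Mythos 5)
The paper does not reprove this lemma -- it is stated as a \emph{knownlemma} borrowed wholesale from \cite[Lemma~4.9]{CFPrestate}, so there is no proof in the present source to compare against. Evaluating your argument on its own merits: the structure is right, and it is the natural way to assemble the ingredients quoted in \S\ref{S.prel}. Two remarks.

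First, your Step~2 on the second sub-interval $[\TTT_{\frac12},\TTT_1-T_a]$ is needlessly indirect. You re-derive the estimate $\|\x(t,\tau;\Q_s)-\ga^-(t-\TTT_1)\|\le d^{\sfwd_+-\mu}+\bar c^*\ep$ by combining \eqref{keymissed.es+} and \eqref{trajWuWs}, but this is exactly what the second inequality of Proposition~\ref{forPropC} already delivers, with the sharper bound $\frac{c^*}{2}\ep$. Using Proposition~\ref{forPropC} symmetrically on both halves of the interval makes the two cases strictly parallel and avoids an extra calibration of $d^{\sfwd_+-\mu}\le\ep$ (though that estimate is anyway correct, since $(1+\nu)(\sfwd_+-\mu)\ge\underline{\sigma}$ follows from $\nu\ge1$, $\sfwd_+\ge\underline{\sigma}$ and $\mu<\mu_0\le\underline{\sigma}^2/4<\underline{\sigma}/2$).

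Second, you are right that the crux is the arithmetic in Step~1, and you correctly isolate the essential relation $(1+\nu)\sfwd_+\ge2\underline{\sigma}$. But the phrase ``once $\mu$ is chosen small enough (depending only on the eigenvalues)'' hides a genuine verification. The constraint coming from $\TTT_1-\TTT_{\frac12}\ge T_a$ works out to $\mu\le\frac{1}{2\la_u^-}\left(\sfwd_+-\frac{\underline{\sigma}}{1+\nu}\right)$, and whether $\mu<\mu_0=\frac14\min\{\sTfwd_+,\sTbwd_-,\underline{\sigma}^2\}$ automatically forces this for all admissible eigenvalue quadruples is a non-trivial check (it is tight in the Hamiltonian case $\la_u^\pm=-\la_s^\pm=\la$ when $\la$ is large). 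The cited lemma in \cite{CFPrestate} presumably performs exactly this calibration, so the honest statement is not ``choose $\mu$ small'' but ``the definitions \eqref{defK0-new}, \eqref{J0} and \eqref{mu0} were designed so that these inequalities hold''; as written your argument leaves open whether the \emph{given} $\mu_0$ is small enough. Your observation that the decay of $\ga^\pm$ at the exact eigenvalue rate (no subexponential loss) is legitimate in dimension one is correct and worth keeping: on a $1$-dimensional invariant manifold with $C^{1+\alpha}$ vector field, $\|\ga^+(s)\|\le C\ee^{\la_s^+ s}$ with a genuine constant, not just $\ee^{(\la_s^++\eta)s}$, and that is exactly what makes the time scales $T_a$, $T_b$ the right ones.
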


\begin{knownlemma}\label{forgotten.origin}\cite[Lemma 4.10]{CFPrestate} 
  Let  $d \in J_0$ and $c^*>0$ be as in Proposition \ref{forPropC}.
  If $t \in [\tau +T_b ,\TTT_1(d,\tau)-T_a]$ we get
  \begin{eqnarray}
   &  \|x(t,\tau ; \Q_s(d,\tau))\| \le  c^* \ep \label{est.origin},  \\
  &  \|x(t,\tau ; \Q_s(d,\tau))- \ga(t-\tau)\| \le  c^* \ep  \label{est.origin-}, \\
  &  \|x(t,\tau ; \Q_s(d,\tau))- \ga(t-\TTT_1(d,\tau))\| \le  c^* \ep \label{est.origin+}.
  \end{eqnarray}
\end{knownlemma}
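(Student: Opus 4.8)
The plan is to deduce Lemma \ref{forgotten.origin} from the results immediately preceding it, namely Proposition \ref{forPropC}, Lemma \ref{forgottentimes}, and the basic estimate \eqref{trajWuWs} on the distance between the perturbed leaves $W^{u,s}$ and the unperturbed homoclinic branches $\ga^{\pm}$. The key observation is that in the time window $t \in [\tau + T_b, \TTT_1(d,\tau) - T_a]$ the trajectory $\x(t,\tau;\Q_s(d,\tau))$ is simultaneously close to the origin, close to a time-translate $\ga^+(t-\tau)$ (which is also near the origin for $t$ large), and close to a time-translate $\ga^-(t-\TTT_1(d,\tau))$ (which is near the origin for $t-\TTT_1$ very negative). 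So all three quantities on the left-hand sides of \eqref{est.origin}, \eqref{est.origin-}, \eqref{est.origin+} are $O(\ep)$, and the point is just to chase the constants carefully.

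First I would establish \eqref{est.origin}: this is essentially a restatement of the second line of \eqref{T.half} in Lemma \ref{forgottentimes}, where we already have $\|\x(t,\tau;\Q_s(d,\tau))\| \le \tfrac34 c^* \ep \le c^*\ep$ on exactly the interval $[\tau+T_b, \TTT_1(d,\tau)-T_a]$. Next, for \eqref{est.origin-}, I would split the interval at $\TTT_{\frac12}(d,\tau)$. On $[\tau+T_b, \TTT_{\frac12}(d,\tau)]$ (using that $\tau + T_b \le \TTT_{\frac12}(d,\tau)$ from \eqref{T.half}) Proposition \ref{forPropC} gives $\|\x(t,\tau;\Q_s(d,\tau)) - \ga^+(t-\tau)\| \le \tfrac{c^*}{2}\ep$, and since $\ga^+ = \ga$ by definition this already yields \eqref{est.origin-} on that subinterval. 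On the remaining subinterval $[\TTT_{\frac12}(d,\tau), \TTT_1(d,\tau)-T_a]$, I would instead bound $\|\x(t,\tau;\Q_s(d,\tau))\| \le \tfrac34 c^*\ep$ from \eqref{T.half} and $\|\ga(t-\tau)\| = \|\ga^+(t-\tau)\|$, noting that for $t \ge \TTT_{\frac12}(d,\tau) \ge \tau + T_b$ with $T_b = |\la_s^+|^{-1}|\ln\ep|$, the exponential decay of $\ga^+$ along the stable direction forces $\|\ga^+(t-\tau)\| = O(\ep^{\text{const}}) = o(\ep^{1/2})$ — in fact, combining with the known decay rate, one gets $\|\ga(t-\tau)\| \le \tfrac14 c^* \ep$ for $\ep_0$ small. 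Adding the two bounds gives $\|\x(t,\tau;\Q_s(d,\tau)) - \ga(t-\tau)\| \le c^*\ep$, completing \eqref{est.origin-}.

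The proof of \eqref{est.origin+} is entirely symmetric: split at $\TTT_{\frac12}(d,\tau)$ again. On $[\TTT_{\frac12}(d,\tau), \TTT_1(d,\tau)-T_a] \subset [\TTT_{\frac12}(d,\tau), \TTT_1(d,\tau)]$, the second estimate of Proposition \ref{forPropC} gives $\|\x(t,\tau;\Q_s(d,\tau)) - \ga^-(t-\TTT_1(d,\tau))\| \le \tfrac{c^*}{2}\ep$, and $\ga^- = \ga$, so \eqref{est.origin+} holds there. On $[\tau+T_b, \TTT_{\frac12}(d,\tau)] \subset [\TTT_1(d,\tau)-\text{(large)}, \TTT_1(d,\tau)-T_a]$, I use $\|\x(t,\tau;\Q_s(d,\tau))\| \le \tfrac34 c^*\ep$ together with the fact that $t \le \TTT_{\frac12}(d,\tau) \le \TTT_1(d,\tau) - T_a$ with $T_a = (\la_u^-)^{-1}|\ln\ep|$, so that $t - \TTT_1(d,\tau) \le -T_a$ is very negative, and the backward (unstable) exponential decay of $\ga^-$ yields $\|\ga(t-\TTT_1(d,\tau))\| = \|\ga^-(t-\TTT_1(d,\tau))\| \le \tfrac14 c^*\ep$ for $\ep_0$ small. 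Summing gives the bound $c^*\ep$ and finishes the lemma.

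The main obstacle — really the only nontrivial point — is verifying that the "tail" contributions $\|\ga^+(t-\tau)\|$ for $t \ge \tau + T_b$ and $\|\ga^-(t-\TTT_1)\|$ for $t - \TTT_1 \le -T_a$ are genuinely absorbed into a constant multiple of $\ep$. This requires knowing the asymptotic decay rates of $\ga(t)$ as $t \to \pm\infty$, which by \eqref{ass.scenario} and \assump{F0} are governed by the eigenvalues $\la_s^+$ (forward) and $\la_u^-$ (backward). Since $T_b$ and $T_a$ were chosen in \eqref{TaTb} precisely with these rates as denominators, we get $\|\ga(t-\tau)\| \lesssim e^{\la_s^+ T_b} \cdot \text{const} = \ep \cdot \text{const}$ and similarly for the backward tail, so the choice of $T_a, T_b$ in Lemma \ref{forgottentimes} is exactly calibrated to make this work. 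One should double-check that the implicit constant in this exponential estimate, multiplied by a possibly large prefactor coming from the homoclinic's behavior, can still be made $\le \tfrac14 c^*$ by shrinking $\ep_0$; this is routine since the prefactor is independent of $\ep$ while the exponential factor is exactly $\ep$ up to a bounded constant, but it is the one spot where the bookkeeping must be done with care. I would also remark that the splitting point $\TTT_{\frac12}$ lies in the required range by the first inequality in each of \eqref{T.half}, so the two subintervals in each case genuinely cover $[\tau+T_b, \TTT_1(d,\tau)-T_a]$.
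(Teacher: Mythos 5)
Your decomposition — (\ref{est.origin}) read off from Lemma \ref{forgottentimes}, then splitting $[\tau+T_b,\TTT_1-T_a]$ at $\TTT_{\frac12}(d,\tau)$ and using Proposition \ref{forPropC} on the side it covers while handling the other side by triangle inequality plus a tail estimate on $\ga$ — is the right structure, and the covering check via the first inequalities of \eqref{T.half} is correctly done. The gap is in the final paragraph, where you write that $\|\ga(t-\tau)\|\lesssim e^{\la_s^+ T_b}\cdot\mathrm{const}=\ep\cdot\mathrm{const}$ and then assert that this is $\le\tfrac14 c^*\ep$ \emph{after shrinking} $\ep_0$ ``since the exponential factor is exactly $\ep$ up to a bounded constant''. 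Those two claims are incompatible: if the tail is genuinely of order $\ep$ with an $\ep$-independent prefactor $C$, then $C\ep\le\tfrac14 c^*\ep$ is the fixed inequality $C\le\tfrac14 c^*$, and shrinking $\ep_0$ changes nothing. You cannot absorb an $O(\ep)$ term by taking $\ep$ small.

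The step is salvageable, but you need a sharper lower bound on the time offset than the bare $\TTT_{\frac12}-\tau\ge T_b$ that \eqref{T.half} provides. Since $d\in J_0=(0,\ep^{(1+\nu)/\underline{\sigma}}]$ with $\nu\ge\nu_0\ge 1$, one has $|\ln d|\ge\frac{1+\nu}{\underline{\sigma}}|\ln\ep|\ge\frac{2}{\underline{\sigma}}|\ln\ep|$; feeding this into \eqref{T1T-1} from Theorem \ref{key} and using $\underline{\sigma}\le\sfwd_+$, $\sTfwd_+/\sfwd_+=1/|\la_s^+|$ gives $\TTT_{\frac12}(d,\tau)-\tau\ge[\sTfwd_+-\mu]\frac{1+\nu}{\underline{\sigma}}|\ln\ep|\gtrsim\tfrac32 T_b$ (for $\mu$ small). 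Hence on $[\TTT_{\frac12},\TTT_1-T_a]$ the tail satisfies $\|\ga(t-\tau)\|\lesssim e^{\la_s^+\cdot\frac32 T_b}=\ep^{3/2}$, which \emph{is} $o(\ep)$ and can now legitimately be absorbed into $\tfrac14 c^*\ep$ by shrinking $\ep_0$. A symmetric computation using $\sTfwd-\sTfwd_+=\sfwd_+/\la_u^-$ shows $\TTT_1-\TTT_{\frac12}\gtrsim 2T_a$, giving $\|\ga(t-\TTT_1)\|\lesssim\ep^{2}$ on $[\tau+T_b,\TTT_{\frac12}]$, which fixes \eqref{est.origin+} in the same way. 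Without invoking \eqref{T1T-1} (or, alternatively, without a priori arranging $c^*$ in Proposition \ref{forPropC} to dominate the homoclinic's prefactor) your argument does not close.
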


The following is a consequence of Theorem \ref{key}.

\begin{knownlemma}\label{key1}\cite[Lemma 4.11]{CFPrestate} 
Assume \assump{F0}, \assump{F1}, \assump{F2}, \assump{K}, \assump{G}.
	Fix $\nu \ge \nu_0$ and let $d\in J_0$;
	fix $\tau \in \R$.
If there is $c_1>0$ such that
$\MM(\TTT_{\pm 1}(d,\tau))\le - 3c_1$ then
\begin{equation*}
	\begin{split}
		d_1 (d,\tau) & :=\dist (\PPP_1(d,\tau),\P_s(\TTT_1(d,\tau))) >  cc_1 \ep>0, \\
		d_{-1} (d,\tau) & :=\dist (\PPP_{-1}(d,\tau),\P_u(\TTT_{-1}(d,\tau))) <-cc_1 \ep <0 ,
	\end{split}
\end{equation*}
while if  $\MM(\TTT_{\pm 1}(d,\tau))\ge
3c_1$ then
$\pm d_{\pm1}(d,\tau)<- cc_1\ep<0$.
\end{knownlemma}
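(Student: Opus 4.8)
The plan is to connect the displacement $d_{\pm1}(d,\tau)$ with the Melnikov function $\MM$ evaluated at the arrival time of the loop, using the estimates already recorded in Theorem \ref{key}, Theorem \ref{keymissed}, and the trajectory-approximation bounds \eqref{trajWuWs}. First I would recall the variational/adjoint computation: the signed distance $d_1(d,\tau)=\dist(\PPP_1(d,\tau),\P_s(\TTT_1(d,\tau)))$ between the image under the Poincar\'e-type map of a point on $\tilde W^u$ near $\P_u$ and the corresponding point of $\tilde W^s$ near $\P_u$ is, to leading order in $\ep$, given by the Melnikov integral. Concretely, one writes the difference of the two trajectories, differentiates in $\ep$, and integrates the resulting linear inhomogeneous equation against the bounded solution of the adjoint variational equation along $\ga$; the inhomogeneity is $\g$, and the two pieces of the integral (over $(-\infty,0)$ with $\f^-$ and over $(0,+\infty)$ with $\f^+$) together with the weights $c_\perp^\pm$ reproduce exactly \eqref{melni-disc}. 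This is the standard Melnikov reduction, already carried out in \cite{CaFr} and in the Appendix of \cite{CFPrestate}; here it only needs to be quoted in the form $d_{\pm1}(d,\tau)= c\,\ep\,\MM(\TTT_{\pm1}(d,\tau)) + o(\ep)$ with an explicit positive constant $c$ (up to sign conventions), uniformly in $d\in J_0$ and $\tau\in\R$.

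Next I would make the error term uniform. The point is that $\Q_s(d,\tau)$ differs from $\P_s(\tau)$ only by $d\le \ep^{(1+\nu)/\und\si}$ in arc length on $\Om^0$, and by Theorem \ref{keymissed} (estimates \eqref{keymissed.es-}--\eqref{keymissed.eu-}) the whole loop trajectory $\x(t,\tau;\Q_s(d,\tau))$ stays within $d^{\sfwd_+-\mu}$ of the corresponding piece of $\tilde W$, while by \eqref{trajWuWs} that piece of $\tilde W$ stays within $\bar c^*\ep$ of the appropriate translate of $\ga$. Since $d^{\sfwd_+-\mu}$ is a (large) power of $\ep$, all the extra displacement coming from $d\ne 0$ and from $\ep\ne 0$ enters the Melnikov integral only through a remainder that is $o(\ep)$ uniformly in $\tau$ and $d$; the $C^r$, $r>1$, regularity of $\f^\pm,\g$ together with the uniform hyperbolicity of the origin (exponential decay of $\ga$ and of the adjoint solution, governed by $\und\la,\ov\la$) makes the tail of the integral uniformly small. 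Thus there is $c_0>0$ such that for $\ep$ small, $|d_{\pm1}(d,\tau)-c_0\,\ep\,\MM(\TTT_{\pm1}(d,\tau))|\le \tfrac{c_0}{3}\ep\,|\MM(\TTT_{\pm1}(d,\tau))| + (\text{genuinely }o(\ep))$, or more simply $|d_{\pm1}(d,\tau)-c_0\ep\MM(\TTT_{\pm1}(d,\tau))|$ is small compared with $c_0 c_1 \ep$ whenever $|\MM(\TTT_{\pm1}(d,\tau))|\ge 2c_1$.

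From here the conclusion is immediate bookkeeping with signs. If $\MM(\TTT_{\pm1}(d,\tau))\le -3c_1$, then $c_0\ep\MM(\TTT_{\pm1}(d,\tau))\le -3c_0c_1\ep$, and subtracting the error (which is at most, say, $c_0c_1\ep$ for $\ep\le\ep_0$) gives $d_1(d,\tau)\ge 2c_0c_1\ep>0$ and, because of the orientation conventions fixed in \assump{F1} and \eqref{ass.scenario} (which make $d_{-1}$ carry the opposite sign to $d_1$ for the backward loop), $d_{-1}(d,\tau)\le -2c_0c_1\ep<0$. The case $\MM(\TTT_{\pm1}(d,\tau))\ge 3c_1$ is symmetric and yields $\pm d_{\pm1}(d,\tau)<-2c_0c_1\ep<0$. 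Renaming $c:=2c_0$ gives the statement. I expect the main obstacle to be precisely the \emph{uniformity in $\tau$} of the $o(\ep)$ remainder: one must check that the implicit-function / Gronwall estimates behind Theorems \ref{key} and \ref{keymissed}, and the exponential tails of the Melnikov integral, do not degrade as $\tau$ ranges over all of $\R$ and as $d$ ranges over $J_0$ — but this uniformity is exactly what is already asserted (``uniform with respect to any $\tau\in\R$'') in Theorem \ref{key} and in Proposition \ref{forPropC}, so the work reduces to threading those uniform bounds through the standard Melnikov computation rather than proving anything genuinely new.
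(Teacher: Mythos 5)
The outline you follow — reduce to the Melnikov distance $\dist(\P_s,\P_u)$ via the closeness of $\PPP_{\pm1}$ to $\P_u(\TTT_1)$ resp.\ $\P_s(\TTT_{-1})$, then read off the sign from the hypothesis on $\MM$ — is the right one, and the uniformity-in-$\tau$ concern is correctly identified and correctly dispatched by citing the uniformity in Theorem~\ref{key}. However, your write-up contains a sign error that makes the argument internally inconsistent and, as stated, proves the wrong inequality.

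You assert the leading-order relation $d_{\pm1}(d,\tau)= c\,\ep\,\MM(\TTT_{\pm1}(d,\tau)) + o(\ep)$ (hedged with ``up to sign conventions''), i.e.\ the same sign in front of $\MM$ for both $d_1$ and $d_{-1}$. But then in your final paragraph you conclude that $d_1$ and $d_{-1}$ carry \emph{opposite} signs, which contradicts that formula; and worse, from your own estimate $|d_1-c_0\ep\MM(\TTT_1)|\le c_0c_1\ep$ together with $\MM(\TTT_1)\le -3c_1$, arithmetic gives $d_1\le -2c_0c_1\ep<0$, not $d_1\ge 2c_0c_1\ep>0$ as you write. The caveat ``up to sign conventions'' cannot be left unresolved here, because the signs are the entire content of the lemma (it is precisely the asymmetry $d_1>0>d_{-1}$ under $\MM<0$ that the later intermediate-value argument in \S\ref{S.mainproof} exploits).

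The correct bookkeeping, using that $\dist$ is a directed arc-length distance satisfying $\dist(\Q,\R)=\dist(\Q,\P)+\dist(\P,\R)$ and $\dist(\Q,\P)=-\dist(\P,\Q)$, is
\begin{equation*}
\begin{split}
d_1(d,\tau)&=\dist\bigl(\PPP_1(d,\tau),\P_u(\TTT_1(d,\tau))\bigr)-\dist\bigl(\P_s(\TTT_1(d,\tau)),\P_u(\TTT_1(d,\tau))\bigr),\\
d_{-1}(d,\tau)&=\dist\bigl(\PPP_{-1}(d,\tau),\P_s(\TTT_{-1}(d,\tau))\bigr)+\dist\bigl(\P_s(\TTT_{-1}(d,\tau)),\P_u(\TTT_{-1}(d,\tau))\bigr).
\end{split}
\end{equation*}
The first summand in each line lies in $[d^{\sfwd+\mu},d^{\sfwd-\mu}]$ resp.\ $[d^{\sbwd+\mu},d^{\sbwd-\mu}]$ by \eqref{D1T1}, hence is positive and $o(\ep)$ uniformly for $d\in J_0$, $\tau\in\R$ (this replaces your detour through Theorem~\ref{keymissed}; the pointwise bound on $\dist(\PPP_{\pm1},\cdot)$ in Theorem~\ref{key} is exactly what is needed). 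The second summand is $\mp\bigl[c\ep\MM(\TTT_{\pm1})+\ep\omega(\TTT_{\pm1},\ep)\bigr]$ by Remark~\ref{rMelni}. Thus $d_1\approx -c\ep\MM(\TTT_1)$ while $d_{-1}\approx +c\ep\MM(\TTT_{-1})$ — opposite signs — and with $\MM(\TTT_{\pm1})\le -3c_1$ and $\ep$ small enough that the uniform errors are below $cc_1\ep$, this gives $d_1>cc_1\ep>0$ and $d_{-1}<-cc_1\ep<0$, as claimed; the case $\MM(\TTT_{\pm1})\ge 3c_1$ is symmetric. With that correction your proof goes through.
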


\section{A connection argument}    \label{S.connection}

Let $\ee^{+} \in \EE^{+}$ and $\tau\in[b_0,b_1]$;
following \cite{CFPrestate} we say that $\x(t,\tau; \xxi)$ satisfies the property  ``chaotic in the future",
 $\bs{C_{\ee^+}^+}$ if
there   exist
  $\alpha_j(\ep)= \al_j^{\ee^+}(\ep,\tau,\TT^+)$ such that the following conditions are satisfied
 \begin{description}
  \item[$\bs{C_{\ee^+}^+}$] $\x(t,\tau; \xxi) \in \tilde{V}(t)$ for any $t \ge \tau$,
 and	if $\ee_j=1$, then
	\begin{equation}\label{ej+=1w}
	\begin{split}
	&   \|\x(t,\tau; \xxi )-\ga(t-T_{2j}-\al_j(\ep)) \| \le c^*\ep \quad \textrm{when $t \in [T_{2j-1}, T_{2j+1}]$},
	\end{split}
	\end{equation}
	while if $\ee_j=0$, we have
	\begin{equation}\label{ej+=0w}
	\|\x(t,\tau; \xxi) \| \le c^*\ep  \quad \textrm{when $t \in [T_{2j-1}, T_{2j+1} ] $}
	\end{equation}
	for any $j \in \Z^+$. Further
	\begin{equation}\label{ej+=tau}
	\|\x(t,\tau; \xxi)-\ga(t-\tau) \| \le c^*\ep  \quad \textrm{when $t \in [\tau, T_1] $}.
	\end{equation}
\end{description}
Analogously let $\ee^{-} \in \EE^{-}$ and $\tau\in[b_0,b_1]$; we say that $\x(t,\tau; \xxi)$ satisfies the property
 ``chaotic in the past", $\bs{C_{\ee^-}^-}$ if
there exist $\alpha_j(\ep)= \al_j^{\ee^-}(\ep,\tau,\TT^-)$ such that
 the following holds:
 \begin{description}
  \item[$\bs{C_{\ee^-}^-}$]  $\x(t,\tau; \xxi) \in \tilde{V}(t)$ for any $t \le \tau$, and
	if $\ee_j=1$, then
	\begin{equation}\label{ej-=1w}
	\begin{split}
	&   \|\x(t,\tau; \xxi )-\ga(t-T_{2j}-\al_j(\ep)) \| \le c^*\ep \quad \textrm{when $t \in [T_{2j-1}, T_{2j+1} ]$},
	\end{split}
	\end{equation}
	while if $\ee_j=0$, we have
	\begin{equation}\label{ej-=0w}
	\|\x(t,\tau; \xxi) \| \le c^*\ep  \quad \textrm{when $t \in [T_{2j-1}, T_{2j+1} ] $}
	\end{equation}
	for any $j \in \Z^-$. Further
	\begin{equation}\label{ej-=tau}
	\|\x(t,\tau; \xxi)-\ga(t-\tau) \| \le c^*\ep  \quad \textrm{when $t \in [T_{-1}, \tau ] $}.
	\end{equation}
\end{description}

From Theorems 3.4 and 3.5 in \cite{CFPrestate}
 we see that for any $\tau \in [b_0, b_1]$ there are non-empty compact and connected sets
$X^{\pm}(\tau,\TT^{\pm},\ee^{\pm})$ such that if $\xxi^{\pm} \in X^{\pm}(\tau,\TT^{\pm},\ee^{\pm})$ then
$\x(t,\tau; \xxi^+)$ has property $\bs{C_{\ee^+}^+}$ and $\x(t,\tau; \xxi^-)$ has property $\bs{C_{\ee^-}^-}$.
So we can define
\begin{equation}\label{defX+la}
 \begin{split}
    \aleph= & \left\{ (\xxi,\tau) \mid  \xxi \in L_0(\sqrt{\ep})
     \, , \;\tau \in  [b_0, b_1]  \right\} \,; \\
 \chi^{\Lambda,+}(\ee^+)=     &   \left\{ (\xxi,\tau) \in \aleph \middle|  \begin{array}{l}
                                      \x(t,\tau ; \xxi) \; \textrm{ has property $\bs{C^+_{\ee^+}}$ and } \\
                                      |\alpha_j| \le  \adown_j-\aup_j \le \l1  \; \textrm{ for any $j \in \Z^+$}
                                    \end{array}  \right\},\\
\chi^{\Lambda,-}(\ee^-)= & \left\{ (\xxi,\tau) \in\aleph \middle|  \begin{array}{l}
                                      \x(t,\tau ; \xxi) \; \textrm{ has property $\bs{C^-_{\ee^-}}$ and } \\
                                     |\alpha_j| \le \adown_j-\aup_j \le  \l1  \; \textrm{ for any $j \in \Z^-$}
                                    \end{array}  \right\} .
 \end{split}
\end{equation}

The next result is formulated under the assumptions of Theorems
\ref{main.periodic1},
\ref{main.weak1} and gives $|\alpha_j|$ bounded or infinitesimal.

\begin{theorem}\label{T.connectionLa}
Assume  that  $\f^{\pm}$ and $\g$ are $C^r$, $r>1$ and that \assump{F0}, \assump{F1}, \assump{F2}, \assump{K} and \assump{G} hold true; assume further
\assump{P1},
and fix $\nu \ge \nu_0$ for $\nu_0$ as in \eqref{defK0-new} and   $\tau \in [b_0, b_1]$; let $\ee^+ \in \EE^+$ and $\ee^- \in \EE^-$.
Then we can choose $\ep_0$ small enough so that for any $0< \ep \le \ep_0$
and any increasing sequence $\mathcal{T}^+=(T_j)$ satisfying
 \eqref{TandKnu} for $j \in \Z^+$ and
\begin{equation}\label{TandKnu_tau+}
	 T_1-b_1 >   \l1 + K_0(1+\nu)   |\ln(\ep)| 
\end{equation}
 the set $ \chi^{\Lambda,+}(\ee^+)$  contains a closed connected set $\tilde{\chi}^{\Lambda,+}(\ee^+)$
which intersects the lines $\tau=b_0$ and $\tau=b_1$.

Analogously,     for any $0< \ep \le \ep_0$
and any increasing sequence $\mathcal{T}^-=(T_j)$  satisfying
 \eqref{TandKnu} for $j \le -2$ and
\begin{equation}\label{TandKnu_tau-}
	 b_0-T_{-1} >   \l1 + K_0(1+\nu)   |\ln(\ep)| 
\end{equation}
 the set $ \chi^{\Lambda,-}(\ee^-)$  contains a closed connected set $\tilde{\chi}^{\Lambda,-}(\ee^-)$
which intersects the lines $\tau=b_0$ and $\tau=b_1$.

  Moreover for any $j \in \mathbb{Z}\setminus \{0\}$ we have the following estimates
    \begin{description}
    \item[a)] if \eqref{nondeg} holds and $r \ge 2$   there is $c_{\alpha}> 0$  such that $|\al_{j}^{\ee^+}(\ep)| \le c_{\alpha}\ep$;
    \item[b)] if \eqref{isolated} holds and $r>1$ then
    $$|\al_{j}^{\ee^+}(\ep)| \le \omega_{\alpha} (\ep) ,$$
    where $\omega_{\alpha}(\cdot)$ is an increasing continuous function such that $\omega_{\alpha}(0)=0$;
    \item[c)] if \eqref{minimal} holds and $r>1$ then $|\al_{j}^{\ee^+}(\ep)| \le \l1$.
  \end{description}
  The constants $\ep_0$, $c^*$, $c_{\alpha}$  and the function  $\omega_{\alpha}$ are  independent of
   $\ep$,
    $\nu$, $\tau$, $\TT^{\pm}$,
  $\ee^{\pm}$.
\end{theorem}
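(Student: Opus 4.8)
The plan is to read Theorem~\ref{T.connectionLa} as a set-valued intermediate value statement for the multifunction $\tau\mapsto X^{+}(\tau,\TT^{+},\ee^{+})$, whose graph is $\chi^{\Lambda,+}(\ee^{+})$; the case of $\chi^{\Lambda,-}(\ee^{-})$ is then obtained by reversing time. By Theorems~3.4 and~3.5 of~\cite{CFPrestate}, conditions \eqref{TandKnu} for $j\in\Z^{+}$ together with \eqref{TandKnu_tau+} (which leaves enough time for the first loop, from $[b_0,b_1]$ up to $T_1$) give, for every fixed $\tau\in[b_0,b_1]$, a non-empty compact connected fibre $X^{+}(\tau,\TT^{+},\ee^{+})$ whose points carry offsets with $|\al_j|\le\adown_j-\aup_j\le\l1$ for $j\in\Z^{+}$; moreover, taking $t=\tau$ in \eqref{ej+=tau} gives $\|\xxi-\ga(0)\|\le c^{*}\ep$, so $\chi^{\Lambda,+}(\ee^{+})$ lies inside the compact set $(\overline{B(\ga(0),c^{*}\ep)}\cap\Om^{0})\times[b_0,b_1]$. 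So I would first show that $\chi^{\Lambda,+}(\ee^{+})$ is \emph{closed}, hence compact --- this is the ``continuity in $\tau$'' alluded to in the introduction --- and then extract from it a connected component joining the two sides of the strip.

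For closedness, take $(\xxi_n,\tau_n)\in\chi^{\Lambda,+}(\ee^{+})$ with $(\xxi_n,\tau_n)\to(\xxi_\infty,\tau_\infty)$. Since all the trajectories involved remain in $B(\bs{\Gamma},\beta)$, where local uniqueness and continuous dependence on initial data hold (Remark~\ref{data.smooth}), $\x(t,\tau_n;\xxi_n)\to\x(t,\tau_\infty;\xxi_\infty)$ uniformly on compact $t$-subsets of $[\tau_\infty,+\infty[$. One then checks that the conditions defining $\bs{C^+_{\ee^+}}$ are all \emph{closed}: membership in $\tilde{V}(t)$ for $t\ge\tau$ survives because each $\tilde{V}(t)$ is closed; the non-strict bounds \eqref{ej+=1w}, \eqref{ej+=0w}, \eqref{ej+=tau} survive pointwise; and the offsets $\al_j(\xxi,\tau)$ depend continuously on $(\xxi,\tau)$, being pinned down by the transversal crossings of the trajectory with $\Om^0$ near $\ga(0)$ and near $\vec{0}$ (via \assump{F1}, \assump{K} and Lemma~\ref{L.loop}), so the bound $|\al_j|\le\l1$ passes to the limit. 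The subtle point --- and the step I expect to be the real obstacle --- is the uniformity of these estimates as $t\to+\infty$ simultaneously with $n\to\infty$; here I would use the hyperbolicity near $\vec{0}$ quantified in~\S\ref{S.prel}, namely Theorem~\ref{key} (loop times and displacements, uniform in $\tau$), Proposition~\ref{forPropC} and Lemmas~\ref{forgottentimes}--\ref{forgotten.origin} (outside a bounded time window the trajectory stays within $c^{*}\ep$ of a time-translate of $\ga$ or of $\vec{0}$, uniformly in the data), so that the tail is controlled independently of $n$ and all the estimates pass to the limit; hence $(\xxi_\infty,\tau_\infty)\in\chi^{\Lambda,+}(\ee^{+})$.

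With $\chi^{\Lambda,+}(\ee^{+})$ compact, let $\pi$ be its projection onto $[b_0,b_1]$: it is continuous, onto, with connected fibres (the fibre over $\tau$ is $X^{+}(\tau,\TT^{+},\ee^{+})$). I would take $\tilde{\chi}^{\Lambda,+}(\ee^{+})$ to be the connected component of $\chi^{\Lambda,+}(\ee^{+})$ that meets $\pi^{-1}(b_0)$, and argue it must also meet $\pi^{-1}(b_1)$. If not, the classical separation lemma for compact metric spaces (if no connected subset meets two given disjoint closed sets, the space splits as a disjoint union of two relatively closed sets containing them) yields $\chi^{\Lambda,+}(\ee^{+})=Z^{0}\sqcup Z^{1}$ with $Z^{0},Z^{1}$ disjoint, relatively closed, $\pi^{-1}(b_0)\subset Z^{0}$, $\pi^{-1}(b_1)\subset Z^{1}$. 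Each connected fibre lies wholly in $Z^{0}$ or in $Z^{1}$, so $S_i=\{\tau\mid\pi^{-1}(\tau)\subset Z^{i}\}$ gives a partition $[b_0,b_1]=S_0\sqcup S_1$ with $b_0\in S_0$, $b_1\in S_1$, each $S_i$ closed by compactness of $\chi^{\Lambda,+}(\ee^{+})$; this contradicts the connectedness of $[b_0,b_1]$. The resulting $\tilde{\chi}^{\Lambda,+}(\ee^{+})$ is closed, connected, contained in $\chi^{\Lambda,+}(\ee^{+})$, and meets both lines $\tau=b_0$ and $\tau=b_1$.

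Finally, the offset estimates and the backward case. Estimate~c) is part of the definition of $\chi^{\Lambda,+}(\ee^{+})$, hence holds whenever \eqref{minimal} does. For~b), under \eqref{isolated} the loop analysis of~\cite{CFPrestate} --- through Lemma~\ref{key1}, which links the sign of the displacement $d_{\pm1}(d,\tau)$ to the sign of $\MM$ at the return time of the loop --- forces, roughly, $|\MM(T_{2j}+\al_j)|\le O(\ep)$; combined with $|\MM(T_{2j}+h)|\ge\omega_M(|h|)$ this gives $|\al_j|\le\omega_\alpha(\ep)$ with $\omega_\alpha$ the function obtained by inverting $\omega_M$ near $0$ (increasing, continuous, $\omega_\alpha(0)=0$). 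For~a), under \eqref{nondeg} one may take $\omega_M(h)=\tfrac{C}{2}h$, so $\omega_\alpha(\ep)=O(\ep/C)$; the extra regularity $r\ge2$ is what makes the defect of the reduced Melnikov equation genuinely $O(\ep)$, yielding $|\al_j|\le c_\alpha\ep$. The statement for $\chi^{\Lambda,-}(\ee^{-})$ follows by running the same three steps with time reversed, using \eqref{TandKnu_tau-} in place of \eqref{TandKnu_tau+} and the backward versions of Theorem~\ref{key}, Theorem~3.5 of~\cite{CFPrestate}, Proposition~\ref{forPropC} and Lemmas~\ref{forgottentimes}--\ref{key1}; the independence of $\ep_0$, $c^{*}$, $c_\alpha$ and $\omega_\alpha$ from $\ep$, $\nu$, $\tau$, $\TT^{\pm}$, $\ee^{\pm}$ is inherited from the uniformity already built into those results.
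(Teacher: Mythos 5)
Your overall strategy --- closedness of $\chi^{\Lambda,+}(\ee^{+})$, a topological separation principle to extract a connected component joining $\tau=b_0$ to $\tau=b_1$, and then the offset estimates and a time-reversal for the backward case --- is a genuine alternative to the paper's route, and the last two parts are fine. The paper instead works in the $(d,\tau)$ rectangle $A_0$ and invokes the path-crossing lemma of Papini--Zanolin (Lemma~\ref{top}) applied to the sets $S^{\Lambda,\pm}$: the technical heart is Proposition~\ref{Sintersectionbis}, a transcription of Proposition~\ref{Sintersection}, which builds a nested sequence of compact parameter intervals $\mathcal{J}_n$ along an arbitrary path $\vec{\psi}$ using the stretching estimates of Theorem~\ref{key} and the sign-flip mechanism of Lemma~\ref{key1}. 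That machinery does not care whether the fibres are connected.

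Your separation argument, in contrast, \emph{does} need the fibres to be connected, and this is where the proposal has a real gap. You write ``the fibre over $\tau$ is $X^{+}(\tau,\TT^{+},\ee^{+})$'', but the sets $X^{+}(\tau,\TT^{+},\ee^{+})$ from \cite[Theorems~3.4--3.5]{CFPrestate} are merely nonempty compact connected subsets whose points enjoy $\bs{C^+_{\ee^+}}$; nothing in the quoted results says they exhaust the fibre $\{\xxi : (\xxi,\tau)\in\chi^{\Lambda,+}(\ee^{+})\}$, and the paper explicitly warns in Remark~\ref{r.nonunique} that uniqueness fails in general. In fact the nested-interval construction in \cite[\S5]{CFPrestate} picks at each stage the ``interval closest to $0$'' (Remark~\ref{closest}); in a degenerate situation other disjoint ``good'' intervals may coexist, and the full fibre could be disconnected. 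Once the fibres may be disconnected, a single fibre can straddle both pieces $Z^0$ and $Z^1$, the map $\tau\mapsto S_i$ is no longer well defined, and $S_0\sqcup S_1$ no longer gives a partition of $[b_0,b_1]$, so the contradiction with connectedness of $[b_0,b_1]$ does not follow. To repair this you would either have to prove that the fibres are connected (not established anywhere, and doubtful in the degenerate regime the paper is designed to cover), or replace the fibre by the connected subset $X^{+}(\tau)$ and then prove an upper semicontinuity statement for $\tau\mapsto X^{+}(\tau)$ so that its graph is closed --- which is essentially the content of Proposition~\ref{Sintersectionbis} in disguise. The paper's choice of Lemma~\ref{top} precisely avoids this: it replaces ``all fibres are connected and closed'' by ``every path crossing the strip in the $d$-direction meets $S^{\Lambda,+}$'', a weaker hypothesis that the nested-interval argument verifies directly.

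Two smaller remarks. First, your closedness sketch points to uniformity of tail estimates as the delicate step; in the paper's Proposition~\ref{Sclosed} this is handled by contradiction at the first index $\ell$ where $\bs{C^+_{\ee^+}}$ fails, using a strict gap $\mu$ and continuous dependence on a fixed finite time window --- no uniformity over $t\to+\infty$ is needed. Second, for items a)--c) on the offsets the paper simply transcribes \cite[Lemma~5.27 and Remark~5.28]{CFPrestate}, which is in the same spirit as your sketch via Lemma~\ref{key1} and $\omega_M$.
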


Now we consider the case in  which $(B_j)$ may become unbounded, as in Theorem \ref{veryweak}, so the distance
 from consecutive zeros of $\MM$ may become unbounded as well.

Let us set
\begin{equation}\label{defX+}
\begin{split}
\chi^+(\ee^+)=& \left\{ (\xxi,\tau) \in \aleph \middle|  \begin{array}{l}
                                      \x(t,\tau ; \xxi) \; \textrm{ has property $\bs{C^+_{\ee^+}}$ and } \\
                                      |\alpha_j| \le  B_{2j}  \; \textrm{ for any $j \in \Z^+$}
                                    \end{array}  \right\},\\
\chi^-(\ee^-)= & \left\{ (\xxi,\tau) \in \aleph  \middle|  \begin{array}{l}
                                      \x(t,\tau ; \xxi) \; \textrm{ has property $\bs{C^-_{\ee^-}}$ and } \\
                                     |\alpha_j| \le   B_{2j}  \; \textrm{ for any $j \in \Z^-$}
                                    \end{array}  \right\} .
    \end{split}
\end{equation}
\begin{theorem}\label{T.connection}
   Assume that the hypotheses of Theorem \ref{T.connectionLa} hold  but
replace \eqref{TandKnu}  by \eqref{TandKnunew}  and   \eqref{TandKnu_tau+} by
\begin{equation}\label{TandKnunew_tau+}
	T_1-b_1 >  \max\{  B_{1}; B_{0}   \} + K_0(1+\nu)   |\ln(\ep)| , 
\end{equation}
and \eqref{TandKnu_tau-} by
	\begin{equation}\label{TandKnunew_tau-}
		b_0-T_{-1} >  \max\{  B_{0}; B_{-1}   \} + K_0(1+\nu)   |\ln(\ep)| . 
\end{equation}
Then the set $\chi^+(\ee^+)$  contains a closed connected set $\tilde{\chi}^+(\ee^+)$
which intersects the lines $\tau=b_0$ and $\tau=b_1$; analogously
 the set $\chi^-(\ee^-)$  contains a closed connected set $\tilde{\chi}^-(\ee^-)$
which intersects the lines $\tau=b_0$ and $\tau=b_1$. However
 we just have
     $$|\al_{j}^{\ee^+}(\ep)| \le B_{2j}, \qquad |\al_{j}^{\ee^-}(\ep)| \le B_{2j}.$$
\end{theorem}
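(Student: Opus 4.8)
The plan is to re-run the entire argument of Theorem \ref{T.connectionLa} almost verbatim, tracking only the places where the bound $|\alpha_j|\le\adown_j-\aup_j\le\l1$ was used and replacing it by the weaker bound $|\alpha_j|\le B_{2j}$. The heart of the matter is still a connectedness/intermediate-value argument: one constructs, for each finite truncation $\ee^{+,(N)}$ of $\ee^+$ obtained by forcing all entries beyond index $N$ to be $0$, a closed connected subset of $\aleph$ on which the trajectory $\x(t,\tau;\xxi)$ follows the prescribed pattern up to time $T_{2N+1}$, and then passes to the limit $N\to\infty$ using compactness of $\aleph$ and the nested-intersection property of a decreasing sequence of compact connected sets. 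The sets $X^{\pm}(\tau,\TT^{\pm},\ee^{\pm})$ provided by Theorems 3.4 and 3.5 of \cite{CFPrestate} already exist in this weaker setting, so the novelty is purely in showing the connected piece survives as $\tau$ runs over $[b_0,b_1]$ and hits both endpoints.

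First I would revisit the single-loop step. Recall that between two consecutive $1$s at indices $j'<j''$ the trajectory performs a loop close to $\bs{\Gamma}$, and by Lemma \ref{key1} the Melnikov sign at the ``return time'' $\TTT_{\pm1}(d,\tau)$ controls on which side of $\tilde W^{s/u}$ the trajectory lands; the point where one applies the intermediate value theorem is a $d$ (equivalently a position on $L^0(\sqrt\ep)$) for which $\MM(\TTT_1(d,\tau))=0$. In the setting of Theorem \ref{T.connectionLa} one had $\MM$ vanishing at $T_{2j}$ with $|\MM|$ bounded below by $\omega_M$ off a $\l1$-interval, which forced $|\alpha_j|\le\l1$. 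Here we only know, via \assump{P1}, that $\MM$ changes sign on $]\beta_{2j},\beta'_{2j}[$ with $\beta'_{2j}-\beta_{2j}=B_{2j}$; so the zero of $\MM\circ\TTT_1(\cdot,\tau)$ that the intermediate value theorem produces lies within distance $B_{2j}$ of $T_{2j}$, giving exactly $|\alpha_j|\le B_{2j}$. The gap conditions \eqref{TandKnunew}, \eqref{TandKnunew_tau+}, \eqref{TandKnunew_tau-} are precisely what is needed so that the loop fits: the time $\TTT_1(d,\tau)-\tau\approx K_0(1+\nu)|\ln\ep|$ (from \eqref{T1T-1}) plus the slack $B_{j+1}$ for locating the next sign change stays below $T_{j+1}-T_j$, so the inductive construction of the truncated connected sets never runs out of room.

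Second, the $\tau$-continuity argument is unchanged in structure: one shows $\chi^+(\ee^+)$ (respectively $\chi^-(\ee^-)$) is closed in $\aleph$, that its fibers over $\tau$ are the nonempty compact connected $X^+(\tau,\TT^+,\ee^+)$, and then invokes the standard fact (a connected-graph / Whyburn-type argument, exactly as in \cite{CFPrestate} and \cite{CFPRimut}) that the union over a closed $\tau$-interval of a ``continuously varying'' family of compact connected sets contains a connected component meeting both end fibers. I would first carry out the $\tau\in[b_0,b_1]$ argument for the weak truncation (Theorem \ref{veryweak} regularity, $r>1$, no non-degeneracy), then note the proof is literally the same with $B_{2j}$ in place of $\l1$. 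The estimate $|\alpha_j^{\ee^{\pm}}(\ep)|\le B_{2j}$ is then just the bound carried through the construction; no refinement to $c_\alpha\ep$ or $\omega_\alpha(\ep)$ is available since we have dropped \eqref{minimal} and hence any quantitative lower bound on the slope of $\MM$ near its zeros.

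The main obstacle I expect is bookkeeping the accumulation of time-translations across infinitely many loops when $(B_j)$ is unbounded: with $|\alpha_j|\le B_{2j}$ and no uniform bound, one must be careful that the windows $[T_{2j-1},T_{2j+1}]$ on which property $\mathbf{C_e}$ is asserted still overlap consistently after translation, and that the composition of the single-loop maps remains well-defined (each loop must start inside the appropriate branch $A^{\textrm{fwd}}(\tau)$ / $A^{\textrm{bwd}}(\tau)$ of $L^0(\sqrt\ep)$). This is handled precisely by demanding $T_{j+1}-T_j>\max\{B_{j+1};B_j\}+K_0(1+\nu)|\ln\ep|$ rather than a constant lower bound — the $\max\{B_{j+1};B_j\}$ term absorbs the worst-case drift at either end of the gap — and by the uniformity (in $\tau\in\R$) of all the estimates in Theorems \ref{key}, \ref{keymissed}, Proposition \ref{forPropC} and Lemmas \ref{forgottentimes}--\ref{key1}, which is exactly why those results were stated with $\tau$-uniform constants. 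Once that is in place, the closedness of $\chi^{\pm}(\ee^{\pm})$ and the passage to the limit $N\to\infty$ go through as in Theorem \ref{T.connectionLa}.
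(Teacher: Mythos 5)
Your broad strategy is right -- work in the rectangle $A_0$, build nested compact intervals via a one-loop-at-a-time iteration, locate the $d$ where $\MM\circ\TTT_1(\cdot,\tau)$ changes sign by the intermediate value theorem, and observe that the $\max\{B_{j+1};B_j\}$ term in \eqref{TandKnunew} is exactly what gives each loop room to land in the next interval $[\beta_{2k_j},\beta'_{2k_j}]$ where $\MM$ changes sign. The estimate \eqref{defDej}, $\Delta_j\ge 2K_0(1+\nu)|\ln\ep|+B_{2k_j}+B_{2k_{j-1}}$, encodes precisely the drift-absorption you describe. You also correctly drop the $\omega_M$-based refinement since \eqref{minimal} is gone, leaving only the a priori bound $|\alpha_j|\le B_{2j}$.

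The gap is in the connectedness step. Knowing that $\chi^+(\ee^+)$ is closed in $\aleph$ and that each fiber $X^+(\tau,\TT^+,\ee^+)$ over $\tau\in[b_0,b_1]$ is a nonempty compact connected set does \emph{not} imply that $\chi^+(\ee^+)$ contains a connected set meeting both end-fibers $\tau=b_0$ and $\tau=b_1$; a closed ``staircase'' has nonempty connected fibers but no vertical continuum. The paper proves the genuinely stronger property (Proposition \ref{Sintersection}): for every continuous path $\vec\psi:[0,1]\to A_0$, $\vec\psi(a)=(d(a),\tau(a))$, with $d(0)=0$ and $d(1)=\ep^{(1+\nu)/\underline\sigma}$, there is $\bar a$ with $\vec\psi(\bar a)\in S^+$. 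This requires running the whole nested-interval construction (Lemma \ref{key2bismod}, Proposition \ref{inductionbis}) along the path $a\mapsto\vec\psi(a)$, not fiberwise on horizontal slices $\tau=\mathrm{const}$ -- the uniformity of Theorems \ref{key} and \ref{keymissed} in $\tau$ is what lets the path-version of the argument go through. Only with this path-crossing property does the Papini--Zanolin lemma (Lemma \ref{top}) deliver the connected crosser $\tilde S^+$. Your appeal to a ``Whyburn-type'' argument with ``continuously varying'' fibers silently assumes a continuity of the fibers that you have not established, and in fact that continuity is not available here; the path-crossing property is the substitute, and it is the main nontrivial step. (A small structural point as well: in the paper the dependency runs the other way -- Theorem \ref{T.connection} is proved first in \S\ref{S.theorem2.1}, and Theorem \ref{T.connectionLa} is then obtained by replacing $B_j$ with $\adown_j-\aup_j$ in the same construction -- but that is cosmetic.)
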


\subsection{Proof of Theorem \ref{T.connection}: the $|\alpha_j|\le B_{2j}$ case}\label{S.theorem2.1}

In this section we prove Theorem \ref{T.connection} using a topological argument together with  the selection scheme developed in
\cite{CFPrestate}: this   will be a step necessary to prove Theorem \ref{veryweak}.

First we need to introduce the following sets
\begin{equation}\label{defS+}
\begin{split}
A_0= & \left\{ (d,\tau) \mid  d \in  J_0 =
[0, \ep^{(1+\nu)/\underline{\sigma}}  ] \, , \;\tau \in [b_0, b_1] \right\} \,,
\\
S^+=& \left\{ (d,\tau) \in A_0 \middle|  \begin{array}{l}
                                      \x(t,\tau ; \Q_s(d,\tau)) \; \textrm{ has property $\bs{C^+_{\ee^+}}$ and } \\
                                      |\alpha_j| \le  B_{2j}  \; \textrm{ for any $j \in \Z^+$}
                                    \end{array}  \right\},\\
S^-= & \left\{ (d,\tau) \in A_0 \middle|  \begin{array}{l}
                                      \x(t,\tau ; \Q_u(d,\tau)) \; \textrm{ has property $\bs{C^-_{\ee^-}}$ and } \\
                                     |\alpha_j| \le   B_{2j}  \; \textrm{ for any $j \in \Z^-$}
                                    \end{array}  \right\} .
    \end{split}
\end{equation}

Our argument relies on the following topological result borrowed from \cite{PZ}.

 \begin{lemma}\label{top}\cite[Lemma 4]{PZ}
Let  $\mathcal{R}=[\alpha_1,\beta_1] \times [\alpha_2,\beta_2]$  be a   full rectangle. Let
$E \subset \mathcal{R}$ be a closed set such that for any  path $\vec{\psi} : [0,1] \to \mathcal{R}$,
$\vec{\psi}(a)=(x(a),y(a))$ with $x(0)=\alpha_1$ and $x(1)=\beta_1$ there is
$\bar{a} \in ]0,1[$ (depending on $\vec{\psi}$) such that  $\vec{\psi}(\bar{a}) \in E$. Then $E$ contains a
closed connected set $\tilde{E}$  which intersects both $y=\alpha_2$ and $y=\beta_2$.
\end{lemma}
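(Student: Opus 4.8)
The plan is to read Lemma~\ref{top} as the classical \emph{crossing lemma} for a rectangle and to prove it by combining a dichotomy from continuum theory with a finite combinatorial argument on a grid. First I would normalise $\mathcal R=[\alpha_1,\beta_1]\times[\alpha_2,\beta_2]$ to the unit square $[0,1]^2$ by the affine homeomorphism $(x,y)\mapsto\bigl(\tfrac{x-\alpha_1}{\beta_1-\alpha_1},\tfrac{y-\alpha_2}{\beta_2-\alpha_2}\bigr)$ (a degenerate rectangle being trivial); this carries paths to paths, the edges $x=\alpha_1,\,x=\beta_1$ to $\{0\}\times[0,1],\,\{1\}\times[0,1]$, the edges $y=\alpha_2,\,y=\beta_2$ to $[0,1]\times\{0\},\,[0,1]\times\{1\}$, and $E$ to a closed (hence compact) set, so the whole statement transfers. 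Set $A:=E\cap([0,1]\times\{0\})$ and $B:=E\cap([0,1]\times\{1\})$; applying the hypothesis to the two horizontal test paths $a\mapsto(a,0)$ and $a\mapsto(a,1)$ shows at once that $A\neq\emptyset$ and $B\neq\emptyset$, and they are disjoint closed subsets of the compact metric space $E$. The set $\tilde E$ demanded by the lemma will be produced as a subcontinuum of $E$ meeting both $A$ and $B$: undoing the normalisation, such a set is a closed connected subset of $E$ meeting the edges $y=\alpha_2$ and $y=\beta_2$, as required.

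Next I would invoke the standard dichotomy of continuum theory (see, e.g., Kuratowski, \emph{Topology}, Vol.~II, or Engelking, \emph{General Topology}): in a compact metric space $X$, given disjoint nonempty closed sets $A,B$, \emph{either} there is a subcontinuum $C\subseteq X$ with $C\cap A\neq\emptyset$ and $C\cap B\neq\emptyset$, \emph{or} $X=X_A\sqcup X_B$ with $X_A,X_B$ disjoint, closed (hence relatively open), $A\subseteq X_A$ and $B\subseteq X_B$. Taking $X=E$, the first alternative yields $\tilde E:=C$ and the lemma is proved; so the crux is to rule out the second alternative, by extracting from it a path in $\mathcal R$ that joins the left and right edges and misses $E$, contradicting the hypothesis.

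Assume therefore $E=X_A\sqcup X_B$ with $A\subseteq X_A$, $B\subseteq X_B$. Then $X_A,X_B$ are disjoint compact sets, $X_A$ is disjoint from the top edge (because $X_A\cap([0,1]\times\{1\})\subseteq B\subseteq X_B$ while $X_A\cap X_B=\emptyset$), and symmetrically $X_B$ is disjoint from the bottom edge; hence
\[
\eta:=\min\bigl\{\operatorname{dist}(X_A,X_B),\ \operatorname{dist}(X_A,[0,1]\times\{1\}),\ \operatorname{dist}(X_B,[0,1]\times\{0\})\bigr\}>0 .
\]
I would then fix a square grid on $[0,1]^2$ of mesh $<\eta/3$ and label each closed cell type~$A$ if it meets $X_A$, type~$B$ if it meets $X_B$, and type~$0$ otherwise; the mesh bound guarantees that no cell gets two labels, that every type-$A$ cell misses the top edge, that every type-$B$ cell misses the bottom edge, and that any grid edge shared by a type-$A$ and a type-$B$ cell is disjoint from $E$. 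With $\mathcal K_A,\mathcal K_B$ the unions of the closed type-$A$, resp.\ type-$B$, cells, one checks that $E\subseteq\operatorname{int}_{\mathcal R}\mathcal K_A\cup\operatorname{int}_{\mathcal R}\mathcal K_B$, that these two sets are open in $\mathcal R$ and disjoint, that $\operatorname{int}_{\mathcal R}\mathcal K_A$ avoids the top edge and $\operatorname{int}_{\mathcal R}\mathcal K_B$ avoids the bottom edge. A finite combinatorial argument on the grid — a discrete ``crossing''/Hex-type statement, to the effect that the edge-adjacency clusters of $\mathcal K_A$ reaching the bottom edge and of $\mathcal K_B$ reaching the top edge cannot meet and therefore leave a corridor open — then produces a polygonal arc running along cell edges inside $\mathcal R\setminus(\operatorname{int}_{\mathcal R}\mathcal K_A\cup\operatorname{int}_{\mathcal R}\mathcal K_B)\subseteq\mathcal R\setminus E$, from a point of the left edge to a point of the right edge. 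This arc contradicts the hypothesis, so the second alternative cannot occur and the first one supplies $\tilde E$.

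The step I expect to be the real obstacle is precisely this grid combinatorics: selecting the arc, verifying the edge-adjacencies, and treating the cases in which $X_A$ or $X_B$ runs along grid lines. This is where Lemma~\ref{top} draws on the strength of the Jordan curve / Poincar\'e--Miranda theorem, and in a write-up I would either carry out that finite argument in full or — since the lemma is used in this paper only as a black box — simply cite \cite[Lemma~4]{PZ} together with classical references (e.g.\ W.~Kulpa, \emph{The Poincar\'e--Miranda theorem}, Amer.\ Math.\ Monthly \textbf{104} (1997)) for its proof.
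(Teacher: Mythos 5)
The paper itself offers no proof of Lemma \ref{top}: it is quoted verbatim, as a black box, from \cite[Lemma 4]{PZ}, so there is no internal argument to measure yours against, and your closing option ``simply cite \cite{PZ}'' is exactly what the authors do. Judged as a standalone argument, your outline follows the standard route and its first two stages are sound: the affine normalization is harmless, the two horizontal edge-paths do force $E$ to meet both horizontal sides, and the dichotomy you invoke is the classical ``cut-wire'' separation theorem for compact metric spaces (no subcontinuum of $E$ meets both $A$ and $B$ if and only if $E$ splits as $X_A\sqcup X_B$ with $A\subseteq X_A$, $B\subseteq X_B$), correctly applied; the observations that $X_A$ misses the top edge, $X_B$ misses the bottom edge, and hence $\eta>0$, are also right.

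The gap is the one you yourself flag: the entire topological content of the lemma sits in the step you only assert, namely that from the separation $E=X_A\sqcup X_B$ one can manufacture a path joining the left edge to the right edge inside $\mathcal{R}\setminus E$. That statement is equivalent in strength to the Poincar\'e--Miranda/Jordan-type duality, so it cannot come out of bookkeeping alone, and the grid argument as sketched has genuine pitfalls: a polygonal arc ``along cell edges'' is not automatically disjoint from $\operatorname{int}_{\mathcal R}\mathcal{K}_A\cup\operatorname{int}_{\mathcal R}\mathcal{K}_B$ (an edge shared by two type-$A$ cells, or a boundary edge of a single type-$A$ cell, lies in that relative interior, and $X_A$ may run along grid lines); diagonal adjacencies must be excluded as well as edge adjacencies; and, most importantly, the existence of a \emph{connected} type-$0$ corridor from the left edge to the right edge is itself a discrete crossing (Hex-type) theorem that has to be proved, not observed. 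So either carry that finite combinatorial argument out in full (Kulpa's account of Poincar\'e--Miranda is a good template), or do as the paper does and cite \cite[Lemma 4]{PZ}; as written, the proposal is a correct plan but not yet a proof.
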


The core of our argument is made up by the following two propositions.
The proof  of Proposition~\ref{Sclosed} is postponed by a few lines and the one of  Proposition~\ref{Sintersection} is postponed by a few
pages.
\begin{proposition}\label{Sclosed}
Let $\ee^+ \in \EE^+$ be fixed; then the set $S^+ \subset A_0$ is closed.
\end{proposition}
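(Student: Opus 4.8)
The plan is to prove that $S^+$ is \emph{sequentially} closed, which suffices since $A_0$ is a metric space. So I would take a sequence $(d_n,\tau_n)\in S^+$ with $(d_n,\tau_n)\to(d_0,\tau_0)$; because $A_0=J_0\times[b_0,b_1]$ is a product of closed intervals, $(d_0,\tau_0)\in A_0$ automatically, and the goal is to show $(d_0,\tau_0)\in S^+$. The first ingredient is that $(d,\tau)\mapsto\Q_s(d,\tau)$ is $C^r$ on $J_0\times[b_0,b_1]$ (this is part of the construction recalled in \S\ref{S.prel}, following \cite{CFPRimut}), so $\xxi_n:=\Q_s(d_n,\tau_n)\to\xxi_0:=\Q_s(d_0,\tau_0)\in L^0(\sqrt\ep)$.

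Next I would pass to the limit on the trajectories $\x_n(t):=\x(t,\tau_n;\xxi_n)$. By property $\bs{C_{\ee^+}^+}$ each $\x_n$ is defined on $[\tau_n,+\infty)$, and the estimates \eqref{ej+=1w}, \eqref{ej+=0w} and \eqref{ej+=tau} (whose intervals cover $[\tau_n,+\infty)$) confine it to the fixed compact set $\mathcal{K}:=\overline{B(\bs{\Gamma},c^*\ep)}\cup\overline{B(\vec 0,c^*\ep)}$, which lies in $B(\bs{\Gamma},\beta)$ for $\ep_0$ small; moreover the $\x_n$ are equi-Lipschitz since $\f^\pm$ and $\g$ are bounded on $\mathcal{K}$. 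Treating $\tau$ as a parameter of the time-translated equation and invoking continuous dependence on initial data and parameters together with local uniqueness of solutions inside $B(\bs{\Gamma},\beta)$ (Remark~\ref{data.smooth}, where no sliding occurs), an Arzel\`a--Ascoli argument yields a subsequence along which $\x_n\to\x^*$ uniformly on every compact $t$-interval, where $\x^*$ solves \eqref{eq-disc}, is defined on all of $[\tau_0,+\infty)$, satisfies $\x^*(\tau_0)=\xxi_0$, and hence coincides with $\x(\cdot,\tau_0;\xxi_0)$ by uniqueness. Since each $\tilde{V}(t)$ is closed and $\x_n(t)\in\tilde{V}(t)$, we also get $\x^*(t)\in\tilde{V}(t)$ for all $t\ge\tau_0$.

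Then I would recover the shifts. For each $n$ and each $j\in\Z^+$ there is $\alpha^{(n)}_j$ with $|\alpha^{(n)}_j|\le B_{2j}$ for which \eqref{ej+=1w} or \eqref{ej+=0w} (according to $\ee_j$) and \eqref{ej+=tau} hold for $\x_n$; since $\prod_{j\in\Z^+}[-B_{2j},B_{2j}]$ is compact, a diagonal extraction over $j$ gives a further subsequence with $\alpha^{(n)}_j\to\alpha^*_j$ for every $j$ and $|\alpha^*_j|\le B_{2j}$. Letting $n\to\infty$ in each of these inequalities — using that the intervals $[T_{2j-1},T_{2j+1}]$ are fixed compact sets on which $\x_n\to\x^*$ uniformly, that $\ga$ is continuous, and that the inequalities are non-strict — shows that $\x^*=\x(\cdot,\tau_0;\xxi_0)$ has property $\bs{C_{\ee^+}^+}$ with the shifts $(\alpha^*_j)$ and $|\alpha^*_j|\le B_{2j}$. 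Hence $(d_0,\tau_0)\in S^+$, so $S^+$ is closed.

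The step I expect to be the main obstacle is the passage to the limit when the trajectories are launched at different times $\tau_n\to\tau_0$: I need both that the limit trajectory is the genuine flow $\x(\cdot,\tau_0;\xxi_0)$ and that it stays globally forward-defined. Both are handled by the continuous-dependence and local-uniqueness statement of Remark~\ref{data.smooth} applied inside $B(\bs{\Gamma},\beta)$, combined with the fact that the $\x_n$ cannot escape the compact set $\mathcal{K}$. The diagonal extraction for the countably many $\alpha_j$ and the passage to the limit in the estimates are routine bookkeeping.
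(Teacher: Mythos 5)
Your argument is correct and relies on the same two essential ingredients the paper uses: continuous dependence on initial data and parameters inside $B(\bs{\Gamma},\beta)$ (Remark~\ref{data.smooth}), and compactness of the admissible shift intervals $[-B_{2j},B_{2j}]$. The difference is one of framing. The paper argues by contradiction: it takes $(\bar d,\bar\tau)\notin S^+$, locates the first index $\ell$ at which no admissible $\alpha_\ell$ can make $\bs{C^+_{\ee^+}}$ hold, sharpens the resulting non-strict failure to a strict inequality $\|\cdot\|>(c^*+2\mu)\ep$ on a small open $t$-interval, and then invokes continuous dependence (choosing in particular $\bar\alpha=\alpha_\ell^N$ in the $\ee_\ell=1$ case) to contradict the membership $(d_N,\tau_N)\in S^+$; the conditions $\eqref{ej+=1w}$, $\eqref{ej+=0w}$, $\eqref{ej+=tau}$ are handled one at a time. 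Your direct route — diagonal extraction so that $\alpha_j^{(n)}\to\alpha_j^*\in[-B_{2j},B_{2j}]$ for every $j$, followed by a single limit passage in the non-strict inequalities — proves sequential closedness without the contradiction scaffolding or the need to isolate a first failing index, and it also explicitly verifies $\x^*(t)\in\tilde V(t)$ using closedness of $\tilde V(t)$, a point the paper's proof leaves implicit. (The Arzel\`a--Ascoli step you add is harmless but redundant: continuous dependence already gives locally uniform convergence.) Both proofs buy the same result; yours is slightly more systematic, the paper's slightly shorter.
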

\begin{proposition}\label{Sintersection}
Let $\ee^+ \in \EE^+$ be fixed; then for any continuous path $\vec{\psi} : [0,1] \to A_0$, $\vec{\psi}(a)=(d(a),\tau(a))$ such that
$d(0)=0$, $d(1)=\ep^{(1+\nu)/\underline{\sigma}}$ there is
$\bar{a} \in ]0,1[$ (depending on $\vec{\psi}$) such that  $\vec{\psi}(\bar{a}) \in S^+$.
\end{proposition}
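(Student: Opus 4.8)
The plan is to prove Proposition~\ref{Sintersection} by a bookkeeping/selection argument on the ``landing data'' $(d_j,\tau_j)$ obtained by following the candidate trajectory loop by loop, in the spirit of the selection scheme of \cite{CFPrestate}. Given a path $\vec\psi(a)=(d(a),\tau(a))$ with $d(0)=0$ and $d(1)=\ep^{(1+\nu)/\underline\sigma}$, I first deal with the degenerate endpoint: when $d=0$ the point $\Q_s(0,\tau)=\P_s(\tau)$ lies on $\tilde W^s(\tau)$, so $\x(t,\tau;\Q_s(0,\tau))$ converges to the origin in forward time and (together with \eqref{trajWuWs}) it already satisfies a version of $\bs{C^+_{\ee^+}}$ for the sequence $\ee^+\equiv 0$; for a general $\ee^+$ the $d=0$ end is ``too small'' to perform the first prescribed loop, so it fails the condition $\ee_j=1$. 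Conversely at $a=1$, $d(1)=\ep^{(1+\nu)/\underline\sigma}$ is the largest admissible value in $J_0$, chosen precisely so that the trajectory is forced to perform a full loop near $\bs\Gamma$ before the time budget $T_1-b_1$ is exhausted (this is where \eqref{TandKnunew_tau+} and \eqref{TandKnu} enter), so the $a=1$ end ``overshoots.'' The intermediate value theorem applied to a suitable scalar function of $a$ measuring ``which side of the prescribed itinerary we are on'' will then yield $\bar a$.

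More precisely, I would make this quantitative using the loop maps of Theorem~\ref{key} and Lemma~\ref{key1}. Set $(d_0,\tau_0)=\vec\psi(a)$. If $\ee_1=1$, define $(d_1,\tau_1)=(|\mathscr D(\mathscr P_1(d_0,\tau_0),\P_s(\mathscr T_1(d_0,\tau_0)))|,\ \mathscr T_1(d_0,\tau_0))$, i.e.\ follow one loop near $\bs\Gamma$ and record the new displacement from the stable leaf at the return time; if $\ee_1=0$, instead follow the trajectory down toward the origin and then back up to $\Om^0$ near $\ga(0)$ using Lemma~\ref{forgotten.origin} and Lemma~\ref{forgottentimes}, producing $(d_1,\tau_1)$ with $\tau_1\approx T_2$. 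Iterating, one gets a finite string $(d_j,\tau_j)_{j=0}^{N}$ for each $N$; the constraint that $\tau_j$ stay synchronized with $T_{2j}$ up to an error $|\alpha_j|\le B_{2j}$ is exactly the constraint cut out by $S^+$. The key point inherited from \cite{CFPrestate} (via Theorem~\ref{key}, \eqref{D1T1}--\eqref{T1T-1}) is that the map $d_0\mapsto d_1$ is monotone-like and expanding/contracting in a controlled exponential fashion, and that the sign of $\alpha_1$ is governed by the sign of $\MM$ at the return time through Lemma~\ref{key1}. Using \assump{P1}, along the $\tau$-interval $[b_0,b_1]$ the Melnikov function changes sign, so by continuity of the loop maps in $(d,\tau)$ and the intermediate value theorem there is a choice of $(d(\bar a),\tau(\bar a))$ along the path for which $\alpha_1$ can be corrected to land in $[-B_2,B_2]$; one then proceeds inductively on $j$, at each stage shrinking the admissible parameter sub-interval of $[0,1]$ and invoking compactness (a nested intersection of closed sets, using Proposition~\ref{Sclosed}) to pass to the limit $N\to\infty$ and obtain a single $\bar a$ that works for all $j\in\Z^+$ simultaneously.

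The genuine obstacle is the uniformity across infinitely many loops: at each loop the error in the landing time can be amplified, and one must show that the ``correction budget'' $B_{2j}+K_0(1+\nu)|\ln\ep|$ built into \eqref{TandKnunew}, together with the expansion estimates \eqref{T1T-1}, is enough to keep the trajectory synchronized forever rather than just for finitely many steps. Concretely, I expect the heart of the proof to be a fixed-point / shadowing-type estimate showing that the map sending a sequence of ``candidate corrections'' $(\alpha_j)$ to the ``required corrections'' (computed from the $\MM$-values at the actual return times) is a contraction in an appropriate weighted $\ell^\infty$-space — the weights being dictated by the constants $\sigma,\Sigma$ in \eqref{defsigma} — with the contraction factor coming from $\overline\sigma<1$. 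This is essentially the mechanism already present in \cite{CFPrestate}; the new twist here is that $B_j$ (hence the target window) may be unbounded, so one must carry the $B_{2j}$'s explicitly through all the estimates and check that the recursion still closes, which is exactly why the conclusion only gives $|\alpha_j|\le B_{2j}$ rather than $|\alpha_j|\le\Lambda^1$.

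After Proposition~\ref{Sintersection} and Proposition~\ref{Sclosed} are in place, Theorem~\ref{T.connection} follows immediately: apply Lemma~\ref{top} with $\mathcal R=A_0$, $E=S^+$, obtaining a closed connected $\tilde S^+\subset S^+$ joining $\tau=b_0$ to $\tau=b_1$; then push forward through the (continuous, by Remark~\ref{data.smooth}) map $(d,\tau)\mapsto(\Q_s(d,\tau),\tau)$ to get $\tilde\chi^+(\ee^+)\subset\chi^+(\ee^+)$ with the required endpoint property, and repeat the whole argument with time reversed, $\Q_s$ replaced by $\Q_u$ and $\EE^+$ by $\EE^-$, to handle $S^-$ and $\tilde\chi^-(\ee^-)$.
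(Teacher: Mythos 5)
Your overall skeleton is right: iterate the loop maps $\TTT_n, d_n$ of \eqref{Tn} along the path $\vec\psi$, use the stretching estimates of Theorem~\ref{key} together with the $\MM$-sign control of Lemma~\ref{key1} and \assump{P1} to build nested compact sub-intervals $\mathcal J_n\subset[0,1]$, and take their (non-empty, by compactness) intersection. This is exactly what the paper does in Proposition~\ref{inductionbis} and Lemma~\ref{call.stepn}. However, your claim that ``the heart of the proof'' should be a shadowing/contraction fixed-point estimate in a weighted $\ell^\infty$-space is not how the proof goes, and in fact cannot be: a contraction argument would yield uniqueness of the chaotic orbit with a given itinerary, whereas the paper explicitly points out in Remark~\ref{r.nonunique} that uniqueness is \emph{not} obtained here — the whole point of this construction is to replace the implicit-function/contraction machinery of Battelli--Fe\v ckan by a purely topological (Cantor-type nested intersection plus intermediate value theorem) argument, precisely so that degenerate zeros of $\MM$ can be handled. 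At each inductive stage the mechanism is: (i) show $\TTT_n(\vec\psi(\cdot))$ covers $[\beta_{2k_n},\beta'_{2k_n}]$ on a sub-interval of $\mathcal J_{n-1}$ (Lemma~\ref{key2bismod} and its successors, using the logarithmic growth $\TTT_1\sim|\ln d|$ from \eqref{T1T-1}); (ii) within that sub-interval, use \assump{P1} and Lemma~\ref{key1} to show $d_n(\vec\psi(\cdot))$ sweeps through $[-\ep^{(1+\nu)/\und\si},\ep^{(1+\nu)/\und\si}]$, so the admissible set is non-empty; (iii) pick the ``closest to $0$'' sub-interval (Remark~\ref{closest}) to keep the choice canonical. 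No contraction factor is computed or needed, and the constant $\ov\sigma<1$ enters in the estimates of Theorem~\ref{key}, not as a Lipschitz constant of a fixed-point operator.

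A second genuine gap is that you do not address the split between $\ee^+\in\hat\EE^+$ (infinitely many $1$'s) and $\ee^+\in\EE^+_0$ (only finitely many $1$'s). The nested-interval scheme terminates after finitely many steps in the $\EE^+_0$ case, and one must then force $d_{j^+_S}(\vec\psi(a))=0$ exactly — so the trajectory lands on $\tilde W^s$ after the last loop — and then invoke Proposition~\ref{forPropC} and Lemma~\ref{forgottentimes} to show that the solution stays within $c^*\ep$ of the origin for all subsequent blocks. Your sketch does not contain this step, and it is not a formality: without landing exactly on the stable leaf the $\ee_j=0$ constraints for $j>j^+_S$ would eventually fail because there is no more correction budget from the Melnikov sign changes. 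Finally, your description of the $\ee_j=0$ steps as producing a new pair $(d_1,\tau_1)$ ``with $\tau_1\approx T_2$'' misdescribes the bookkeeping: the paper skips the $\ee_j=0$ indices entirely via the subsequence $\SS_j=T_{2k_j}$ and the gaps $\Delta_j$ in \eqref{defSS}--\eqref{defDej}, so that each loop map step already absorbs all intervening zero blocks.
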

Once proved Propositions \ref{Sclosed} and \ref{Sintersection} we
can apply Lemma \ref{top}
to
conclude
the following.
\begin{proposition}\label{topological}
Let the hypotheses of Theorem \ref{T.connection} be satisfied.
Then the set $S^+$  contains a closed connected set $\tilde{S}^+$
which intersects the lines $\tau=b_0$ and $\tau=b_1$, while the set $S^-$  contains a closed connected set $\tilde{S}^-$
which intersects the lines $\tau=b_0$ and $\tau=b_1$.
\end{proposition}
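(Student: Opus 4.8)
The plan is to verify the two hypotheses of the topological Lemma \ref{top} and then invoke it directly. We take the full rectangle to be $A_0 = [0, \ep^{(1+\nu)/\underline{\sigma}}] \times [b_0, b_1]$, with the first coordinate being $d$ and the second being $\tau$, and we set $E := S^+$. Proposition \ref{Sclosed} already tells us that $S^+$ is a closed subset of $A_0$, which is exactly the first requirement of Lemma \ref{top}. Proposition \ref{Sintersection} tells us that for every continuous path $\vec{\psi}:[0,1] \to A_0$, $\vec{\psi}(a) = (d(a), \tau(a))$, joining the side $\{d = 0\}$ to the side $\{d = \ep^{(1+\nu)/\underline{\sigma}}\}$ (i.e.\ with $d(0) = 0$ and $d(1) = \ep^{(1+\nu)/\underline{\sigma}}$), there is some $\bar{a} \in \,]0,1[\,$ with $\vec{\psi}(\bar{a}) \in S^+$. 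This is precisely the second requirement of Lemma \ref{top}, with the roles of the coordinate axes matched so that $[\alpha_1,\beta_1] = [0,\ep^{(1+\nu)/\underline{\sigma}}]$ is the $d$-axis and $[\alpha_2,\beta_2] = [b_0,b_1]$ is the $\tau$-axis.

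Hence Lemma \ref{top} applies and yields a closed connected set $\tilde{S}^+ \subset S^+$ intersecting both $\tau = b_0$ and $\tau = b_1$. This proves the first assertion of the proposition.

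For the second assertion, concerning $S^-$, the argument is entirely symmetric: one runs the backward-time analogues of Propositions \ref{Sclosed} and \ref{Sintersection} — that $S^-$ is closed, and that every path in $A_0$ joining $\{d=0\}$ to $\{d = \ep^{(1+\nu)/\underline{\sigma}}\}$ meets $S^-$ — which hold by the same reasoning with the maps $\TTT_{-1}, \PPP_{-1}$, $\Q_u(d,\tau)$ and property $\bs{C^-_{\ee^-}}$ replacing their forward counterparts $\TTT_1, \PPP_1$, $\Q_s(d,\tau)$ and $\bs{C^+_{\ee^+}}$, using \eqref{TandKnunew_tau-} in place of \eqref{TandKnunew_tau+}. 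Applying Lemma \ref{top} once more with $E := S^-$ produces the closed connected set $\tilde{S}^- \subset S^-$ joining $\tau = b_0$ to $\tau = b_1$.

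The only genuine content here is in Propositions \ref{Sclosed} and \ref{Sintersection}, whose proofs follow; the present deduction is purely the bookkeeping needed to feed them into Lemma \ref{top}. The main obstacle — which is deferred to the proof of Proposition \ref{Sintersection} — is the construction, along an arbitrary path $\vec{\psi}$ crossing $A_0$, of a selection of the initial datum and of the shifts $\alpha_j$ realizing property $\bs{C^+_{\ee^+}}$ with the required control $|\alpha_j| \le B_{2j}$; this rests on the loop-time and space-displacement estimates of Theorems \ref{key} and \ref{keymissed}, on Lemmas \ref{forgottentimes}, \ref{forgotten.origin}, \ref{key1}, and on the selection scheme of \cite{CFPrestate}.
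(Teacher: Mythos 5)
Your proposal is correct and follows the same route as the paper: both invoke Lemma \ref{top} with $\mathcal{R}=A_0$, $E=S^+$, after citing Propositions \ref{Sclosed} and \ref{Sintersection} for the closedness and crossing hypotheses, and both dispatch the $S^-$ part by time reversal. The only cosmetic difference is that the paper condenses the $S^-$ case into a one-line citation of the inversion-of-time argument in \cite[\S 5.3]{CFPrestate}, whereas you spell out the substitutions $(\TTT_{-1},\PPP_{-1},\Q_u,\bs{C^-_{\ee^-}},\eqref{TandKnunew_tau-})$ for their forward counterparts.
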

\begin{proof}
  The part concerning $S^+$ and $\tilde{S}^+$, follows
 from   Propositions \ref{Sclosed} and \ref{Sintersection}, simply applying Lemma \ref{top}  choosing  $\mathcal{R}= A_0$, $E= S^+$.

 The part concerning  $S^-$ and $\tilde{S}^-$ then follows using an inversion of time argument, see e.g.\ \cite[\S 5.3]{CFPrestate}.
\end{proof}

\begin{proof}[\textbf{Proof of Proposition \ref{Sclosed}}]
In this proof, we consider $\ee^+$ and $\ep$ fixed so we leave the dependence on these variables unsaid.
Assume by contradiction that $S^+$ is not closed. Then there is a sequence $(d_n, \tau_n) \in S^+$ such that
$(d_n, \tau_n)\to  (\bar{d}, \bar{\tau}) \not\in S^+$. Let us denote by  $\alpha_j^n=\alpha_j(d_n, \tau_n)$.
 By  \eqref{defS+}, $|\alpha_j^n | \le B_{2j}$ for any $j$ and $n$.

Let  $\ell \in \N$
be such that the trajectory $\x(t,\bar{\tau}; \Q_s(\bar{d},\bar{\tau}))$
satisfies $\bs{C^+_{\ee^+}}$  with   $\alpha_j = \bar{\alpha}_j$,
 for suitable
   $\alpha_j = \bar{\alpha}_j$
 for any $j \le \ell-1$, but $\bs{C^+_{\ee^+}}$ does not hold
 for $j=\ell$   for any $\alpha_{\ell}= \bar{\alpha}$ whenever $|\bar{\alpha}| \le B_{2 \ell}$.

We split the argument in the case $\ee_\ell=0$ and $\ee_\ell=1$.

Assume first $\ee_\ell=0$.
Then,  for any $|\bar{\alpha}| \le B_{2\ell}$ there is $\bar{t}=\bar{t}(\bar{\alpha}) \in [T_{2\ell-1},T_{2\ell+1}]$  such that
$$\|\x(\bar{t},\bar{\tau}; \Q_s(\bar{d},\bar{\tau})) \| >  c^*\ep ;$$
 in fact we may choose    $\bar{t}\in]T_{2\ell-1},T_{2\ell+1}[$.
So we can find $\delta>0$ and a suitable $\mu>0$ such that
$$\|\x(t,\bar{\tau}; \Q_s(\bar{d},\bar{\tau})) \| >  (c^*+2\mu) \ep $$
for any $t \in [\bar{t}-\delta, \bar{t}+\delta]$.
Then,  recalling that $\Q_s(\cdot,\cdot)$ is continuous  and using continuous dependence of the solutions of \eqref{eq-disc} on data and
parameters, see Remark \ref{data.smooth},
we can find $N>0$ large enough so that
$$\|\x(t,\tau_{n}; \Q_s(d_{n},\tau_{n})) \| >  (c^*+\mu) \ep $$
for any $t \in [\bar{t}-\delta, \bar{t}+\delta]$ and any $n \ge N$.
 But this is a contradiction since $\x(t,\tau_{N}; \Q_s(d_{N},\tau_{N}))$ satisfies   $\bs{C^+_{\ee^+}}$.

 Now assume $\ee_\ell=1$, then for any $|\bar{\alpha}| \le B_{2\ell}$ there is $\bar{t}=\bar{t}(\bar{\alpha}) \in [T_{2\ell-1},T_{2\ell+1}]$
 such that
$$\|\x(\bar{t},\bar{\tau}; \Q_s(\bar{d},\bar{\tau}))-\ga(\bar{t}- T_{2\ell}-\bar{\al}) \|> c^*\ep .$$
 Again we can assume $\bar{t} \in ]T_{2\ell-1},T_{2\ell+1}[$,
so that we can find $\delta>0$ and a suitable $\mu>0$ such that
$$\|\x(t,\bar{\tau}; \Q_s(\bar{d},\bar{\tau}))-\ga(t- T_{2\ell}-\bar{\al}) \|> (c^*+2 \mu)\ep $$
 for any $t \in [\bar{t}-\delta, \bar{t}+\delta]$.
 Then, using again  the continuity of $\Q_s(\cdot,\cdot)$ and the  continuous dependence of the solutions of \eqref{eq-disc} on data and
 parameters
 (see Remark \ref{data.smooth}),
we can find $N>0$ large enough so that
$$\|\x(t,\tau_{n}; \Q_s(d_{n},\tau_{n})) -\ga(t- T_{2\ell}-\bar{\al}) \| >  (c^*+\mu) \ep $$
for any $t \in [\bar{t}-\delta , \bar{t}+\delta]$ and any $n \ge N$.
 In particular, if we choose $\bar{\alpha}= \alpha^N_{\ell}$, we find $\bar{t}^N= \bar{t}(\alpha^N_{\ell})$ such that
$$\|\x(t,\tau_{N}; \Q_s(d_{N},\tau_{N})) -\ga(t- T_{2\ell}-\al^{N}_\ell) \| >  (c^*+\mu) \ep $$
for any $t \in [\bar{t}^N-\delta , \bar{t}^N+\delta]$.
But this contradicts the fact that the trajectory $\x(t,\tau_{N}; \Q_s(d_{N},\tau_{N}))$ satisfies   $\bs{C^+_{\ee^+}}$.

Now assume that there is $\bar{T} \in [\tau, T_1]$ such that $$\|\x( \bar{T},\bar{\tau}; \Q_s(\bar{d},\bar{\tau}))-
\ga(\bar{T}-\bar{\tau}) \| >  c^*\ep
.$$
Arguing as in the $\ee_\ell=1$ case, we
 find a contradiction.
\end{proof}

The Proof of Proposition \ref{Sintersection} is  rather lengthy and requires several lemmas.

We start from some notation borrowed from \cite{CFPrestate}.
We split $\EE^+= \{0,1\} ^{\Z^+}$ in two subsets $\hat{\EE}^+$ and $\EE^+_0$:
 \begin{equation}\label{efuture}
 \begin{array}{c}
   j^+_*(\ee^+)=\sup\{ j \mid \ee^+_j=1  \},\\
\hat{\EE}^+= \{ \ee^+=(\ee^+_j) \in \EE^+ \mid j^+_*(\ee^+)= +\infty \} ,\\
\EE^+_0= \{ \ee^+=(\ee^+_j) \in \EE^+ \mid  j^+_*(\ee^+)< +\infty \},\\
 j_\SS^+(\ee^+)=\begin{cases}\#\{j\mid 1\leq j\leq j^+_*(\ee^+),\,\ee_j^+=1\},&\text{if }j^+_*(\ee^+)<\infty,\\ \infty,& \text{if }
j^+_*(\ee^+)=\infty,\end{cases}
\end{array}
\end{equation}
 where $\#$ denotes the number of elements. So, $j^+_*(\ee^+)$ is ``the index of the last $1$ of $\ee^+$'' and $j_\SS^+(\ee^+)$ is
``the number of $1$s in $\ee^+$''.

Let us fix  $\TT^+=(T_j)$, $j \in \Z^+$ satisfying \eqref{TandKnunew},  \eqref{TandKnunew_tau+}
and $\ee^+ \in \EE^+$; adapting
\cite{BF11} we introduce a new sequence
$\SS=(\SS_j)$,  $j=0,1,\dots,j_\SS^+(\ee^+)$, which is a subsequence of $\TT^+$ depending also on $\ee^+$, and we define $(\Delta_j)$,
 $j=1,2,\dots,j_\SS^+(\ee^+)$ as
follows:
\begin{equation}\label{defSS}
\begin{split}
  \SS_0 &= \tau \,, \qquad \qquad  \SS_j= \min \{T_{2k} > \SS_{j-1} \mid \ee_k=1 \}  , \qquad 1 \le j \le j^+_{\SS}(\ee^+),\\
  \Delta_j &= \SS_j-\SS_{j-1}  \, , \quad \textrm{ if  $1 \le j \le j^+_{\SS}(\ee^+)$} \,.
\end{split}
\end{equation}
Clearly,  $\SS_j$ has finitely many values if $\ee^+ \in \EE^+_0$, while if $\ee^+ \in \hat{\EE}^+$,   then $\SS_j$ and $\Delta_j$ are
defined for any $j \in \N$ and in $\Z^+$, respectively.\\
We denote by $k_j$,   the  subsequence such that $\SS_j=T_{2k_j}$, $1 \le j  \le j^+_{\SS}(\ee^+)$, and we set
$\beta_{2k_0}=b_0$, $ \beta'_{2k_0}=b_1$, so that
\begin{equation}\label{defik}
\begin{split}
 & \beta_{2k_j} < \SS_j < \beta'_{2k_j}, \qquad 0 \le j \le j^+_{\SS}(\ee^+)
\end{split}
\end{equation}
where  $\beta_j$  and $\beta'_j$ are the ones defined in \assump{P1} and \eqref{defkj}.
Notice that there is a subsequence $n_k$ such that $\beta_{k}=b_{n_k}$, so
 $\beta_{2k_j} = b_{n_{2k_j}}$ and  $\beta'_{2k_j} = b_{n_{2k_j}+1}$.
Further, setting $B_{2k_0}=B_0=b_1-b_0$, by \eqref{TandKnunew} for all $j \geq 1$  we find the estimate
\begin{equation}\label{defDej}
\Delta_j \ge 2 K_0(1+\nu) |\ln(\ep)| + B_{2k_j}+ B_{2k_{j-1}} .
\end{equation}

In fact, we will rely mainly on the sequences $(\SS_j)$ and $(\Delta_j)$.

We start our argument with the more involved case of $\ee^+ \in \hat{\EE}^+$,  then the case where $\ee^+ \in  \EE^+_0$ will follow more
easily.

In the whole proof  $\vec{\psi}:[0,1] \to A_0$, $\vec{\psi}(a)=(d(a),\tau(a))$ is a generic path such that
$d(0)=0$ and $d(1)= \ep^{(1+\nu)/ \underline{\sigma}}$. Since $\vec{\psi}$, $\TT^+$ and $\ee^+ \in \EE^+$ are fixed in this proof, we leave
these dependencies unsaid.

 In the proof of \cite[Theorem 3.4]{CFPrestate} we have constructed a sequence of nested intervals $J_n \subset J_{n-1} \subset J_0$
 characterized by the following property:  if
$d \in J_n$ then the trajectory $\x(t,\tau ; \Q_s(d,\tau))$ has property $\bs{C^+_{\ee^+}}$ when $t \in [\tau, \TTT_n(d,\tau)]$  and it will cross
(transversely)
$L^0$ at $t= \TTT_n(d,\tau)$ after performing $n$ loops.

More precisely, following \cite{CFPrestate}
for any  $i=1,\dots,n$ we   define the $C^r$ functions
$$
  \TTT_i(\cdot ,\tau): J_i \to  [\beta_{2k_i}, \beta_{2k_{i}}'] \, , \qquad   d_i(\cdot ,\tau): J_i \to  \R,
$$
\begin{equation}\label{Tn}
\begin{split}
d_{i}(d,\tau):= & d_{1}(D,A), \quad \textrm{where $D=d_{i-1}(d,\tau)$, and $A=\TTT_{i-1}(d,\tau)$},\\
\TTT_{i}(d,\tau):= & \TTT_{1}(D,A), \quad \textrm{where $D=d_{i-1}(d,\tau)$, and $A=\TTT_{i-1}(d,\tau)$},\\
\TTT_{i-\frac{1}{2}}(d,\tau):= & \TTT_{\frac{1}{2}}(D,A), \quad \textrm{where $D=d_{i-1}(d,\tau)$, and $A=\TTT_{i-1}(d,\tau)$}
\end{split}
\end{equation}
with $d_0(d,\tau)=d$, $\TTT_0(d,\tau)=\tau$
such that, if
$d \in J_n$, the trajectory $\x (t,\tau; \vec{Q^s}(d,\tau))$ performs $n$ loops close to $\bs{\Gamma}$ when $t \in [\tau,  \TTT_n(d,\tau)]$,
and intersects (transversely) $L^0$ exactly at $t=\tau$ and at $t= \TTT_i(d ,\tau)$ and $L^{\inn}$ at $\TTT_{i-\frac{1}{2}}(d,\tau)$ for $i=1, \ldots, n$.

To prove Proposition \ref{Sintersection} we just have to repeat the argument of \cite[Theorem~3.4]{CFPrestate} but this time working on the path $\vec{\psi}(a)$
instead of on the horizontal segment $J_0\times \{\tau \}$.
The argument relies mainly on Theorem \ref{key} and on its uniformity with respect to $\tau$.

So, our aim is to build a sequence of nested compact intervals $\mathcal{J}_{n+1} \subset \mathcal{J}_{n} \subset
\mathcal{J}_1 \subset [0,1]$
such that, if $a \in \mathcal{J}_{n}$ then  $\x(t,\tau(a); \Q_s(d(a),\tau(a)))$ has property $\bs{C^+_{\ee^+}}$
 whenever $t \in [\tau, \TTT_n(d(a),\tau(a))] \supset [\tau, \beta_{2k_n}]$.

Let  $D_A^1=\textrm{exp}\left(-\frac{5(\Delta_1 +B_{2k_1})}{2 \sTfwd}\right)$ and observe that $D_A^1 < \ep^{(1+\nu)/\underline{\sigma}}$, cf.~(5.5) 
 in
\cite{CFPrestate}. Then set
$I_1 = \left[ D_A^1   ,  \ep^{(1+\nu)/\underline{\sigma}}\right]$ and
$$
\mathcal{I}_1:= d^{-1}(I_1)= \left\{ a \in [0,1] \mid d(a) \in I_1  \right\}.$$
Notice that $\mathcal{I}_1$ is closed,  since $I_1$ is closed and $d(\cdot)$ is continuous; further $\mathcal{I}_1 \ne \emptyset$
since $d(\cdot):[0,1] \to  [0,\ep^{(1+\nu)/\underline{\sigma}}]$ is surjective by construction.

Now we use the stretching property of the function  $\TTT_1(\cdot, \tau)$  as in Theorem~\ref{key} (which is in fact
based on the fact that $\TTT_1(d, \tau)$ grows as  $|\ln d|$   as $d \to 0$, see \cite{CFPrestate}).

\begin{lemma}\label{key2bismod}
  There are $a',a'' \in   \mathcal{I}_1$ such that
 \begin{equation}\label{goalkey2bis}
   \begin{split}
    \TTT_1(\vec{\psi}(a')) =     \beta_{2k_1},  \qquad  \TTT_1(\vec{\psi}(a'')) =     \beta'_{2k_1}.
   \end{split}
 \end{equation}
 Hence the image of the function $\TTT_1(\vec{\psi}(\cdot)) :  \mathcal{I}_1 \to \R$
 contains the closed interval $[\beta_{2k_1} ,\beta'_{2k_1}]$.
\end{lemma}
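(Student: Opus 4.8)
The plan is to produce the two parameters $a',a''$ by an intermediate value argument for the continuous scalar map $g(a):=\TTT_1(\vec{\psi}(a))=\TTT_1(d(a),\tau(a))$, which by Theorem \ref{key} is well defined and $C^r$ as long as $0<d(a)\le\delta$. Everything rests on the logarithmic estimate \eqref{T1T-1}, namely $[\sTfwd-\mu]\,|\ln d|\le\TTT_1(d,\tau)-\tau\le[\sTfwd+\mu]\,|\ln d|$ \emph{uniformly in} $\tau$, together with the spacing bound \eqref{defDej} coming from \eqref{TandKnunew}, the definition of $D_A^1$, and the values $K_0=\tfrac{3\overline{\Sigma}}{2\underline{\sigma}}$ from \eqref{defK0-new} and $\mu_0$ from \eqref{mu0}. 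The first thing to do is to pass from the possibly complicated set $\mathcal{I}_1$ to a genuine subinterval, since $d(\cdot)$ is only continuous, not monotone: set $\alpha:=\sup\{a\in[0,1]\mid d(a)\le D_A^1\}$. Because $d(0)=0<D_A^1<\ep^{(1+\nu)/\underline{\sigma}}=d(1)$ and $d(\cdot)$ is continuous, this set is nonempty, closed, and does not contain $1$; hence $\alpha<1$, $d(\alpha)=D_A^1$, and $d(a)>D_A^1$ for all $a\in(\alpha,1]$. Since the image of $d$ lies in $[0,\ep^{(1+\nu)/\underline{\sigma}}]$ this gives $[\alpha,1]\subseteq d^{-1}(I_1)=\mathcal{I}_1$, and $g$ is continuous on $[\alpha,1]$.

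Next I would estimate $g$ at the left endpoint $a=\alpha$, where $d(\alpha)=D_A^1$ and therefore $|\ln d(\alpha)|=\tfrac{5(\Delta_1+B_{2k_1})}{2\sTfwd}$ with $\Delta_1=\SS_1-\tau(\alpha)=T_{2k_1}-\tau(\alpha)$. Using the lower bound in \eqref{T1T-1} and the fact that $\mu\le\mu_0<\tfrac14\sTfwd$ (since $\mu_0\le\tfrac14\sTfwd_+<\tfrac14\sTfwd$), one gets $g(\alpha)-\tau(\alpha)\ge[\sTfwd-\mu]\,|\ln D_A^1|>\tfrac32(\Delta_1+B_{2k_1})$. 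Plugging in $\Delta_1=T_{2k_1}-\tau(\alpha)$ and using $\tau(\alpha)\le b_1<T_{2k_1}$ yields $g(\alpha)>\tfrac32 T_{2k_1}-\tfrac12\tau(\alpha)+\tfrac32 B_{2k_1}>T_{2k_1}+\tfrac32 B_{2k_1}>\beta_{2k_1}+B_{2k_1}=\beta'_{2k_1}$, where $\beta_{2k_1}<T_{2k_1}<\beta'_{2k_1}$ comes from \eqref{defik}; this is exactly the analogue, uniform in $\tau\in[b_0,b_1]$, of the corresponding step in the proof of \cite[Theorem~3.4]{CFPrestate}.

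Then I would estimate $g$ at the right endpoint $a=1$, where $d(1)=\ep^{(1+\nu)/\underline{\sigma}}$ and $|\ln d(1)|=\tfrac{1+\nu}{\underline{\sigma}}|\ln\ep|$. The upper bound in \eqref{T1T-1} gives $g(1)-\tau(1)\le[\sTfwd+\mu]\tfrac{1+\nu}{\underline{\sigma}}|\ln\ep|$, while \eqref{defDej} (with $j=1$) together with $T_{2k_1}-\beta_{2k_1}<\beta'_{2k_1}-\beta_{2k_1}=B_{2k_1}$ from \eqref{defik} gives $\beta_{2k_1}-\tau(1)=\Delta_1-(T_{2k_1}-\beta_{2k_1})>\Delta_1-B_{2k_1}\ge 2K_0(1+\nu)|\ln\ep|$. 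Since $2K_0=\tfrac{3\overline{\Sigma}}{\underline{\sigma}}\ge\tfrac{3\sTfwd}{\underline{\sigma}}>\tfrac{\sTfwd+\mu}{\underline{\sigma}}$ (because $\mu<\tfrac14\sTfwd<2\sTfwd$), we conclude $g(1)-\tau(1)<2K_0(1+\nu)|\ln\ep|<\beta_{2k_1}-\tau(1)$, i.e.\ $g(1)<\beta_{2k_1}$. Finally, $g$ is continuous on $[\alpha,1]$ with $g(\alpha)>\beta'_{2k_1}>\beta_{2k_1}>g(1)$, so the intermediate value theorem gives $g\bigl([\alpha,1]\bigr)\supseteq[\beta_{2k_1},\beta'_{2k_1}]$; in particular there are $a',a''\in[\alpha,1]\subseteq\mathcal{I}_1$ with $\TTT_1(\vec{\psi}(a'))=\beta_{2k_1}$ and $\TTT_1(\vec{\psi}(a''))=\beta'_{2k_1}$, which is \eqref{goalkey2bis}, and the statement about the image is immediate.

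The step I expect to carry the real content is the pair of endpoint estimates: it is precisely there that the choice of the endpoints $D_A^1$ and $\ep^{(1+\nu)/\underline{\sigma}}$ of $I_1$, the time‑gap condition \eqref{TandKnunew} (through \eqref{defDej}), and the constant $K_0=\tfrac{3\overline{\Sigma}}{2\underline{\sigma}}$ must dovetail with the $|\ln d|$‑asymptotics of $\TTT_1$ so that the loop‑time straddles the whole window $[\beta_{2k_1},\beta'_{2k_1}]$; this reproduces, with all quantities uniform in $\tau$, the estimate already carried out in \cite[Theorem~3.4]{CFPrestate}. The extraction of the subinterval $[\alpha,1]\subseteq\mathcal{I}_1$ in the first step is routine but not dispensable: without it one cannot run a one–dimensional intermediate value argument, since along a generic path $\vec{\psi}$ the function $d(\cdot)$ need not be monotone and $\mathcal{I}_1$ need not be connected.
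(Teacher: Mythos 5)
Your proof is correct and follows essentially the same route as the paper: extract from the path a subinterval of $\mathcal{I}_1$ whose endpoints hit $d=D_A^1$ and $d=\ep^{(1+\nu)/\underline{\sigma}}$, show via the logarithmic bounds of Theorem~\ref{key} that $\TTT_1(\vec{\psi}(\cdot))$ overshoots $\beta'_{2k_1}$ at the small-$d$ endpoint and undershoots $\beta_{2k_1}$ at the large-$d$ endpoint, and conclude by the intermediate value theorem. The only difference is that you spell out the endpoint estimates (which the paper delegates to \cite[Lemma~5.1]{CFPrestate}) and replace the paper's ``w.l.o.g.\ $[A',A'']\subset\mathcal{I}_1$'' by the cleaner explicit choice $\alpha=\sup\{a\mid d(a)\le D_A^1\}$.
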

\begin{proof}
Following the notation of \cite[Lemma 5.1]{CFPrestate}, we denote by  $D^1_B= \ep^{(1+\nu)/\underline{\sigma}}$. Let   $A'<A''$  be such that
$d(A')=D_A^1$, $d(A'')=D_B^1$; we can assume w.l.o.g. that $[A',A''] \subset \mathcal{I}_1$. As in the proof of \cite[Lemma 5.1]{CFPrestate},
using Theorem~\ref{key} we see that
$$\TTT_1 (\vec{\psi}(A'))> \SS_1+ B_{2k_1}, \qquad  \TTT_1 (\vec{\psi}(A''))< \SS_1- B_{2k_1}.$$
Since
$[\beta_{2k_1},\beta_{2k_1}']\subset[\SS_1-B_{2k_1},\SS_1+B_{2k_1}]$, from the continuity of $\TTT_1 (\vec{\psi}(\cdot))$   we see that there
are $a', a'' \in \mathcal{I}_1$ such that
\eqref{goalkey2bis} holds.
\end{proof}
Then we set
$$
\mathcal{\hat{I}}_1:=  \left\{ a \in \mathcal{I}_1 \mid \beta_{2k_1} \le \TTT_1(\vec{\psi}(a))  \le \beta'_{2k_1}   \right\}.$$
Observe that $\mathcal{\hat{I}}_1$ is closed but possibly disconnected, so we choose $\mathcal{\check{I}}_1\subset \mathcal{\hat{I}}_1$ so
that
$\mathcal{\check{I}}_1$ is a compact connected interval with the following property
 \begin{equation}\label{prop1bis}
     \begin{split}
 &   \TTT_1(\cdot,\tau):  \mathcal{\check{I}}_1  \to  [\beta_{2k_1}, \beta'_{2k_1}] \qquad \textrm{is surjective}.
                                 \end{split}
   \end{equation}

\begin{remark}\label{closest}
  For definiteness  we choose $\mathcal{\check{I}}_1$ to be   the ``interval closest to $0$ satisfying property
   \eqref{prop1bis}''.  Namely notice that by construction there are $A<B$, $A,B \in \mathcal{\hat{I}}_1$  such that $\TTT_1(\vec{\psi}(A))=\beta'_{2k_1}$ and
   $\TTT_1(\vec{\psi}(B))=\beta_{2k_1}$, then we set
$$B'_1= \min \{a \in  \mathcal{\hat{I}}_1 \mid  \TTT_1(\vec{\psi}(a))=\beta_{2k_1}   \}, $$
$$  A'_1= \max \{ a\in \mathcal{\hat{I}}_1 \mid a< B'_1, \,
\TTT_1(\vec{\psi}(a))=\beta'_{2k_1}  \}$$  and we define $\mathcal{\check{I}}_1=[A_1', B_1']$
(so it is the   smallest   interval with this property).
   \end{remark}

Then from  property \eqref{prop1bis},  \assump{P1} and   Lemma \ref{key1}   we see that there are $\mathcal{A}^-, \mathcal{A}^+ \in \mathcal{\check{I}}_1$ such that
$d_1(\vec{\psi}(\mathcal{A}^-))=-\ep^{(1+\nu)/\underline{\sigma}}$ and
$d_1(\vec{\psi}(\mathcal{A}^+))=\ep^{(1+\nu)/\underline{\sigma}}$. Assume to fix the ideas $\mathcal{A}^-<\mathcal{A}^+$, then we set
\begin{equation}\label{J1ebis}
  \mathcal{J}_1:=[\mathcal{A}^-,\mathcal{A}^+] \subset \mathcal{I}_1
\end{equation}
so we have the following lemma.
\begin{lemma}
The function $ \TTT_1(\vec{\psi}(\cdot)): \mathcal{J}_1 \to  [\beta_{2k_1}   , \beta'_{2k_1}]$
is well defined and $C^r$, while the function
$$ d_1(\vec{\psi}(\cdot)): \mathcal{J}_1 \to  \R$$
is  $C^r$ and its image contains $[-\ep^{(1+\nu)/ \underline{\sigma}} , \ep^{(1+\nu)/ \underline{\sigma}}]$.
\end{lemma}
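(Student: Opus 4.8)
The plan is to read this lemma directly off the constructions that immediately precede it: no new analysis is required, only the chain of inclusions $\mathcal{J}_1=[\mathcal{A}^-,\mathcal{A}^+]\subseteq\mathcal{\check{I}}_1\subseteq\mathcal{\hat{I}}_1\subseteq\mathcal{I}_1$ together with one appeal to the intermediate value theorem. First I would dispose of well-definedness and regularity. Since $\mathcal{J}_1\subseteq\mathcal{I}_1=d^{-1}(I_1)$, every $a\in\mathcal{J}_1$ satisfies $d(a)\in I_1=[D_A^1,\ep^{(1+\nu)/\underline{\sigma}}]$, so $0<D_A^1\le d(a)\le\ep^{(1+\nu)/\underline{\sigma}}\le\delta$ (the last inequality holding once $\ep_0$ has been fixed small enough, cf.\ Theorem \ref{key}), while $\tau(a)\in[b_0,b_1]\subset\R$. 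Hence $\vec{\psi}(a)$ lies in the domain on which Theorem \ref{key} furnishes the $C^r$ map $\TTT_1(\cdot,\cdot)$, and on which $d_1(\cdot,\cdot)=\dist(\PPP_1(\cdot,\cdot),\P_s(\TTT_1(\cdot,\cdot)))$ is the $C^r$ map supplied by Theorem \ref{key} and Lemma \ref{key1}; the compositions $\TTT_1(\vec{\psi}(\cdot))$ and $d_1(\vec{\psi}(\cdot))$ are therefore well defined on $\mathcal{J}_1$ and inherit the regularity of these maps (the symbol $C^r$ being understood as applying to $\TTT_1,d_1$ as functions of $(d,\tau)$; for a merely continuous $\vec{\psi}$ the compositions are continuous in $a$).

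Second I would fix the codomain of $\TTT_1(\vec{\psi}(\cdot))$. Because $\mathcal{J}_1\subseteq\mathcal{\check{I}}_1\subseteq\mathcal{\hat{I}}_1$ and, by the very definition of $\mathcal{\hat{I}}_1$, one has $\beta_{2k_1}\le\TTT_1(\vec{\psi}(a))\le\beta'_{2k_1}$ on $\mathcal{\hat{I}}_1$, the map $\TTT_1(\vec{\psi}(\cdot))$ sends $\mathcal{J}_1$ into $[\beta_{2k_1},\beta'_{2k_1}]$, which is the asserted codomain. (Surjectivity onto this window is already built into $\mathcal{\check{I}}_1$ through property \eqref{prop1bis}, but it is not part of the present claim.)

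Third I would derive the range statement for $d_1(\vec{\psi}(\cdot))$. Recall that $\mathcal{A}^-$ and $\mathcal{A}^+$ were chosen in $\mathcal{\check{I}}_1$, using property \eqref{prop1bis}, assumption \assump{P1} and Lemma \ref{key1}, so that $d_1(\vec{\psi}(\mathcal{A}^-))=-\ep^{(1+\nu)/\underline{\sigma}}$ and $d_1(\vec{\psi}(\mathcal{A}^+))=\ep^{(1+\nu)/\underline{\sigma}}$, and that, after possibly relabelling so that $\mathcal{A}^-<\mathcal{A}^+$, the set $\mathcal{J}_1=[\mathcal{A}^-,\mathcal{A}^+]$ is a genuine compact interval. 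Since $d_1(\vec{\psi}(\cdot))$ is continuous on $\mathcal{J}_1$ and attains the two values $\pm\ep^{(1+\nu)/\underline{\sigma}}$ at the endpoints, the intermediate value theorem yields that its image contains the whole segment $[-\ep^{(1+\nu)/\underline{\sigma}},\ep^{(1+\nu)/\underline{\sigma}}]$, which finishes the proof.

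I do not expect any genuine obstacle here: all the substance has been spent earlier, namely the stretching of $\TTT_1$ as $d\to 0$ (Theorem \ref{key}, exploited through Lemma \ref{key2bismod}), which is what lets $\TTT_1(\vec{\psi}(\cdot))$ sweep across the full window $[\beta_{2k_1},\beta'_{2k_1}]$, and the Melnikov sign control near the zeros of $\MM$ (Lemma \ref{key1} combined with \assump{P1}), which forces $d_1$ to change sign with amplitude at least $\ep^{(1+\nu)/\underline{\sigma}}$. The only point demanding care is that the final interval $\mathcal{J}_1$ must be nested inside all three of $\mathcal{\check{I}}_1$, $\mathcal{\hat{I}}_1$, $\mathcal{I}_1$ simultaneously, so that the three conclusions — correct codomain, $C^r$ regularity of the underlying maps, and applicability of the intermediate value theorem on an interval — can all be asserted on the same set; this is guaranteed by the order in which $\mathcal{I}_1\supseteq\mathcal{\hat{I}}_1\supseteq\mathcal{\check{I}}_1\supseteq\mathcal{J}_1$ were constructed, in particular by Remark \ref{closest} and by the display defining $\mathcal{J}_1$.
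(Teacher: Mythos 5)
Your proof is correct and matches what the paper intends: the paper states this lemma without a separate proof, treating it as an immediate readout of the nested constructions $\mathcal{J}_1=[\mathcal{A}^-,\mathcal{A}^+]\subset\mathcal{\check{I}}_1\subset\mathcal{\hat{I}}_1\subset\mathcal{I}_1$ and of the defining properties of $\mathcal{A}^\pm$, which is exactly what you unwind. Your parenthetical caveat that the composite is only as smooth as $\vec{\psi}$ allows (so merely continuous for a continuous path, which is all that the intermediate value theorem argument needs) is a fair reading of the paper's slight abuse of notation when writing $C^r$ for $\TTT_1(\vec{\psi}(\cdot))$ and $d_1(\vec{\psi}(\cdot))$.
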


Now we keep  in mind the definition of $\TTT_n$ and $d_n$ given in \eqref{Tn}, and  we proceed by induction
to prove the following result.
\begin{proposition}\label{inductionbis}
Let $\ee^+ \in \hat{\EE}^+$; then
there is an infinite sequence of nested compact intervals  $\mathcal{J}_{n} \subset\mathcal{J}_{n-1} \subset \mathcal{J}_1$
  such that the function
$$ \TTT_{n}(\vec{\psi}(\cdot)): \mathcal{J}_{n} \to  [\beta_{2k_n}, \beta'_{2k_n}]$$
is well defined and $C^r$, while  the image of the function
$$ d_{n}(\vec{\psi}(\cdot)): \mathcal{J}_{n} \to  \R$$
contains $[-\ep^{(1+\nu)/ \underline{\sigma}} , \ep^{(1+\nu)/ \underline{\sigma}}]$.
  \end{proposition}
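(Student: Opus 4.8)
The plan is to prove Proposition \ref{inductionbis} by induction on $n$, where the case $n=1$ has already been established in the lemmas immediately preceding it (namely the construction of $\mathcal{J}_1$ with the stated properties of $\TTT_1(\vec{\psi}(\cdot))$ and $d_1(\vec{\psi}(\cdot))$). So suppose the interval $\mathcal{J}_n \subset \mathcal{J}_{n-1} \subset \cdots \subset \mathcal{J}_1$ has been constructed with the property that $\TTT_n(\vec{\psi}(\cdot)) : \mathcal{J}_n \to [\beta_{2k_n}, \beta'_{2k_n}]$ is well-defined and $C^r$ and the image of $d_n(\vec{\psi}(\cdot)) : \mathcal{J}_n \to \R$ contains $[-\ep^{(1+\nu)/\underline{\sigma}}, \ep^{(1+\nu)/\underline{\sigma}}]$. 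The task is to carve out $\mathcal{J}_{n+1}$.

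The key steps, in order, will mirror exactly the $n=1$ construction but applied over $\mathcal{J}_n$ instead of over $[0,1]$. First I would set $\mathcal{I}_{n+1} := \{ a \in \mathcal{J}_n \mid d_n(\vec{\psi}(a)) \in I_{n+1} \}$, where $I_{n+1} = [D^{n+1}_A, \ep^{(1+\nu)/\underline{\sigma}}]$ with $D^{n+1}_A = \exp\left(-\tfrac{5(\Delta_{n+1}+B_{2k_{n+1}})}{2\sTfwd}\right)$, following the recipe of \cite[Lemma 5.1]{CFPrestate}; this set is closed (by continuity of $d_n \circ \vec{\psi}$) and non-empty (since the image of $d_n(\vec{\psi}(\cdot))$ covers $I_{n+1} \cap [0,\ep^{(1+\nu)/\underline{\sigma}}]$, noting $D^{n+1}_A < \ep^{(1+\nu)/\underline{\sigma}}$). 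Then, composing the maps as in \eqref{Tn} and invoking Theorem \ref{key} (in particular \eqref{T1T-1}, which gives that $\TTT_1(D,A) - A$ grows like $|\ln D|$ uniformly in $A$), I would show that on $\mathcal{I}_{n+1}$ the map $\TTT_{n+1}(\vec{\psi}(\cdot)) = \TTT_1(d_n(\vec{\psi}(\cdot)), \TTT_n(\vec{\psi}(\cdot)))$ takes values that straddle $\SS_{n+1}$ by more than $B_{2k_{n+1}}$ on either side — precisely, that there are points where it exceeds $\SS_{n+1}+B_{2k_{n+1}}$ and points where it is below $\SS_{n+1}-B_{2k_{n+1}}$ — so that by the intermediate value theorem (continuity of $\TTT_{n+1}\circ\vec{\psi}$) its image contains $[\beta_{2k_{n+1}}, \beta'_{2k_{n+1}}] \subset [\SS_{n+1}-B_{2k_{n+1}}, \SS_{n+1}+B_{2k_{n+1}}]$. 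This uses the gap estimate \eqref{defDej}, $\Delta_j \ge 2K_0(1+\nu)|\ln\ep| + B_{2k_j} + B_{2k_{j-1}}$, exactly as in the single-$\tau$ proof.

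Next I would restrict to $\check{\mathcal{I}}_{n+1} \subset \hat{\mathcal{I}}_{n+1} := \{ a \in \mathcal{I}_{n+1} \mid \beta_{2k_{n+1}} \le \TTT_{n+1}(\vec{\psi}(a)) \le \beta'_{2k_{n+1}} \}$, choosing a compact connected subinterval on which $\TTT_{n+1}(\vec{\psi}(\cdot))$ is still surjective onto $[\beta_{2k_{n+1}}, \beta'_{2k_{n+1}}]$ — picking, as in Remark \ref{closest}, the one ``closest to $0$'' for definiteness. On $\check{\mathcal{I}}_{n+1}$ the time $\TTT_{n+1}(\vec{\psi}(a))$ ranges over a full period-window $[\beta_{2k_{n+1}}, \beta'_{2k_{n+1}}]$, so by assumption \assump{P1} the Melnikov function $\MM$ changes sign as a function of the loop-end time, whence by Lemma \ref{key1} the displacement $d_{n+1}(\vec{\psi}(\cdot))$ attains both $+\ep^{(1+\nu)/\underline{\sigma}}$ and $-\ep^{(1+\nu)/\underline{\sigma}}$; choosing $\mathcal{J}_{n+1}$ to be the closed interval between two such points gives $\mathcal{J}_{n+1} \subset \mathcal{J}_n$ with the image of $d_{n+1}(\vec{\psi}(\cdot))$ containing $[-\ep^{(1+\nu)/\underline{\sigma}}, \ep^{(1+\nu)/\underline{\sigma}}]$, and $\TTT_{n+1}(\vec{\psi}(\cdot))$ $C^r$ with values in $[\beta_{2k_{n+1}}, \beta'_{2k_{n+1}}]$. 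The $C^r$ regularity throughout is inherited from Theorem \ref{key} together with the $C^r$ dependence of $\Q_s(d,\tau)$ on $(d,\tau)$ and the continuity of $\vec{\psi}$.

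The main obstacle I anticipate is purely bookkeeping rather than conceptual: one has to be careful that the compositions in \eqref{Tn} remain well-defined, i.e.\ that at each stage the argument $d_n(\vec{\psi}(a))$ stays in the domain $J_0$ (equivalently $0 < d_n \le \delta$) where Theorem \ref{key} applies, and that the relevant trajectory pieces stay inside $B(\bs{\Gamma},\beta)$ so that Proposition \ref{forPropC} and Lemmas \ref{forgottentimes}--\ref{forgotten.origin} can be used to verify property $\bs{C^+_{\ee^+}}$ on the newly covered time interval $[\TTT_n, \TTT_{n+1}] \supset [\beta_{2k_n}, \beta_{2k_{n+1}}]$; the choice $a \mapsto (d_n(\vec{\psi}(a)), \TTT_n(\vec{\psi}(a)))$ landing in $I_{n+1} \times [\beta_{2k_n}, \beta'_{2k_n}]$ is designed exactly so that these estimates go through with the same $c^*$. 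Since all of this was done for fixed $\tau$ in \cite[Theorem 3.4]{CFPrestate} and Theorem \ref{key} is uniform in $\tau \in \R$, the only new ingredient is substituting the path $\vec{\psi}$ for the horizontal segment, which does not disturb any of the inequalities. Once Proposition \ref{inductionbis} is in hand, taking (for $\ee^+\in\hat{\EE}^+$) a point $a_\infty$ in the nested intersection $\bigcap_n \mathcal{J}_n$ — nonempty by compactness — yields $\vec{\psi}(a_\infty) \in S^+$, and the case $\ee^+ \in \EE^+_0$ (finitely many loops) follows more easily by stopping the induction after $j^+_\SS(\ee^+)$ steps and using the remaining freedom, which completes the proof of Proposition \ref{Sintersection}.
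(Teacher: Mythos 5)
Your proposal follows the paper's proof essentially step by step (indexed $\mathcal{J}_{n-1}\to\mathcal{J}_n$ there, $\mathcal{J}_n\to\mathcal{J}_{n+1}$ here): pin $d_n(\vec{\psi}(\cdot))$ in a window $[D_A^{n+1},\ep^{(1+\nu)/\underline{\sigma}}]$, use the stretching estimate from Theorem~\ref{key} and the intermediate value theorem to show $\TTT_{n+1}(\vec{\psi}(\cdot))$ straddles $\SS_{n+1}\pm B_{2k_{n+1}}$, restrict to a connected component on which it is surjective onto $[\beta_{2k_{n+1}},\beta'_{2k_{n+1}}]$, and then invoke \assump{P1} with Lemma~\ref{key1} to make $d_{n+1}(\vec{\psi}(\cdot))$ sweep the full interval $[-\ep^{(1+\nu)/\underline{\sigma}},\ep^{(1+\nu)/\underline{\sigma}}]$; that is exactly the paper's argument. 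One small correction: for the inductive step the paper takes $D_\#^{n+1}=\exp\bigl(-5(\Delta_{n+1}+B_{2k_{n+1}}+B_{2k_n})/(2\sTfwd)\bigr)$, whereas you kept the $n=1$ formula without the $B_{2k_n}$ term. That extra term is needed because, unlike in the base case where $\TTT_0(\vec{\psi}(a))=\tau(a)=\SS_0$, the starting time $\TTT_n(\vec{\psi}(a))$ now ranges over the whole interval $[\beta_{2k_n},\beta'_{2k_n}]$ of width $B_{2k_n}$, so to guarantee $\TTT_{n+1}(\vec{\psi}(a))\ge\SS_{n+1}+B_{2k_{n+1}}$ even when $\TTT_n(\vec{\psi}(a))$ is as small as $\beta_{2k_n}$ one must absorb the additional slack $\SS_n-\beta_{2k_n}\le B_{2k_n}$; this is precisely why \eqref{defDej} (which you do cite) carries the $B_{2k_n}+B_{2k_{n-1}}$ terms, so the slip is presentational rather than conceptual.
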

\begin{proof}
We proceed by induction: we have already shown the $n=1$ step, so we assume the existence of $\mathcal{J}_{n-1}$.

Following the argument in the proof of Proposition 5.7  in \cite{CFPrestate}, we define
\begin{equation*}
\begin{split}
  D^n_\# &= \textrm{exp}\left(\frac{-5 (\Delta_n+B_{2k_n}+B_{2k_{(n-1)}}) }{2\sTfwd}\right) \le \ep^{(1+\nu)/ \underline{\sigma}},   \\
      \mathcal{I}_n &=
  \left\{ a \in \mathcal{J}_{n-1} \mid  D^n_\# \le d_{n}(\vec{\psi}(a))    \le \ep^{(1+\nu)/ \underline{\sigma}} \right\}.
  \end{split}
\end{equation*}
  Let   $A'_n<A''_n$  be such that
 $ d_n(\vec{\psi}(A'_n))=D_\#^n$, $d_n(\vec{\psi}(A''_n))=\ep^{(1+\nu)/ \underline{\sigma}}$; we can assume w.l.o.g. that $[A'_n,A''_n]
 \subset
\mathcal{I}_n$.

Then, from the proof of \cite[Lemma 5.8]{CFPrestate}  (and the uniformity of the estimates in
Theorem~\ref{key} with respect to $\tau \in [b_0,b_1]$), we see that
$$ \TTT_n(\vec{\psi}(A'_n)) \ge \SS_n +B_{2k_n}> \beta'_{2k_n} \,, \quad  \TTT_n(\vec{\psi}(A''_n)) \le \SS_n -B_{2k_n}<\beta_{2k_n}.$$
Hence, using the continuity of $\TTT_n(\vec{\psi}(\cdot))$, we see that there are $a'_n, a''_n$
such that $A'_n<a'_n<a''_n<A''_n$ and
\begin{equation}\label{hh}
   \TTT_n(\vec{\psi}(a'_n)) = \beta'_{2k_n} \,, \quad  \TTT_n(\vec{\psi}(a''_n)) = \beta_{2k_n}.
\end{equation}

 Then, following the argument of  \cite[\S 5.1.1]{CFPrestate}, we define
\begin{equation*}
\begin{split}
\mathcal{\hat{I}}_n= \mathcal{\hat{I}}_n^{\ee^+}(\tau,\TT^+):= &   \left\{ a \in \mathcal{I}_{n}  \mid
  \beta_{2k_n} \le \TTT_{n}(\vec{\psi}(a)) \le \beta'_{2k_n}   \right\}
\end{split}
\end{equation*}
 which is a closed nonempty set.
Further, possibly shrinking $\mathcal{\hat{I}}_n$,  we can assume that
 \begin{equation}\label{calpropn}
 \begin{split}
 &  \TTT_n(\vec{\psi}(\cdot)): \mathcal{\hat{I}}_n \to [\beta_{2k_n} , \beta'_{2k_n}]   \qquad \textrm{is surjective}.
                                 \end{split}
 \end{equation}
Then, reasoning as in Remark \ref{closest}, we denote by $\mathcal{\check{I}}_n=\mathcal{\check{I}}_n^{\ee^+}(\tau,\TT^+)$ the
 ``closed interval closest to $0$'' having  property \eqref{calpropn}.

Now, using \assump{P1}, \eqref{calpropn}     and   Lemma \ref{key1}
 we see that there are
   $\mathcal{A}^-_n, \mathcal{A}^+_n \in \mathcal{\check{I}}_n $ such that
   $d_n(\vec{\psi}(\mathcal{A}^-_n))=-\ep^{(1+\nu)/\underline{\sigma}}$ and
   $d_n(\vec{\psi}(\mathcal{A}^+_n))=\ep^{(1+\nu)/\underline{\sigma}}$.

Finally, we set
\begin{equation*}\label{calhatAn}
  \begin{array}{l}
    \mathcal{\check{A}^-}_n:= \min \{ a \in \mathcal{\check{I}}_n \mid  d_n(\vec{\psi}(a))= -\ep^{(1+\nu)/\underline{\sigma}}\}, \\
    \mathcal{\check{A}^+}_n:= \min \{ a \in \mathcal{\check{I}}_n \mid  d_n(\vec{\psi}(a))= \ep^{(1+\nu)/\underline{\sigma}}\} ,\\
    \underline{\mathfrak{A}}_n= \min \{ \mathcal{\check{A}^-}_n; \mathcal{\check{A}^+}_n \}, \qquad \overline{\mathfrak{A}}_n= \max \{
    \mathcal{\check{A}^-}_n;
    \mathcal{\check{A}^+}_n \},
  \end{array}
\end{equation*}
and   we denote by
 \begin{equation}\label{calJne}
  \mathcal{J}_n=\mathcal{J}_n^{\ee^+}(\tau,\TT^+):=[\underline{\mathfrak{A}}_n;\overline{\mathfrak{A}}_n]  \subset \mathcal{\check{I}}_n
  \subset \mathcal{J}_{n-1},
\end{equation}
 so the proposition is proved.
\end{proof}

Now, following again \cite[\S 5.1.1]{CFPrestate},  for any $a \in \mathcal{J}_n$ we set
  \begin{equation}\label{caldefaln}
    \al_{k_j}=\al_{k_j}^{\ee^+}(\ep,\vec{\psi}(a), \TT^+)= \TTT_j(\vec{\psi}(a))- T_{2k_j}  \qquad \textrm{for any $j=1, \ldots, n$.}
  \end{equation}

Repeating word by word the argument of the proofs of the results in Lemmas~5.10  and
 5.12    in \cite{CFPrestate},   but replacing the assumption $(d,\tau) \in J_n \times [b_0,b_1]$ by
 $a \in \mathcal{J}_n$ one can reprove in this context the analogous results to get the following.

\begin{lemma}\label{call.stepn}
Let $\ee^+ \in \EE^+$; then for any $1 \le n \le  j^+_S=j^+_S (\ee^+)$ (where $j^+_S=+\infty$ if $\ee^+ \in \hat{\EE}^+$, and $j^+_S<+\infty$
if
$\ee^+ \in
\EE^+_0$), and
  any $a \in \mathcal{J}_n$, the trajectory $\x(t,\tau(a); \Q_s(\vec{\psi}(a)))$ satisfies $\bs{C^+_{\ee^+}}$ for any $\tau(a) \le t \le
  \TTT_n(\vec{\psi}(a))$,
  i.e., for any $\tau(a) \le t \le T_{2 k_n} +\al_{k_n}$.
  Further,    $|\al_{k_n}| \le B_{2k_n}$.
\end{lemma}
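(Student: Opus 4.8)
The plan is to carry the induction of Proposition \ref{inductionbis} one level deeper, namely to show that the nested intervals $\mathcal{J}_n$ constructed there not only control $\TTT_n(\vec{\psi}(\cdot))$ and $d_n(\vec{\psi}(\cdot))$ but in fact guarantee that the trajectory $\x(t,\tau(a);\Q_s(\vec{\psi}(a)))$ satisfies the full future-chaotic property $\bs{C^+_{\ee^+}}$ on the time interval $[\tau(a),\TTT_n(\vec{\psi}(a))]$. This is exactly the content of Lemmas 5.10 and 5.12 of \cite{CFPrestate}, the only difference being that there the horizontal segment $J_n\times\{\tau\}$ is replaced by the image of the path $\vec{\psi}$, so the first step is to state that the proofs of those lemmas transfer verbatim, since they rely only on Theorem \ref{key}, Theorem \ref{keymissed}, Proposition \ref{forPropC}, Lemmas \ref{forgottentimes}, \ref{forgotten.origin}, \ref{key1}, all of which are uniform in $\tau\in[b_0,b_1]$, together with the continuous dependence of solutions of \eqref{eq-disc} on data and parameters (Remark \ref{data.smooth}).

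First I would fix $\ee^+\in\EE^+$ and $a\in\mathcal{J}_n$, and recall from Proposition \ref{inductionbis} that $\TTT_j(\vec{\psi}(a))\in[\beta_{2k_j},\beta'_{2k_j}]$ for $j=1,\dots,n$, so that $\al_{k_j}=\TTT_j(\vec{\psi}(a))-T_{2k_j}$ satisfies $|\al_{k_j}|\le B_{2k_j}$ by \eqref{defik}; this already disposes of the last assertion of the lemma. Then, on each ``loop'' interval $[\TTT_{j-1}(\vec{\psi}(a)),\TTT_j(\vec{\psi}(a))]$, I would use Lemma \ref{forgotten.origin} (with base point $\TTT_{j-1}(\vec{\psi}(a))$ and space coordinate $d_{j-1}(\vec{\psi}(a))\in J_0$) to obtain
\[
\|\x(t,\tau(a);\Q_s(\vec{\psi}(a)))-\ga(t-T_{2k_{j-1}}-\al_{k_{j-1}})\|\le c^*\ep
\]
on the first half of the loop and
\[
\|\x(t,\tau(a);\Q_s(\vec{\psi}(a)))-\ga(t-T_{2k_{j}}-\al_{k_{j}})\|\le c^*\ep
\]
on the second half; combined with the estimate $\|\x(t,\tau(a);\Q_s(\vec{\psi}(a)))\|\le c^*\ep$ valid in the central portion of each loop (Lemma \ref{forgottentimes}, rephrased via Lemma \ref{forgotten.origin}), this gives precisely \eqref{ej+=1w} for those indices $k$ with $\ee_k=1$ that occur among $k_1,\dots,k_n$, and \eqref{ej+=0w} for all the intermediate indices $k$ with $\ee_k=0$ (for which the trajectory stays within $B(\vec{0},c^*\ep)$ for the whole time window $[T_{2k-1},T_{2k+1}]$, since such windows are contained in the central near-origin part of a loop by \eqref{defDej} and the gap condition \eqref{TandKnunew}). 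The initial stretch $[\tau(a),T_1]$ is handled by the $j=1$ application together with \eqref{ej+=tau}, using that $\tau(a)\in[b_0,b_1]$ and $\Q_s(d(a),\tau(a))$ lies $O(\ep)$-close to $\P_s(\tau(a))$, hence $O(\ep)$-close to $\ga^+(0)$ by \eqref{trajWuWs} and \assump{K}. Finally I would note that the inclusion $\x(t,\tau(a);\Q_s(\vec{\psi}(a)))\in\tilde{V}(t)$ for $t\ge\tau(a)$ follows from the corresponding statements in Theorems \ref{keymissed} and \ref{key}, which track the trajectory in $\tilde{V}^{\pm}(t)$ during each half-loop.

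The main obstacle is bookkeeping rather than a genuine analytic difficulty: one has to match the ``loop-by-loop'' estimates of Lemma \ref{forgotten.origin}, which are phrased relative to the base times $\TTT_{j-1}(\vec{\psi}(a))$, with the ``window-by-window'' estimates $t\in[T_{2j-1},T_{2j+1}]$ demanded by property $\bs{C^+_{\ee^+}}$, and to check that the overlaps are consistent — in particular that at the transition times $T_{2k_j-1}$ and $T_{2k_j+1}$ the two candidate approximating homoclinic pieces $\ga(\cdot-T_{2k_{j-1}}-\al_{k_{j-1}})$ and $\ga(\cdot-T_{2k_j}-\al_{k_j})$ are both within $c^*\ep$ of the trajectory, which is where the gap estimate \eqref{defDej} and the fact that $\ga(t)\to\vec0$ exponentially are used. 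Since this is exactly the content of Lemmas 5.10 and 5.12 in \cite{CFPrestate} and none of the constants or estimates there depended on $\tau$ in an essential way (all the underlying results, above all Theorem \ref{key}, being uniform in $\tau\in\R$), the transfer is routine, and I would simply indicate the dictionary $J_n\times\{\tau\}\rightsquigarrow\vec{\psi}(\mathcal{J}_n)$ and refer to \cite{CFPrestate} for the details.
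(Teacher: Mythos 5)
Your proposal is correct and takes essentially the same approach as the paper: the paper's proof is literally the one-line reduction ``repeat word by word the arguments of Lemmas~5.10 and 5.12 in \cite{CFPrestate}, replacing $(d,\tau)\in J_n\times[b_0,b_1]$ by $a\in\mathcal{J}_n$,'' which is exactly the dictionary $J_n\times\{\tau\}\rightsquigarrow\vec{\psi}(\mathcal{J}_n)$ you identify. The additional loop-by-loop bookkeeping you sketch (via Lemma~\ref{forgotten.origin}, the gap estimate \eqref{defDej}, and the uniformity in $\tau$ of Theorem~\ref{key}) is consistent and just expands what the paper leaves to the cited reference.
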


If $\ee^+ \in \hat{\EE}^+$, we set
\begin{equation}\label{Jinftypsi}
\begin{split}
  &\mathcal{J_{+\infty}} = \mathcal{J}^{\ee^+}_{+\infty}(\tau, \TT^+):= \cap_{n=1}^{+\infty}\mathcal{J}_n^{\ee^+}(\tau, \TT^+) , \\
   & a_*=a_*^{\ee^+}(\tau, \TT^+)= \min \{a \in \mathcal{J}_{+\infty}\}.
  \end{split}
\end{equation}
Notice that  we are intersecting infinitely many nested compact intervals, so $\mathcal{J_{+\infty} }$
is a  non-empty compact and connected set: whence the minimum $a_*$ exists.

\begin{proposition}\label{Sintersectionhat}
 Proposition  \ref{Sintersection} and Theorem \ref{T.connection}  hold true for any $\ee^+ \in \hat{\EE}^+$.
\end{proposition}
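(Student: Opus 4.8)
The plan is to harvest the output of the inductive construction carried out along the path $\vec{\psi}$ — namely Proposition~\ref{inductionbis} and Lemma~\ref{call.stepn} — and to let $n\to+\infty$. First I would fix $\ee^+\in\hat{\EE}^+$, together with $\tau\in[b_0,b_1]$, $\nu\ge\nu_0$, $\ep\le\ep_0$ and an arbitrary increasing sequence $\TT^+=(T_j)$, $j\in\Z^+$, as in Theorem~\ref{T.connection}, and fix also a generic continuous path $\vec{\psi}:[0,1]\to A_0$, $\vec{\psi}(a)=(d(a),\tau(a))$, with $d(0)=0$ and $d(1)=\ep^{(1+\nu)/\und{\si}}$. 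Since $\ee^+$ contains infinitely many $1$s, Proposition~\ref{inductionbis} supplies the \emph{entire} infinite chain of nested nonempty compact intervals $\mathcal{J}_{n+1}\subset\mathcal{J}_n\subset\dots\subset\mathcal{J}_1\subset[0,1]$, so that $\mathcal{J}_{+\infty}=\cap_{n\ge1}\mathcal{J}_n$ is a nonempty compact connected subset of $[0,1]$; I would then set $\bar{a}:=a_*=\min\mathcal{J}_{+\infty}$, as in \eqref{Jinftypsi}.

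Next I would check that $\vec{\psi}(\bar{a})\in S^+$. Writing $a=\bar{a}$, set $\al_{k_j}:=\TTT_j(\vec{\psi}(a))-T_{2k_j}$ for $j\ge1$ as in \eqref{caldefaln}, and $\al_j:=0$ for every index $j$ with $\ee^+_j=0$ (legitimate, since property $\bs{C^+_{\ee^+}}$ constrains only the $\al_j$ attached to the $1$s of $\ee^+$). Because $\ee^+\in\hat{\EE}^+$, Lemma~\ref{call.stepn} applies at every level $n\ge1$ and gives that $\x(t,\tau(a);\Q_s(\vec{\psi}(a)))$ has property $\bs{C^+_{\ee^+}}$ on $\tau(a)\le t\le\TTT_n(\vec{\psi}(a))$ with $|\al_j|\le B_{2j}$ for every $j$ occurring up to step $n$. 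By Proposition~\ref{inductionbis}, $\TTT_n(\vec{\psi}(a))\in[\beta_{2k_n},\beta'_{2k_n}]$, hence $\TTT_n(\vec{\psi}(a))\ge\beta_{2k_n}=b_{n_{2k_n}}$; since $k_n\to+\infty$ and $b_i\to+\infty$ by \assump{P1}, we obtain $\TTT_n(\vec{\psi}(a))\to+\infty$. Consequently the estimates \eqref{ej+=1w}, \eqref{ej+=0w}, \eqref{ej+=tau} and the inclusion $\x(t,\tau(a);\Q_s(\vec{\psi}(a)))\in\tilde{V}(t)$ hold for \emph{every} $t\ge\tau(a)$, the values $\al_{k_j}$ produced at the successive levels being mutually consistent and so assembling into a single sequence $(\al_j)_{j\in\Z^+}$ with $|\al_j|\le B_{2j}$. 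That is, $\x(t,\tau(a);\Q_s(\vec{\psi}(a)))$ has property $\bs{C^+_{\ee^+}}$ and $\vec{\psi}(\bar{a})\in S^+$, cf.\ \eqref{defS+}. Finally $\bar{a}\in\,]0,1[$: on one side $\mathcal{J}_{+\infty}\subset\mathcal{J}_1\subset\mathcal{I}_1=d^{-1}\big([D_A^1,\ep^{(1+\nu)/\und{\si}}]\big)$ with $D_A^1>0$, so $d(\bar{a})\ge D_A^1>0=d(0)$; on the other side, arguing as in Lemma~\ref{key2bismod}, $d(a)=\ep^{(1+\nu)/\und{\si}}$ forces $\TTT_1(\vec{\psi}(a))<\SS_1-B_{2k_1}<\beta_{2k_1}$, so such an $a$ does not belong to $\mathcal{\hat I}_1\supset\mathcal{J}_{+\infty}$, whence $d(\bar{a})<d(1)$ and $\bar{a}\ne1$. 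This would prove Proposition~\ref{Sintersection} for every $\ee^+\in\hat{\EE}^+$.

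For Theorem~\ref{T.connection} I would then combine this with Proposition~\ref{Sclosed} (valid for every $\ee^+$) and apply Lemma~\ref{top} with $\mathcal{R}=A_0$, $E=S^+$, exactly as in the proof of Proposition~\ref{topological}, getting a closed connected set $\tilde{S}^+\subset S^+$ meeting $\tau=b_0$ and $\tau=b_1$. The map $(d,\tau)\mapsto(\Q_s(d,\tau),\tau)$ is continuous and injective on $\{d>0\}$ and preserves the $\tau$-coordinate; moreover every point of $S^+$ with $\ee^+\in\hat{\EE}^+$ has $d>0$ (for $d=0$ one has $\Q_s(0,\tau)=\P_s(\tau)\in W^s(\tau)$, whose forward orbit tends to $\vec{0}$ and performs no loop, hence cannot have property $\bs{C^+_{\ee^+}}$ with infinitely many $1$s). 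Thus the image $\tilde{\chi}^+(\ee^+)$ of $\tilde{S}^+$ is a closed connected subset of $\chi^+(\ee^+)$ meeting $\tau=b_0$ and $\tau=b_1$, with $|\al_j^{\ee^+}(\ep)|\le B_{2j}$; and the symmetric statement for $\chi^-(\ee^-)$, $\ee^-\in\hat{\EE}^-$, follows verbatim via the inversion-of-time argument recalled in the proof of Proposition~\ref{topological}.

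I expect the only genuinely delicate point to be the limit passage in the second paragraph: each application of Lemma~\ref{call.stepn} certifies property $\bs{C^+_{\ee^+}}$ only on the bounded interval $[\tau(a),\TTT_n(\vec{\psi}(a))]$, so one must guarantee that $\TTT_n(\vec{\psi}(a))\to+\infty$ — precisely where the hypothesis $\ee^+\in\hat{\EE}^+$ (infinitely many loops, $k_n\to+\infty$) enters, jointly with $b_i\to+\infty$ — and that the $\al_{k_j}$'s obtained at the various levels agree, so that they glue into one sequence. The remaining ingredients — nonemptiness and connectedness of $\mathcal{J}_{+\infty}$, the bound $|\al_j|\le B_{2j}$, and the verification $\bar{a}\in\,]0,1[$ — are routine given the construction already in place.
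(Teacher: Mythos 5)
Your argument is correct and follows essentially the same route as the paper: set $\bar a := a_*=\min\mathcal{J}_{+\infty}$ from \eqref{Jinftypsi}, invoke Lemma~\ref{call.stepn} at every level $n$, and use $\TTT_n(\vec{\psi}(a_*))\ge\beta_{2k_n}\to+\infty$ (forced by $\ee^+\in\hat{\EE}^+$) to upgrade the finite-interval property to all $t\ge\tau(a_*)$; the paper's proof is exactly this, stated very compactly. Your added verifications (consistency of the $\al_{k_j}$ across levels, which is automatic since $\TTT_j(\vec{\psi}(\bar a))$ is a fixed number once $\bar a$ is fixed, and the check that $\bar a\in\,]0,1[$) are sound elaborations that the paper leaves implicit.
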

 \begin{proof}
 Let $\ee^+ \in \hat{\EE}^+$, then from Lemma \ref{call.stepn} we immediately obtain $a_*$ such that $\vec{\psi}(a_*) \in S^+$.
 Then by construction $\x(t,\tau; \Q_s(\vec{\psi}(a_*))$ has property $\bs{C_{\ee^+}^+}$.
 \end{proof}

\begin{proof}[\textbf{Proof of Proposition \ref{Sintersection}}]

If $\ee^+ \in \hat{\EE}^+$ the result follows from Proposition~\ref{Sintersectionhat}.

So, let  $\ee^+ \in \EE^+_0$ and consider the set
$\mathcal{J}_{j^+_S}= \cap_{n=1}^{j^+_S} \mathcal{J}_{n}$ constructed via Lemma~\ref{call.stepn}, which is again a compact interval.
The function
$$ d_{j^+_S}(\vec{\psi}(\cdot)): \mathcal{J}_{j^+_S} \to \R$$
is $C^r$ and its image contains the interval $[-\ep^{(1+\nu)/\und{\si}} , \ep^{(1+\nu)/\und{\si}}]$;
further the function
$$ \TTT_{j^+_S} (\vec{\psi}(\cdot)): \mathcal{J}_{j^+_S}  \to [\beta_{2k_{j^+_S}}, \beta'_{2k_{j^+_S}}] $$
is well defined and $C^r$.

 So   the set
$$\mathcal{I}_{+\infty}= \mathcal{I}_{+\infty}^{\ee^+}(\TT^+):= \{a \in \mathcal{J}_{j^+_S}    \mid d_{j^+_S} (\vec{\psi}(a))=0 \}$$
is a non-empty compact set. Let us denote by $a_*= \min \mathcal{I}_{+\infty}$ and by
$\mathcal{J}_{+\infty}=\mathcal{J}_{+\infty}^{\ee^+}(\tau, \TT^+)$ the largest connected
component
of $\mathcal{I}_{+\infty}$ containing $a_*$.
Then $\x(t,\tau; \Q_s(\vec{\psi}(a)))$ has property $\bs{C^+_{\ee^+}}$   for any $t \in [\tau,
\TTT_{j^+_S}(\vec{\psi}(a))]$, whenever  $a \in
\mathcal{J}_{+\infty}$.

Further  $\x(\TTT_{j^+_S}(\vec{\psi}(a)),\tau(a); \Q_s(\vec{\psi}(a)))= \P_s(\TTT_{j^+_S}(\vec{\psi}(a)))$, hence \\
\mbox{$\x(t,\tau(a); \Q_s(\vec{\psi}(a))) \in \tilde{W}^s(t) \subset \Om^+$} for any $t \ge \TTT_{j^+_S}(\vec{\psi}(a))$.
So from Proposition~\ref{forPropC}, recalling that
$\alpha_{k_j}= \TTT_{j}(\vec{\psi}(a))-T_{2k_j}$ for any $1 \le j \le j^+_S$,
 we see that
$$ \|\x(t,\tau(a); \Q_s(\vec{\psi}(a)))- \ga^+(t- T_{2k_{j^+_S}}-\alpha_{k_{j^+_S}})\| \le \frac{c^*}{2} \ep$$
for any $t \ge T_{ 2k_{j^+_S}}+\alpha_{k_{j^+_S}}$.
Moreover from Lemma \ref{forgottentimes} we see that
$$\|\x(t,\tau(a); \Q_s(\vec{\psi}(a)))\| \le c^* \ep \quad  \textrm{for any
$t \ge T_{ 2k_{j^+_S}+1} \ge  \TTT_{j^+_S}(\vec{\psi}(a)) + T_b$.}$$
Hence we see that $ \x(t,\tau(a); \Q_s(\vec{\psi}(a)))$ has property $\bs{C^+_{\ee^+}}$ for any $t \ge \tau(a)$, whenever
 $a\in J_{+\infty}$  and we conclude the proof of Proposition \ref{Sintersection}.
\end{proof}

\medskip

Now, the assertion of Theorem \ref{T.connection} immediately follows from Proposition~\ref{topological} setting
$$\tilde{\chi}^+(\ee^+)=  \left\{ (\Q_s(d,\tau),\tau)   \mid  (d,\tau) \in \tilde{S}^+   \right\},   \quad
\tilde{\chi}^-(\ee^-)=  \left\{ (\Q_u(d,\tau),\tau)   \mid  (d,\tau) \in \tilde{S}^-   \right\}.
$$

\subsection{Proof of Theorem \ref{T.connectionLa}}\label{proof.connection2}

  In this section we assume that   the hypotheses of Theorem \ref{T.connectionLa}   are satisfied.
   Further   $\ep \in ]0, \ep_0[$, $\nu \ge \nu_0$ are fixed and
  $\TT^+=(T_j)$, $j \in \Z^+$ and $\TT^-=(T_j)$, $j \in \Z^-$ are fixed sequences satisfying
    \eqref{TandKnu} for $j \in \Z^+$ and \eqref{TandKnu_tau+}, and \eqref{TandKnu} for $j \le -2$ and \eqref{TandKnu_tau-},
respectively;
  moreover we assume \eqref{minimal}.
  Finally we let $\ee^+ \in \EE^+$ and $\ee^- \in \EE^-$, then we set
 \begin{equation}\label{defS+la}
 \begin{split}
 S^{\Lambda,+}=     &   \left\{ (d,\tau) \in A_0\middle|  \begin{array}{l}
                                      \x(t,\tau ; \Q_s(d,\tau)) \; \textrm{ has property $\bs{C^+_{\ee^+}}$ and } \\
                                      |\alpha_j| \le  \adown_j-\aup_j \le \l1  \; \textrm{ for any $j \in \Z^+$}
                                    \end{array}  \right\},\\
S^{\Lambda,-}= & \left\{ (d,\tau) \in A_0 \middle|  \begin{array}{l}
                                      \x(t,\tau ; \Q_u(d,\tau)) \; \textrm{ has property $\bs{C^-_{\ee^-}}$ and } \\
                                     |\alpha_j| \le \adown_j-\aup_j \le  \l1  \; \textrm{ for any $j \in \Z^-$}
                                    \end{array}  \right\} .
 \end{split}
\end{equation}

Repeating word by word the proof of Propositions \ref{Sclosed} and \ref{Sintersection} in Section \ref{S.theorem2.1},
but replacing $B_j$ by $\adown_j-\aup_j$  for any $j \in \Z$,
we obtain the following.
\begin{proposition}\label{Sclosedbis}
Let $\ee^+ \in \EE^+$ be fixed; then the set $S^{\Lambda,+} \subset A_0$ is closed.
\end{proposition}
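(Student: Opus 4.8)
The plan is to mirror, essentially verbatim, the argument used for $S^+$ in the proof of Proposition \ref{Sclosed}, but with the bound $|\alpha_j| \le B_{2j}$ replaced by $|\alpha_j| \le \adown_j - \aup_j \le \l1$ everywhere it appears. The only structural features of the proof of Proposition \ref{Sclosed} that are actually used are: (i) the admissible $\alpha_j$ range in the definition of $S^+$ is a compact interval (here $[-(\adown_j-\aup_j), \adown_j-\aup_j]$ instead of $[-B_{2j}, B_{2j}]$), so that ``$\bs{C^+_{\ee^+}}$ fails for $j=\ell$ for every admissible $\alpha_\ell$'' gives, for each admissible $\bar\alpha$, a genuine bad time $\bar t(\bar\alpha)$ in the open interval $]T_{2\ell-1}, T_{2\ell+1}[$; and (ii) continuity of $\Q_s(\cdot,\cdot)$ together with continuous dependence of solutions of \eqref{eq-disc} on initial data and parameters within $B(\bs\Gamma, \beta)$, as recorded in Remark \ref{data.smooth}. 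Neither of these depends on whether the bound on $\alpha_j$ is $B_{2j}$ or $\adown_j-\aup_j$.

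Concretely, I would argue by contradiction: suppose $(d_n,\tau_n)\in S^{\Lambda,+}$ with $(d_n,\tau_n)\to(\bar d,\bar\tau)\notin S^{\Lambda,+}$, write $\alpha_j^n = \alpha_j(d_n,\tau_n)$ so that $|\alpha_j^n|\le \adown_j-\aup_j$, and let $\ell\in\N$ be the first index at which $\x(t,\bar\tau;\Q_s(\bar d,\bar\tau))$ fails to satisfy $\bs{C^+_{\ee^+}}$ for any admissible $\bar\alpha$ with $|\bar\alpha|\le \adown_{2\ell}-\aup_{2\ell}$ (with the convention for the initial segment $[\tau, T_1]$ handled exactly as in the last paragraph of the proof of Proposition \ref{Sclosed}). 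Splitting into the cases $\ee_\ell = 0$ and $\ee_\ell = 1$: in the first case, for each admissible $\bar\alpha$ there is $\bar t = \bar t(\bar\alpha)\in\,]T_{2\ell-1}, T_{2\ell+1}[$ with $\|\x(\bar t,\bar\tau;\Q_s(\bar d,\bar\tau))\| > c^*\ep$; choosing $\delta, \mu > 0$ so that the strict inequality $\|\x(t,\bar\tau;\Q_s(\bar d,\bar\tau))\| > (c^*+2\mu)\ep$ persists on $[\bar t-\delta, \bar t+\delta]$, continuity (Remark \ref{data.smooth}) yields $N$ with $\|\x(t,\tau_n;\Q_s(d_n,\tau_n))\| > (c^*+\mu)\ep$ on that interval for $n\ge N$, contradicting that $\x(t,\tau_N;\Q_s(d_N,\tau_N))$ has property $\bs{C^+_{\ee^+}}$. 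In the second case one uses the perturbed homoclinic comparison $\|\x(\bar t,\bar\tau;\Q_s(\bar d,\bar\tau)) - \ga(\bar t - T_{2\ell}-\bar\alpha)\| > c^*\ep$, and, taking $\bar\alpha = \alpha_\ell^N$, reaches the same contradiction with $\bs{C^+_{\ee^+}}$ for the $N$-th trajectory.

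I do not expect any genuine obstacle here: the statement is a routine adaptation, and the text preceding it (``Repeating word by word the proof of Propositions \ref{Sclosed} and \ref{Sintersection} \dots but replacing $B_j$ by $\adown_j-\aup_j$'') already signals that the proof is to be a transcription. The one point worth a sentence of care is to note explicitly that the family $\{\bar t(\bar\alpha) : |\bar\alpha|\le \adown_{2\ell}-\aup_{2\ell}\}$ need not be uniform in $\bar\alpha$, but this does not matter: the contradiction is obtained for the single value $\bar\alpha = \alpha_\ell^N$ once $N$ is fixed, exactly as in the $\ee_\ell = 1$ case of Proposition \ref{Sclosed}. Hence the proof reads:

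\begin{proof}[\textbf{Proof of Proposition \ref{Sclosedbis}}]
The proof is identical to that of Proposition \ref{Sclosed}, replacing $B_{2j}$ by $\adown_j-\aup_j$ throughout. We sketch the argument for completeness. Fix $\ee^+$ and $\ep$ and suppose, by contradiction, that $S^{\Lambda,+}$ is not closed; then there is a sequence $(d_n,\tau_n)\in S^{\Lambda,+}$ with $(d_n,\tau_n)\to(\bar d,\bar\tau)\notin S^{\Lambda,+}$. Write $\alpha_j^n = \alpha_j(d_n,\tau_n)$, so $|\alpha_j^n|\le \adown_j-\aup_j\le\l1$ for all $j,n$. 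Let $\ell\in\N$ be such that $\x(t,\bar\tau;\Q_s(\bar d,\bar\tau))$ satisfies $\bs{C^+_{\ee^+}}$ on $[\bar\tau,\TTT_{\ell-1}(\bar d,\bar\tau)]$ for suitable $\bar\alpha_j$ with $|\bar\alpha_j|\le\adown_j-\aup_j$ for $j\le\ell-1$, but $\bs{C^+_{\ee^+}}$ fails for $j=\ell$ for every $\bar\alpha$ with $|\bar\alpha|\le\adown_\ell-\aup_\ell$.

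If $\ee_\ell = 0$, then for any such $\bar\alpha$ there is $\bar t=\bar t(\bar\alpha)\in\,]T_{2\ell-1},T_{2\ell+1}[$ with $\|\x(\bar t,\bar\tau;\Q_s(\bar d,\bar\tau))\|>c^*\ep$; hence there are $\delta>0$ and $\mu>0$ with $\|\x(t,\bar\tau;\Q_s(\bar d,\bar\tau))\|>(c^*+2\mu)\ep$ for $t\in[\bar t-\delta,\bar t+\delta]$. By continuity of $\Q_s(\cdot,\cdot)$ and continuous dependence on data and parameters (Remark \ref{data.smooth}), there is $N$ with $\|\x(t,\tau_n;\Q_s(d_n,\tau_n))\|>(c^*+\mu)\ep$ for $t\in[\bar t-\delta,\bar t+\delta]$ and $n\ge N$, contradicting that $\x(t,\tau_N;\Q_s(d_N,\tau_N))$ satisfies $\bs{C^+_{\ee^+}}$.

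If $\ee_\ell = 1$, then for any $\bar\alpha$ with $|\bar\alpha|\le\adown_\ell-\aup_\ell$ there is $\bar t=\bar t(\bar\alpha)\in\,]T_{2\ell-1},T_{2\ell+1}[$ with $\|\x(\bar t,\bar\tau;\Q_s(\bar d,\bar\tau))-\ga(\bar t-T_{2\ell}-\bar\alpha)\|>c^*\ep$, so there are $\delta>0$, $\mu>0$ with $\|\x(t,\bar\tau;\Q_s(\bar d,\bar\tau))-\ga(t-T_{2\ell}-\bar\alpha)\|>(c^*+2\mu)\ep$ on $[\bar t-\delta,\bar t+\delta]$. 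Again by Remark \ref{data.smooth} there is $N$ with $\|\x(t,\tau_n;\Q_s(d_n,\tau_n))-\ga(t-T_{2\ell}-\bar\alpha)\|>(c^*+\mu)\ep$ on $[\bar t-\delta,\bar t+\delta]$ for $n\ge N$; taking $\bar\alpha=\alpha_\ell^N$ contradicts that $\x(t,\tau_N;\Q_s(d_N,\tau_N))$ satisfies $\bs{C^+_{\ee^+}}$. The case of a bad time $\bar T\in[\bar\tau,T_1]$ with $\|\x(\bar T,\bar\tau;\Q_s(\bar d,\bar\tau))-\ga(\bar T-\bar\tau)\|>c^*\ep$ is handled as in the $\ee_\ell=1$ case. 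In all cases we reach a contradiction, so $S^{\Lambda,+}$ is closed.
\end{proof}
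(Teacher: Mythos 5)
Your proposal is correct and follows exactly the paper's intended route: the paper itself only says that Proposition \ref{Sclosedbis} is obtained by ``repeating word by word the proof of Propositions \ref{Sclosed} and \ref{Sintersection} \ldots but replacing $B_j$ by $\adown_j-\aup_j$'', and your transcription does precisely that, including the key device in the $\ee_\ell=1$ case of fixing $\bar\alpha=\alpha^N_\ell$ only after $N$ has been determined. The only slip is a harmless typo in your preamble (writing $\adown_{2\ell}-\aup_{2\ell}$ instead of $\adown_\ell-\aup_\ell$), which you correct in the formal proof.
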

\begin{proposition}\label{Sintersectionbis}
Let $\ee^+ \in \EE^+$ be fixed; then for any continuous path $\vec{\psi} : [0,1] \to A_0$, $\vec{\psi}(a)=(d(a),\tau(a))$ such that
$d(0)=0$;
$d(1)=
\ep^{(1+\nu)/\underline{\sigma}}$ there is
$\bar{a} \in ]0,1[$ (depending on $\vec{\psi}$) such that  $\vec{\psi}(\bar{a}) \in S^{\Lambda,+}$.
\end{proposition}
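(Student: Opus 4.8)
The plan is to derive Proposition~\ref{Sintersectionbis} essentially as a corollary of the proof of Proposition~\ref{Sintersection} given in \S\ref{S.theorem2.1}, with the single bookkeeping change of replacing the time gaps $B_j$ by the quantities $\adown_j - \aup_j$ (which are bounded above by $\l1$ in the present, more restrictive, setting governed by \eqref{minimal} and \eqref{TandKnu}). First I would observe that the entire construction in \S\ref{S.theorem2.1}---the sets $A_0$, $S^+$, the nested compact intervals $\mathcal{J}_n$, and the functions $\TTT_n(\vec{\psi}(\cdot))$, $d_n(\vec{\psi}(\cdot))$---never used the weaker gap condition \eqref{TandKnunew} in an essential way; all it needed was (i) the surjectivity/stretching behaviour of $\TTT_1(\cdot,\tau)$ furnished by Theorem~\ref{key}, uniformly in $\tau \in [b_0,b_1]$, (ii) Lemma~\ref{key1} to flip $d_n$ across zero using the sign changes of $\MM$ guaranteed by \assump{P1}, and (iii) enough room between consecutive $T_j$ for the loops to fit. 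Under \eqref{TandKnu} together with \eqref{minimal}, the room available is $T_{j+1}-T_j > \l1 + K_0(1+\nu)|\ln\ep|$, and since $\adown_j - \aup_j \le \l1$, the analogue of the estimate \eqref{defDej}, namely $\Delta_j \ge 2K_0(1+\nu)|\ln\ep| + (\adown_{2k_j}-\aup_{2k_j}) + (\adown_{2k_{j-1}}-\aup_{2k_{j-1}})$, still holds. So every inequality of the form "$\TTT_n(\vec{\psi}(A'_n)) > \beta'_{2k_n}$, $\TTT_n(\vec{\psi}(A''_n)) < \beta_{2k_n}$" goes through verbatim.

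The key steps, in order, are: (1) build $\mathcal{I}_1$ and invoke the analogue of Lemma~\ref{key2bismod} to get $\TTT_1(\vec{\psi}(\cdot))$ surjective onto $[\beta_{2k_1},\beta'_{2k_1}]$; here the only change is that the threshold $D_A^1$ is defined with $\Delta_1 + (\adown_{2k_1}-\aup_{2k_1})$ in place of $\Delta_1 + B_{2k_1}$, and one checks $D_A^1 < \ep^{(1+\nu)/\underline\sigma}$ exactly as before. (2) Pass to the subinterval $\mathcal{\check I}_1$ with the surjectivity property \eqref{prop1bis}, then use Lemma~\ref{key1} and \assump{P1} to locate $\mathcal{A}^\pm$ with $d_1(\vec{\psi}(\mathcal{A}^\pm)) = \pm\ep^{(1+\nu)/\underline\sigma}$, yielding $\mathcal{J}_1$. (3) Run the induction of Proposition~\ref{inductionbis} unchanged, replacing $B_{2k_n}$ by $\adown_{2k_n}-\aup_{2k_n}$ throughout, to get the nested $\mathcal{J}_n$. (4) Apply the analogue of Lemma~\ref{call.stepn}, which now gives $|\al_{k_n}| \le \adown_{2k_n}-\aup_{2k_n} \le \l1$ rather than $|\al_{k_n}| \le B_{2k_n}$, precisely the bound appearing in the definition of $S^{\Lambda,+}$ in \eqref{defS+la}. (5) Split on $\ee^+ \in \hat{\EE}^+$ versus $\ee^+ \in \EE^+_0$ exactly as in Proposition~\ref{Sintersectionhat} and the proof of Proposition~\ref{Sintersection}: in the first case intersect the nested $\mathcal{J}_n$ and take $a_* = \min \mathcal{J}_{+\infty}$; in the second, solve $d_{j^+_S}(\vec{\psi}(a)) = 0$ and use Proposition~\ref{forPropC} and Lemma~\ref{forgottentimes} to get property $\bs{C^+_{\ee^+}}$ for all $t \ge \tau(a)$. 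In either case the resulting $\bar a = a_*$ satisfies $\vec{\psi}(\bar a) \in S^{\Lambda,+}$, and since $d(0)=0 \notin S^{\Lambda,+}$ and $d(1) = \ep^{(1+\nu)/\underline\sigma}$ forces $\bar a \in \,]0,1[$, we are done.

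I expect the only genuine point requiring care---and hence the place I would spell things out rather than merely cite---is verifying that the tighter gap condition \eqref{TandKnu} (which, unlike \eqref{TandKnunew}, bounds the loop room by $\l1 + K_0(1+\nu)|\ln\ep|$ with $\l1$ a fixed constant independent of $j$) still provides enough slack for every stretching inequality in the induction, given that here one must also accommodate the auxiliary points $\aup_j, \adown_j$ lying inside $]\,\beta_{2j}, \beta'_{2j}[$. Concretely, the intervals $[\beta_{2k_n},\beta'_{2k_n}]$ onto which $\TTT_n(\vec{\psi}(\cdot))$ must be surjective are still the ones coming from \assump{P1}, and $[\beta_{2k_n},\beta'_{2k_n}] \subset [\SS_n - (\adown_{2k_n}-\aup_{2k_n}),\, \SS_n + (\adown_{2k_n}-\aup_{2k_n})]$ need not hold verbatim; one instead uses $[\beta_{2k_n},\beta'_{2k_n}] \subset [\SS_n - \l1,\SS_n+\l1]$ together with the $\l1$-term in \eqref{TandKnu}. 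This is a routine but not entirely mechanical check, so in the write-up I would state explicitly: "repeating the proof of Propositions~\ref{Sclosed} and~\ref{Sintersection}, with $B_j$ replaced by $\adown_j-\aup_j$ and with \eqref{defDej} replaced by its evident analogue using \eqref{TandKnu} and \eqref{minimal}, we obtain Propositions~\ref{Sclosedbis} and~\ref{Sintersectionbis}", and leave the line-by-line verification to the reader exactly as the authors already do.
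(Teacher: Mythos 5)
Your proposal takes exactly the route the paper does: the paper's entire proof of Proposition~\ref{Sintersectionbis} is the single sentence ``repeating word by word the proof of Propositions~\ref{Sclosed} and~\ref{Sintersection} in Section~\ref{S.theorem2.1}, but replacing $B_j$ by $\adown_j-\aup_j$ for any $j\in\Z$,'' and your write-up spells out that reduction in the same order the original argument runs.

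One detail in your sketch is slightly off and worth correcting if you write it out. You keep $[\beta_{2k_n},\beta'_{2k_n}]$ as the target range for $\TTT_n(\vec{\psi}(\cdot))$ and worry whether it sits inside $[\SS_n-\l1,\SS_n+\l1]$. It does not in general, since $B_{2k_n}=\beta'_{2k_n}-\beta_{2k_n}$ may exceed $\l1$. The correct modification is to replace the target interval itself: the nested sets $\mathcal{\hat I}_n$, $\mathcal{\check I}_n$ should be cut with the smaller interval $[\aup_{k_n},\adown_{k_n}]\subset[\beta_{2k_n},\beta'_{2k_n}]$ (which \emph{does} satisfy $[\aup_{k_n},\adown_{k_n}]\subset[\SS_n-\l1,\SS_n+\l1]$). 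Surjectivity onto this smaller interval is even cheaper to obtain, and since \eqref{minimal} gives $\MM(\aup_{k_n})\MM(\adown_{k_n})<0$, Lemma~\ref{key1} still furnishes the required sign flip of $d_n$. With $\TTT_n(\vec{\psi}(a))\in[\aup_{k_n},\adown_{k_n}]$ one then reads off directly that $\al_{k_n}=\TTT_n(\vec{\psi}(a))-T_{2k_n}$ satisfies $|\al_{k_n}|\le\adown_{k_n}-\aup_{k_n}\le\l1$, which is precisely the bound appearing in the definition \eqref{defS+la} of $S^{\Lambda,+}$. Everything else in your outline goes through verbatim.
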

Then, as in  the proof of Proposition \ref{Sintersection},     we can apply Lemma \ref{top}, choosing  $\mathcal{R}= A_0$, $E= S^{\Lambda,+}$,
to
conclude
the following.
\begin{proposition}\label{topologicalbis}
Let the hypotheses of Theorem \ref{T.connectionLa} be satisfied.
Then the set $S^{\Lambda,+}$  contains a closed connected set $\tilde{S}^{\Lambda,+}$
which intersects the lines $\tau=b_0$ and $\tau=b_1$,
while the set $S^{\Lambda,-}$  contains a closed connected set $\tilde{S}^{\Lambda,-}$
which intersects the lines $\tau=b_0$ and $\tau=b_1$.
\end{proposition}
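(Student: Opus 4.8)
The plan is to obtain Proposition \ref{topologicalbis} as a direct consequence of the two preceding Propositions \ref{Sclosedbis} and \ref{Sintersectionbis}, via the topological Lemma \ref{top}. First I would observe that the parameter set
$A_0=\left[0,\ep^{(1+\nu)/\underline{\sigma}}\right]\times[b_0,b_1]$
is a full rectangle in the $(d,\tau)$-plane, so it fits the framework of Lemma \ref{top} with $[\alpha_1,\beta_1]=\left[0,\ep^{(1+\nu)/\underline{\sigma}}\right]$ playing the role of the $d$-axis and $[\alpha_2,\beta_2]=[b_0,b_1]$ playing the role of the $\tau$-axis. By Proposition \ref{Sclosedbis} the set $E:=S^{\Lambda,+}\subset A_0$ is closed; by Proposition \ref{Sintersectionbis}, for every continuous path $\vec{\psi}:[0,1]\to A_0$, $\vec{\psi}(a)=(d(a),\tau(a))$, with $d(0)=0$ and $d(1)=\ep^{(1+\nu)/\underline{\sigma}}$, there is $\bar a\in\,]0,1[$ with $\vec{\psi}(\bar a)\in E$. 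These are precisely the hypotheses of Lemma \ref{top}, whose conclusion furnishes a closed connected set $\tilde S^{\Lambda,+}\subset S^{\Lambda,+}$ meeting both $\tau=b_0$ and $\tau=b_1$, which is the first half of the assertion.

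For the $S^{\Lambda,-}$ statement I would repeat the argument after reversing time. Under the substitution $t\mapsto -t$, system \eqref{eq-disc} becomes a piecewise-smooth planar system of the same type, in which stable and unstable leaves, forward and backward loops, and the branches $\Om^{\pm}$ are interchanged, while the structural hypotheses \assump{F0}, \assump{F1}, \assump{F2}, \assump{K}, \assump{G} and \assump{P1} (and conditions \eqref{TandKnu}, \eqref{minimal}) are preserved; this is the inversion-of-time device already used in \cite[\S 5.3]{CFPrestate}. Under this correspondence a trajectory through $\Q_u(d,\tau)$ with property $\bs{C^-_{\ee^-}}$ for the original system becomes a trajectory through $\Q_s(d,\tau)$ with property $\bs{C^+_{\ee^+}}$ (for a suitably reindexed sequence) of the time-reversed system, and $S^{\Lambda,-}$ becomes the analogue of $S^{\Lambda,+}$; hence the time-reversed versions of Propositions \ref{Sclosedbis} and \ref{Sintersectionbis} hold, and a second application of Lemma \ref{top} produces $\tilde S^{\Lambda,-}$.

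I do not expect any genuine obstacle at this stage, since the substance has already been packaged into Propositions \ref{Sclosedbis} and \ref{Sintersectionbis} — themselves obtained, as indicated in the excerpt, by repeating verbatim the proofs of Propositions \ref{Sclosed} and \ref{Sintersection} with every occurrence of $B_j$ replaced by $\adown_j-\aup_j\le\l1$. The only points deserving a line of care are: that the path used in Lemma \ref{top} is a path in the \emph{full} rectangle $A_0$, so that $d=0$ (the stable leaf itself) is an admissible endpoint, which is consistent with the definition \eqref{J0} of $J_0$; and, in the time-reversal step, that conditions \eqref{TandKnu_tau-} and the backward analogue of \eqref{TandKnu} transform into the forward conditions \eqref{TandKnu_tau+} and \eqref{TandKnu}, a purely routine bookkeeping identical to the one in \cite{CFPrestate}.
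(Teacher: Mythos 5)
Your proposal is correct and takes essentially the same route as the paper: the paper states Proposition \ref{topologicalbis} immediately after the sentence announcing the application of Lemma \ref{top} with $\mathcal{R}=A_0$, $E=S^{\Lambda,+}$, and (as in the proof of Proposition \ref{topological}) handles $S^{\Lambda,-}$ by the inversion-of-time argument from \cite[\S 5.3]{CFPrestate}. Your write-up is a slightly more explicit unpacking of the same two steps, including the bookkeeping under $t\mapsto -t$, and there is no substantive difference.
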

\begin{proof}[\textbf{Proof of Theorem \ref{T.connectionLa}}]
Let us focus first on forward time, so let $\TT^+=(T_j)$, $j \in \Z^+$, be a fixed sequence satisfying
\eqref{TandKnu} for $j \in \Z^+$   and \eqref{TandKnu_tau+} for any   $\ee^+ \in \EE^+$. Then the existence of the compact connected subset
$\tilde{\chi}^{\Lambda,+}(\ee^+) \subset \chi^{\Lambda,+}(\ee^+)$ is obtained simply by setting
$$\tilde{\chi}^{\Lambda,+}(\ee^+)=  \left\{ (\Q_s(d,\tau),\tau)   \mid  (d,\tau) \in \tilde{S}^{\Lambda,+}   \right\} .$$
  The estimates on the sequence $\alpha_j^{\ee^+}(\ep)$ are obtained repeating word by word the proof of  Lemma 5.27 and Remark 5.28 in \cite{CFPrestate}.
  The part of the proof concerning backward time, i.e., $\TT^-=(T_j)$, $j \in \Z^-$ and   $\ee^- \in \EE^-$, is then obtained
  with an inversion of time argument as in \cite[\S 5.3]{CFPrestate}.
\end{proof}

From Proposition 5.30   in \cite{CFPrestate} we find the following.
\begin{proposition}\label{closeW}
  Let $\x(t,\tau; \xxi)$ be a trajectory  constructed using the first part of either of Theorems \ref{T.connectionLa} or \ref{T.connection}, i.e.~a trajectory satisfying property $\bs{C_{\ee^+}^+}$. Then
  \begin{equation}\label{e.closeW}
    \x(t,\tau; \xxi) \in [B(\tilde{W}(t), \ep^{(1+\nu)/2}) \cap \tilde{V}(t)]
  \end{equation}
  for any $t \ge \tau$.

  Analogously
  let $\x(t,\tau; \xxi)$ be a trajectory  constructed using the
  second part of either of Theorems \ref{T.connectionLa} or \ref{T.connection}, i.e.
  a trajectory satisfying property $\bs{C_{\ee^-}^-}$. Then
   \eqref{e.closeW} holds for any $t \le \tau$.
\end{proposition}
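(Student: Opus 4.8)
The plan is to split the inclusion into its two halves. The containment $\x(t,\tau;\xxi)\in\tilde V(t)$ is already part of the statement of property $\bs{C^+_{\ee^+}}$ (it is its first clause), so nothing has to be done there; only $\x(t,\tau;\xxi)\in B(\tilde W(t),\ep^{(1+\nu)/2})$ needs an argument, and this comes loop by loop from Theorem~\ref{keymissed}. Recall from \S\ref{S.theorem2.1} and \S\ref{proof.connection2} that a trajectory furnished by the first part of Theorem~\ref{T.connectionLa} or of Theorem~\ref{T.connection} is of the form $\x(t,\tau;\xxi)=\x(t,\tau;\Q_s(d,\tau))$ for a point $(d,\tau)$ picked from the nested family $\mathcal J_n$, and that by that construction it crosses $L^0$ transversely at an increasing sequence $\tau=\TTT_0<\TTT_1<\TTT_2<\cdots$ (finite if $\ee^+\in\EE^+_0$, infinite if $\ee^+\in\hat\EE^+$), performing one loop close to $\bs\Gamma$ on each interval $[\TTT_{i-1},\TTT_i]$, with loop--entry displacements $d_i=d_i(d,\tau)$. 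The point to keep in mind is that along the \emph{selected} orbit these satisfy $0<d_i\le\ep^{(1+\nu)/\underline{\sigma}}$ (i.e.\ $d_i\in J_0$); this is exactly what the selection of $\mathcal J_n$ delivers, since the chosen parameter lies in $\mathcal J_n$ for every $n$ and on $\mathcal J_n$ the displacement after $n$ loops is controlled by $\ep^{(1+\nu)/\underline{\sigma}}$, cf.\ Lemma~\ref{call.stepn}.

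Next I would estimate on a fixed loop interval $[\TTT_{i-1},\TTT_i]$, $i\ge1$. By local uniqueness of solutions in $B(\bs\Gamma,\beta)$ (Remark~\ref{data.smooth}) and $\x(\TTT_{i-1},\tau;\xxi)=\Q_s(d_{i-1},\TTT_{i-1})$ one has $\x(t,\tau;\xxi)=\x(t,\TTT_{i-1};\Q_s(d_{i-1},\TTT_{i-1}))$ for $\TTT_{i-1}\le t\le\TTT_i$. Applying Theorem~\ref{keymissed} with $d$ replaced by $d_{i-1}$ and base time $\TTT_{i-1}$: for $\TTT_{i-1}\le t\le\TTT_{i-\frac{1}{2}}$, estimate \eqref{keymissed.es-} gives $\|\x(t,\tau;\xxi)-\x(t,\TTT_{i-1};\P_s(\TTT_{i-1}))\|\le d_{i-1}^{\sfwd_+-\mu}$ with $\x(t,\TTT_{i-1};\P_s(\TTT_{i-1}))\in\tilde W^s(t)$ by \eqref{proprieta}; for $\TTT_{i-\frac{1}{2}}\le t\le\TTT_i$, estimate \eqref{keymissed.es+} gives $\|\x(t,\tau;\xxi)-\x(t,\TTT_i;\P_u(\TTT_i))\|\le d_{i-1}^{\sfwd_+-\mu}$ with $\x(t,\TTT_i;\P_u(\TTT_i))\in\tilde W^u(t)$, again by \eqref{proprieta}. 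In either case $\x(t,\tau;\xxi)\in B(\tilde W(t),d_{i-1}^{\sfwd_+-\mu})$ throughout $[\TTT_{i-1},\TTT_i]$.

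Finally I would collect the exponents and assemble. Since $\underline{\sigma}=\min\{\sfwd_+,\sbwd_-\}$ by \eqref{defsigma}, so $\sfwd_+\ge\underline{\sigma}$, and $\mu<\mu_0\le\tfrac14\underline{\sigma}^2<\tfrac14\underline{\sigma}$ (recall $0<\underline{\sigma}<1$), one gets $(\sfwd_+-\mu)/\underline{\sigma}\ge1-\mu/\underline{\sigma}>\tfrac34$, hence, using $d_{i-1}\le\ep^{(1+\nu)/\underline{\sigma}}$ and $0<\ep\le\ep_0<1$, $d_{i-1}^{\sfwd_+-\mu}\le\ep^{(1+\nu)(\sfwd_+-\mu)/\underline{\sigma}}\le\ep^{(1+\nu)/2}$. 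The intervals $[\TTT_{i-1},\TTT_i]$ exhaust $[\tau,\sup_i\TTT_i)$; when $\ee^+\in\hat\EE^+$ there are infinitely many of them and $\TTT_i\to+\infty$ (each loop lasts at least a time of order $|\ln\ep|$, by Theorem~\ref{key}), so $[\tau,+\infty)$ is covered; when $\ee^+\in\EE^+_0$ only finitely many loops occur and on the remaining half-line $t\ge\TTT_{j^+_S}$ one has $\x(t,\tau;\xxi)=\x(t,\TTT_{j^+_S};\P_s(\TTT_{j^+_S}))\in\tilde W^s(t)\subset\tilde W(t)$, so the bound is trivial there. This proves the forward assertion; the backward one, for trajectories with property $\bs{C^-_{\ee^-}}$, follows from the inversion-of-time argument of \cite[\S5.3]{CFPrestate}, the role of $\sfwd_+$ now being played by $\sbwd_-$, for which $(\sbwd_--\mu)/\underline{\sigma}>\tfrac34$ in the same way. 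There is essentially no new obstacle beyond \cite[Proposition~5.30]{CFPrestate}: the only thing to verify is that the present construction shares the property $d_i\in J_0$ along the selected orbit, which it does by design, and the estimate $d^{\sfwd_+-\mu}\le\ep^{(1+\nu)/2}$ is then immediate from \eqref{defsigma} and \eqref{mu0}.
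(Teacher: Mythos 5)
Your proposal is correct and proceeds along the same lines as the paper, which simply invokes \cite[Proposition~5.30]{CFPrestate} at this point; you have essentially reconstructed that proposition's argument (loop-by-loop application of Theorem~\ref{keymissed} to the selected orbit, using that the loop-entry displacements $d_{i-1}$ produced by the nested-interval scheme stay in $J_0$, then converting $d_{i-1}^{\sfwd_+-\mu}\le\ep^{(1+\nu)/2}$ via $\sfwd_+\ge\und{\sigma}$ and $\mu<\tfrac14\und{\sigma}^2$), and handled the tail for $\ee^+\in\EE^+_0$ and the time-reversed statement exactly as in the cited reference.
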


 \section{Proof of the main results -- Theorems \ref{main.periodic1}, \ref{main.weak1},  \ref{veryweak}}\label{S.mainproof}

In this section
 we glue together   trajectories  chaotic in the past with the ones chaotic in the future, obtained
in the results of Section  \ref{S.connection}.
We  start by proving Theorem~\ref{theoremkey} below, which is stated in the setting of Theorem \ref{veryweak}.
Namely, we look for some pairs $(d^{\pm}_{\star},\tau_{\star})$ such that
$\Q_s(d^+_{\star},\tau_{\star})=\Q_u(d^-_{\star},\tau_{\star})$ and $(d^+_{\star},\tau_{\star})
\in \tilde{S}^+(\TT^+, \ee^+) $,
$(d^-_{\star},\tau_{\star}) \in \tilde{S}^-(\TT^-, \ee^-) $:
see  Proposition \ref{topological}.
For this purpose we rely on the
intermediate value theorem
 and on the sign changes of $\MM(\tau)$.
We focus on the following stripe:
$$\hat{A}:= \{(d,\tau) \mid  d \in  [- \sqrt{\ep}, \sqrt{\ep} ] ,\, \tau \in [b_0,b_1]    \}. $$
 We recall that $d^+$ and $d^-$ measure respectively the distance of $\Q_s(d^+,\tau)$ from $\P_s(\tau)$ and of
$\Q_u(d^-,\tau)$ from $\P_u(\tau)$ and that they are both $o(\ep)$ (in fact smaller than $\ep^{(1+\nu)/\und{\si}}$
with $\nu \ge \nu_0$, see \eqref{J0}).
Further the distance between
$\P_s(\tau)$ and $\P_u(\tau)$ is $O(\ep)$ and it is measured by the Melnikov function, up to the first order in $\ep$.
For this reason we need to work on the stripe  $\hat{A}$, which is  a bit larger than $O(\ep)$.

We are ready to prove the following result.
\begin{theorem}\label{theoremkey}
  Assume that  the hypotheses of Theorem  \ref{veryweak} hold. Let $\ee \in \EE$ be a sequence satisfying
   $\ee_0=1$. Then there is a compact  connected  set $X(\ee, \TT)\subset L^0$ and a sequence $(\al_j^{\ee}(\ep))$, $j \in
   \Z$,
   such that if $\xxi \in X(\ee, \TT)$,
   then $\x(t,T_0+\al_0^\ee(\ep); \xxi)$ has property  $\bs{C_{\ee}}$.  Further $|\alpha_0| \le B_0$.
\end{theorem}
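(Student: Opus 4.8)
The strategy is to glue a "chaotic-in-the-future" trajectory to a "chaotic-in-the-past" trajectory at a common point of $L^0$ and a common starting time $\tau_\star\in[b_0,b_1]$, using the intermediate value theorem to locate $\tau_\star$. First I would invoke Theorem~\ref{T.connection}: it produces closed connected sets $\tilde S^+=\tilde S^+(\TT^+,\ee^+)$ and $\tilde S^-=\tilde S^-(\TT^-,\ee^-)$ inside $A_0$, each intersecting both lines $\tau=b_0$ and $\tau=b_1$; equivalently, via the maps $\Q_s$ and $\Q_u$, the sets $\tilde\chi^+(\ee^+)$ and $\tilde\chi^-(\ee^-)$ inside $\aleph$. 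Here $\ee^\pm$ are the restrictions of the given $\ee\in\EE$ to $\Z^\pm$. Since $\ee_0=1$, the point $T_0$ is a "loop" time, and the branch $t\in[T_{-1},T_1]$ must carry $\x(t,\tau;\xxi)$ along a translate of $\ga(t-T_0-\al_0)$; this is exactly the overlap region where the forward and backward conditions both speak about the same piece of trajectory, which is what makes the gluing consistent.

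The core of the argument is to show that the images of $\tilde\chi^+(\ee^+)$ and $\tilde\chi^-(\ee^-)$ in the strip $\hat A=[-\sqrt\ep,\sqrt\ep]\times[b_0,b_1]$ must intersect. The natural coordinates on $\hat A$ are $(d,\tau)$, where for a candidate crossing point $\Q$ of $L^0$ we measure $d^+=\dist(\Q,\P_s(\tau))$ and $d^-=\dist(\Q,\P_u(\tau))$; these are related by $d^+ - d^- = \dist(\P_s(\tau),\P_u(\tau))$, and the separation $\dist(\P_s(\tau),\P_u(\tau))$ equals $c\,\ep\,\MM(\tau)+o(\ep)$ by the Melnikov computation, hence changes sign between $\tau=b_0$ and $\tau=b_1$ because $\MM(b_0)<0<\MM(b_1)$ (recall $T_0\in[b_0,b_1]$ and \assump{P1}). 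So I would define on $A_0$ the two curves $\tilde S^+$ (parametrized by $d^+$, with $d^+\in[0,\ep^{(1+\nu)/\und\si}]$) and $\tilde S^-$ (parametrized by $d^-$), re-express the second one in the $d^+$-coordinate as $d^+ = d^- + \dist(\P_s,\P_u)$, i.e.\ shift it by the Melnikov-controlled amount, and then argue that a connected set joining $\tau=b_0$ to $\tau=b_1$ inside the strip, shifted by a quantity that is negative near $b_0$ and positive near $b_1$ (after scaling: the shift is $O(\ep)$ while the strip width is $O(\sqrt\ep)$, so both shifted copies stay in $\hat A$), must cross the original connected set joining $\tau=b_0$ to $\tau=b_1$. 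This is a standard planar Jordan-type crossing lemma for two connected sets each separating the two horizontal sides of a rectangle; alternatively one can phrase it again through Lemma~\ref{top} applied to a suitable difference map. Concretely: pick a path in $\tilde S^-$ from $\{\tau=b_0\}$ to $\{\tau=b_1\}$; along it the signed quantity $\big(d^+_{\text{on }\tilde S^+ \text{ at same }\tau} - d^+_{\text{induced by }\tilde S^-}\big)$ — made precise via the connected-set intermediate value argument — changes sign, producing $(d_\star^+,d_\star^-,\tau_\star)$ with $\Q_s(d_\star^+,\tau_\star)=\Q_u(d_\star^-,\tau_\star)=:\xxi_\star$.

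Once such a triple is found, the trajectory $\x(t,\tau_\star;\xxi_\star)$ satisfies $\bs{C^+_{\ee^+}}$ for $t\ge\tau_\star$ and $\bs{C^-_{\ee^-}}$ for $t\le\tau_\star$; since $\xxi_\star$ lies on both $\tilde W^s(\tau_\star)$-side data and $\tilde W^u(\tau_\star)$-side data and the two descriptions agree on the overlap $[T_{-1},T_1]$ (both say the trajectory is $c^*\ep$-close to a translate of $\ga$ there, with a single shift $\al_0$), concatenating gives property $\bs{C_\ee}$ on all of $\R$ with the shift sequence $\al_j^\ee(\ep)$ inherited from the two halves, and $\tau_\star=T_0+\al_0^\ee(\ep)$ with $|\al_0|\le B_0$ because $\tau_\star\in[b_0,b_1]$ and $B_0=b_1-b_0$ (cf.\ \eqref{defik}). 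Finally, the set $X(\ee,\TT)$ of all such $\xxi_\star$ — i.e.\ the image under $(\d,\tau)\mapsto\Q_s(d,\tau)$ of the intersection of the two connected sets, intersected appropriately — is compact and connected: compactness follows from closedness of $\tilde S^\pm$ (Proposition~\ref{Sclosed} and its backward analogue) together with boundedness, and connectedness from the connectedness of $\tilde S^\pm$ and a choice of a minimal connected component of the intersection, as is done elsewhere in the paper (e.g.\ the "$\mathcal J_{+\infty}$ largest connected component" device). The main obstacle, I expect, is making the crossing argument in the strip rigorous when the two connected sets $\tilde S^\pm$ are merely closed and connected (not arcs): one must either appeal to a topological crossing lemma for continua separating a rectangle, or reduce to Lemma~\ref{top} by constructing an auxiliary path and a closed "bad set" whose avoidance would contradict the separation properties — and one must track that the $O(\ep)$ Melnikov shift genuinely dominates the $o(\ep)$ error terms uniformly in $\tau\in[b_0,b_1]$, which is exactly where Lemma~\ref{key1} and the Melnikov estimate \eqref{melni-disc} are used.
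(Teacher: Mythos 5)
Your proposal follows essentially the same route as the paper: invoke Theorem~\ref{T.connection} (Proposition~\ref{topological}) to get the connected sets $\tilde S^\pm$, translate $\tilde S^-$ by $\DDD(\P_u(\tau),\P_s(\tau))$ into the strip $\hat A$ to form what the paper calls $\hat S^-$, use Remark~\ref{rMelni} together with \assump{P1} to show that $d^+ - \hat d^-$ is strictly negative along $\tau=b_0$ and strictly positive along $\tau=b_1$, and conclude by the planar separation/crossing argument that $\tilde S^+\cap\hat S^-\ne\emptyset$, whence $\xxi_\star=\Q_s(d_\star,\tau_\star)=\Q_u(d^-_\star,\tau_\star)$ carries both $\bs{C^+_{\ee^+}}$ and $\bs{C^-_{\ee^-}}$ and $|\al_0|=|\tau_\star-T_0|\le B_0$. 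The only slip is a sign convention ($d^+-d^-=\dist(\P_u,\P_s)$, not $\dist(\P_s,\P_u)$, by \eqref{dist}), which is immaterial to the sign-change argument, and you rightly flag the point the paper treats somewhat informally, namely that the continuum $\hat S^-$ separates the rectangle $\hat A$.
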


 Before starting the proof, we collect some facts for future reference.

Let us introduce the set
\begin{equation}\label{defhatS}
\begin{split}
     & \hat{S}^- = \hat{S}^-(\TT^-, \ee^-):= \{ (d+\DDD(\P_u(\tau), \P_s(\tau)), \tau) \mid   (d,\tau) \in \tilde{S}^-(\TT^-, \ee^-)
     \},
\end{split}
\end{equation}
 where $\tilde{S}^-$ is as in Proposition \ref{topological}.
Notice that by construction $\hat{S}^- \subset \hat{A}$.
Further, $\hat{S}^-$ is connected, see Proposition \ref{topological}, and splits $\hat{A}$ in two relatively open and connected
components say $E^r$ and $E^l$, see Figure \ref{intersection}.  We can assume that
$E^r$ contains the line $d= \sqrt{\ep}$, while
$E^l$ contains the line $d= -\sqrt{\ep}$.

  Let us set
  \begin{gather*}
  	\tilde{J}_{+\infty}(\tau)= \{ d\in\R\mid (d,\tau) \in \tilde{S}^+ \},\\
  	\hat{J}_{-\infty}(\tau)= \{ d\in\R\mid (d,\tau) \in \hat{S}^- \},
  \end{gather*}
 $\tilde{S}^+$ being as in Proposition \ref{topological}.

 \begin{remark}
       If $ \hat{d}  \in \tilde{J}_{+\infty}(\tau) \cap \hat{J}_{-\infty}(\tau)$ and
       $ \hat{d}= d^- + \DDD(\P_u(\tau), \P_s(\tau))$ then
       $$\hat{d}=\DDD(\Q_s(\hat{d},\tau), \P_s(\tau))=\DDD(\Q_u(d^-,\tau), \P_s(\tau)).$$
       Hence we find $\Q_s(\hat{d},\tau)=\Q_u(d^-,\tau)$. In fact $\hat{d}$ measures the directed distance  on $\Om^0$ between $\Q_u$ and
       $\P_s$.
     \end{remark}
     To prove Theorem \ref{theoremkey} we need a further classical result, again borrowed from \cite{CFPrestate}, see e.g.
     the nice introduction of \cite{JPY} or \cite[\S 4.5]{GH} for a proof.
\begin{remark}\label{rMelni}
	Assume that $\f^{\pm}$ and $\g$ are $C^r$, $r>1$.
Observe that if $0<\ep \ll 1$ then,  for any fixed $\tau \in \R$,
	there are $c>0$ (independent of $\ep$ and $\tau$) and a monotone increasing and continuous function
$\bar{\omega}(\ep)$ such that $\bar{\omega}(0)=0$ and
	$$\dist(\P_s(\tau),\P_u(\tau))= c \ep \MM(\tau)+ \ep  \omega(\tau,\ep) \, , \qquad  |\omega(\tau,\ep)| \le
\bar{\omega}(\ep).$$
Further, if $r \ge 2$, we can find $C_M>0$ such that $\bar{\omega}(\ep) \le C_M \ep$.
\end{remark}

\begin{proof}[\textbf{Proof of Theorem \ref{theoremkey}}]

We claim that
\\ $\bullet$ \ \emph{there is  $(d_{\star},\tau_{\star}) \in (\hat{S}^-(\TT^-, \ee^-) \cap  \tilde{S}^+(\TT^+, \ee^+))$. }
\\
In fact for any
$d^+_i \in \tilde{J}_{+\infty}(b_i)$ and any $\hat{d}^-_i= d^-_i+ \DDD(\P_u(b_i), \P_s(b_i)) \in \hat{J}_{-\infty}(b_i)$ we have
$$d^+_i-\hat{d}^-_i = d^+_i- \DDD(\P_u(b_i), \P_s(b_i))- d^-_i$$
for $i=0,1$.  From \assump{P1} we see that $\MM(b_0)<-\bar{c}<0<\bar{c}<\MM(b_1)$, so, recalling that $d^{\pm}_i \in [0, \ep^{(1+\nu)/ \underline{\sigma}}]$ for $i=0,1$,
  using Remark \ref{rMelni},   for any $0<\ep \le \ep_0$  we find
\begin{equation}\label{d+id-i}
  \begin{split}
     d^+_0-\hat{d}^-_0 \le  &   \frac{c}{2} \MM(b_0) \ep +\ep^{1+\bar{\nu} } \le    \frac{c}{4} \MM(b_0) \ep  <  -
     \frac{c\bar{c}}{4} \ep , \\
     d^+_1-\hat{d}^-_1 \ge  &  \frac{c}{2} \MM(b_1)\ep  -\ep^{1+\bar{\nu} } \ge   \frac{c}{4} \MM(b_1) \ep  >
     \frac{c\bar{c}}{4} \ep
  \end{split}
\end{equation}
where $\bar{\nu}= \frac{1-\underline{\sigma}+\nu}{\underline{\sigma}}>1$.
Since \eqref{d+id-i} holds true for any $d^+_i \in \tilde{J}_{+\infty}(b_i)$ and any $\hat{d}^-_i \in \hat{J}_{-\infty}(b_i)$, we see
that
$\tilde{J}_{+\infty}(b_0)\times\{b_0\}  \subset  E^l$ while $\tilde{J}_{+\infty}(b_1)\times\{b_1\}  \subset  E^r$.
Then recalling that $\tilde{J}_{+\infty}(b_i)\times\{b_i\} \subset \tilde{S}^+$ for $i=0,1$ and that $\tilde{S}^+$
is connected we prove the claim.

Now let $d^-_\star=d_\star- \DDD(\P_u(\tau_\star), \P_s(\tau_\star))$; then $(d^-_\star,\tau_\star) \in
\tilde{S}^-=\tilde{S}^-(\TT^-, \ee^-)$ and
$$\xxi_\star = \Q_s(d_\star,\tau_\star)= \Q_u(d^-_\star,\tau_\star)$$
is such that $\x(t,\tau_\star; \xxi_\star)$ has property $\bs{C_{\ee}}$ for any $t \in \R$.

\begin{figure}[t]
\centerline{\epsfig{file=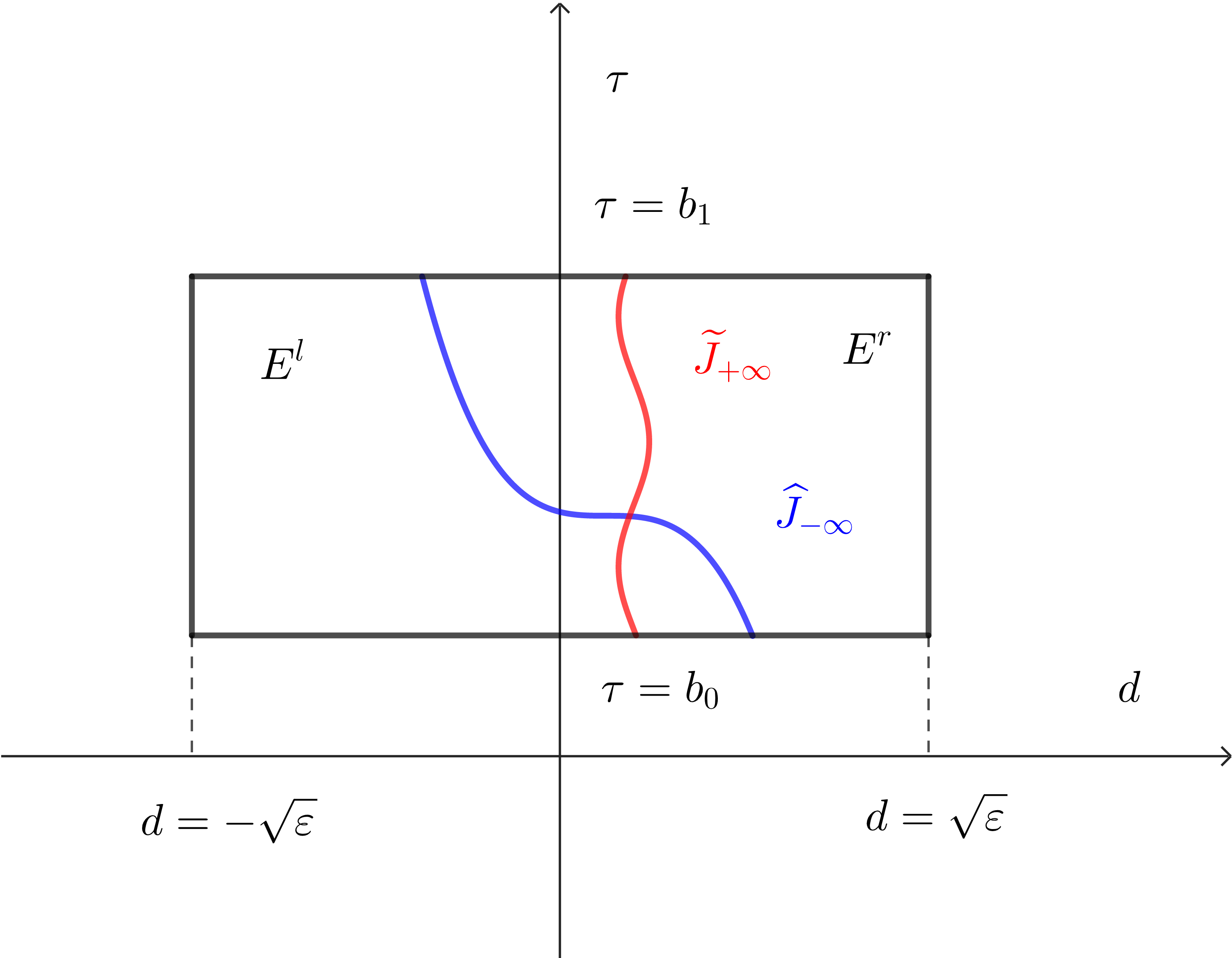, width = 8 cm}}
\caption{In this picture we illustrate the set $\hat{A}$ and the proof of Theorem \ref{theoremkey}.
}\label{intersection}
\end{figure}

Finally, define
\begin{equation}\label{alpha0}
  \alpha_0= \alpha^{\TT, \ee}_0 (\ep)= \tau_\star-T_0;
\end{equation}
then by construction $\x(t,T_0+ \alpha_0 ; \xxi_\star)$ has property $\bs{C_{\ee}}$.
Further $|\tau_\star-T_0|<b_1-b_0=B_0$ so the result is proved.
\end{proof}

 \begin{remark}\label{uselesspos}
 Note that $\xxi_\star$ lies on $\Om^0$ but it is not placed between $\P_u(\tau_\star)$, $\P_s(\tau_\star)$:
 indeed $\DDD(\xxi_\star, \P_u(\tau_\star))= d^-_\star>0$ and $\DDD(\xxi_\star, \P_s(\tau_\star))=  d_\star>0$.
 \end{remark}

\subsection{Setting of Theorems \ref{main.periodic1},
\ref{main.weak1}:  $|\alpha_j|$ bounded or infinitesimal}\label{S.theorem2.2}

In this section
we work in the setting of Theorems \ref{main.periodic1}
and \ref{main.weak1}. Here we glue together the trajectories chaotic in the past and in the future  obtained via Theorem \ref{T.connectionLa}, so we will always assume  \assump{P1}, \eqref{minimal} and we consider a sequence
$\TT$ satisfying \eqref{TandKnu}  for $\nu \ge\nu_0$.
In fact we just need to adapt slightly the argument of Section~\ref{S.theorem2.1} in order to obtain better estimates on $|\alpha_j|$.
Therefore we replace the set $\hat{A}$ by the set $\hat{A}^{\Lambda}$ defined as
$$\hat{A}^{\Lambda}:= \{(d,\tau) \mid  d \in  [- \sqrt{\ep}, \sqrt{\ep} ] ,\, \tau \in [\aup_0,\adown_0]    \}, $$
 further we replace $\tilde{S}^{\pm}$  defined in Proposition  \ref{topological}   by the sets
$\tilde{S}^{\Lambda,\pm}$ 
defined in Proposition  \ref{topologicalbis},
  and the set $\hat{S}^{-}$ by the set $\hat{S}^{\Lambda,-}$ defined as
\begin{equation}\label{defhatSbis}
\begin{split}
      \hat{S}^{\Lambda,-} &= \hat{S}^{\Lambda,-}(\TT^-, \ee^-)\\
     &:= \{ (d+\DDD(\P_u(\tau), \P_s(\tau)), \tau) \mid   (d,\tau) \in
     \tilde{S}^{\Lambda,-}(\TT^-, \ee^-)
     \}.
\end{split}
\end{equation}

So we can prove the following.
\begin{theorem}\label{theoremkeybis}
  Assume that  the hypotheses of Theorem \ref{main.weak1} hold;   in particular let $\TT$ be a sequence satisfying \eqref{TandKnu}
  and \eqref{minimal}.
   Let $\ee \in \EE$ be a sequence satisfying
   $\ee_0=1$. Then there is a compact  connected  set $X(\ee, \TT)\subset L^0$  and a sequence $(\al_j^{\ee}(\ep))$, $j \in
   \Z$,
   such that if $\xxi \in X(\ee, \TT)$,
   then $\x(t,T_0+\al_0^\ee(\ep); \xxi)$ has property  $\bs{C_{\ee}}$.  Further $|\alpha_0| \le \l1$.
\end{theorem}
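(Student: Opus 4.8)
The plan is to mirror the proof of Theorem~\ref{theoremkey} almost verbatim, replacing the sets associated with the looser $|\alpha_j|\le B_{2j}$ bookkeeping by the sets built under the more restrictive assumptions \eqref{minimal} and \eqref{TandKnu}. Concretely, I would work on the thinner strip $\hat{A}^{\Lambda}=\{(d,\tau)\mid d\in[-\sqrt\ep,\sqrt\ep],\ \tau\in[\aup_0,\adown_0]\}$ instead of $\hat A$, and use $\tilde S^{\Lambda,\pm}$ from Proposition~\ref{topologicalbis} in place of $\tilde S^{\pm}$, together with the shifted set $\hat S^{\Lambda,-}$ from \eqref{defhatSbis}. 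The key structural facts I rely on are: (i) $\tilde S^{\Lambda,+}$ is a closed connected subset of $A_0$ meeting both $\tau=b_0$ and $\tau=b_1$ (hence, after intersecting with the vertical strip $\tau\in[\aup_0,\adown_0]\subset[b_0,b_1]$, it still joins $\tau=\aup_0$ to $\tau=\adown_0$ --- here I use that $\aup_0<T_0<\adown_0$ with $\aup_0,\adown_0\in]b_0,b_1[$ by \eqref{minimal}); and (ii) $\hat S^{\Lambda,-}$ is connected and separates $\hat A^{\Lambda}$ into a left component $E^l$ containing $d=-\sqrt\ep$ and a right component $E^r$ containing $d=\sqrt\ep$.

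The core of the argument is the same sign-change computation. First I would note that for $d^+_i\in\tilde J^{\Lambda}_{+\infty}(\tau)$ (the $\tau$-slice of $\tilde S^{\Lambda,+}$) and $\hat d^-_i=d^-_i+\DDD(\P_u(\tau),\P_s(\tau))\in\hat J^{\Lambda}_{-\infty}(\tau)$ one has $d^+_i-\hat d^-_i=d^+_i-\DDD(\P_u(\tau),\P_s(\tau))-d^-_i$, and since $d^{\pm}_i\in[0,\ep^{(1+\nu)/\und\si}]$ while by Remark~\ref{rMelni} $\DDD(\P_s(\tau),\P_u(\tau))=c\ep\MM(\tau)+\ep\,\omega(\tau,\ep)$ with $|\omega(\tau,\ep)|\le\bar\omega(\ep)$, the same estimate \eqref{d+id-i} applies at $\tau=\aup_0$ and $\tau=\adown_0$: by \eqref{minimal}, $\MM(\aup_0)\MM(\adown_0)<0$ with $|\MM(\aup_j)|=|\MM(\adown_j)|=\dd\bar c$, so one of $d^+-\hat d^-$ is $\le-\tfrac{c\dd\bar c}{4}\ep<0$ at one endpoint and $\ge\tfrac{c\dd\bar c}{4}\ep>0$ at the other, once $\ep_0$ is small enough that the error term $\ep^{1+\bar\nu}$ is negligible compared to $\dd\bar c\,\ep$. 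Hence $\tilde J^{\Lambda}_{+\infty}(\aup_0)\times\{\aup_0\}$ lies entirely in $E^l$ and $\tilde J^{\Lambda}_{+\infty}(\adown_0)\times\{\adown_0\}$ lies entirely in $E^r$ (or vice versa); since $\tilde S^{\Lambda,+}$ restricted to the strip is connected and meets both ends, it must cross $\hat S^{\Lambda,-}$, giving a point $(d_\star,\tau_\star)\in\hat S^{\Lambda,-}\cap\tilde S^{\Lambda,+}$ with $\tau_\star\in[\aup_0,\adown_0]$.

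From $(d_\star,\tau_\star)$ the conclusion follows exactly as before: set $d^-_\star=d_\star-\DDD(\P_u(\tau_\star),\P_s(\tau_\star))$, so $(d^-_\star,\tau_\star)\in\tilde S^{\Lambda,-}$; then by the remark before Remark~\ref{rMelni} one has $\xxi_\star:=\Q_s(d_\star,\tau_\star)=\Q_u(d^-_\star,\tau_\star)$, and since this point lies simultaneously in (the $\xxi$-image of) $\tilde S^{\Lambda,+}$ and of $\tilde S^{\Lambda,-}$, the trajectory $\x(t,\tau_\star;\xxi_\star)$ has property $\bs{C^+_{\ee^+}}$ for $t\ge\tau_\star$ and $\bs{C^-_{\ee^-}}$ for $t\le\tau_\star$, i.e.\ property $\bs{C_{\ee}}$ on all of $\R$ (using $\ee_0=1$ so that the two halves glue along the homoclinic piece through $\ga(0)$). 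Defining $\alpha_0=\alpha^{\TT,\ee}_0(\ep)=\tau_\star-T_0$, we get $\x(t,T_0+\alpha_0;\xxi_\star)$ with property $\bs{C_{\ee}}$, and the improved bound $|\alpha_0|=|\tau_\star-T_0|\le\adown_0-\aup_0\le\l1$ now comes for free from \eqref{minimal}, since both $T_0$ and $\tau_\star$ lie in $[\aup_0,\adown_0]$. The compact connected set $X(\ee,\TT)\subset L^0$ is obtained by taking the $\xxi$-image of the connected intersection (or, more carefully, the connected component of the intersection locus containing $\xxi_\star$), exactly as in Corollary~\ref{c.sigma}.

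I expect no genuinely new obstacle here: the whole content is that the separation/crossing argument of Theorem~\ref{theoremkey} survives when one shrinks the $\tau$-window from $[b_0,b_1]$ to $[\aup_0,\adown_0]$ and uses the sign change of $\MM$ at $\aup_0,\adown_0$ (guaranteed by \eqref{minimal}) rather than at $b_0,b_1$. The only point requiring a little care is checking that $\tilde S^{\Lambda,+}$, which a priori connects $\tau=b_0$ to $\tau=b_1$, still connects the two walls $\tau=\aup_0$ and $\tau=\adown_0$ of the smaller strip: this is where one invokes that the defining inequalities $|\alpha_j|\le\adown_j-\aup_j\le\l1$ in \eqref{defS+la} are compatible with Theorem~\ref{T.connectionLa}, which precisely produces $\tilde\chi^{\Lambda,+}(\ee^+)$ meeting $\tau=b_0$ and $\tau=b_1$ while staying inside $S^{\Lambda,+}$; restricting to $\aup_0\le\tau\le\adown_0$ and taking a suitable connected subcontinuum (it must cross the strip since its endpoints are on opposite sides and it is connected) does the job. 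Everything else is a transcription of the already-written proof of Theorem~\ref{theoremkey}, so I would keep the exposition short and simply point to that proof for the repeated details.
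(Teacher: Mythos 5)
Your proposal is correct and follows essentially the same route as the paper's own proof: work in the thinner strip $\hat{A}^{\Lambda}$ with the sets $\tilde{S}^{\Lambda,\pm}$ and $\hat{S}^{\Lambda,-}$ of \eqref{defS+la}, \eqref{defhatSbis}, run the sign-change estimate of Remark~\ref{rMelni} at $\tau=\aup_0$ and $\tau=\adown_0$ where $\MM$ has opposite signs with $|\MM|=\dd\bar c$ by \eqref{minimal}, and conclude the intersection $\tilde{S}^{\Lambda,+}\cap\hat{S}^{\Lambda,-}\ne\emptyset$ with $\tau_\star^{\Lambda}\in[\aup_0,\adown_0]$, whence $|\alpha_0|=|\tau_\star^{\Lambda}-T_0|\le\adown_0-\aup_0\le\l1$. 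The one point you single out --- that the continuum $\tilde{S}^{\Lambda,+}$, which a priori joins $\tau=b_0$ to $\tau=b_1$, must contain a subcontinuum joining $\tau=\aup_0$ to $\tau=\adown_0$ inside the narrower strip, so that the crossing point actually lands with $\tau_\star^{\Lambda}\in[\aup_0,\adown_0]$ --- is indeed necessary for the final bound on $\alpha_0$, is resolved by a standard boundary-bumping/separation argument for compacta, and is left implicit in the paper's ``arguing as in the proof of Theorem~\ref{theoremkey}''; noting it explicitly is a small but genuine improvement in rigor.
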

\begin{proof}
  We just need to adapt the argument of the proof of Theorem \ref{theoremkey}.

  We claim that
\\ $\bullet$ \emph{there is}
$(d_{\star}^{\Lambda},\tau_{\star}^{\Lambda}) \in (\hat{S}^{\Lambda,-}(\TT^-, \ee^-) \cap  \tilde{S}^{\Lambda,+}(\TT^+, \ee^+))$.
\\
In fact let
$d^{\pm}_{\uparrow} \in \tilde{J}_{\pm\infty}(\aup_0)$, $d^{\pm}_{\downarrow} \in \tilde{J}_{\pm\infty}(\adown_0)$,
 and  $\hat{d}^-_{\uparrow}= d^-_{\uparrow}+ \DDD(\P_u(\aup_0), \P_s(\aup_0)) \in \hat{J}_{-\infty}(\aup_0)$,
 $\hat{d}^-_{\downarrow}= d^-_{\downarrow}+ \DDD(\P_u(\adown_0), \P_s(\adown_0)) \in \hat{J}_{-\infty}(\adown_0)$;
from \eqref{minimal} and \eqref{defbetaj} we can assume  $\MM(\aup_0)=-\bar{c} \delta<0<\bar{c} \delta =\MM(\adown_0)$
(or we have opposite sign conditions but the argument goes through without significant changes);
hence using Remark \ref{rMelni}, for any $0<\ep \le \ep_0$ we find
\begin{equation}\label{d+id-ibis}
  \begin{split}
     d^{+}_{\uparrow}-\hat{d}^{-}_{\uparrow} \le  &   \frac{c}{2} \MM(\aup_0) \ep +\ep^{1+\bar{\nu} } \le    \frac{c}{4} \MM(\aup_0) \ep =  -
     \frac{c\bar{c}}{4} \delta \ep , \\
     d^{+}_{\downarrow}-\hat{d}^{-}_{\downarrow}\ge  &  \frac{c}{2} \MM(\adown_0)\ep  -\ep^{1+\bar{\nu} } \ge   \frac{c}{4} \MM(\adown_0) \ep
     =
     \frac{c\bar{c}}{4}  \delta\ep ,
  \end{split}
\end{equation}
 with $\bar{\nu}= \frac{1-\underline{\sigma}+\nu}{\underline{\sigma}}>1$.
Then, arguing as in the proof of Theorem \ref{theoremkey}, we prove the claim.

Now let $d^{\Lambda,-}_\star=d_\star^{\Lambda}- \DDD(\P_u(\tau_\star^{\Lambda}), \P_s(\tau_\star^{\Lambda}))$; then
$(d^{\Lambda,-}_\star,\tau_\star^{\Lambda}) \in
\tilde{S}^{\Lambda,-}=\tilde{S}^{\Lambda,-}(\TT^-, \ee^-)$ and
$$\xxi_\star = \Q_s(d_\star^{\Lambda},\tau_\star^{\Lambda})= \Q_u(d^{\Lambda,-}_\star,\tau_\star^{\Lambda})$$
is such that $\x(t,\tau_\star^{\Lambda}; \xxi_\star)$ has property $\bs{C_{\ee}}$ for any $t \in \R$ or, in other words,
$\x(t,T_0+\alpha_0,\xxi_\star)$ with $\alpha_0=\tau_\star^{\Lambda}-T_0$ has property $\bs{C_{\ee}}$ for any $t \in \R$.

Further by construction   $ |\alpha_0| = |\tau_\star^{\Lambda}-T_0| \le \adown_0-\aup_0 \leq \l1$, so the theorem is proved.
\end{proof}

\begin{cor}\label{c.alpha0}
  Let the assumptions of Theorem \ref{theoremkeybis} be satisfied but replace \eqref{minimal} by \eqref{isogen}.
 Then there is a continuous increasing function $\bar{\omega}_{\al}(\ep)$
 (independent of $\ee \in \EE$) such that $\bar{\omega}_{\al}(0)=0$
  and $|\al_0^{\ee}|\le \bar{\omega}_{\al}(\ep)+  \lzero$. In particular if  \eqref{isolated}  holds then
  $|\al_0^{\ee}|\le \bar{\omega}_{\al}(\ep)$.

  Now replace \eqref{minimal} by \eqref{nondeg} and assume that $\f$ and $\g$ are $C^r$ with $r \ge 2$; then we find $c_{\al}>0$ such that $|\al_0^{\ee}|\le c_{\al} \ep$,
  where $c_{\al}$ depends neither on $\ep$ nor on $\ee \in \EE$.
\end{cor}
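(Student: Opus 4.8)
The construction of the set $X(\ee,\TT)$, of the sequence $(\al_j^{\ee}(\ep))$ and of the point $\xxi_\star=\Q_s(d_\star^{\Lambda},\tau_\star^{\Lambda})$ is kept exactly as in Theorem~\ref{theoremkeybis}; the only thing to revisit is the location of the crossing time $\tau_\star^{\Lambda}$, since $\al_0^{\ee}(\ep)=\tau_\star^{\Lambda}-T_0$. The plan is to re-run the intermediate-value / connectedness argument of that proof on a $\tau$-strip much thinner than $[\aup_0,\adown_0]$, whose half-width is dictated by the modulus $\omega_M$ of \eqref{isogen}. Recall that the crossing is produced by controlling the sign of $d^+-\hat d^-=(d^+-d^-)-\DDD(\P_u(\tau),\P_s(\tau))$, where $d^\pm\in\tilde J_{\pm\infty}(\tau)\subset[0,\ep^{(1+\nu)/\underline{\sigma}}]$. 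Writing $\ep^{(1+\nu)/\underline{\sigma}}=\ep^{1+\bar\nu}$ with $\bar\nu=\tfrac{1-\underline{\sigma}+\nu}{\underline{\sigma}}>1$ and using Remark~\ref{rMelni} in the form $\DDD(\P_u(\tau),\P_s(\tau))=-c\,\ep\,\MM(\tau)-\ep\,\omega(\tau,\ep)$, $|\omega(\tau,\ep)|\le\bar\omega(\ep)$, one gets
$$d^+-\hat d^-=c\,\ep\,\MM(\tau)+\ep\,\omega(\tau,\ep)+(d^+-d^-),\qquad \bigl|\ep\,\omega(\tau,\ep)+(d^+-d^-)\bigr|\le\ep\bigl(\bar\omega(\ep)+\ep^{\bar\nu}\bigr),$$
so that at any $\tau$ with $c\,|\MM(\tau)|>\bar\omega(\ep)+\ep^{\bar\nu}$ the sign of $d^+-\hat d^-$ equals that of $\MM(\tau)$, uniformly in the admissible $d^\pm$ and independently of $\ee^\pm$.

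Accordingly set $\bar{\omega}_{\al}(\ep):=\omega_M^{-1}\!\bigl(\tfrac{2}{c}(\bar\omega(\ep)+\ep^{\bar\nu})\bigr)$, which is continuous, strictly increasing, vanishes at $0$, depends only on $\omega_M,c,\bar\omega$ (not on $\ee$), and, after shrinking $\ep_0$ so that its argument stays in the range of $\omega_M$ and $\bar{\omega}_{\al}(\ep)+\lzero\le\l1$, is admissible in \eqref{isogen}. Put $\tau_1:=\max\{\aup_0,\,T_0-\lzero-\bar{\omega}_{\al}(\ep)\}$, $\tau_2:=\min\{\adown_0,\,T_0+\lzero+\bar{\omega}_{\al}(\ep)\}$, so $\tau_1\le T_0-\lzero\le T_0+\lzero\le\tau_2$. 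Using \eqref{minimal} to pin $\MM<0$ on $[\aup_0,T_0-\lzero]$ and $\MM>0$ on $[T_0+\lzero,\adown_0]$: at $\tau_1$ either \eqref{isogen} gives $|\MM(\tau_1)|\ge\omega_M(\bar{\omega}_{\al}(\ep))=\tfrac2c(\bar\omega(\ep)+\ep^{\bar\nu})$, or $\tau_1=\aup_0$ and $|\MM(\aup_0)|=\bar c\delta$ is a fixed positive number; in both cases $c|\MM(\tau_1)|>\bar\omega(\ep)+\ep^{\bar\nu}$ for $\ep$ small, hence $d^+-\hat d^-<0$ for all $(d^+,\tau_1)\in\tilde S^{\Lambda,+}$, and symmetrically $d^+-\hat d^->0$ at $\tau_2$. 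Re-running the argument of the proofs of Theorems~\ref{theoremkey} and~\ref{theoremkeybis} with $[\aup_0,\adown_0]$ replaced by $[\tau_1,\tau_2]$, the connected set $\tilde S^{\Lambda,+}$ must meet $\hat S^{\Lambda,-}$ at a point with $\tau$-coordinate $\tau_\star^{\Lambda}\in[\tau_1,\tau_2]$; since $T_0\in[\tau_1,\tau_2]$, and $T_0-\aup_0=\lzero+(T_0-\lzero-\aup_0)<\lzero+\bar{\omega}_{\al}(\ep)$ in the truncated case (likewise on the right), we get $|\al_0^{\ee}|=|\tau_\star^{\Lambda}-T_0|\le\lzero+\bar{\omega}_{\al}(\ep)$. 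If \eqref{isolated} holds then $\lzero=0$ and this reads $|\al_0^{\ee}|\le\bar{\omega}_{\al}(\ep)$.

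Finally, under \eqref{nondeg} with $r\ge2$ one may take $\omega_M(h)=\tfrac C2 h$, so $\omega_M^{-1}(y)=\tfrac2C y$, while Remark~\ref{rMelni} gives $\bar\omega(\ep)\le C_M\ep$; since $\ep^{\bar\nu}\le\ep$ for $\ep\le1$ ($\bar\nu>1$),
$$|\al_0^{\ee}|\le\bar{\omega}_{\al}(\ep)=\frac{4(\bar\omega(\ep)+\ep^{\bar\nu})}{cC}\le\frac{4(C_M+1)}{cC}\,\ep=:c_{\al}\,\ep,$$
with $c_{\al}$ depending only on $C,C_M,c$, hence on neither $\ep$ nor $\ee$. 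The delicate point is not the Melnikov estimate above but the topological bookkeeping behind ``re-running the argument'': one must check that, after shrinking the strip, the intersection time $\tau_\star^{\Lambda}$ genuinely lands in $[\tau_1,\tau_2]$ and not in $[b_0,\tau_1)\cup(\tau_2,b_1]$. This is handled as in Theorem~\ref{theoremkeybis}: extract the sub-continuum of $\tilde S^{\Lambda,+}$ crossing both lines $\tau=\tau_1$ and $\tau=\tau_2$, and use that $\DDD(\P_u(\tau),\P_s(\tau))=O(\ep)$ keeps $\hat S^{\Lambda,-}$ uniformly away from $d=\pm\sqrt\ep$, so that the sign of $d^+-\hat d^-$ does determine on which side of $\hat S^{\Lambda,-}$ a point of $\tilde S^{\Lambda,+}$ lies; alternatively, these bounds on $\al_0^{\ee}$ can be quoted by ``repeating word by word'' the analogues of Lemma~5.27 and Remark~5.28 in \cite{CFPrestate}, as in the proof of Theorem~\ref{T.connectionLa}.
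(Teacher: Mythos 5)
Your argument is correct, and the final $|\alpha_0^{\ee}|$ bounds (including the $c_{\alpha}\ep$ bound under \eqref{nondeg}) match the paper's, but you take a structurally different and more laborious route. The paper's proof is a direct, post-hoc estimate: since $\tau_\star^{\Lambda}$ is \emph{already} produced by Theorem~\ref{theoremkeybis}, and Remark~\ref{uselesspos} records that $\xxi_\star$ lies within distance $\ep^{(1+\nu)/\underline\sigma}$ of \emph{both} $\P_s(\tau_\star^{\Lambda})$ and $\P_u(\tau_\star^{\Lambda})$, one gets for free that $|\DDD(\P_s(\tau_\star^{\Lambda}),\P_u(\tau_\star^{\Lambda}))|\le\ep^{(1+\nu)/\underline\sigma}=\ep^{1+\bar\nu}$. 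Plugging this into Remark~\ref{rMelni} gives $|\MM(\tau_\star^{\Lambda})|\le\bigl(c_r(\ep+\ep^{r-1})+\ep^{\bar\nu}\bigr)/c$, and \eqref{isogen} then bounds $|\tau_\star^{\Lambda}-T_0|$ by $\lzero+\omega_M^{-1}(\cdot)$. No topology is revisited: the smallness of the $d$-range of $A_0$ forces the Melnikov function to be small at \emph{every} intersection point, not just at a specially constructed one.

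You instead push the thinness into the $\tau$-direction: define a narrower strip $[\tau_1,\tau_2]$ with half-width $\bar\omega_\alpha(\ep)+\lzero$ and re-run the intermediate-value argument there. This does work, and your sign computations are right (and conceptually the same Melnikov estimate is in play). But it obliges you to extract a sub-continuum of $\tilde S^{\Lambda,+}$ crossing both $\tau=\tau_1$ and $\tau=\tau_2$ and to re-verify the separation of $\hat S^{\Lambda,-}$ at the new endpoints --- exactly the ``topological bookkeeping'' you flag as delicate. The paper's proof sidesteps all of this; the bound on $|\alpha_0|$ is a consequence of what was already constructed, not a re-run of the construction. Your closing suggestion to quote analogues of Lemma~5.27 of~\cite{CFPrestate} is also slightly off-target: those results are calibrated for $\alpha_j$ with $j\ge1$, and $j=0$ is precisely what Corollary~\ref{c.alpha0} is meant to cover separately.
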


\begin{proof}
With the same notation as in the proof of Theorem \ref{theoremkeybis},
observe first that
 \begin{equation}\label{estDDDbis}
				\begin{split}
					&|\DDD(\P_s(\tau_\star^{\Lambda}), \P_u(\tau_\star^{\Lambda}))|= |c \ep \MM(\tau_\star^{\Lambda})+\ep
\omega(\tau_\star^{\Lambda},\ep)|    \\
					& =|\DDD(\P_s(\tau_\star^{\Lambda}), \xxi_\star )+\DDD( \xxi_\star, \P_u(\tau_\star^{\Lambda}))| \le
\ep^{\frac{1+\nu}{\underline{\sigma}}}=
\ep^{1+\bar{\nu}}
				\end{split}
		\end{equation}
  with $\bar{\nu}= \frac{1-\underline{\sigma}+\nu}{\underline{\sigma}}>1$, where we have used Remark \ref{uselesspos}.

 We recall that $\f^{\pm}$ and $\g$ are $C^r$ with $r>1$, and that the Melnikov function $\MM$ inherits the same regularity.
 Hence there is
 $c_r>0$ such that $|\omega(\tau_\star^{\Lambda},\ep)|\le c_r (\ep+\ep^{r-1})$.
 Assume first  \eqref{isogen},  so that  $|\MM(\tau)| \ge \omega_M(|\tau-T_0-\lzero|)$  when $ \tau \in [T_0+\lzero, \adown_0]$.

 Then from \eqref{estDDDbis}, we find
  \begin{equation}\label{xxi12bis}
  \begin{split}
      & |\omega_M(|\tau_\star^{\Lambda}-T_0-\lzero|)| \le  |\MM(\tau_\star^{\Lambda})| \le   \frac{c_r (\ep+ \ep^{r-1})+
      \ep^{\bar{\nu}}}{c},
  \end{split}
  \end{equation}
  when $ \tau_\star^{\Lambda} \in [T_0+\lzero, \adown_0]$. Hence there is $\bar{\omega}_{\alpha}(\ep)$ as in the statement such
  that
  \begin{equation}\label{tau12}
  \begin{split}
      &   |\alpha_0| =|\tau_\star^{\Lambda}-T_0|  \le \lzero + \omega_M^{-1}\left(  \frac{c_r (\ep+ \ep^{r-1})+ \ep^{\bar{\nu}}}{c}
      \right)=\lzero +\bar{\omega}_{\alpha}(\ep),
  \end{split}
  \end{equation}
    when $ \tau_\star^{\Lambda} \in [T_0+\lzero, \adown_0]$. The estimate when  $\tau_\star^{\Lambda} \in [\aup, T_0-\lzero]$
  is analogous and it is omitted;
    when $\tau_\star^{\Lambda} \in[T_0-\lzero, T_0+\lzero]$, so $|\alpha_0|= |\tau_\star^{\Lambda}-T_0| \le \lzero$, whence the result
    is proved.
  When \eqref{isolated} holds we obtain the estimates by setting $\lzero=0$.

Now we assume   $r \ge 2$ and that \eqref{nondeg} holds so $|\MM'(T_0)| \ge C>0$. Then, from \eqref{xxi12bis}, we  find $C_M>0$ such
that
  \begin{equation*}
  \begin{split}
      &  \frac{|\MM'(T_0)|}{2} |\tau_\star^{\Lambda}-T_0| \le |\MM(\tau_\star^{\Lambda})| \le  \frac{2 c_r \ep + \ep^{\bar{\nu}}}{c}  \le C_M
      \ep
      \, ,
  \end{split}
  \end{equation*}
  \begin{equation}\label{tau12*}
  \begin{split}
      &   |\alpha_0| =|\tau_\star^{\Lambda}-T_0|  \le  \frac{2 C_M \ep}{C}=:c_{\alpha} \ep.
  \end{split}
  \end{equation}
So the proof is concluded.
 \end{proof}

\begin{remark}
  Assume that $\MM^{(j)}(T_0)=0$ for $j=0, \ldots, k-1$ and $\MM^{(k)}(T_0)\ne 0$: then $|\MM(T_0+h)| \geq  C_k |h|^k$ for some
  $C_k>0$ when $|h|$ is small enough. If $\f^{\pm}$ and $\g$ are $C^r$ with $r \ge k \ge 2$, we can adapt slightly the
  previous argument
  in order to set $\omega_M(\ep)= C_k \ep^k$. Then we
  get
 $$|\alpha_0| = |\tau_\star^{\Lambda}-T_0| \le \left[\frac{2 C_M}{C_k} \ep \right]^{\frac{1}{k}} = O(\ep^{1/k}) .$$
\end{remark}
\begin{remark}\label{foralpha0}
  Let the assumptions of any of Theorems \ref{main.weak1}  and \ref{veryweak} be satisfied.
Assume now that $|\MM'(T_0)|=C > 0$, then  there is $c_{\alpha}>0$ such that
$|\al^{\ee}_0|< c_{\alpha} \ep$, uniformly  for any $\ee \in \EE$.
To prove this fact, we can repeat word by word the proof of Corollary \ref{c.alpha0}.
\end{remark}

\subsection{Proof of Theorems \ref{main.periodic1}, \ref{main.weak1}, \ref{veryweak}}

 Here we complete the proof of our main results.
Actually, we  just consider the setting of Theorem \ref{main.weak1} (and \ref{main.periodic1}): the proof of Theorem \ref{veryweak} follows exactly the same lines and it is omitted.

\begin{proof}[\textbf{Proof of Theorem \ref{main.weak1}}]
Let $\TT=(T_j)$, $j \in \Z$ be a fixed sequence satisfying \eqref{TandKnu}.
Then, via Theorem
  \ref{theoremkeybis}, for any  $\tilde{\ee} \in \EE$ such that $\tilde{\ee}_0=1$,
  we construct a (compact) set $X(\tilde{\ee})\subset L^0$ such that if $\xxi(\tilde{\ee}) \in X(\tilde{\ee})$
  then there is a sequence $\alpha_j^{\tilde{\ee}}$ such that the trajectory $\x(t, T_0+\alpha_0^{\tilde{\ee}}; \xxi(\tilde{\ee}))$
  exhibits property
  $\bs{C_{\tilde{\ee}}}$.
  Let us denote by
  \begin{equation}\label{defsigma0}
    \Sigma':= \bigcup \{   X(\ee) \mid \ee_0=1 \}.
  \end{equation}
Then fix $k \in (\Z \setminus \{0 \})$ and denote by $\tilde{\TT}^k= (\tilde{T}^k_j)$, $j \in \Z$, the sequence where
$\tilde{T}^k_j= T_{2k+j}$.
Notice that $\tilde{T}_0^k \in [ \aup_{k} ,  \adown_{k} ] \subset [\beta_{2k}, \beta'_{2k}]$ by
\eqref{defbetaj}. Now let $\bar{\ee}$ be
such that $\bar{\ee}_k=1$, we
denote by
$\tilde{\ee}^k=(\bar{\ee}_{k+j})_{j \in \Z}$, so that $\tilde{\ee}^k_0=\bar{\ee}_k=1$: we can apply Theorem
   \ref{theoremkeybis}  on the shifted sequence to construct the set $X^k( \tilde{\ee}^k)\subset L^0$ such that if
  $\xxi^k \in X^k( \tilde{\ee}^k)$ then the trajectory $\x(t, \tilde{T}^k_0+\tilde{\alpha}_0^{\tilde{\ee}^k}; \xxi^k)$
  has property   $\bs{C_{\tilde{\ee}^{\boldsymbol{k}}}}$  using the time sequence $\tilde{\TT}^k$.

  Let us denote by   $\xxi^0=\x(T_0+\tilde{\alpha}_{-k}^{\tilde{\ee}^k}, \tilde{T}^k_0+\tilde{\alpha}_0^{\tilde{\ee}^k}; \xxi^k)$
   and by
  $\bar{\alpha}_j^{\bar{\ee}}=\tilde{\alpha}_{j-k}^{\tilde{\ee}^k}$, so that
  $$\x(t,T_0+\bar{\alpha}_{0}^{\bar{\ee}} ; \xxi^0) \equiv  \x(t, \tilde{T}^k_0+\tilde{\alpha}_0^{\tilde{\ee}^k}; \xxi^k),$$
  for any $t \in \R$;
  then by construction $\x(t,T_0+\bar{\alpha}_{0}^{\bar{\ee}}; \xxi^0)$ has property $\bs{C_{\bar{\ee}}}$. So we have  constructed $X(\ee)$
    so that $\x(t,T_0+\alpha_{0}^{\ee}; \xxi^0)$ has property $\bs{C_{\ee}}$ for any $\ee$ such that $\ee_k=1$.
  Let us  define
     \begin{equation}\label{defsigmak}
  \begin{split}
       & \tilde{\Sigma}_k:= \bigcup \{   X(\ee) \mid \ee_{k}=1 \},
       \qquad \qquad \Sigma = (\bigcup_{k \in \Z \setminus \{ 0\}  } \tilde{\Sigma}_{k} ) \cup \Sigma' \cup \{\vec{0} \}.
  \end{split}
  \end{equation}
     Then the set $\Sigma=\Sigma (\TT)$ in \eqref{defsigmak} has the required properties, i.e., for any $\ee \in \EE$ there is
  $X(\ee) \subset \Sigma$ such that, for any $\xxi \in X(\ee)$, the   trajectory $\x(t,T_0+\alpha_{0}^{\ee}; \xxi)$ has property
  $\bs{C_{\ee}}$. Clearly the
  initial condition $\xxi=\vec{0}$
  corresponds to the null sequence $\ee_j=0$ for any $j \in \Z$.
\end{proof}

  \begin{remark}\label{r.disjointX}
    We emphasize that $\tilde{\Sigma}_k \cap \tilde{\Sigma}_j$  is nonempty. However $X(\ee) \cap X(\ee') = \emptyset$ if $\ee \ne \ee'$.
  \end{remark}

Now we use Theorem \ref{main.weak1}  to prove  Theorem
 \ref{main.periodic1}.

\begin{proof}[\textbf{Proof of Theorem \ref{main.periodic1}.}] 

Let us set
\begin{equation}\label{ctilde}\tilde{c} = 2 \sup
\{\|\dot{\ga}^+(t)\|; \|\dot{\ga}^-(s)\| \mid s<0<t \},
\end{equation}
and observe that $\tilde{c}>0$ is finite.
From the proof of Corollary 3.7 in \cite{CFPrestate} we see that 
 for any $j \in \Z$,  $t\in\R$,  we get
  \begin{equation}\label{easy}
     \| \ga(t+ \alpha_j(\ep))- \ga(t)\| \le  \tilde{c} |\alpha_j(\ep)|;
   \end{equation}
further
 \begin{itemize}
   \item assume \eqref{nondeg}, then   for any $j \in \Z$ we find
   \begin{equation}\label{easier}
 \sup_{t \in \R} \| \ga(t-T_{2j}- \alpha_j(\ep))- \ga(t-T_{2j})\|  \le  \tilde{c}   c_{\alpha} \ep
     \end{equation}
  where $c_{\alpha}>0$ is given in \eqref{tau12*} and it is independent of $j \in \Z$;
   \item assume
   \eqref{isolated}, then   for any $j \in \Z$ we find
 $$  \sup_{t \in \R} \| \ga(t-T_{2j}- \alpha_j(\ep))- \ga(t-T_{2j})\|   \le  \tilde{c}   \bar{\omega}_{\alpha} (\ep ) =:
 \omega_{\alpha} (\ep )$$
  where  $\bar{\omega}_{\alpha} (\ep )$ is given in \eqref{tau12}
  and it is independent of $j\in \Z$.
 \end{itemize}
 Theorem \ref{main.periodic1} follows from Theorem  \ref{main.weak1}
 simply choosing
 \begin{equation}\label{deftildecstar}
 \tilde{c}^*=c^*+ \tilde{c}   c_{\alpha}.
 \end{equation}
\end{proof}

  Corollary \ref{r.all} is proved with the same idea.

\section{Semi-conjugacy with the Bernoulli shift}\label{proof.Bernoulli}

Let $\si:{\cal E}\to{\cal E}$ be the (forward) Bernoulli shift, defined by
$\si(\ee):=(\ee_{m+1})_{m\in\Z}$ where $\ee=(\ee_m) \in \EE$.
In this section, using a classical argument, we show that the action of the flow of \eqref{eq-disc}
on the set $\Sigma$ constructed either via Theorem~\ref{main.periodic1} or via Corollary \ref{c.sigma} is semi-conjugated to the Bernoulli shift.
In the whole section we follow quite closely \cite[\S 6]{BF11} and we borrow some results from \cite[\S 6]{CFPrestate}.
For the analogous sets constructed via
Theorem \ref{veryweak} we are able to obtain just partial results.
Set
\begin{equation*}\label{defE}
\begin{gathered}
\hat{\cal E}:=\left\{\ee\in{\cal E} \mid \inf\{m\in\Z \mid \ee_m=1\}=-\infty,\, \sup\{m\in\Z \mid \ee_m=1\}=\infty\right\}, \\
{\cal E}_0^+:=\left\{\ee\in{\cal E} \mid \inf\{m\in\Z \mid \ee_m=1\}>-\infty,\, \sup\{m\in\Z \mid \ee_m=1\}=\infty\right\}, \\
{\cal E}_0^-:=\left\{\ee\in{\cal E} \mid \inf\{m\in\Z \mid \ee_m=1\}=-\infty,\, \sup\{m\in\Z \mid \ee_m=1\}<\infty\right\}, \\
{\cal E}_0:=\left\{\ee\in{\cal E} \mid \inf\{m\in\Z \mid
\ee_m=1\}>-\infty,\, \sup\{m\in\Z \mid \ee_m=1\}<\infty\right\}.
\end{gathered}
\end{equation*}
Note that $\hat{\cal E}$, ${\cal E}_0^-$, ${\cal E}_0^+$, ${\cal E}_0$ are
invariant under the Bernoulli shift.

The set ${\cal E}$ becomes a
totally disconnected compact metric space with the distance
\begin{equation}\label{metric}
d(\ee',\ee'') = \sum_{m\in\Z}\frac{|\ee'_{m}-\ee''_{m}|}{2^{|m|+1}}\, .
\end{equation}

Let $\mathcal{T}= (T_m )$, $m\in\Z$ be a fixed sequence of values satisfying \eqref{TandKnu}.
Following  \cite[\S 6]{BF11}
 we set
 $\mathcal{T}^{(k)}= ( T_{m+2k} )$  for any $k \in \Z$.
Further   let us denote by $$\sigma^k= \stackrel{k \;\text{times}}{\overbrace{\sigma \circ \cdots \circ \sigma}}.$$

Let $X(\ee, \TT)$  and $\Sigma(\TT)$ be the sets constructed via Theorem \ref{main.periodic1},
for any $k \in \Z$ we introduce the sets
\begin{equation}\label{defSk}
\Sigma_k := \left\{ \xxi_k= \x(T_{k}, T_0; \xxi_0) \mid \xxi_0 \in X(\ee,\TT), \, \ee \in \EE \right\}.
\end{equation}

\begin{remark}\label{notunique}
  Note that we  might have  $\xxi_0' \ne \xxi_0''$, such that $\xxi_0',\xxi_0'' \in  X(\ee,\TT)$, while in the classical case
  $X(\ee,\TT)$ is a singleton.
\end{remark}

Notice that the set $\Sigma(\TT)$ constructed via Theorem \ref{main.periodic1} satisfies
$\Sigma(\TT)= \Sigma_0$, where $\Sigma_0$ is as  in \eqref{defSk}.
 Further, using  \eqref{ej=1} and \eqref{ej=0} we get the following.

 \begin{remark}\label{remtwo}
 Assume the hypotheses of Theorem \ref{main.periodic1}, then for any $k \in \Z$ we find
$$ \Sigma_{2k} \subset [B(\ga(0), \tilde{c}^* \ep) \cup B(\vec{0}, \tilde{c}^* \ep)].$$

 Similarly  in the setting of Corollary  \ref{c.sigma}, if \eqref{isolated} holds for any $k \in \Z$ we find
$$
\Sigma_{2k} \subset [B(\ga(0), \omega_\alpha(\ep)) \cup B(\vec{0}, c^* \ep)].$$

Finally   in the setting of Corollary  \ref{c.sigma}, if \eqref{minimal} holds for any $k \in \Z$ we find
$$
\Sigma_{2k} \subset [B(\bs{\Gamma^{\Lambda^1}}, c^* \ep) \cup B(\vec{0}, c^* \ep)]$$
 where $\bs{\Gamma^{\Lambda^1}}= \{ \ga(t)
\mid |t| \le \Lambda^1\}$.
 \end{remark}

Further   $\x(t,T_0; \xxi_0) \in B(\bs{\Gamma}, \sqrt{\ep})$
for any $t \in \R$; hence from Remark  \ref{data.smooth} we see that local
 uniqueness and continuous dependence on initial data is ensured for any
trajectory $\x(t,T_0; \xxi_0)$ such that $\xxi_0 \in \Sigma_0$.
So we get the following.
\begin{remark}\label{remone}
  By construction, if $\xxi_0 \in \Sigma_0$ then $\xxi_k= \x(T_{2k},T_0; \xxi_0) \in \Sigma_{2k}$ and we have
  \begin{equation}\label{identity}
    \x(t,T_0; \xxi_0) \equiv \x(t,T_{2k}; \xxi_k)   \, , \quad \textrm{for any $t \in \R$}.
  \end{equation}
 Viceversa, if $\xxi_k \in \Sigma_{2k}$ then  $\xxi_0= \x(T_0,T_{2k}; \xxi_k) \in \Sigma_0$ and \eqref{identity} holds.
  Then
  $\xxi_0 \in X(\bar{\ee}, \TT)$ if and only if $\xxi_k \in X(\sigma^k(\bar{\ee}), \TT^{(k)})$.
 \end{remark}

 Let $\xxi_0 \in X(\ee, \TT)$, then we set
  $\Psi_0(\xxi_0)=\ee$, so that
    $\Psi_0: \Sigma_0 \to \EE$ is well defined and onto.

  Similarly, let $\xxi_k \in \Sigma_{2k}$, then there is a  uniquely defined
  $\xxi_0\in \Sigma_0$  such that $x(T_0, T_{2k}; \xxi_k)=\xxi_0$; further there is a uniquely defined
  $\ee \in \EE$ such that $\xxi_0 \in X(\ee, \TT)$. Let $\ee^k= \sigma^k(\ee)$, then we
  set $\Psi_k(\xxi_k)=\ee^k$, so that $\Psi_k: \Sigma_{2k} \to \EE$ is well defined.

 Notice that   $\Psi_k$ is onto by construction  but it might  be not injective, see Remark \ref{notunique}.

Adapting \cite[Proposition 6.4]{CFPrestate} we find the following
\begin{proposition}\label{continuityine}
Let the assumptions of   Theorem \ref{main.periodic1}  be satisfied,
then the map $\Psi_k: \Sigma_{2k} \to \EE$ is continuous.
\end{proposition}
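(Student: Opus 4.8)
The plan is to show that $\Psi_k$ is continuous by contradiction, exploiting the fact that $\Sigma_{2k}$ is contained in a small neighbourhood of the compact set $\bs{\Gamma}$ and that trajectories starting in $\Sigma_0$ enjoy local uniqueness and continuous dependence on initial data (Remark \ref{data.smooth}). By Remark \ref{remone} it suffices to treat $k=0$, i.e.\ to prove $\Psi_0:\Sigma_0 \to \EE$ is continuous; the general case then follows since $\Psi_k = \sigma^k \circ \Psi_0 \circ \x(T_0,T_{2k};\cdot)$ is a composition of continuous maps (the shift $\sigma^k$ being continuous on $(\EE, d)$ with the metric \eqref{metric}, and $\x(T_0, T_{2k}; \cdot)$ being continuous on $\Sigma_{2k}$ by continuous dependence on initial data within $B(\bs{\Gamma},\sqrt{\ep})$).

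So first I would fix $\xxi_0 \in \Sigma_0$ with $\Psi_0(\xxi_0) = \ee$ and suppose, for contradiction, that there is a sequence $\xxi_0^n \to \xxi_0$ in $\Sigma_0$ with $\Psi_0(\xxi_0^n) = \ee^n$ and $d(\ee^n, \ee) \ge \eta_0 > 0$ for all $n$. By the definition of the metric \eqref{metric}, this forces the existence of a fixed index $m_0 \in \Z$ (after passing to a subsequence, say with $|m_0|$ minimal) such that $\ee^n_{m_0} \ne \ee_{m_0}$ for infinitely many $n$; passing again to a subsequence we may assume $\ee^n_{m_0} \ne \ee_{m_0}$ for all $n$ and, since $\{0,1\}$ is finite, that $\ee^n_{m_0}$ is constant, equal to $1-\ee_{m_0}$. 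The idea is now to derive a contradiction from property $\bs{C_{\ee}}$ versus property $\bs{C_{\ee^n}}$ on the time window $[T_{2m_0 -1}, T_{2m_0+1}]$, using the separation between the homoclinic loop and a neighbourhood of the origin.

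The key quantitative input is that the two possible behaviours on the window $[T_{2m_0-1}, T_{2m_0+1}]$ are mutually exclusive up to a gap of order $O(1)$, not $O(\ep)$: if $\ee_{m_0}=0$ then $\|\x(t,T_0+\al_0^{\ee}(\ep);\xxi_0)\| \le c^*\ep$ on that window by \eqref{ej=0w}, whereas if $\ee^n_{m_0}=1$ then $\x(t, T_0+\al_0^{\ee^n}(\ep); \xxi_0^n)$ follows a translate of $\ga$ and in particular passes within $c^*\ep$ of $\ga(0) \ne \vec{0}$ at some time in the window (cf.\ \eqref{ej=1w} and Remark \ref{consecutive}), so the two trajectories are at distance at least $\|\ga(0)\| - 2c^*\ep \gg \ep$ at that time; the symmetric statement holds with the roles of $0$ and $1$ exchanged. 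On the other hand, $\x(\cdot, T_0; \xxi_0^n) \to \x(\cdot, T_0; \xxi_0)$ uniformly on the compact window (by continuous dependence, Remark \ref{data.smooth}, and continuity of $\al_0^{\ee^n}(\ep)$, which is bounded since $|\al_0| \le \l1$ — or $\le B_0$ — uniformly), which is the desired contradiction. One small bookkeeping point is to make sure $\al_0^{\ee^n}(\ep)$ stays in a fixed compact interval so the time windows line up; this follows from the uniform bound $|\al_0^{\ee}| \le \l1$ (Theorem \ref{theoremkeybis} / Corollary \ref{c.alpha0}) valid independently of $\ee$.

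The main obstacle, and the only place requiring genuine care, is the handling of the shift index $m_0$: because the metric \eqref{metric} weights coordinates by $2^{-|m|-1}$, a small distance $d(\ee^n,\ee)$ controls only finitely many coordinates at a time, so one must argue that if $d(\ee^n,\ee) \not\to 0$ then some \emph{fixed} coordinate $m_0$ differs infinitely often — this is where the subsequence extraction is needed — and then localize the whole argument to the single window $[T_{2m_0-1},T_{2m_0+1}]$, on which continuous dependence is uniform because it is a \emph{fixed} compact time interval. I expect this to follow the template of \cite[Proposition 6.4]{CFPrestate} essentially verbatim, the only modification being that here $\Psi_0$ need not be injective (Remark \ref{notunique}), which is irrelevant for the continuity argument since continuity of a (possibly many-to-one) map is what is claimed. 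I would close by remarking that the same proof applies, with $c^*\ep$ replaced by $\omega_\alpha(\ep)$ or the neighbourhood $B(\bs{\Gamma^{\Lambda^1}}, c^*\ep)$ as appropriate, in the setting of Corollary \ref{c.sigma}.
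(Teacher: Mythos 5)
Your proposal is correct and is essentially the argument the paper invokes by reference: the paper simply cites the adaptation of \cite[Proposition 6.4]{CFPrestate}, whose content is exactly the contradiction argument you outline — reduction to $k=0$ by composition with the (continuous) flow and shift, extraction of a fixed discrepant index $m_0$ from the failure of $d(\ee^n,\ee)\to 0$, the $O(1)$ vs.\ $O(\ep)$ dichotomy between staying near $\vec{0}$ and passing near $\ga(0)$ on the fixed compact window $[T_{2m_0-1},T_{2m_0+1}]$, and uniform continuous dependence from Remark \ref{data.smooth}. The only minor remark is that in the setting of Theorem \ref{main.periodic1} the trajectories in property \textbf{C} already start from $t=T_0$ with no $\al_0$ shift, so the bookkeeping about $\al_0^{\ee^n}(\ep)$ is unnecessary there, though it becomes relevant (and is handled correctly by you) in the Corollary \ref{c.sigma} variant.
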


Now let us fix $\rho>0$ small enough, independent of $\ep$, so that in $B(\bs{\Gamma}, \rho)$ no sliding phenomena may take place.
Then from Remark \ref{data.smooth} we see that for any $k \in \Z$ the function $F_k: B(\bs{\Gamma}, \rho) \to \Omega$,
$F_k(\xxi)= \x(T_{2k+2}, T_{2k}; \xxi)$ is a homemomorphism onto its image;
however, notice that $F_k$ is not a diffeomorphism, since the flow of \eqref{eq-disc} is continuous in the domain but not smooth.
Hence
by construction $F_k : \Sigma_{2k} \to \Sigma_{2(k+1)}$ is a well-defined homeomorphism too;
so adapting \cite[Theorem 6.5]{CFPrestate} we find the following.
\begin{theorem} \label{thm.bernoulli}
Let the assumptions of   Theorem \ref{main.periodic1}  be satisfied.
  Then for any $0<\ep \le \ep_0$, we find
\begin{equation}\label{eq.com}
\Psi_{k+1} \circ F_{k} (\xxi) = \sigma \circ \Psi_k (\xxi) \, , \qquad k \in \Z, \, \xxi \in \Sigma_{2k},
\end{equation}
i.e., for all $k \in \Z$
the following diagram
commutes:
$$\xymatrix { {\Sigma_{2k}} \ar[rrrrr]^{F_k}
\ar[d]_{\Psi_k} &&&&&
{\Sigma_{2(k+1)}}\ar[d]^{\Psi_{k+1}}\\
{\EE} \ar[rrrrr]_{\sigma} &&&&& {\EE}}
$$
Moreover, for all $k \in \Z$ the map $\Psi_k$ is continuous and onto.
\end{theorem}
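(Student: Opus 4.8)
The plan is to establish Theorem~\ref{thm.bernoulli} by combining three ingredients already available in the excerpt: the construction of the sets $X(\ee,\TT)$ and $\Sigma(\TT)=\Sigma_0$ from Theorem~\ref{main.periodic1} together with the shifted-sequence construction in its proof; the homeomorphism property of the flow maps $F_k$ coming from Remark~\ref{data.smooth}; and the compatibility of the coding with time-shifts recorded in Remark~\ref{remone}. First I would fix $\rho>0$ small and independent of $\ep$ so that no sliding occurs in $B(\bs{\Gamma},\rho)$, as in Remark~\ref{data.smooth}; since every trajectory starting in $\Sigma_0$ stays in $B(\bs{\Gamma},\sqrt{\ep})\subset B(\bs{\Gamma},\rho)$ for $\ep$ small, local uniqueness and continuous dependence give that $F_k(\xxi)=\x(T_{2k+2},T_{2k};\xxi)$ is a homeomorphism of $B(\bs{\Gamma},\rho)$ onto its image, and by the cocycle property $F_k$ maps $\Sigma_{2k}$ bijectively onto $\Sigma_{2(k+1)}$; its inverse is the analogous backward flow map, which is continuous for the same reason.

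Next I would verify the commuting-square identity \eqref{eq.com}. Take $\xxi\in\Sigma_{2k}$ and let $\xxi_0=\x(T_0,T_{2k};\xxi)\in\Sigma_0$ be the unique preimage under the flow, so $\xxi\in X(\sigma^k(\bar\ee),\TT^{(k)})$ exactly when $\xxi_0\in X(\bar\ee,\TT)$, by Remark~\ref{remone}; thus $\Psi_k(\xxi)=\sigma^k(\bar\ee)$. Applying $F_k$ advances the base time from $T_{2k}$ to $T_{2k+2}$, i.e.\ $F_k(\xxi)=\x(T_{2k+2},T_0;\xxi_0)\in\Sigma_{2(k+1)}$, and the same identification via Remark~\ref{remone} (now with $k+1$ in place of $k$) gives $\Psi_{k+1}(F_k(\xxi))=\sigma^{k+1}(\bar\ee)=\sigma(\sigma^k(\bar\ee))=\sigma(\Psi_k(\xxi))$. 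This is essentially bookkeeping: the key point is that the coding $\Psi_k$ is \emph{defined} by pulling back to $\Sigma_0$ and reading off the (shifted) symbol sequence, so shifting the base point by one ``double period'' is tautologically the Bernoulli shift on symbols. That $\Psi_k$ is onto is immediate from the construction in Theorem~\ref{main.periodic1}: for every $\ee\in\EE$ there is a nonempty $X(\ee,\TT)\subset\Sigma_0$, and applying the flow produces a nonempty fibre in $\Sigma_{2k}$.

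The one genuinely nontrivial step is the continuity of $\Psi_k:\Sigma_{2k}\to\EE$, which I would obtain by adapting \cite[Proposition~6.4]{CFPrestate} (our Proposition~\ref{continuityine}). The argument runs as follows. Suppose $\xxi^{(n)}\to\xxi$ in $\Sigma_{2k}$ with $\Psi_k(\xxi^{(n)})=\ee^{(n)}$ and $\Psi_k(\xxi)=\ee$; by compactness of $\EE$ it suffices to show that any convergent subsequence $\ee^{(n)}\to\ee^{\star}$ forces $\ee^{\star}=\ee$, i.e.\ the symbols agree in every fixed coordinate $m$. Pulling back to $\Sigma_0$ via the homeomorphism $\x(T_0,T_{2k};\cdot)$ we reduce to $k=0$. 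Fix $m$; whether $\ee_m=0$ or $\ee_m=1$ is detected by the behaviour of $\x(t,T_0+\alpha_0^{\ee}(\ep);\xxi)$ on the compact interval $[T_{2m-1},T_{2m+1}]$ through the alternative \eqref{ej=1} versus \eqref{ej=0} — one has $\|\x(\cdot)-\ga(\cdot-T_{2m})\|\le\tilde c^{*}\ep$ in the first case and $\|\x(\cdot)\|\le\tilde c^{*}\ep$ in the second, and since $\ga(t-T_{2m})$ is bounded away from $\vec 0$ somewhere on that interval, the two cases are separated by a positive distance. Continuous dependence on initial data on the compact time interval (Remark~\ref{data.smooth}), together with $|\alpha_0^{\ee^{(n)}}-\alpha_0^{\ee}|\le 2\l1$ bounded (so one may extract a further convergent subsequence of the $\alpha_0$'s), then shows the limiting trajectory must satisfy the same alternative as $\x(t,T_0+\alpha_0^{\ee}(\ep);\xxi)$, hence $\ee^{\star}_m=\ee_m$. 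As $m$ was arbitrary, $\ee^{\star}=\ee$, proving continuity. With continuity and the commuting square in hand, Theorem~\ref{thm.bernoulli} follows; I would then remark that injectivity of $\Psi_k$ is \emph{not} claimed, because of the lack of uniqueness inside $X(\ee,\TT)$ noted in Remarks~\ref{r.nonunique} and \ref{notunique}, which is precisely why one obtains semi-conjugacy rather than a topological conjugacy.
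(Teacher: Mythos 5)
Your proposal is correct and matches the paper's proof in all essentials: the commuting-square identity is obtained by the same bookkeeping via Remark~\ref{remone}, ontoness comes from the nonemptiness of each fibre $X(\ee,\TT)$, and continuity of $\Psi_k$ is the one nontrivial point, which the paper also delegates to Proposition~\ref{continuityine} (adapted from \cite[Proposition~6.4]{CFPrestate}). The only difference is that you sketch the continuity argument in detail (separating the $\ee_m=0$ and $\ee_m=1$ alternatives on $[T_{2m-1},T_{2m+1}]$ via continuous dependence and compactness of $\EE$) where the paper simply cites the proposition, so this is an accurate elaboration rather than a divergent route.
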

 \begin{proof}
  We borrow the argument from the proof of Theorem 6.1 in \cite{BF11}.

Note that, by Proposition \ref{continuityine} the map $\Psi_k$ is continuous and onto.
Let us show that the diagram commutes.
 Let $\xxi_k \in \Sigma_{2k}$ and let $\xxi_{k+1}=  F_k(\xxi_k)= \x(T_{2k+2},T_{2k}; \xxi_k)$, and $\xxi_0=\x(T_0, T_{2k}; \xxi_k) \in \Sigma_0$.
 Let us denote by $\ee= \Psi_0(\xxi_0)$,  $\ee'= \Psi_k(\xxi_k)$, $\ee''= \Psi_{k+1}(\xxi_{k+1})$; then by construction
 $\ee''= \sigma^{k+1}(\ee)$ and $\ee'= \sigma^{k}(\ee)$ so that $\ee''= \sigma (\ee')$. Hence
\begin{equation*}
  \begin{split}
   \Psi_{k+1}\circ F_k(\xxi_k)=  \Psi_{k+1}(\xxi_{k+1})= \ee''= \sigma(\ee')=\sigma \circ \Psi_k (\xxi_k),
  \end{split}
\end{equation*}
so the diagram commutes.
 \end{proof}

 Now we briefly consider the setting of Theorem \ref{veryweak} following \cite[\S 6]{CFPrestate}. In this case
we can still define in a similar way as above the sets $\Sigma_k$ and the mappings
$\Psi_k$ and $F_k$, where $\Psi_k$ is onto and $F_k$ is a homeomorphism. However
Remark~\ref{remtwo} is replaced by the following weaker result.
 \begin{remark}\label{remthree}
 Assume the hypotheses of Theorem \ref{veryweak}, then
 $$
\Sigma_{2k} \subset [B(\bs{\Gamma}^{B_{2k}}, \tilde{c}^* \ep) \cup B(\vec{0}, \tilde{c}^* \ep)] \subset B(\bs{\Gamma}, \tilde{c}^* \ep), \qquad \textrm{for any } \; k \in \Z,$$
 where
  $\bs{\Gamma}^{B_{2k}}= \{ \ga(t) \mid |t| \le B_{2k} \}$, $\tilde{c}^*$ is given by \eqref{deftildecstar}, $\Sigma_k$ is given by \eqref{defSk} and
  $X(\ee, \TT)=\{\x(T_0,T_0+\al_0^\ee(\ep);\xxi)\mid \xxi\in\bar{X}(\ee,\TT)\}$.
\end{remark}
 Notice that if $(B_k)$ is unbounded, then the two sets $B(\bs{\Gamma}^{B_{2k}}, \tilde{c}^* \ep)$
 and $B(\vec{0}, \tilde{c}^* \ep)$ may intersect, so $\Psi_k$ might not be continuous.

In conclusion, $\Psi_k$ is onto but possibly discontinuous, so we do not have a real semi-conjugation with the Bernoulli shift even if the diagram in Theorem~\ref{thm.bernoulli} still commutes.

\section{Examples}\label{s.ex}
In this section we present several examples of simple piecewise-smooth systems satisfying the assumptions
of  Theorems \ref{main.periodic1}, \ref{main.weak1} and \ref{veryweak}: the examples are meant mainly to illustrate
assumptions
 \assump{P1}, \assump{P2} or \assump{P3}.

Let us consider the following system:
	\begin{equation}\label{eq_ex_ps}
		\begin{gathered}
			\begin{aligned}\dot{x} &= y+x^2+\ep g_1(t,x,y)\\
				\dot{y} &= x-2x^2\end{aligned} \quad \text{if }y>0,\\[1em]
			\begin{aligned}\dot{x} &= y-x^2+\ep g_1(t,x,y)\\
				\dot{y} &= x-2x^2\end{aligned} \quad \text{if }y<0
		\end{gathered}
	\end{equation}
on $\Om=]-2,2[\times ]-2,2[$.
We let $G(x,y)=-y$, so $\Om^-=]-2,2[\times ]0,2[$, $\Om^+=]-2,2[\times ]-2,0[$, $\Om^0=]-2,2[\times \{0\}$.	
Moreover, $\g= (g_1,g_2)$ with $g_2\equiv 0$ and we will choose $g_1(t,x,y)$ so that
 $g\in C_b^r(\R\times\Omega,\R^2)$.
Note that, if $\ep=0$,
system \eqref{eq_ex_ps} has the homoclinic solution (see Figure \ref{figEx-hom})
\begin{equation}\label{eqexhom}
	\ga(t)=(\gamma_1(t),\gamma_2(t))=\begin{cases}
		(\eu^t,\eu^t-\eu^{2t}),& t\leq 0,\\
		(\eu^{-t},-\eu^{-t}+\eu^{-2t}),& t>0\end{cases}
\end{equation}
corresponding to the homoclinic point $\vec{0}$. Note that for $t>0$, we have
\begin{gather*}
	\gamma_1(-t)=\eu^{-t}=\gamma_1(t),\\
	\gamma_2(-t)=\eu^{-t}-\eu^{-2t}=-\gamma_2(t).
\end{gather*}
\begin{figure}[t]
	\begin{center}
		\includegraphics[width=6cm]{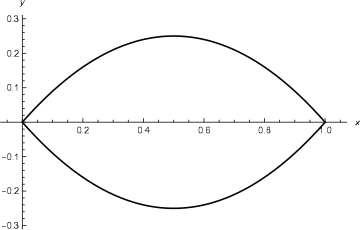}
		\caption{Homoclinic solution of system \eqref{eq_ex_ps} with $\ep=0$.}\label{figEx-hom}
	\end{center}
\end{figure}

\noindent
Further, $\la_u^{\pm}=1$, $\la_s^{\pm}=-1$, $\vec{v}^{\pm}_u=\mp\frac{1}{\sqrt{2}}(1,1)$, $\vec{v}^{\pm}_s=\pm\frac{1}{\sqrt{2}}(1,-1)$, so
\assump{F0, F1, F2, K} are verified. Notice that $\und{\sigma}=\ov{\sigma}=1/2$.
Since $\tr\boldsymbol{\f_x^\pm}(x,y)=0$ for any $(x,y)\in\Om^\pm$ and $c_{\perp}^{\pm}=1$, the corresponding Melnikov function given by \eqref{melni-disc} has the form
\begin{equation}\label{eq_ex_Meln}
	\begin{gathered}
		\mathcal{M}(\aaa)= -\int_{-\infty}^{0}  (\gamma_1(t)-2\gamma_1^2(t))g_1(t+\aaa,\ga(t))\,dt\\
		-\int_{0}^{+\infty} (\gamma_1(t)-2\gamma_1^2(t))g_1(t+\aaa,\ga(t))\,dt\\
		=\int_{-\infty}^{0} (2\eu^{2t}-\eu^t)g_1(t+\aaa,\ga(t))\,dt
		+\int_{0}^{+\infty} (2\eu^{-2t}-\eu^{-t})g_1(t+\aaa,\ga(t))\,dt.
	\end{gathered}
\end{equation}
In the examples below the perturbation term  $\g= (g_1,0)$ will always be of the form $g_1(t,x,y)=xh(t)$ with different choices of $h(t)$, so
that \eqref{eq_ex_Meln} reads
\begin{equation}\label{eq_ex_M}
\begin{split}
	\mathcal{M}(\aaa)&=\int_{-\infty}^{0} (2\eu^{2t}-\eu^t)\eu^{t}h(t+\aaa)\,dt
	+\int_{0}^{+\infty} (2\eu^{-2t}-\eu^{-t})\eu^{-t}h(t+\aaa)\,dt\\
	&=2A(3,\aaa)-A(2,\aaa)
\end{split}
\end{equation}
where we denote
\begin{equation}\label{eq_ex_A}
	A(k, \aaa)=\int_{-\infty}^0\eu^{kt}h(t+\aaa)\,dt+\int_0^\infty \eu^{-kt}h(t+\aaa)\,dt.
\end{equation}

\begin{example}\label{ex1}
In this first example we choose $g_1(t,x,y)=xh(t)$, $h(t)=\sin (2\pi t)$.
\end{example}

 Plugging the given perturbation into \eqref{eq_ex_A}, we get
by direct computation
\[
A(k,\aaa)=\frac{2 k}{4 \pi^2+k^2}\sin(2 \pi\aaa),
\] which gives
$\mathcal{M}(\aaa)=C_1 \sin (2 \pi\aaa)$ where $C_1=\frac{ 8\pi^2+3}{(4\pi^2+9)(\pi^2+1)}>\frac{2}{13}>0$. So in this case assumption
\assump{P3} is satisfied.

Observe that $\MM(j/2)=0$ and $|\MM'(j/2)|= 2 \pi C_1>0$ for any $j \in \Z$ so \eqref{nondeg} holds.
Therefore we can apply Theorem \ref{main.periodic1} to find an $\ep_0$ such that we have the following.
For any $0<\ep \le \ep_0$ and any $\nu \ge \nu_0=1$ (see \eqref{defK0-new}) we can define

$$ \ogap = \big[K_0(1+\nu) |\ln(\ep)|+2 \big]= \big[3(1+\nu) |\ln(\ep)|+2 \big]  $$
where $[ \cdot ]$ denotes the integer part; then we see that any sequence $\TT=(T_j)$, $j \in \Z$ where
$T_j=j \ogap$ verifies \eqref{TandKnu} (in fact also \eqref{TandKnunew}  since $b_j= -1/4 +j/2$  and
$B_j=1/2$).

So we can apply Theorem \ref{main.periodic1} and we find a 
set of initial conditions
$\Sigma(\TT)$ such that $\x(t,T_0;\Q)$ has property \assump{C} for any $Q \in \Sigma(\TT)$.

\smallskip

 Note that    using a  sequence of the form $\bar{\TT}=(\bar{T}_k)$, $\bar{T}_k= k \ogap+1/2$,
$k \in \Z$, we find a different chaotic pattern starting from a new
set, say $\Sigma(\bar{\TT})$.
Further, starting from sequences jumping between
$T_j$ and $\bar{T}_j$ (even randomly), e.g.
$\tilde{\TT}=(\tilde{T}_k)$ where
$$\tilde{T}_k= \left\{ \begin{array}{cc}
                         k \ogap & \textrm{if $k$ is odd},\\
                         k\ogap +1/2 & \textrm{if $k$ is even,}
                       \end{array}    \right.$$
                       we find a different chaotic pattern $\Sigma(\tilde{\TT})$.
                       Moreover we can choose also sequences with non constant gaps, e.g.\ we can allow $T_{j+1}-T_j$ to vary randomly in the
                       semi-integers between
                       $\ogap$ and $2\ogap$.

 \smallskip

Analogously, one can verify the next example.

\begin{example}
	System \eqref{eq_ex_ps} with $g_1(t,x,y)=x (\sin t+\sin(2\pi t))$ satisfies \assump{P2} with
	$\mathcal{M}(\aaa)= \frac{2}{5}\sin\aaa+C_1\sin(2\pi\aaa)$.
\end{example}
Notice that $\aaa=0$ is a non-degenerate zero of $\MM$, further $\MM$ is quasi periodic, hence almost periodic:
it follows that in each quasi period $\bar{N}$ (cf.~\assump{P2}) $\MM$ admits a non degenerate zero.
So we can find a sequence $\TT=(T_j)$ of non-degenerate zeros of $\MM$ such that $T_{j+1}-T_j \ge \ogap$, for any $j \in \Z$: thus \eqref{TandKnu}    and \eqref{nondeg} are satisfied and we can apply
Theorem \ref{main.periodic1}.

\begin{example}\label{exgen0}
	Let us consider system \eqref{eq_ex_ps} with $g_1(t,x,y)=x h(t)$, and  $h(t)=3 \sin (2 \pi t) +R(t)$,
where $R(t)$ is a $C^1$ function such that $|R(t)| \le 1$.
\end{example}
In this case $R(t)$ can be regarded as a noise or a disturbance on which we have little information.
Repeating the previous computation we see that  in this case we get
$$\MM(\aaa)=3 C_1 \sin (2 \pi t) +M_r(t)\, , \quad \textrm{where $|M_r(t)| \le  1/3$}.$$
The estimate on $M_r(t)$ is obtained by plugging $|R(t)|\le 1$ in \eqref{eq_ex_M}.
Note that $\MM(\aaa)$ admits at least one zero in each interval of length $1$,  since $3C_1>6/13>1/3$  but we have no information on the
non-degeneracy of these zeros and in fact we do not know their precise locations.
Further $g$,  and hence $\MM$ do not satisfy any classical recurrence property.

However \eqref{eq_ex_ps} satisfies \assump{P1}  and also \eqref{minimal}, so we deduce the existence of a sequence
of zeros of $\MM(\aaa)$, say
$\bar{\TT}=(\bar{T}_j)$ where $\bar{T}_j \in [\ogap j , \ogap j +1[$, which satisfies \eqref{TandKnu} and \eqref{TandKnunew},
 so that we can apply Theorem \ref{main.weak1}.

Hence for any $\ee \in \EE$ we find (a not uniquely defined)   $\xxi(\ee)$ and a sequence $\al_j^{\ee}=\al_j^{\ee}(\ep)$ such that the
trajectory
$\x(t,T_0 +\al_0^{\ee}; \xxi(\ee))$
has property $\mathbf{C_e}$, but we just know that $|\al_j^{\ee}(\ep)| \le 1$.

Finally notice that   if we happen to know that $R(t)$ is such that $M_r(0)=M'_r(t) \equiv 0$ in an arbitrarily small
neighborhood of the origin, then we can apply Remark \ref{foralpha0} and we find that  $\al_0^{\ee}=O(\ep)$.

\begin{example}\label{exgen}
Let us consider system \eqref{eq_ex_ps} with $g_1(t,x,y)= x h(t)$, $h(t)= \sign(t)\sin^{2r+1}\sqrt{|t|}$, $r\in\Z^+$, $r\geq 2$,
where $\sign(t)=1$ if $t>0$, $\sign(t)=-1$ if $t<0$, and $\sign(t)=0$ if $t=0$.
\end{example}

It can be shown that $h\in C^r(\R)$ so $\g\in C^r_b(\R\times \Om,\R^2)$.
Notice that $h$ is odd, and consequently $\mathcal{M}$ is odd. So, in what follows, we assume that $\aaa>0$.

Once again
plugging the given perturbation into \eqref{eq_ex_A}, we get
\begin{gather*}
	A(k, \aaa)=-\int_{-\infty}^{-\aaa}\eu^{kt}\sin^{2r+1}\sqrt{-(t+\aaa)}\,dt
	+\int_{-\aaa}^0\eu^{kt}\sin^{2r+1}\sqrt{t+\aaa}\,dt\\
	+\int_0^\infty \eu^{-kt}\sin^{2r+1}\sqrt{t+\aaa}\,dt.
\end{gather*}
Now, we estimate these three integrals:
\begin{gather*}
	I_1= -\int_{-\infty}^{-\aaa}\eu^{kt}\sin^{2r+1}\sqrt{-(t+\aaa)}\,dt
	= -\int_0^\infty\eu^{-k(t+\aaa)}\sin^{2r+1}\sqrt{t}\,dt, \\
	|I_1| \le  \eu^{-k\aaa}\int_0^\infty\eu^{-kt}\,dt \to 0
\end{gather*}
 as $\aaa\to\infty$.
Next, integrating by parts, we derive
\begin{gather*}
	I_2=\int_{-\aaa}^0\eu^{kt}\sin^{2r+1}\sqrt{t+\aaa}\,dt
	=2\int_0^{\sqrt{\aaa}}t\eu^{k(t^2-\aaa)}\sin^{2r+1}t\,dt\\
	=\frac{1}{k}\sin^{2r+1}\sqrt{\aaa}-\frac{2r+1}{k}\int_0^{\sqrt{\aaa}}\eu^{k(t^2-\aaa)}\sin^{2r}t\cos t\,dt	.
\end{gather*}
Similarly,
\begin{gather*}
	I_3=\int_0^\infty \eu^{-kt}\sin^{2r+1}\sqrt{t+\aaa}\,dt
	=2\int_{\sqrt{\aaa}}^\infty t\eu^{-k(t^2-\aaa)}\sin^{2r+1}t\,dt\\
	=\frac{1}{k}\sin^{2r+1}\sqrt{\aaa}+\frac{2r+1}{k}\int_{\sqrt{\aaa}}^\infty \eu^{-k(t^2-\aaa)}\sin^{2r}t\cos t\,dt.
\end{gather*}
So we have
 \begin{gather*}
	A(k,\theta)= -\int_0^\infty\eu^{-k(t+\aaa)}\sin^{2r+1}\sqrt{t}\,dt+\frac{2}{k}\sin^{2r+1}\sqrt{\aaa}\\
	{}+\frac{2r+1}{k}\left[\int_{\sqrt{\aaa}}^\infty \eu^{-k(t^2-\aaa)}\sin^{2r}t\cos t\,dt
	-\int_0^{\sqrt{\aaa}}\eu^{k(t^2-\aaa)}\sin^{2r}t\cos t\,dt\right].
\end{gather*}
For the integrals in the bracket, we have the following estimates
$$\left|\int_{\sqrt{\aaa}}^\infty \eu^{-k(t^2-\aaa)}\sin^{2r}t\cos t\,dt\right|
\leq \eu^{k\aaa}\int_{\sqrt{\aaa}}^\infty\eu^{-kt^2}\,dt,$$
where
$$\lim_{\aaa\to\infty}\frac{\int_{\sqrt{\aaa}}^\infty \eu^{-kt^2}\,dt}{\eu^{-k\aaa}}
=\lim_{\aaa\to\infty}\frac{\eu^{-k\aaa}}{2k\sqrt{\aaa}\eu^{-k\aaa}}=\lim_{\aaa\to\infty}\frac{1}{2k\sqrt{\aaa}}=0$$
by de l'Hopital's rule, and analogously
$$\left|\int_0^{\sqrt{\aaa}}\eu^{k(t^2-\aaa)}\sin^{2r}t\cos t\,dt\right|
\leq \eu^{-k\aaa}\int_0^{\sqrt{\aaa}}\eu^{kt^2}\,dt,$$
where
$$\lim_{\aaa\to\infty}\frac{\int_0^{\sqrt{\aaa}}\eu^{kt^2}\,dt}{\eu^{k\aaa}}
=\lim_{\aaa\to\infty}\frac{\eu^{k\aaa}}{2k\sqrt{\aaa}\eu^{k\aaa}}=\lim_{\aaa\to\infty}\frac{1}{2k\sqrt{\aaa}}=0.$$
In conclusion, for $\aaa>0$ we find
$$\mathcal{M}(\aaa)=\frac{1}{3}\sin^{2r+1}\sqrt{\aaa}+ R_1(\aaa),$$
where $R_1(\aaa)\to 0$ as $\aaa\to +\infty$.

Since $\mathcal{M}$ is odd, it follows
$$\mathcal{M}(\aaa)=\sign(\aaa)\frac{1}{3}\sin^{2r+1}\sqrt{|\aaa|}+\sign(\aaa)R_1(|\aaa|).$$
This means that   the Melnikov function has a doubly infinite sequence of negative minima $\MM(b_{2i})$
and positive maxima $\MM(b_{2i+1})$  such that
\begin{equation}\label{MMadd}
  \begin{array}{cc}
        \MM(b_{2i})<-\frac{1}{6}<0 <\frac{1}{6}< \MM(b_{2i+1}) ,  &  i \in \Z, \\
          b_j= \sign (2j-1)\left( \frac{\pi}{2}(2j-1)  \right)^2  ,    & j \in \Z
  \end{array}
\end{equation}

\noindent
so assumption \assump{P1} holds with $\bar{c}=1/6$     (see also Remark
\ref{remP1}).
Moreover, note that the zeros are getting further and further away from each other as $\aaa$ tends to $+\infty$ or
$-\infty$.

In fact $\MM(0)=0$ since $\MM$ is odd. Further we find
$$\MM'(\aaa)=  2A_\theta(3,\aaa)-A_\theta(2,\aaa),$$
$$A_\theta(k,\aaa)= \int_{-\infty}^{+\infty} \eu^{-k|t|} h'(t+\aaa) dt$$
for $A_\theta(k,\theta)=\frac{\partial}{\partial\theta}A(k,\theta)$,
where we used the exponential behaviour of the integrand and the  Lebesgue's dominated convergence theorem. So when
 $\aaa=0$ we find
$$A_\theta(k,0)= 2 (2r+1)\int_{0}^{+\infty} \eu^{-ks^2 }\sin^{2r}s\cos s\, ds  ,$$
 \begin{equation}\label{aprimo}
   \MM'(0)=  2A_\theta(3,0)-A_\theta(2,0)
 \end{equation}
taking into account that the integrand is even and we set $t=s^2$.

 One can compute numerically $\MM'(0)$ for some specific values of $r$; for instance, we find the following:
$$ \textrm{ for  } r=2: \MM'(0) \cong 0.023; $$
$$\textrm{ for  } r=3: \MM'(0) \cong -0.020. $$
Note that in both cases $\MM'(0) \neq 0$.
However in this case, \eqref{nondeg} and even \eqref{isogen}  are not satisfied because we do not have a uniform
control of the slope of $\MM(\aaa)$ in its zeroes.
So we can just apply Theorem \ref{veryweak}, and we
obtain a
chaotic pattern
similar to the one of Example \ref{ex1}. That is, for any sequence of zeros of   $\MM(\cdot)$, say $\TT=(T_j)$, $j\in\Z$, such that $T_0=0$,
$T_{j+1}-T_j
\ge
\ogap$, we obtain a  set $\Sigma(\TT)$ such that for any $\ee \in \EE$ there is a
$\xxi(\ee) \in \Sigma(\TT)$, and a sequence $(\al_j^\ee(\ep))$, $j\in\Z$ such that $\x(t,T_0+\al_0^\ee(\ep); \xxi(\ee))$ has property $\mathbf{C_e}$.

 Further
$\MM'(T_0)\ne 0$, so  we can   apply  Remark \ref{foralpha0} to find $|\alpha_0^{\ee}(\ep)|\le C_{\alpha} \ep$.
 However for $k \ne 0$ we just have the a priori
 estimate
 \begin{equation}\label{aBk}
   |\alpha_k^{\ee}(\ep)|
 	\le B_{2k}= b_{n_{2k}+1}-b_{n_{2k}}
    =\begin{cases} 2\pi^2 |n_{2k}|,& n_{2k}\in\Z\setminus\{0\},\\ \frac{\pi^2}{2},& n_{2k}=0.\end{cases}
 \end{equation}

 As far as we are aware, even in the smooth case, it was not possible to apply
the results available in literature to this type of Melnikov functions (with distance between  consecutive  zeros going to infinity).

\medskip

We can go even further with a more general aperiodic perturbation satisfying~\assump{P1}:

\begin{example}\label{exgen2}
	Let us consider system \eqref{eq_ex_ps} with $g_1(t,x,y)=x h(t)$ and odd function  $h\in C^r(\R)$ such that $|h(t)|\leq H_0$,
$|h'(t)|\leq H_1$ for all $t\in\R$ and some constants $H_0,H_1>0$; further we assume that there is a  sequence $(\aaa_j)_{j \in
\N}$,  $\theta_j>0$ for all $j\in\N$, $\aaa_j \to + \infty$ as
$j \to + \infty$
such that $h(\aaa_{2j})<-c$, $h(\aaa_{2j+1})>c$ for some $c>0$  and  $h'(t) \to 0$ as $t \to +\infty$.
\end{example}

 As before we find  that
$$\mathcal{M}(\aaa)=2A(3,\aaa)-A(2,\aaa),$$
\begin{equation}\label{Mprimo}
  \mathcal{M}'(\aaa)=2A_\theta(3,\aaa)-A_\theta(2,\aaa).
\end{equation}
We assume firstly $\theta>0$ and we investigate the integrals:
$$\left|\int_{-\infty}^{-\aaa}\eu^{kt}h(-(t+\aaa))\,dt\right|	
\leq\int_0^\infty\eu^{-k(t+\aaa)}|h(t)|\,dt \leq H_0\eu^{-k\aaa}\int_0^\infty\eu^{-kt}\,dt$$
which tends to $0$ as $\aaa\to\infty$. Integrating by parts we get
\begin{gather*}
	\int_{-\aaa}^0\eu^{kt}h(t+\aaa)\,dt = \int_0^\aaa \eu^{k(t-\aaa)}h(t)\,dt\\
	=\frac{h(\aaa)}{k}-\frac{\eu^{-k\aaa}h(0)}{k}
	-\frac{\eu^{-k\aaa}}{k}\int_0^\aaa \eu^{kt}h'(t)\,dt.
\end{gather*}
Here if $  \eu^{kt}h'(t) \in L^1(0,\infty)$, we can find $K>0$ such that
$$\left|\frac{\eu^{-k\aaa}h(0)}{k}
	+ \frac{\eu^{-k\aaa}}{k}\int_0^\aaa \eu^{kt}h'(t)\,dt\right| \le \frac{K}{k} \eu^{-k\aaa};$$
otherwise, if $  \eu^{kt}h'(t) \not\in L^1(0,\infty)$, using de l'Hopital's rule, as $\aaa \to \infty$ we find
$$ \frac{\eu^{-k\aaa}h(0)}{k}
	+ \frac{\eu^{-k\aaa}}{k}\int_0^\aaa \eu^{kt}h'(t)\,dt  \sim \frac{h(0)}{k} \eu^{-k\aaa} + \frac{h'(\aaa)}{k^2}.$$
Finally similarly,
\begin{equation*}
	\int_0^\infty \eu^{-kt}h(t+\aaa)\,dt=\int_\aaa^\infty\eu^{-k(t-\aaa)}h(t)\,dt
	=\frac{h(\aaa)}{k}+\frac{\eu^{k\aaa}}{k}\int_\aaa^\infty\eu^{-kt}h'(t)\,dt	
\end{equation*}
with, for $\aaa$ large enough,  	
$$\left|\frac{\eu^{k\aaa}}{k}\int_\aaa^\infty\eu^{-kt}h'(t)\,dt\right|
\leq    2 \frac{|h'(\aaa)|}{k^2}.$$
In conclusion,
$$A(k,\aaa)=\frac{2h(\aaa)}{k}+R(\aaa,k),\quad |R(\aaa,k)|\leq   \frac{\bar{K} \eu^{-k \aaa}}{k} + \frac{3|h'(\aaa)|}{k^2}
 $$
for $\aaa>0$ large and a suitable $\bar{K}>0$.
Hence, using the fact that $\mathcal{M}$ is odd,
 and since by assumption $h'(\aaa) \to 0$ as $\aaa \to +\infty$,
 we obtain
$$\mathcal{M}(\aaa)=\frac{h(\aaa)}{3}+R(\aaa),\quad |R(\aaa)|\to 0 \text{ as $\aaa \to +\infty$}.$$
This means that $\mathcal{M}(\aaa_{2j})<-\tfrac{c}{6}<0$, $\mathcal{M}(\aaa_{2j+1})>\tfrac{c}{6}>0$ and
$\mathcal{M}(-\aaa_{2j})>\tfrac{c}{6}>0$, $\mathcal{M}(-\aaa_{2j+1})<-\tfrac{c}{6}<0$ if $|j|$ is large enough, so
assumption \assump{P1} is  fulfilled. Further from the assumptions on $h$ it follows that the distance between consecutive zeros of $\MM$ is
unbounded.

Hence we find a  sequence of zeros of   $\MM(\cdot)$, say $\TT=(T_j)$, such that $T_0=0$, $T_{j+1}-T_j \ge
\ogap$,
but we do not have control on $\MM'(T_k)$ so we can just apply Theorem \ref{veryweak}
and we obtain results analogous to the ones
of Example \ref{exgen}, in particular we just find $|\al_k| \le B_{2k}$ and $(B_{2k})$ may become unbounded.
Notice that in this result the function $h$ can again be regarded also as some kind of noise on which we have no information but the one given in
the statement.

However, if we assume   $\MM'(0) \ne 0$   (which is in fact an assumption on $h$, see \eqref{Mprimo} and \eqref{aprimo}) we can apply
Remark \ref{foralpha0} and we find $\al_0(\ep)=O(\ep)$.



\end{document}